\renewcommand\thesubfigure{(\alph{subfigure})}
\scshape\fontsize{12}{15}}
\normalfont\fontsize{12}{17}\bfseries}
\normalfont\fontsize{11}{14}\bfseries}
\newdimen\zerolinewidth
\tikzset{
  zero line width/.code={\zerolinewidth=\pgflinewidth
    \tikzset{line width=0cm}},
  use line width/.code={\tikzset{line width=\the\zerolinewidth}},
  draw with anchor in boundary/.style={
    zero line width,
    postaction={draw,use line width},
  },
}
\tikzstyle{vertex}=[circle, draw, fill=black, inner sep=0.7pt, outer sep=0pt, anchor=center]
\tikzstyle{edge}=[draw,line width = 0.4pt]
\theoremstyle{plain}
\newtheorem{theorem}{Theorem}[section]
\newtheorem{proposition}[theorem]{Proposition}
\newtheorem{conj}[theorem]{Conjecture}
\newtheorem{claim}[theorem]{Claim}
\newtheorem{rem}[theorem]{Remark}
\newtheorem{ques}[theorem]{Question}
\theoremstyle{definition}
\newtheorem{definition}[theorem]{Definition}
\numberwithin{equation}{section}
\newcommand{\N}{\mathbb{N}}
\newcommand{\M}{\mathsf{M}}
\newcommand{\Z}{\mathbb{Z}}
\newcommand{\Q}{\mathbb{Q}}
\newcommand{\R}{\mathbb{R}}
\newcommand{\Sp}{\mathbb{S}}
\DeclarePairedDelimiter\floor{\lfloor}{\rfloor}
\DeclarePairedDelimiter\ceil{\lceil}{\rceil}
\newcommand{\lk}[2]{{\rm lk}_{#1}(#2)}
\newcommand{\st}[2]{{\rm st}_{#1}(#2)}
\newcommand{\skel}[2]{{\rm skel}_{#1}(#2)}
\newcommand{\Kd}[1]{{\mathcal K}(#1)}
\newcommand{\bs}{\backslash}
\newcommand{\ol}{\overline}
\newcommand{\meets}{\leftrightarrow}
\newcommand{\nmeets}{\nleftrightarrow}
\newcommand{\TPSS}{S^{\hspace{.2mm}2} \mbox{$\times
\hspace{-2.6mm}_{-}$} \, S^{\hspace{.1mm}1}}
\newcommand{\ind}{\mathrm{Ind}}
\newcommand{\on}[2]{N_{#1}(#2)}   \newcommand{\cn}[2]{N_{#1}[#2]}   \DeclarePairedDelimiterX\set[1]\lbrace\rbrace{\def\given{\;\delimsize\vert\;}#1}
\renewcommand{\mod}[1]{\,({\mathrm{mod\;\,} #1})}
\newcommand{\n}{N}
\newcommand{\dimax}[1]{d_{\mathit{max}}(\ind(#1))}
\newcommand{\dimin}[1]{d_{\mathit{min}}(\ind(#1))}
\newcommand{\eqcolon}{\mathrel{\resizebox{\widthof{$\mathord{=}$}}{\height}{ $\!\!=\!\!\resizebox{1.2\width}{0.8\height}{\raisebox{0.23ex}{$\mathop{:}$}}\!\!$ }}}
\newcommand{\coloneq}{\mathrel{\resizebox{\widthof{$\mathord{=}$}}{\height}{ $\!\!\resizebox{1.2\width}{0.8\height}{\raisebox{0.23ex}{$\mathop{:}$}}\!\!=\!\!$ }}}
\renewcommand*{\thesubfigure}{(\alph{subfigure})}
\newcommand{\samir}[1]{\textcolor{green!65!black}{#1}}
\newcommand{\sourav}[1]{\textcolor{violet}{#1}}
\newcommand{\raju}[1]{\textcolor{orange!80!black}{#1}}
\newcommand{\sagar}[1]{\textcolor{brown!60!black}{#1}}
\newcommand{\red}[1]{\textcolor{red!80!black}{#1}}
\newcommand{\blue}[1]{\textcolor{blue!80!black}{#1}}
\Crefname{claim}{Claim}{Claims}
\date{}
\begin{document}

\title{On the matching complexes of the categorical product of path graphs}

\author{Raju Kumar Gupta}
	\address{Department of Mathematics, Indian Institute of Technology (IIT) Madras, India}
	\email{rajukrg3217@gmail.com}

 \author{Sourav Sarkar}
	\address{Department of Mathematics, Indian Institute of Technology (IIT) Madras, India}
	\email{sarkarsourav610@gmail.com}

 \author{Sagar S. Sawant}
	\address{Department of Mathematics, Indian Institute of Technology (IIT) Madras,  India}
	\email{sagar@smail.iitm.ac.in}

	\author{Samir Shukla}
	\address{School of Mathematical and Statistical Sciences, Indian Institute of Technology (IIT) Mandi,  India}
	\email{samir@iitmandi.ac.in}

\begin{abstract}
		The matching complex $\mathsf{M}(G)$ of a graph $G$ is a simplicial complex whose simplices are matchings in $G$. These complexes appear in various places and found applications in many areas of mathematics including computational geometry, representation theory, combinatorics, etc. 
        In this article, we consider the matching complexes of the categorical product $P_n \times P_m$ of path graphs $P_n$ and $P_m$. For $m = 1$, $P_n \times P_m$ is a discrete graph and therefore its matching complex is the void complex. For $m = 2$, $\M(P_n \times P_m)$ has been proved to be homotopy equivalent to a wedge of spheres by Kozlov. We show that for $n \geq 2$ and $3 \leq m \leq 5$, the matching complex of $P_n \times P_m$ is homotopy equivalent to a wedge of spheres. For $m =3$, we explicitly compute the number and dimension of spheres appearing in the wedge. Furthermore, for $m \in \{4, 5\}$,  we provide the minimum and maximum dimensions of spheres appearing in the wedge in the homotopy type of $\mathsf{M}(P_n \times P_m)$.   
	\end{abstract}
 
	\keywords{matching complex, independence complex, categorical product of graphs, path graph, line graph}
 	\subjclass[2020]{55P10, 05E45, 55U10} 

\maketitle
  
\section{Introduction}
 A {\it matching} in a simple graph $G$ is a collection of pairwise disjoint edges. The {\it matching complex} $\M(G)$ of a graph $G$ is the simplicial complex whose vertices are the edges, and simplicies are the matchings in $G$. In topological combinatorics, it is an important problem to study the topology of matching complexes of graphs. Matching complexes first appeared in the thesis of Garst \cite{Gar79}, where they obtained the matching complexes of complete bipartite graphs  $K_{m, n}$ as special coset complexes of symmetric groups $S_n$.  Thereafter, matching complexes have appeared and found applications in various places.  The matching complexes of complete graphs were studied in the paper of Bouc  \cite{Bou92} in connection with Brown complexes and Quillen complexes.  In \cite{Bjorner1992},  Bj{\"o}rner et al., described the matching complexes of complete bipartite graphs as chessboard complexes to study the geometry of non-attacking rook configurations on a general $m\times n$ chessboard.  In \cite{Rade92},  $\M(K_{m, n})$ appeared as the complex of all partial injective functions from $\{1, \ldots, m\}$ to $\{1, \ldots, n\}$ in connection to the problem of estimating the number of halving hyperplanes of a finite set of points in Euclidean space. 
Other than the complete graphs and complete bipartite graphs, the classes of graphs for which matching complexes have been studied include; paths and cycles \cite{Kozlov99}, trees \cite{MMTD2008, Marija2022}, polygonal tilings \cite{Matsushita2022,Bayer2023}, honeycomb graphs \cite{MMTD2008}, etc.   For more details on these complexes, we refer to a survey article by Wachs \cite{Wachs03} and Chapter 11 of the book \cite{Jonsson2008} by Jonsson.

\begin{definition}
    The {\it cartesian product} $G \Box H$ of graphs $G$ and $H$ is the graph with vertex set $V(G) \times  V(H)$, where any two vertices $(u, v)$  and $(u', v')$ form an edge if and only if either $u = u'$ and $vv' \in E(H)$, or $v = v'$ and $uu' \in E(G)$. 
\end{definition}

\begin{definition}
	The {\it categorical product} $G \times H$ of graphs $G$ and $H$ is the graph with vertex set $V(G) \times  V(H)$, where any two vertices $(u, v)$  and $(u', v')$ form an edge if and only if $uu' \in E(G)$ and $vv' \in E(H)$. 
\end{definition}

For $n \geq 1$, let  $P_n$ denote the path graph on $n$ vertices.  The matching complexes of the cartesian product of path graphs have been studied by various authors. In \cite{Kozlov99},  Kozlov studied  $\M(P_n \Box P_1)$.   In \cite{Jon05}, Jonsson studied the homotopical depth and topological connectivity of matching complexes of the cartesian product of path graphs for certain cases. Furthermore, Braun and Hough investigated the homology of $\mathsf{M}(P_n \Box P_2)$ in \cite{BH17}, and its homotopy type was examined by Matsushita in \cite{Mat19}. 

Motivated by the cartesian product of path graphs, in this article, we consider the matching complexes of the categorical product  $P_n \times P_m$ of path graphs  (see \Cref{fig:cat-cart-paths}). Since  $P_n \times P_1$  does not contain any edge, $\M(P_n \times P_1)$ is a void complex. Further, $P_n \times P_2$ is a disjoint union of two path graphs, and therefore $\M(P_n \times P_2)$ is homotopy equivalent to a wedge of spheres from \cite{Kozlov99}. 
For $3 \leq m \leq 5$, we compute the homotopy type of the matching complexes of $P_n \times P_m$. 

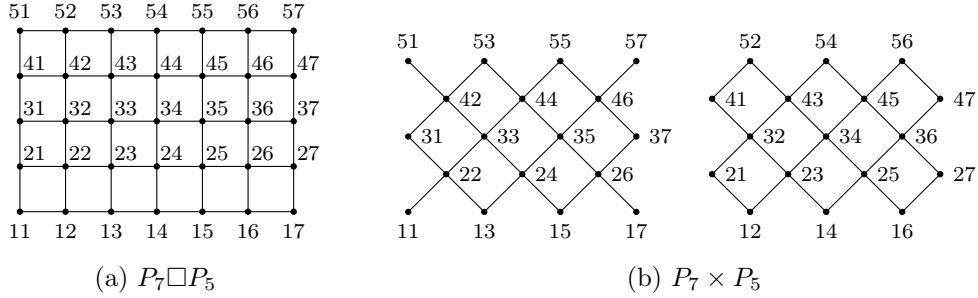
\begin{figure}[h!]
    \centering
    \begin{subfigure}[b]{0.28\textwidth}
    \begin{tikzpicture}[scale=0.5]
        \foreach \x in {1,2,...,7}
        {
        \node[vertex, label = {-90:\tiny{$1\x$}}] (1\x) at (\x,1) {};
        }
        \foreach \x in {1,2,...,7}
        {
        \node[vertex, label = {[label distance = -4pt]30:\tiny{$2\x$}}] (2\x) at (\x,2) {};
        }
        \foreach \x in {1,2,...,7}
        {
        \node[vertex, label = {[label distance = -4pt]30:\tiny{$3\x$}}] (3\x) at (\x,3) {};
        }
        \foreach \x in {1,2,...,7}
        {
        \node[vertex, label = {[label distance = -4pt]30:\tiny{$4\x$}}] (4\x) at (\x,4) {};
        }
        \foreach \x in {1,2,...,7}
        {
        \node[vertex, label = {90:\tiny{$5\x$}}] (5\x) at (\x,5) {};
        }

        \foreach \y [evaluate = \y as \ynext using {int(\y+1)}] in {1,2,...,6}
        {
        \draw[edge] (1\y) -- (1\ynext) ;
        \draw[edge] (2\y) -- (2\ynext) ;
        \draw[edge] (3\y) -- (3\ynext) ;
        \draw[edge] (4\y) -- (4\ynext) ;
        \draw[edge] (5\y) -- (5\ynext) ;
        }

        \foreach \x in {1,2,3,4,5,6,7}
        {
        \foreach \y [evaluate = \y as \ynext using {int(\y+1)}] in {1,2,3,4}
        {
        \draw[edge] (\y\x) -- (\ynext\x) ;
        }
        }
        
    \end{tikzpicture}  
    \caption{\small{$P_7 \Box P_5$}}
    \end{subfigure}
    \hspace{0.05\textwidth}
    \begin{subfigure}[b]{0.55\textwidth}
    \begin{tikzpicture}[scale=0.4]
        \foreach \x in {1,3,5,7}
        {
        \node[vertex, label = {-90:\tiny{$1\x$}}] (1\x) at (\x,1) {};
        }
        \foreach \x in {2,4,6}
        {
        \node[vertex, label = {0:\tiny{$2\x$}}] (2\x) at (\x,2) {};
        }
        \foreach \x in {1,3,5,7}
        {
        \node[vertex, label = {0:\tiny{$3\x$}}] (3\x) at (\x,3) {};
        }
        \foreach \x in {2,4,6}
        {
        \node[vertex, label = {0:\tiny{$4\x$}}] (4\x) at (\x,4) {};
        }
        \foreach \x in {1,3,5,7}
        {
        \node[vertex, label = {90:\tiny{$5\x$}}] (5\x) at (\x,5) {};
        }

        \foreach \y [evaluate = \y as \ynext using {int(\y+1)}] in {1,3,5}
        {
        \draw[edge] (5\y) -- (4\ynext) ;
        \draw[edge] (3\y) -- (2\ynext) ;
        
        \draw[edge] (1\y) -- (2\ynext) ;
        \draw[edge] (3\y) -- (4\ynext) ;
        }

        \foreach \y [evaluate = \y as \ynext using {int(\y+1)}] in {2,4,6}
        {
        \draw[edge] (4\y) -- (5\ynext) ;
        \draw[edge] (2\y) -- (3\ynext) ;
        
        \draw[edge] (2\y) -- (1\ynext) ;
        \draw[edge] (4\y) -- (3\ynext) ;
        }

    \end{tikzpicture}
    \hspace{1mm}
    \begin{tikzpicture}[scale=0.4]
        \foreach \x in {2,4,6}
        {
        \node[vertex, label = {-90:\tiny{$1\x$}}] (1\x) at (\x,1) {};
        }
        \foreach \x in {1,3,5,7}
        {
        \node[vertex, label = {0:\tiny{$2\x$}}] (2\x) at (\x,2) {};
        }
        \foreach \x in {2,4,6}
        {
        \node[vertex, label = {0:\tiny{$3\x$}}] (3\x) at (\x,3) {};
        }
        \foreach \x in {1,3,5,7}
        {
        \node[vertex, label = {0:\tiny{$4\x$}}] (4\x) at (\x,4) {};
        }
        \foreach \x in {2,4,6}
        {
        \node[vertex, label = {90:\tiny{$5\x$}}] (5\x) at (\x,5) {};
        }

        \foreach \y [evaluate = \y as \ynext using {int(\y+1)}] in {2,4,6}
        {
        \draw[edge] (5\y) -- (4\ynext) ;
        \draw[edge] (3\y) -- (2\ynext) ;
        
        \draw[edge] (1\y) -- (2\ynext) ;
        \draw[edge] (3\y) -- (4\ynext) ;
        }

        \foreach \y [evaluate = \y as \ynext using {int(\y+1)}] in {1,3,5}
        {
        \draw[edge] (4\y) -- (5\ynext) ;
        \draw[edge] (2\y) -- (3\ynext) ;
        
        \draw[edge] (2\y) -- (1\ynext) ;
        \draw[edge] (4\y) -- (3\ynext) ;
        }

    \end{tikzpicture}
    \caption{\small{$P_7 \times P_5$}}
    \end{subfigure}
   \caption{\small{Cartesian and categorical product of $P_7$ and $P_5$}}
    \label{fig:cat-cart-paths}
\end{figure}

\begin{theorem} \label{theorem:combine}
	For $n \geq 2$ and $3 \leq m \leq 5$, $\M(P_n \times P_m)$ is homotopy equivalent to a wedge of spheres. 
	\end{theorem}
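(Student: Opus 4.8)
The first step is to reduce to connected graphs. Since adjacent vertices $(i,j)$ and $(i',j')$ of $P_n\times P_m$ satisfy $i+j\equiv i'+j'\pmod 2$, the graph $P_n\times P_m$ is the disjoint union of its two induced subgraphs $\Gamma^{0}$ and $\Gamma^{1}$ on the vertices with $i+j$ even, respectively odd. Because $\M(G\sqcup H)\cong\M(G)\ast\M(H)$ and the join of two wedges of spheres is again a wedge of spheres (using $S^{p}\ast S^{q}\simeq S^{p+q+1}$ and the fact that $(-)\ast(-)$ commutes with wedges up to homotopy for CW complexes), it suffices to prove that $\M(\Gamma^{0})$ and $\M(\Gamma^{1})$ are wedges of spheres for $3\le m\le 5$ and every $n\ge 2$; here a single point and the empty complex $S^{-1}$ are counted as degenerate wedges of spheres.

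For each fixed $m\in\{3,4,5\}$ and each parity, $\Gamma^{\varepsilon}$ is an ``elongated'' graph assembled by repeatedly gluing a bounded gadget (a few consecutive columns of the grid), with two mildly irregular gadgets at the ends; in particular it always carries pendant (degree-$1$) vertices near the ends. The plan is to induct on $n$, peeling off the terminal gadget at each stage. The engine is the standard star--deletion decomposition of a matching complex: for an edge $e=uv$ of a graph $G$,
\[
\M(G)\ \simeq\ \M(G-e)\ \cup_{\M(G-u-v)}\ C\,\M(G-u-v),
\]
so that $\M(G)$ is the homotopy cofibre of the inclusion $\M(G-u-v)\hookrightarrow\M(G-e)$; this is combined with the fold lemma (if $e,f$ are vertex-disjoint edges such that every other edge meeting $e$ also meets $f$, then $\M(G)\simeq\M(G-f)$, since in line-graph terms $N_{L(G)}(e)\subseteq N_{L(G)}(f)$) and the trivial fact that deleting an isolated vertex does not change $\M$. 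Taking $e$ to be a pendant edge of the last gadget, both $G-e$ and $G-u-v$ collapse --- after one or two folds and the removal of the isolated vertices created --- to disjoint unions of strictly smaller members of the same families, together with contractible graphs or paths $P_{k}$ (whose matching complexes are either contractible or a single sphere), so the recursion stays inside the world of wedges of spheres.

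Running this recursion from a finite list of explicitly computed base cases, every $\M(\Gamma^{\varepsilon})$ is obtained from spheres by iterated suspensions, wedges and joins, hence is a wedge of spheres; together with the reduction of the first paragraph this proves the theorem. For $m=3$ the recursion is simple enough to be solved in closed form, yielding the exact number and dimensions of the spheres; for $m\in\{4,5\}$ one instead only tracks the smallest and largest sphere produced at each step, giving the minimum and maximum dimensions claimed in the abstract.

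The two points that require care are the following. First, bookkeeping at the irregular ends: for each $m$, each parity, and (in the $m=4,5$ cases) each residue of $n$ modulo the gadget length, the initial one or two peeling steps are special and must be treated separately before the periodic part of the recursion takes over. Second, and more essential, at each application of the star--deletion decomposition one must check that the homotopy cofibre really is a wedge of spheres and not merely a space with free homology; this holds because at every step a pendant vertex makes one of $\M(G-e)$, $\M(G-u-v)$ contractible (so the cofibre is a suspension of a wedge of spheres) or the inclusion splits (so the cofibre is a wedge of the two sides). Verifying that this dichotomy persists at every node of the recursion is where the bulk of the work lies, especially for $m=4,5$, where the gadgets are larger and the recursion branches into several sub-cases --- which is also the reason only the extreme sphere dimensions, rather than an exact count, are recorded there.
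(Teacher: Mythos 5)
Your overall strategy is the one the paper follows: split $P_n\times P_m$ into its two connected components, use $\M(G\sqcup H)\simeq \M(G)\ast\M(H)$, and then induct on $n$ by peeling material off one end of each component using the star--deletion (mapping cone) decomposition together with folds and isolated-vertex removal; this is exactly Adamaszek's link/deletion lemma and the folding lemma, transcribed from $\ind(L(G))$ to $\M(G)$. The problem is that everything after that is asserted rather than proved, and the assertion is precisely the mathematical content of the theorem. Your central inductive claim --- that after removing a pendant edge both $G-e$ and $G-u-v$ ``collapse, after one or two folds and the removal of the isolated vertices created, to disjoint unions of strictly smaller members of the same families,'' and that at every node one of the two sides is contractible or the inclusion is null-homotopic --- is never established, and you yourself concede that verifying it ``is where the bulk of the work lies.'' Without exhibiting a finite catalogue of end-decorated graphs that is actually closed under your peeling operation, stating the recursions among their matching complexes, and checking the null-homotopy/contractibility hypothesis at each application of the cofibre decomposition, the induction is not set up and nothing is proved.

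Moreover, the shape of your claim underestimates what the closure actually requires, so one cannot regard it as a routine verification left to the reader. In the paper's execution the peeling does \emph{not} return smaller members of the same family after a fold or two: for $m=5$ one is forced to introduce six auxiliary boundary classes $A_{n,5},\dots,F_{n,5}$ plus seven tilde variants (thirteen new families), and for $m=4$ four more, with interlocking recurrences proved by induction on all of them simultaneously; and the reductions repeatedly use edge additions and deletions justified by edge-invariant triplets (Proposition \ref{Edge deletion 1}), not only folds, with null-homotopy of the relevant inclusions checked via explicit cone-point arguments (e.g.\ $\ind(\Gamma_{n,5}^5\setminus\cn{}{g_{23}})\ast\{g_{33}\}\subseteq\ind(\Gamma_{n,5}^5\setminus g_{23})$). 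Your dichotomy ``a pendant vertex makes one side contractible or the inclusion splits'' also fails as a blanket statement for the intermediate graphs, which in general no longer have pendant vertices; the paper instead uses simplicial vertices and ad hoc cone points at those stages. So the proposal is a correct plan of attack, essentially the paper's plan, but with a genuine gap: the finite closed family, its base cases, and the step-by-step verification of the homotopy hypotheses --- i.e.\ the analogue of Claims \ref{homG_{n,5}}--\ref{homFn}, Section \ref{subsection:GammaTilde}, and Sections \ref{section:PnP4}--\ref{section:PnP3} --- are missing.
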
 
	
We explicitly compute the dimension and number of spheres in the homotopy type of $\M(P_n \times P_3)$ (see \Cref{PnxP3}). For $m \in \{4, 5\}$, we give the minimum and maximum dimension of spheres appearing in the wedge in homotopy type of $\M(P_n \times P_m)$.

\begin{proposition}\label{proposition:dim_PnP5}
    For  $n \geq 3$, let $d_{max}(\M(P_n \times P_5))$ and $d_{min}(\M(P_n \times P_5))$ denote the maximum and minimum dimension of spheres appearing in the wedge in the homotopy type of $\M(P_n \times P_5)$, respectively. Then, $d_{max}(\M(P_n \times P_5))$ and $d_{min}(\M(P_n \times P_5))$ are given in \Cref{tab:intro_dimensionPnP5}. 
\end{proposition}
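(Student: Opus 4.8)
The plan is to reduce the statement to the two connected components of $P_n\times P_5$ and then to read off the extreme dimensions from the recursive computation already underlying \Cref{theorem:combine}. For $n\ge 2$ the graph $P_n\times P_5$ is disconnected, with exactly two components, namely the vertex sets $\{(i,j):i+j\text{ even}\}$ and $\{(i,j):i+j\text{ odd}\}$ (see \Cref{fig:cat-cart-paths}); write $P_n\times P_5=A_n\sqcup B_n$, with $A_n$ the component of $(1,1)$. A matching of a disjoint union is a disjoint union of matchings of the pieces, so $M(P_n\times P_5)=M(A_n)*M(B_n)$, the simplicial join, and since the join of nonempty wedges of spheres $\bigvee_\alpha S^{a_\alpha}$ and $\bigvee_\beta S^{b_\beta}$ is homotopy equivalent to $\bigvee_{\alpha,\beta}S^{a_\alpha+b_\beta+1}$ we obtain
\[
d_{max}M(P_n\times P_5)=d_{max}M(A_n)+d_{max}M(B_n)+1,\qquad d_{min}M(P_n\times P_5)=d_{min}M(A_n)+d_{min}M(B_n)+1 .
\]
By the proof of \Cref{theorem:combine}, $M(A_n)$ and $M(B_n)$ are genuine nonempty wedges of spheres once $n\ge 3$; this is also where the hypothesis $n\ge 3$ enters, since $P_2\times P_5\cong P_5\sqcup P_5$ and $M(P_5)$ is contractible, so $M(P_2\times P_5)$ is contractible. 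When $n$ is even, the reflection $i\mapsto n+1-i$ of $P_n$ gives $A_n\cong B_n$; when $n$ is odd, $A_n$ and $B_n$ are the non-isomorphic ``wide'' and ``narrow'' strips cut out by the two colour classes, and both must be analysed.

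\emph{Recursion on each component.} Each of $A_n$ and $B_n$ is a grid-like graph of bounded width whose size grows linearly in $n$. Applying the standard cofibre sequences and suspension/fold lemmas for matching complexes --- equivalently, the independence-complex machinery for the line graphs, exactly as in the proof of \Cref{theorem:combine} --- at a leaf or near-leaf at the free end of the strip, one peels off a bounded piece and expresses $M(A_n)$, up to homotopy, as a wedge of iterated suspensions of the complexes $M(A_{n-k})$ for $k$ in a fixed finite set, possibly joined with the matching complexes of a few fixed small graphs surviving at the truncated end. A suspension raises dimension by $1$, and a join with a wedge of spheres $M(H)$ raises $d_{max}$ by $d_{max}M(H)+1$ and $d_{min}$ by $d_{min}M(H)+1$; hence $d_{max}$ and $d_{min}$ satisfy linear recurrences with a short period in $n$, of the shape $d_{max}M(A_n)=d_{max}M(A_{n-p})+q$, and likewise for $d_{min}$ and for $B_n$, valid once $n$ exceeds a small threshold. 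This is the same kind of computation performed in full for the narrower case $P_n\times P_3$ in \Cref{PnxP3}.

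\emph{Solving and assembling.} One computes the finitely many base cases directly (by hand or by computer), solves the recurrences to obtain closed forms for $d_{max}M(A_n)$, $d_{min}M(A_n)$, $d_{max}M(B_n)$ and $d_{min}M(B_n)$ as floor functions of linear expressions in $n$ (depending on the residue of $n$ modulo the period and on the parity of $n$), and substitutes these into the two identities above. Grouping the outcome by the residue of $n$ yields precisely the entries of \Cref{tab:intro_dimensionPnP5}.

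\emph{The main obstacle.} The delicate point is the recursion step: the description of $M(A_n)$ is a \emph{wedge} of several terms with different offsets, and $d_{max}$ (respectively $d_{min}$) of $\bigvee_\alpha S^{d_\alpha}$ is $\max_\alpha d_\alpha$ (respectively $\min_\alpha d_\alpha$). One must therefore check, for each residue class of $n$, that some term in the wedge actually attains the claimed extreme dimension, and that no term which would govern an extreme is contractible (which would make the wedge degenerate) --- this is exactly what forces the base-case analysis and the exclusion of $n=2$. Keeping track of the small boundary graphs created when the strip is truncated, and matching the parity split between $A_n$ and $B_n$ against the tabulated formulas, is the remaining bookkeeping.
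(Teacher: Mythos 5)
Your reduction to the two components, the join formula $d_{\ast}M(P_n\times P_5)=d_{\ast}M(A_n)+d_{\ast}M(B_n)+1$, and the explanation of why $n=2$ must be excluded are all correct and coincide with how the paper proceeds. The gap is in the recursion step. You assert that peeling the free end expresses $M(A_n)$ as a wedge of suspensions of $M(A_{n-k})$ for finitely many $k$, ``possibly joined with the matching complexes of a few fixed small graphs surviving at the truncated end.'' That is not what the reduction produces: truncating the line graph $\Gamma_{n,5}$ of one component yields wedge summands indexed by \emph{new unbounded families} of graphs, $\ind(\Gamma_{n,5})\simeq \Sigma^{3}\ind(A_{n-2,5})\vee\Sigma^{4}\ind(B_{n-3,5})\vee\Sigma^{3}\ind(C_{n-2,5})\vee\Sigma^{3}\ind(D_{n-2,5})$, and these families recurse in turn through $E_{n,5}$, $F_{n,5}$, $\Lambda_{n,5}$ and back into $\Gamma_{n,5}$, with a parallel system of tilde families for the odd component (\Cref{subsection:GammaTilde}). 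So there is no self-contained recurrence for the single family $M(A_n)$ with bounded boundary corrections; what exists is a coupled system of eight (plus eight tilde) families.

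Consequently the claimed periodic recurrences for $d_{max}$ and $d_{min}$, while true a posteriori (period $4$ with increment $7$, and period $5$ with increment $8$), cannot be obtained the way you describe: they are exactly the output of the simultaneous induction over the whole coupled system carried out in \Cref{propn: dim-min-max-Pn5} and its tilde analogue (\Cref{tab: max Pn5}--\Cref{tab: min 2nd Pn5}), which also settles the attainment/non-degeneracy issue you flag as ``remaining bookkeeping.'' Since your proposal defers precisely this computation, the specific entries of \Cref{tab:intro_dimensionPnP5} (the $7$'s and $16$'s) are asserted rather than derived, so the proposition is not yet proved.
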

\begin{table}[h!]
	{\tiny
		\setlength{\tabcolsep}{6pt} 
		\renewcommand{\arraystretch}{1.5} 
		\centering
		\begin{tabular}{|c|c|c|c|c|c|}
			\hline
			\multicolumn{3}{|c|}{$d_{\max}(\M(P_n \times P_5))$} & \multicolumn{3}{c|}{$d_{\min}(\M(P_n \times P_5))$} \\
			\hline
			$k \equiv$ & $n = 2k+2$ & $n = 2k+3$ & $k \equiv$ & $n = 2k+2$ & $n = 2k+3$ \\
			\hline
			$0 \mod{4}$ & $7\big(\frac{k}{2}\big)+1$ & $7\big(\frac{k}{2}\big)+3$ & $0 \mod{5}$ & $16\big(\frac{k}{5}\big)+3$ & $16\big(\frac{k}{5}\big)+4$ \\
			\hline
			$1 \mod{4}$ & $7\big(\frac{k-1}{2}\big)+5$ & $7\big(\frac{k-1}{2}\big)+7$ & $1 \mod{5}$ & $16\big(\frac{k-1}{5}\big)+5$ & $16\big(\frac{k-1}{5}\big)+7$ \\
			\hline
			$2 \mod{4}$ & $7\big(\frac{k-2}{2}\big)+9$ & $7\big(\frac{k-2}{2}\big)+11$ & $2 \mod{5}$ & $16\big(\frac{k-2}{5}\big)+9$ & $16\big(\frac{k-2}{5}\big)+10$ \\
			\hline
			$3 \mod{4}$ & $7\big(\frac{k-3}{2}\big)+13$ & $7\big(\frac{k+1}{2}\big)$ & $3 \mod{5}$ & $16\big(\frac{k-3}{5}\big)+11$ & $16\big(\frac{k-3}{5}\big)+13$ \\
			\hline
			{} & & & $4 \mod{5}$ & $16\big(\frac{k-4}{5}\big)+15$ & $16\big(\frac{k+1}{5}\big)$ \\
			\hline
		\end{tabular}
		\caption{Maximum and minimum dimension of spheres in $\M(P_n \times P_5)$}
	\label{tab:intro_dimensionPnP5}}
\end{table}

\begin{table}[h!]
	{\tiny
		\setlength{\tabcolsep}{7pt} 
		\renewcommand{\arraystretch}{1.4} 
		\centering
		\begin{tabular}{|c|c|c|}
			\hline
			$k = 7 q+r$, $0 \leq r \leq 6$ & $n = 2k+2$ & $n = 2k+3$ \\
			\hline
			$r = 0$ & $2(k+3q)+1$ & $2(k+3q)+1$  \\
			\hline
			$r = 1$ & $2(k+3q)+1$ & $2(k+3q)+3$ \\
			\hline
			$r = 2$ & $2(k+3q)+3$ & $2(k+3q)+3$ \\
			\hline
			$r = 3,4$ & $2(k+3q)+3$ & $2(k+3q)+5$ \\
			\hline
			$r = 5,6$ & $2(k+3q)+5$ & $2(k+3q)+7$ \\
			\hline
		\end{tabular}
		
		\caption{Maximum dimension of spheres in $\M(P_n \times P_4)$}}
	\label{tab:intro_dimensionPnP4}
\end{table}

\begin{proposition} \label{proposition:dim_PnP4}
    For $n \geq 3$, the minimum dimension of a sphere in the homotopy type of $\M(P_n \times P_4)$ is $2k+1$ if $n = 2k+2$, and $2k+3$ if $n = 2k+3$. The maximum dimension of a sphere in the homotopy type of $\M(P_n \times P_4)$  is given in \Cref{tab:intro_dimensionPnP4}. 
\end{proposition}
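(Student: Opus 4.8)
\textit{Proof strategy.} The starting point is that $P_n \times P_4$ is disconnected. Since $P_n$ and $P_4$ are connected and bipartite, $P_n \times P_4$ has exactly two connected components, and the reflection $j \mapsto 5-j$ of $P_4$ carries one component onto the other; hence the two components are isomorphic. Write $C_n$ for this common component: it is a graph on $2n$ vertices which one checks is the ``doubled path'' obtained from the path $v_1 v_2 \cdots v_n$ by adjoining, for each $i$, a new vertex $v_i'$ with $N_{C_n}(v_i') = N_{P_n}(v_i)$ (so $|E(C_n)| = 3n-3$). Because the matching complex of a disjoint union of graphs is the join of their matching complexes, $M(P_n \times P_4) \cong M(C_n) \ast M(C_n)$. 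Once it is known that $M(C_n)$ is homotopy equivalent to a wedge of spheres, say $M(C_n) \simeq \bigvee_\gamma S^{c_\gamma}$ (which follows from the recursion below), one obtains $M(P_n \times P_4) \simeq \bigvee_{\gamma,\delta} S^{\,c_\gamma + c_\delta + 1}$, so that $d_{\min}M(P_n \times P_4) = 2\min_\gamma c_\gamma + 1$ and $d_{\max}M(P_n \times P_4) = 2\max_\gamma c_\gamma + 1$. Thus it suffices to determine the smallest and largest dimension of a sphere appearing in the wedge decomposition of $M(C_n)$.

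For that, I would set up a recursion in $n$ for the homotopy type of $M(C_n)$. Fixing a low-degree vertex near one end of $C_n$ and splitting $M(C_n)$ over the edges incident to it together with its deletion --- equivalently, iterating the link-deletion cofibration $M(C_n) \simeq M(C_n \setminus e) \cup_{M(C_n \setminus \{x,y\})} \mathrm{Cone}(M(C_n \setminus \{x,y\}))$ for an edge $e = xy$, or running a matching-tree argument on the line graph $L(C_n)$ --- and checking that the attaching maps involved are null-homotopic, one arrives at a decomposition of $M(C_n)$ as a finite wedge of suspensions of the complexes $M(C_m)$ with $m < n$, together with finitely many suspensions of small fixed complexes (coming from the boundary-modified graphs produced along the way). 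Solving this recursion simultaneously shows that $M(C_n)$ is a wedge of spheres and computes, for every $n$, the multiset of dimensions occurring in it; this is, in essence, the content of \Cref{theorem:combine} in the case $m = 4$.

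From the recursion the two extremes can be read off. For the minimum, one descends the branch along which the dimension grows most slowly; the recursion is arranged so that along this branch the minimum increases by exactly one each time $n$ increases by two, giving $d_{\min}M(C_n) = \lfloor (n-1)/2 \rfloor$ and hence $d_{\min}M(P_n \times P_4) = 2\lfloor (n-1)/2 \rfloor + 1$, that is, $2k+1$ when $n = 2k+2$ and $2(k+1)+1$ when $n = 2k+3$. For the maximum, one descends the branch along which the dimension grows fastest; that branch is only eventually periodic in $n$, the period being reflected by the residue of $k$ modulo $7$ (write $k = 7q + r$ with $0 \le r \le 6$ and $q = \lfloor k/7 \rfloor$), and unwinding it yields the values $d_{\max}M(P_n \times P_4) = 2\,d_{\max}M(C_n) + 1$ recorded in \Cref{tab:intro_dimensionPnP4}. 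The finitely many base cases $n = 3, 4, \ldots$ needed to launch the recursion are settled by direct computation.

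The crux --- and the part most prone to error --- is the recursion for $M(C_n)$ itself: one has to choose the splitting vertices near the end of $C_n$ so that the family of graphs encountered stays small and closed under the operation, verify at each step that the relevant link includes null-homotopically (so that the decomposition is a genuine homotopy equivalence, not merely an identity of Betti numbers), and then carry out the bookkeeping that isolates the slowest- and fastest-growing branches of the resulting recursion tree. It is exactly this bookkeeping that produces the split by the parity of $n$ in the formula for the minimum dimension, and the split according to the residue of $k$ modulo $7$ in \Cref{tab:intro_dimensionPnP4} for the maximum dimension.
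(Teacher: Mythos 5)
Your reduction is sound and is the same as the paper's: the two components of $P_n \times P_4$ are indeed isomorphic for every $n$ (your doubled-path description of the component is correct), so $\M(P_n \times P_4) \simeq \M(C_n) \ast \M(C_n)$, and once $\M(C_n)$ is known to be a wedge of spheres the extremal dimensions of the join are $2\,d_{\min}+1$ and $2\,d_{\max}+1$. This is exactly how the paper passes from \Cref{proposition:dimPnP4allgraphs} to \Cref{proposition:dim_PnP4}.

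The gap is that everything after this reduction is asserted rather than proved. The two facts carrying all the content of the proposition --- that along the slowest branch the minimum dimension of $\M(C_n)$ equals $\lfloor (n-1)/2 \rfloor$, and that the fastest branch is governed by the residue of $k$ modulo $7$ --- are presented as outputs of a recursion you never write down. In the paper this step requires constructing a specific closed family of end-modified graphs $\Gamma_{n,4},\Lambda_{n,4},A_{n,4},B_{n,4},C_{n,4},D_{n,4}$, proving the explicit recursions in \Crefrange{Gamman4}{Fn4} (each needing a concrete verification of a fold, a simplicial vertex, or a null-homotopic inclusion, and each carrying its own suspension shifts $\Sigma,\Sigma^2,\Sigma^3$), and then running a simultaneous induction (\Cref{proposition:dimPnP4allgraphs}) that tracks $d_{\mathit{min}}$ and $d_{\mathit{max}}$ of all six families at once, because the recursions intertwine them: for instance $\ind(\Gamma_{n,4}) \simeq \Sigma\ind(A_{n-1,4}) \vee \Sigma^{2}\ind(\Lambda_{n-2,4})$ and $\ind(\Lambda_{n,4}) \simeq \Sigma\ind(B_{n-1,4}) \vee \Sigma^{2}\ind(\Gamma_{n-1,4})$. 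The mod-$7$ periodicity of the maximum and the parity behaviour of the minimum are consequences of those particular shifts, not of any general principle, so they cannot be ``read off'' before the family and the shifts are pinned down; moreover, for the minimum one must also check that it is actually attained (several branches in the analogous computations are contractible, so an unexamined branch could a priori fail to contribute a sphere in the claimed dimension). Without the explicit family, the verified recursions, and the inductive bookkeeping, your proposal is a correct plan --- essentially the paper's plan --- but not yet a proof of the stated formulas.
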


 The {\it line graph} $L(G)$ of a graph $G$ is the graph whose vertices are the edges of $G$, and any two such distinct vertices $uv$ and $u'v'$ are joined by an edge in $L(G)$ if and only if they have a common endpoint, {\it i.e.}, if $\{u, v\} \cap \{u', v'\}  \neq \emptyset$. For a graph $G$, a subset $A \subseteq V(G)$ is said to be {\it independent} in $G$ if the induced subgraph on $A$ does not contain any edge. The {\it independence complex} $\ind(G)$ of $G$ is a simplicial complex on $V(G)$ whose simplices are the independent sets in $G$.  Observe that for any graph $G$, the matching complex $\M(G)$ is the same as the $\ind(L(G))$.

 To prove \Cref{theorem:combine}, we show that  $\ind(L(P_n \times P_m))$ is a wedge of spheres up to homotopy equivalence. The main idea used in this proof is to reduce the graph $L(P_n \times P_m)$ into smaller graphs by the operations; deletions of vertices, deletions of edges, and additions of edges.  During this process of reduction, we come across thirteen new classes of graphs in the case $m = 5$, four new classes of graphs in the case $m = 4$, and one new class of graphs in the case $m=3$. We establish recursive relations among the independence complex of  $L(P_n \times P_m)$ and independence complexes of these new classes of graphs. 

 This paper is organized as follows: In \Cref{section:prel}, we present the necessary preliminaries on graph theory and simplicial complexes. In \Cref{section:PnP3}, we determine the homotopy type of the matching complexes of $P_n \times P_3$, showing that it is a wedge of spheres, and we explicitly compute both the number and the dimensions of the spheres appearing in the wedge. In \Cref{section:PnP4}, we compute the homotopy type of the matching complexes of $P_n \times P_4$. The graph $P_n \times P_4$ is disconnected and consists of two connected components. Regardless of whether $n$ is odd or even, these components are isomorphic. For $n \geq 0$, we denote the connected components of the line graphs of $P_{2n+2}\times P_4$ and $P_{2n+3}\times P_4$ by $\Gamma_{n,4}$ (see \Cref{fig:Gn4}) and $\widetilde{\Gamma}_{n,4}$ (see \Cref{fig:tildeGamman4}), respectively.  
In \Cref{PnP4Graphs}, we formally define the graphs $\Gamma_{n,4}$, $\widetilde{\Gamma}_{n,4}$, and four intermediary graphs needed to compute the homotopy types of the independence complexes of $\Gamma_{n,4}$ and $\widetilde{\Gamma}_{n,4}$. In \Cref{subsection:Gamma4 and Lembda4}, we establish recursive relations between their independence complexes. Then, in \Cref{PnP4homotopy},  we conclude the homotopy type of the matching complexes of $P_n \times P_4$, showing that it is a wedge of spheres, and we determine the maximum and minimum dimensions of spheres that occur in the wedge in $\M(P_n\times P_4)$.

\Cref{section:PnP5} is majorly dedicated to the  matching complexes of  $P_n \times P_5$ and is divided into three subsections. The graph $L(P_n \times P_5)$ is disconnected and has exactly two components. For $k \geq 0$, if $n = 2k+2$, then $L(P_n \times P_5)$ consists of two copies of the graph $\Gamma_{k, 5}$ (see \Cref{fig:Gn}); and if $n = 2k+3$, then $L(P_n \times P_5) = \Lambda_{k, 5} \sqcup \widetilde{\Gamma}_{k, 5}$ (see \Cref{fig:Lambda-n} and \Cref{fig:G'n}).    We start   \Cref{section:PnP5} with the definitions of the graphs  $\Gamma_{k, 5}, \Lambda_{k, 5}$ and $ \widetilde{\Gamma}_{k, 5}$.  In \Cref{subsection:Gamma and Lembda}, we compute the independence complexes of $\Gamma_{k, 5}$ and $ \Lambda_{k, 5}$. We introduce  $6$ new classes of graphs (which we need in the computation of $\ind(\Gamma_{k, 5})$ and $\ind(\Lambda_{k, 5})$), and establish recursive relations among independence complexes of graphs $\Gamma_{k,5}, \Lambda_{k, 5}$ and these $6$ classes of graphs.   
  In \Cref{subsection:GammaTilde},  we compute the independence complex of the graph $\widetilde{\Gamma}_{k, 5}$. Here again,  we define another $7$ class of graphs (which we need in the computation of $\ind(\widetilde{\Gamma}_{k, 5})$), and establish recursive relations among independence complexes of these $7$ new classes of graphs and   $\ind(\widetilde{\Gamma}_{k, 5})$.  
    In \Cref{subsection:PnP5dimensionbound}, we conclude the homotopy type of $\M(P_n \times P_5)$ and determine the minimum and maximum dimensions of spheres appearing in the wedge in $\M(P_n \times P_5)$. Finally, in  \Cref{section:futureplan}, we pose a few questions and a conjecture, which naturally arise from the work done in this paper.

\section{Preliminaries}\label{section:prel}

 A {\em graph} $G$ is an ordered pair $(V(G), E(G))$, where $V(G)$ is a finite set of \emph{vertices} and $E(G) \subseteq \binom{V(G)}{2}$ is the set of \emph{edges} of $G$, consisting of $2$-element subsets of $V(G)$.
 For the sake of brevity, we denote an edge $\{u,v\}$ by $uv$. Given a graph $G=(V(G),E(G))$ and a vertex $u$ in $V(G)$, the {\em open neighborhood} of $u$ is defined as the collection $\{v\in V(G) : uv\in E(G)\}$ and it is denoted by  $N(u)$.  
 The {\em closed neighborhood} of a vertex $u$ in the graph $G$ is defined as $\{u\}\cup N(u)$, and is denoted by $\n[u]$. 
 A map $f \colon V (G) \rightarrow V (H)$ is said to be a {\em graph homomorphism} if $f(v)f(w) \in E(H)$ for all $vw \in E(G)$. A bijective graph homomorphism is called an {\it isomorphism}. 
 If such an isomorphism exists between two graphs $G$ and $H$, they are called {\em isomorphic}, denoted as $G \cong H$. Let $S=\{v_1,\dots, v_r\}$ be a collection of vertices in $V(G)$. 
 Then, $G\setminus S$ denotes the induced subgraph of $G$ on the vertex set $V(G)\setminus S$. 
 In particular, if $S$ is the singleton set $\{v_1\}$, then we write the graph $G\setminus S$ as $G\setminus v_1$. 
 Similarly, for a collection of edges $T=\{e_1,\dots,e_s\}$, by $G+T$ and $G-T$ we denote the graphs on the vertex set $V(G)$ with edge set $E(G)\cup T$ and $E(G)\setminus T$, respectively.  
 For more details about the graphs, we refer the reader to \cite{west}.
 
An {\em (abstract) simplicial complex} $\Delta$ on a vertex set $V$ is a collection of subsets of $V$ such that if $\sigma\in \Delta$ and $\tau$ is a subset of $\sigma$, then $\tau\in\Delta$. If $\sigma\in\Delta$ and $|\sigma|=k+1$, then we say $\sigma$ is a simplex of dimension $k$ or a $k$-simplex. We further assume that the empty set is present in every simplicial complex as the only simplex of dimension $-1$. The $0$-dimensional simplices are called the vertices of $\Delta$, and the set of all vertices of a simplicial complex $\Delta$ is denoted by $V(\Delta)$.

 A {\em subcomplex} of a simplicial complex is a sub-collection of simplices that itself forms a simplicial complex. If $\Delta$ is a simplicial complex and $\sigma$ is a simplex in $\Delta$, then the {\em link} and {\em deletion} of $\sigma$ are defined as the subcomplexes $\{\tau\in\Delta : \sigma\cup\tau\in\Delta \hspace{.2cm}\text{and}\hspace{.1cm} \sigma\cap\tau=\emptyset\}$ and $\{\tau\in\Delta : \sigma\nsubseteq\tau\}$, respectively. 
   The {\em join} of two simplicial complexes $\Delta_1$ and $\Delta_2$, where $V(\Delta_1)\cap V(\Delta_2)=\emptyset$, is the simplicial complex $\Delta_1\ast\Delta_2\coloneq\{\sigma\cup\tau : \sigma\in\Delta_1\hspace{.2cm}\text{and}\hspace{.1cm}\tau\in\Delta_2\}$. In particular, if $a,b$ are vertices that are not part of a simplicial complex $\Delta$, then the simplicial complexes $\{a\}\ast\Delta$ and $(\{a\}\ast\Delta)\cup(\{b\}\ast\Delta)$ are called the {\em cone} and the {\em suspension} of $\Delta$ with the apexes $a$ and $\{a,b\}$, respectively. We denote the suspension of $\Delta$ with apex $\{a,b\}$ by $\Sigma_{a,b}\Delta$ (or $\Sigma\Delta$). 
 
 In this article,  we consider any simplicial
complex as a topological space, namely its geometric realization. For the definition of
geometric realization, we refer the reader to \cite{kozlovbook}. For details about topological background, one may refer to \cite{Hatcher}.


\begin{proposition}{\label{prop: join}}
    Let $G_1 \sqcup G_2$ denote the disjoint union of two graphs, $G_1$ and $G_2$. Then
    $\ind(G_1 \sqcup G_2) \simeq \ind(G_1) \ast \ind(G_2).$
\end{proposition}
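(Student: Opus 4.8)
The plan is to prove the stronger statement that $\ind(G_1 \sqcup G_2)$ and $\ind(G_1) \ast \ind(G_2)$ are \emph{isomorphic} as abstract simplicial complexes; homotopy equivalence of the geometric realizations is then immediate. First I would record the two bookkeeping facts about the disjoint union that make everything work: writing the vertex sets of $G_1$ and $G_2$ as disjoint (by the meaning of disjoint union), we have $V(G_1 \sqcup G_2) = V(G_1) \sqcup V(G_2)$ and $E(G_1 \sqcup G_2) = E(G_1) \sqcup E(G_2)$. In particular, no edge of $G_1 \sqcup G_2$ has one endpoint in $V(G_1)$ and the other in $V(G_2)$.

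Next I would characterize the simplices of the left-hand complex. Given a subset $\sigma \subseteq V(G_1) \sqcup V(G_2)$, put $\sigma_i = \sigma \cap V(G_i)$ for $i = 1, 2$. By the previous paragraph, every edge of the induced subgraph of $G_1 \sqcup G_2$ on $\sigma$ lies entirely inside $\sigma_1$ or entirely inside $\sigma_2$. Hence $\sigma$ is independent in $G_1 \sqcup G_2$ if and only if $\sigma_1$ is independent in $G_1$ and $\sigma_2$ is independent in $G_2$; equivalently, $\sigma \in \ind(G_1 \sqcup G_2)$ exactly when $\sigma = \sigma_1 \sqcup \sigma_2$ with $\sigma_1 \in \ind(G_1)$ and $\sigma_2 \in \ind(G_2)$. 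Since $V(\ind(G_1)) \cap V(\ind(G_2)) = V(G_1) \cap V(G_2) = \emptyset$, this is precisely the defining condition for $\sigma$ to be a simplex of the join $\ind(G_1) \ast \ind(G_2)$.

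It follows that the identity map on $V(G_1) \sqcup V(G_2)$ induces a simplicial isomorphism $\ind(G_1 \sqcup G_2) \cong \ind(G_1) \ast \ind(G_2)$, and isomorphic simplicial complexes have homeomorphic, hence homotopy equivalent, geometric realizations. This yields $\ind(G_1 \sqcup G_2) \simeq \ind(G_1) \ast \ind(G_2)$, as claimed.

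I do not expect a genuine obstacle here: the statement is an unpacking of the definitions of the independence complex and of the join, and the whole content is the elementary observation that a disjoint union has no edges across the two parts. The only points worth a moment's care are the degenerate cases — if some $G_i$ has no vertices then $\ind(G_i) = \{\emptyset\}$ acts as a unit for the join and the claim is trivial, while $\ind(G_i)$ is never the void complex since $\emptyset$ is a simplex of the independence complex of every graph — so I would present the argument in just a few lines rather than as a proof of a substantive theorem.
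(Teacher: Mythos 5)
Your proposal is correct and is essentially the paper's own (implicit) argument: the paper states this as an observation that "follows from the definition of the join," and your unpacking — that a simplex of $\ind(G_1 \sqcup G_2)$ is exactly a disjoint union $\sigma_1 \sqcup \sigma_2$ of independent sets because no edge crosses between $V(G_1)$ and $V(G_2)$, giving a simplicial isomorphism with $\ind(G_1) \ast \ind(G_2)$ — is precisely the intended justification. No gaps; the attention to degenerate cases is fine but not needed.
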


Clearly, for each vertex $v\in V(G)$, the link and the deletion of the simplex $\{v\}$ in $\ind(G)$ are the same as $\ind(G\setminus \n[v])$ and $\ind(G\setminus v)$, respectively. Observe that $\ind(G)$ is the same as the simplicial complex $(\{v\}\ast \ind(G\setminus \n[v]))\cup \ind(G\setminus v)$. Further, from \cite{Adamaszek}, we have the following:

\begin{proposition}[{\rm Proposition 3.1, \cite{Adamaszek}}]\label{Link and Deletion} 
    Let $v$ be a vertex of a graph $G$. If the inclusion $\ind(G\setminus \n[v]) \xhookrightarrow {} \ind(G \setminus v)$ is null-homotopic, then we have
    $$\ind(G)\simeq \ind(G \setminus v) \vee \Sigma \ind(G\setminus N [v]).$$
\end{proposition}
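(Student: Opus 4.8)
The plan is to exhibit $\ind(G)$ as a mapping cone and then invoke the standard principle that the mapping cone of a null-homotopic map splits off a suspension as a wedge summand.

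First I would use the decomposition recorded just above the statement. Put $X := \ind(G\setminus v)$ and $A := \ind(G\setminus N[v])$. Since $A$ is precisely the link of $v$ in $\ind(G)$, it is a subcomplex of the deletion $X$; write $i\colon A\hookrightarrow X$ for this inclusion. From $\ind(G) = \bigl(\{v\}\ast A\bigr)\cup X$ together with $\bigl(\{v\}\ast A\bigr)\cap X = A$, the complex $\ind(G)$ is the pushout of $\{v\}\ast A\hookleftarrow A\hookrightarrow X$, the right-hand map being $i$. Both maps are inclusions of subcomplexes, hence cofibrations, so this ordinary pushout is already a homotopy pushout; and since the simplicial cone $\{v\}\ast A$ is contractible, collapsing it to a point changes the homotopy pushout only up to homotopy equivalence. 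Hence $\ind(G)$ is homotopy equivalent to the mapping cone $C_i$ of the map $i$. (Equivalently, since $\{v\}\ast A$ is a contractible subcomplex one may note directly that $\ind(G)\simeq\ind(G)/(\{v\}\ast A)\cong X/A$.)

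Next I would bring in the hypothesis. The one external ingredient is that, for maps between CW complexes, the mapping cone depends up to homotopy equivalence only on the homotopy class of the map --- equivalently, a homotopy pushout is unchanged up to homotopy equivalence when the maps of the diagram are replaced by homotopic ones. This is elementary and uses only the homotopy extension property, which is available here because all complexes involved are finite CW complexes (see \cite{Hatcher}). Since $i$ is null-homotopic, $C_i\simeq C_c$ for a constant map $c\colon A\to X$ with image a point $x_0$, and for a constant map one checks directly that $C_c = X\cup_c CA = X\vee(CA/A) = X\vee\Sigma A$, where $CA$ denotes the cone on $A$; the wedge is formed at $x_0$, which is identified with one of the two apexes of the unreduced suspension $\Sigma A$. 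Unwinding the definitions of $X$ and $A$ now gives $\ind(G)\simeq\ind(G\setminus v)\vee\Sigma\ind(G\setminus N[v])$, as desired.

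I do not anticipate a genuine obstacle: the statement is the ``cofiber of a null map'' principle transcribed into the language of independence complexes, and once the set-up is in place the proof is a couple of lines. Two points merit a remark. The first is the degenerate case $A=\emptyset$, which occurs exactly when $N[v]=V(G)$: there $i$ is vacuously null-homotopic, $\Sigma A=S^{0}$, and the identity $\ind(G)=\bigl(\{v\}\ast A\bigr)\cup X$ says that $\ind(G)$ is just $\ind(G\setminus v)$ together with the additional isolated vertex $\{v\}$, i.e.\ $\ind(G\setminus v)\vee S^{0}$. The second is the appeal to homotopy invariance of the mapping cone which, though standard, is the only nontrivial fact the argument borrows; everything else is bookkeeping with cones, suspensions, and wedges.
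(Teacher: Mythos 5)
Your argument is correct and is essentially the standard proof: the paper itself does not prove this statement but quotes it from \cite{Adamaszek}, where Proposition 3.1 is established by exactly this route, namely writing $\ind(G)$ as the pushout of $\{v\}\ast\ind(G\setminus N[v])\hookleftarrow \ind(G\setminus N[v])\hookrightarrow \ind(G\setminus v)$, identifying it up to homotopy with the mapping cone of the inclusion, and splitting the cone of a null-homotopic map as $\ind(G\setminus v)\vee\Sigma\ind(G\setminus N[v])$. Your handling of the degenerate case $N[v]=V(G)$ is a reasonable extra remark and does not affect the argument.
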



\begin{proposition}[{\rm Lemma 2.4, \cite{Engstrom}}] 
\label{Folding Lemma} 
    Let $v$ and $w$ be a pair of distinct vertices of $ G$ with $N(v) \subseteq N(w)$. Then    $\ind(G)\simeq {} \ind(G \setminus w)$.
\end{proposition}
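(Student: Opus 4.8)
The plan is to localize the homotopy type of $\ind(G)$ at the vertex $w$ via Proposition~\ref{Link and Deletion} and to show that the ``correction term'' that appears there, namely $\ind(G\setminus N[w])$, is contractible. First I would record the elementary observation that $v$ and $w$ are non-adjacent: if $vw\in E(G)$, then $w\in N(v)\subseteq N(w)$, forcing $w\in N(w)$, which is impossible in a simple graph. Combined with $v\neq w$, this yields $v\notin N[w]$, so $v$ is a genuine vertex of the induced subgraph $G\setminus N[w]$.

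Next I would examine $G\setminus N[w]$ in a neighbourhood of $v$. The open neighbours of $v$ in $G\setminus N[w]$ are exactly $N(v)\setminus N[w]$; but $N(v)\subseteq N(w)\subseteq N[w]$, so this set is empty and $v$ is an isolated vertex of $G\setminus N[w]$. Consequently $\ind(G\setminus N[w]) = \{v\}\ast\ind\bigl(G\setminus(N[w]\cup\{v\})\bigr)$ is a cone, hence contractible.

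Now I would apply Proposition~\ref{Link and Deletion} to the vertex $w$. Since $\ind(G\setminus N[w])$ is contractible, the inclusion $\ind(G\setminus N[w])\hookrightarrow\ind(G\setminus w)$ is null-homotopic (any map out of a contractible space is), so $\ind(G)\simeq\ind(G\setminus w)\vee\Sigma\ind(G\setminus N[w])$. Because the suspension of a contractible space is contractible, the second wedge summand vanishes and $\ind(G)\simeq\ind(G\setminus w)$, as desired.

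There is no serious obstacle here; the only point requiring care is ensuring that $v$ survives in $G\setminus N[w]$, i.e.\ the non-adjacency of $v$ and $w$, which is exactly where the hypotheses ``$v\neq w$'' and ``$G$ simple (no loops)'' enter. (One could also bypass Proposition~\ref{Link and Deletion}: write $\ind(G) = \bigl(\{w\}\ast\ind(G\setminus N[w])\bigr)\cup\ind(G\setminus w)$, glued along the subcomplex $\ind(G\setminus N[w])$; both this subcomplex and the cone $\{w\}\ast\ind(G\setminus N[w])$ are contractible and subcomplex inclusions are cofibrations, so the gluing lemma gives $\ind(G)\simeq\ind(G\setminus w)$.)
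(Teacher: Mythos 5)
Your argument is correct, and every point that needs care is handled: the non-adjacency of $v$ and $w$ (so $v$ survives in $G\setminus N[w]$), the isolation of $v$ there (so $\ind(G\setminus N[w])$ is a cone, hence contractible and in particular nonvoid), and the null-homotopy hypothesis of Proposition~\ref{Link and Deletion}. Note, however, that the paper does not prove this statement at all: it is imported verbatim as Lemma~2.4 of \cite{Engstrom}, so the comparison is with Engstr\"om's original argument rather than with anything internal to this paper. Engstr\"om proves the stronger combinatorial statement that $\ind(G)$ \emph{collapses} onto $\ind(G\setminus w)$, essentially by pairing each independent set containing $w$ but not $v$ with its union with $\{v\}$ (independence of the larger set is exactly where $N(v)\subseteq N(w)$ enters), which is a discrete-Morse/elementary-collapse argument. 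Your route instead deduces the weaker conclusion of homotopy equivalence from Adamaszek's Proposition~\ref{Link and Deletion} plus the isolated-vertex observation; this is perfectly adequate for how the lemma is used in the paper, and there is no circularity, since Proposition~\ref{Link and Deletion} is proved by the cone-union decomposition $\ind(G)=\bigl(\{w\}\ast\ind(G\setminus N[w])\bigr)\cup\ind(G\setminus w)$ and cofibration gluing, independently of the folding lemma. Your parenthetical alternative at the end is in fact that same decomposition argument made explicit, so it is the most self-contained version of your proof; what the collapsibility proof buys in exchange is a simple-homotopy (indeed purely combinatorial) statement rather than a homotopy-theoretic one.
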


\begin{proposition} [{\rm Lemma 2.5, \cite{Engstrom}}] \label{Simplicial Vertex Lemma} 
    Let $G$ be a graph and $v$ be a simplicial vertex of $G$, that is, all neighbors of $v$ are pairwise adjacent. Let $N(v) = \{w_1,w_2,\dots,w_k\}$. Then
    $\ind(G) \simeq \bigvee_{i=1}^{k}\Sigma\ind(G\setminus N [w_i]).$
\end{proposition}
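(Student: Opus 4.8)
The plan is to exploit the disconnectedness of $L(P_n\times P_4)$. Since $P_n\times P_4$ is the categorical product of two bipartite graphs, the parity of the coordinate sum $i+j$ partitions its vertices into two classes with no edges between them, so $L(P_n\times P_4)$ splits as a disjoint union of two graphs. When $n=2k+2$ is even, the reflection $i\mapsto n+1-i$ changes $i+j$ by $n+1\equiv 1\pmod 2$ and hence interchanges the two parity classes, so the two components are isomorphic; when $n=2k+3$ is odd the reflection fixes each class and the components are (in general) non-isomorphic. By \Cref{prop: join} we therefore have $\M(P_n\times P_4)=\ind(L(P_n\times P_4))\simeq \ind(G_1)\ast\ind(G_2)$, with $G_1\cong G_2$ in the even case. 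Granting from \Cref{section:PnP4} that each $\ind(G_i)$ is homotopy equivalent to a wedge of spheres, I would invoke $S^a\ast S^b\simeq S^{a+b+1}$ together with distributivity of the join over wedges of spheres,
$$\Big(\bigvee_\alpha S^{a_\alpha}\Big)\ast\Big(\bigvee_\beta S^{b_\beta}\Big)\simeq\bigvee_{\alpha,\beta}S^{a_\alpha+b_\beta+1}.$$
This reduces the entire statement to the extremal sphere dimensions of the component complexes, via $d_{min}\M(P_n\times P_4)=d_{min}\ind(G_1)+d_{min}\ind(G_2)+1$ and the analogue for $d_{max}$; in the even case these collapse to $2\,d_{min}\ind(G_1)+1$ and $2\,d_{max}\ind(G_1)+1$.

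The second step is to extract the extremal dimensions of each component complex from the recursive homotopy equivalences established in \Cref{section:PnP4}. There, applying \Cref{Link and Deletion}, \Cref{Folding Lemma}, and \Cref{Simplicial Vertex Lemma} at well-chosen vertices of the four classes of graphs arising for $m=4$ expresses $\ind(G)$, for each such class $G$, as a wedge of \emph{suspensions} of independence complexes of strictly smaller graphs from the same list. Since one suspension raises every sphere dimension by exactly one, each equivalence of the shape $\ind(G)\simeq\bigvee_t \Sigma^{s_t}\ind(H_t)$ yields the coupled recurrences
$$d_{max}\ind(G)=\max_t\big(s_t+d_{max}\ind(H_t)\big),\qquad d_{min}\ind(G)=\min_t\big(s_t+d_{min}\ind(H_t)\big),$$
with base cases supplied by the finitely many small graphs at the bottom of the recursion. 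Thus both extremal dimensions are governed by explicit max-plus (respectively min-plus) linear recurrences in $k$.

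I would then solve these recurrences. For the minimum, a single branch dominates at every step, so the min-recurrence reduces to an ordinary linear recurrence with constant increment; a direct induction on $k$ gives $d_{min}\ind(G_1)=k$ in the even case, hence $d_{min}\M(P_n\times P_4)=2k+1$, and the analogous computation for the two odd-case components yields $2(k+1)+1$. For the maximum the increment is \emph{not} constant: the dominating branch cycles with period $7$ in $k$, which is exactly the source of the residue classes in \Cref{tab:intro_dimensionPnP4}. I would set $k=7q+r$ with $0\le r\le 6$, verify the table entries for $q=0$ and each $r$ directly as base cases, and then induct on $q$, checking that advancing $k$ by $7$ increases $d_{max}$ by the fixed amount encoded in the factor $2(k+3q)$ (so that $d_{max}$ grows super-linearly, with slope $10/7$ per component). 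Substituting the resulting $d_{max}\ind(G_i)$ into the join formula and simplifying then reproduces the tabulated values, the even/odd asymmetry reflecting that the components coincide only when $n$ is even.

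The main obstacle I anticipate is the maximum-dimension computation. Unlike the minimum, where one branch always wins, the max-plus recurrence has competing branches whose winner depends on $k\bmod 7$, so correctly identifying the dominating branch in each residue class and pinning down all base cases is where the real bookkeeping lies. Establishing the recursive homotopy equivalences themselves is deferred to \Cref{section:PnP4}; once they are available, the dimension tracking is elementary but must be carried out carefully to match every entry of \Cref{tab:intro_dimensionPnP4}.
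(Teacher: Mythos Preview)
Your proposal does not address the stated proposition at all. The statement in question is the \emph{Simplicial Vertex Lemma} (Proposition~\ref{Simplicial Vertex Lemma}), a cited preliminary result from Engstr\"om's paper asserting that for a simplicial vertex $v$ with $N(v)=\{w_1,\dots,w_k\}$ one has $\ind(G)\simeq\bigvee_{i=1}^k\Sigma(\ind(G\setminus N[w_i]))$. The paper does not prove this; it is quoted as a tool.

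What you have written is instead a proof outline for Proposition~\ref{proposition:dim_PnP4}, the computation of the extremal sphere dimensions in $\M(P_n\times P_4)$. Your outline even \emph{invokes} the Simplicial Vertex Lemma as one of the ingredients, so it cannot serve as a proof of it. You appear to have matched your proposal to the wrong target statement.

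If the intent were to prove the Simplicial Vertex Lemma itself, the argument is short and entirely different from anything in your proposal: since $v$ is simplicial, $N[v]$ induces a clique, so every maximal independent set contains exactly one vertex of $N[v]$; one then builds $\ind(G)$ from the subcomplexes $\{w_i\}\ast\ind(G\setminus N[w_i])$ glued along $\ind(G\setminus N[v])$, which is contractible (it is a cone with apex $v$), and a standard homotopy-colimit or iterated mapping-cone argument collapses this to the stated wedge of suspensions.
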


\begin{proposition}[{\rm Proposition 3.4, \cite{Adamaszek}}] \label{Edge deletion 1}
    Let $G$ be a graph and let $uv$ be an edge in $G$. If $\ind(G\setminus \n[u, v])$ is contractible, then 
    $\ind(G)\simeq \ind(G \setminus uv).$
\end{proposition}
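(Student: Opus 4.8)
The plan is to exhibit $\ind(G)$ as a subcomplex of $\ind(G\setminus uv)$ obtained by deleting a single simplex, and then to show that the part being removed is homotopically negligible precisely because of the contractibility hypothesis. Throughout, write $\Delta = \ind(G\setminus uv)$. Since $uv\notin E(G\setminus uv)$, the set $\{u,v\}$ is a $1$-simplex of $\Delta$. A subset $\sigma\subseteq V(G)$ is independent in $G$ if and only if it is independent in $G\setminus uv$ and does not contain both $u$ and $v$. Hence $\ind(G)$ is exactly the deletion of the simplex $\{u,v\}$ from $\Delta$, i.e.\ the subcomplex $A := \{\tau\in\Delta : \tau\not\supseteq\{u,v\}\}$.

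First I would set up the cover. Let $B := \{u,v\}\ast \mathrm{lk}_{\Delta}(\{u,v\})$ be the closed star of $\{u,v\}$ in $\Delta$. Every face of $\Delta$ either omits at least one of $u,v$, and so lies in $A$, or contains $\{u,v\}$, and so lies in $B$; thus $\Delta = A\cup B$. Being a join with the simplex $\{u,v\}$, the closed star $B$ is a cone and is therefore contractible.

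Next I would compute the intersection and identify the link. A direct check gives $A\cap B = (\partial\{u,v\})\ast \mathrm{lk}_{\Delta}(\{u,v\})$, where $\partial\{u,v\} = \{\{u\},\{v\}\}$ is a $0$-sphere; consequently $A\cap B = \Sigma\,\mathrm{lk}_{\Delta}(\{u,v\})$, the suspension of the link. To relate the link to the hypothesis, observe that deleting the edge $uv$ does not affect $N(u)\cup N(v)$ once the vertices $u,v$ are removed, so the closed neighborhood of $\{u,v\}$ in $G\setminus uv$ is $N[\{u,v\}]$, and the induced subgraph of $G\setminus uv$ on $V(G)\setminus N[\{u,v\}]$ equals that of $G$. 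Therefore $\mathrm{lk}_{\Delta}(\{u,v\}) = \ind(G\setminus N[\{u,v\}])$, which is contractible by hypothesis; hence $A\cap B = \Sigma\,\ind(G\setminus N[\{u,v\}])$ is contractible as well.

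Finally I would invoke the gluing lemma. Since $A$ and $B$ are subcomplexes of $\Delta$ with $\Delta = A\cup B$, the inclusions are cofibrations and the square formed by $A\cap B$, $A$, $B$, $\Delta$ is a homotopy pushout. As both $A\cap B$ and $B$ are contractible, the inclusion $A\cap B\hookrightarrow B$ is a homotopy equivalence, and therefore so is the inclusion $A\hookrightarrow \Delta$. This yields $\ind(G) = A \simeq \Delta = \ind(G\setminus uv)$, as desired. The combinatorial bookkeeping (identifying the deletion, the closed star, and the intersection) is routine; the one step requiring care is the final homotopy-theoretic gluing, where one must ensure the cofibration hypotheses so that contractibility of $B$ and of $A\cap B$ forces $A\hookrightarrow\Delta$ to be an equivalence. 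This is guaranteed here because all spaces involved are simplicial complexes.
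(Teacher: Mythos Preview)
The paper does not provide its own proof of this proposition; it is quoted verbatim from \cite{Adamaszek} as a preliminary tool and used as a black box throughout. So there is no in-paper argument to compare against.

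That said, your proof is correct and is essentially the natural argument. The identification of $\ind(G)$ with the face-deletion of $\{u,v\}$ from $\Delta=\ind(G\setminus uv)$ is exactly right, the cover $\Delta=A\cup B$ by the deletion and the closed star is the standard one, and your computation $A\cap B=\partial\{u,v\}\ast\mathrm{lk}_\Delta(\{u,v\})=\Sigma\,\ind(G\setminus N[\{u,v\}])$ is accurate. The final step via the homotopy pushout (gluing lemma) is legitimate since subcomplex inclusions of CW complexes are cofibrations. This is in fact the approach Adamaszek takes in the cited reference, phrased there in terms of attaching a simplex along a contractible link.
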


\begin{definition}
    A triplet of distinct vertices $u,v$ and $x$ in a graph $G$ satisfying $x \notin N(u) \cup N(v)$ and $N(x) \subseteq N(u) \cup N(v)$ is called an \emph{edge-invariant triplet} in $G$, and denoted by $[u,v;x]$.
\end{definition}

Based on the \Cref{Edge deletion 1}, we have the following. Suppose $G$ is a graph that consists of vertices $\{u,v\}$ and some vertex $x$ (depending on $u$ and $v$) such that $[u,v;x]$ is an edge invariant triplet in $G$. Then $x$ is an isolated vertex in $G \setminus \n[u,v]$, which implies that $\ind(G\setminus \n[u, v])$ is contractible. Thus, we have 
$$
\ind(G) \simeq \ind(G\triangle uv),
$$
where $V(G \triangle uv) = V(G)$, and $E(G \triangle uv)$ is given by the symmetric difference $E(G) \triangle \{uv\}.$


For $n \geq 1$, the path graph $P_n$ is a graph with $V (P_n) = \{v_i \mid 1\leq i\leq n\}$ and $E(P_n) = \{v_iv_{i+1} \mid  1\leq i \leq n-1\}$.
 \begin{proposition} [{\rm Proposition 4.6, \cite{Kozlov99}}] \label{Ind(path)}
For $n \geq 1$, 
\begin{center}
            $\ind(P_n) \simeq \begin{cases}
            \Sp^{k-1}  & \text{if} \ n = 3k, 3k-1,\\
            \text{a point} & \text{if} \ n = 3k+1.\\
                       \end{cases}$
  \end{center}
 \end{proposition}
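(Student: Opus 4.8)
The plan is to reduce the statement to a single three-term recursion, $\ind(P_n)\simeq\Sigma\ind(P_{n-3})$, obtained from one application of the simplicial-vertex lemma, and then iterate it down to three small base cases. No deletion or addition of edges is needed here: an endpoint of the path is already a simplicial vertex.

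Concretely, I would label the vertices of $P_n$ as $v_1,\dots,v_n$ along the path. The endpoint $v_1$ has $N(v_1)=\{v_2\}$, a single vertex, so its neighbours are vacuously pairwise adjacent and $v_1$ is a simplicial vertex. Applying \Cref{Simplicial Vertex Lemma} with the one-element set $N(v_1)=\{v_2\}$ gives
$$\ind(P_n)\simeq\Sigma\bigl(\ind(P_n\setminus N[v_2])\bigr).$$
For $n\ge 4$ we have $N[v_2]=\{v_1,v_2,v_3\}$, and the induced subgraph $P_n\setminus N[v_2]$ on $\{v_4,\dots,v_n\}$ is, after relabelling, exactly the path $P_{n-3}$; hence $\ind(P_n)\simeq\Sigma\ind(P_{n-3})$ for every $n\ge 4$. (Equivalently, one could invoke \Cref{Folding Lemma} with $N(v_1)\subseteq N(v_3)$ together with \Cref{prop: join}, which yields the same recursion.)

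Next I would record the base cases directly from the definition of the independence complex: $\ind(P_1)$ is a single point, hence contractible; $\ind(P_2)$ consists of the two independent singletons $\{v_1\},\{v_2\}$, so it is $\Sp^{0}$; and $\ind(P_3)$ has facets $\{v_1,v_3\}$ and $\{v_2\}$, i.e.\ a segment together with a disjoint point, so it too is homotopy equivalent to $\Sp^{0}$. Iterating $\ind(P_n)\simeq\Sigma\ind(P_{n-3})$ now brings any $P_n$ down to $P_1$, $P_2$, or $P_3$: for $n=3k$ one applies it $k-1$ times to reach $P_3$, and for $n=3k+1$ or $n=3k+2$ one applies it $k$ times to reach $P_1$ or $P_2$. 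Using $\Sigma\Sp^{d}\simeq\Sp^{d+1}$ and the fact that the suspension of a contractible complex is contractible, this gives $\ind(P_{3k})\simeq\Sp^{k-1}$, $\ind(P_{3k+1})$ contractible (a point), and $\ind(P_{3k+2})\simeq\Sp^{k}$, as claimed. I do not anticipate a genuine obstacle; the only points that need care are confirming that the vertex deletion really returns $P_{n-3}$ (a relabelling check) and matching the index shift $n\mapsto n-3$ against the dimension shift $d\mapsto d+1$ so that the three residue classes come out with exactly the stated dimensions.
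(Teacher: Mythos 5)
Your argument is correct. Note, however, that the paper does not prove this statement at all: it is quoted verbatim from Kozlov (Proposition 4.6 of \cite{Kozlov99}), so there is no in-paper proof to compare against. Your derivation is a clean, self-contained justification using only tools the paper already states: the endpoint $v_1$ is simplicial with the single neighbour $v_2$, so \Cref{Simplicial Vertex Lemma} (or, equivalently, \Cref{Folding Lemma} applied to $N(v_1)\subseteq N(v_3)$ followed by \Cref{prop: join}) gives the recursion $\ind(P_n)\simeq\Sigma\ind(P_{n-3})$ for $n\geq 4$, and your base cases $\ind(P_1)\simeq\ast$, $\ind(P_2)\simeq\Sp^0$, $\ind(P_3)\simeq\Sp^0$ together with $\Sigma\Sp^d\simeq\Sp^{d+1}$ and the fact that suspensions of contractible complexes are contractible yield exactly the stated trichotomy; this is essentially the same three-step recursion that underlies Kozlov's original computation.
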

 
For $n \geq 1$, the cycle graph $\mathcal{C}_n$ is a graph with $V (\mathcal{C}_n) = \{v_i \mid 1\leq i\leq n\}$ and $E(\mathcal{C}_n) = \{v_iv_{i+1} \mid 1\leq i \leq n-1\}\cup \{v_1v_n\}$.
\begin{proposition} [{\rm Proposition 5.2, \cite{Kozlov99}}] \label{Ind(Cycle)}
For $n \geq 3$,  
\begin{center}
            $\ind(\mathcal{C}_n) \simeq \begin{cases}
            \Sp^{k-1} \vee S^{k-1} & \text{if} \ n = 3k,\\
             \Sp^{k-1} & \text{if} \ n = 3k \pm 1.\\
                       \end{cases}$
  \end{center}
 \end{proposition}

\section{\texorpdfstring{Matching complexes of $P_n \times P_3$}{Matching complexes of Pn × P3}} \label{section:PnP3}
In this section, we determine the homotopy type of the matching complex of  $P_n\times P_3$.
The graph $P_n\times P_3$ is disconnected and has two connected components. When $n$ is even, then these components are isomorphic. However, for $n$ odd, the components are not isomorphic.
For $n \geq 0$, we denote the (isomorphic) connected components of the line graphs of $P_{2n+2}\times P_3$ by $\Gamma_{n,3}$ (see \Cref{fig:Gamman3}). Furthermore, for $n \geq 0$,  we denote the (non-isomorphic) connected components of the line graphs of  $P_{2n+3}\times P_3$ by $\Lambda_{n,3}$ and $\widetilde{\Gamma}_{n,3}$ (see \Cref{fig:Lambdan3,,fig:Lambdan3tilde}), respectively. Therefore, we have the following:
$$
\M(P_{2n+2}\times P_3) \simeq \ind(\Gamma_{n,3}) * \ind(\Gamma_{n,3}) \ \text{and} \ \M(P_{2n+3}\times P_3) \simeq \ind(\Lambda_{n,3}) * \ind(\widetilde{\Gamma}_{n,3}).
$$

During the computation of the homotopy type of the independence complexes of graphs $\Gamma_{n,3}, \Lambda_{n,3}$, and $\widetilde{\Gamma}_{n,3}$ for $n\geq 0$, we encounter an intermediary graph, denoted as $A_{n,3}$ (see \Cref{fig:An3}). Using the independence complex of $A_{n,3}$, we illustrate that the independence complexes of $\Gamma_{n,3}, \Lambda_{n,3}$, and $\widetilde{\Gamma}_{n,3}$ are the wedges of spheres. \\


\noindent (i) \textbf{Graph }$\Gamma_{n,3}$  \\
For $n\geq 0$, we define the graph $\Gamma_{n,3}$ as follows:
\begin{equation*}
	\begin{split}
		V(\Gamma_{n,3}) = & \set{g_{ij} : i=1,2 \text{ and } j = {1,2,\dots,2n{+}1}},\\
		E(\Gamma_{n,3}) = & \ \set{g_{1j}g_{2j} : j=1,2,\dots,2n{+}1} 
		\cup \set{g_{ij}g_{i\,j+1} : i=1,2 \text{ and } j=1,2,\dots,2n} \\
		& \cup \set{g_{1j}g_{2\,j+1}, g_{1\,j+1}g_{2\,j} : \; j=2,4,\dots,2n}.
	\end{split}
\end{equation*}

\begin{figure}[ht]
	\centering	 
	\begin{subfigure}[b]{0.45\textwidth}
		\centering
		\begin{tikzpicture}[scale=0.3]

			\begin{scope}
				\foreach \x in {1,2,...,8}
				{
					\node[vertex] (1\x) at (2*\x,3*2) {};
				}
				\foreach \x in {1,2,...,8}
				{
					\node[vertex] (2\x) at (2*\x,2*2) {};
				}

				\foreach \y [evaluate={\y as \x using int({\y+1})}] in {1,2,...,4,6,7} 
				{
					\draw[edge] (1\y) -- (1\x);
					\draw[edge] (2\y) -- (2\x);
				}
				
				\foreach \z in {1,2,3,4,5,...,8}
				{\draw[edge] (1\z) -- (2\z);
				}
				
				\foreach \z [evaluate={\znext=int({1+\z})}] in {2,4,7}
				{
					\draw[edge] (1\z) -- (2\znext);
				}
				\foreach \z [evaluate={\znext=int({\z-1})}] in {3,5,8}
				{
					\draw[edge] (1\z) -- (2\znext);
				}
				
				\coordinate (c15) at (15);
				\coordinate (c16) at (16);

				\draw[dotted, thick,shorten >=4pt, shorten <=4pt] (10,5) -- (12,5);

				\draw[decoration={brace,raise=5pt,mirror},decorate]
				(21) -- node[below=6pt] {$1$}(23);
				\draw[decoration={brace,raise=5pt,mirror},decorate]
				(23) -- node[below=6pt] {$2$}(25);
				\draw[decoration={brace,raise=5pt,mirror},decorate]
				(26) -- node[below=6pt] {$n$}(28);
				
				\node[label={[label distance=-3pt]90:\scriptsize{$g_{11}$}}] at (11) {};
				\foreach \x in {2,...,5}
				{
					\node[label={[label distance=-3pt]90:\scriptsize{$g_{1\x}$}}] at (1\x) {};
				}
				\node[label={[label distance=-3pt]180:\scriptsize{$g_{21}$}}] at (21) {};

				\node[label={[label distance=-3pt]90:\scriptsize{$g_{1\,{2n+1}}$}}] at (18) {};
				\node[label={[label distance=-3pt]90:\scriptsize{$\cdots$}}] at (16) {};
				
				\node[label={[label distance=-3pt]0:\scriptsize{$g_{2\,{2n+1}}$}}] at (28) {};
				
			\end{scope}

				
				
			
		\end{tikzpicture}
		\caption{$\Gamma_{n,3}$}
		\label{fig:Gamman3}
	\end{subfigure}
	\begin{subfigure}[b]{0.45\textwidth}
		\centering
		\begin{tikzpicture}[scale=0.3]
			\begin{scope}
				\foreach \x in {1,2,...,8}
				{
					\node[vertex] (1\x) at (2*\x,3*2) {};
				}
				\foreach \x in {1,2,...,8}
				{
					\node[vertex] (2\x) at (2*\x,2*2) {};
				}

				\foreach \y [evaluate={\y as \x using int({\y+1})}] in {1,2,...,4,6,7} 
				{
					\draw[edge] (1\y) -- (1\x);
					\draw[edge] (2\y) -- (2\x);
				}
				
				\foreach \z in {1,2,3,4,5,...,8}
				{\draw[edge] (1\z) -- (2\z);
				}
				
				\foreach \z [evaluate={\znext=int({1+\z})}] in {2,4,7}
				{
					\draw[edge] (1\z) -- (2\znext);
				}
				\foreach \z [evaluate={\znext=int({\z-1})}] in {3,5,8}
				{
					\draw[edge] (1\z) -- (2\znext);
				}
				
				\coordinate (c15) at (15);
				\coordinate (c16) at (16);

				\draw[dotted, thick,shorten >=4pt, shorten <=4pt] (10,5) -- (12,5);

				\draw[decoration={brace,raise=5pt,mirror},decorate]
				(21) -- node[below=6pt] {$1$}(23);
				\draw[decoration={brace,raise=5pt,mirror},decorate]
				(23) -- node[below=6pt] {$2$}(25);
				\draw[decoration={brace,raise=5pt,mirror},decorate]
				(26) -- node[below=6pt] {$n$}(28);
				
				\node[label={[label distance=-3pt]90:\scriptsize{$g_{11}$}}] at (11) {};
				\foreach \x in {2,...,5}
				{
					\node[label={[label distance=-3pt]90:\scriptsize{$g_{1\x}$}}] at (1\x) {};
				}

				\node[label={[label distance=-3pt]90:\scriptsize{$g_{1\,{2n+1}}$}}] at (18) {};
				\node[label={[label distance=-3pt]90:\scriptsize{$\cdots$}}] at (16) {};
				
				\node[label={[label distance=-3pt]0:\scriptsize{$g_{2\,{2n+1}}$}}] at (28) {};

			\end{scope}
			
			\begin{scope}[shift = {(-2,0)}]
				\node[vertex] (a1) at (2,4) {};
				
				\draw[edge] (a1) -- (11);
				\draw[edge] (a1) -- (21);
				
				\node[label={[label distance=-3pt]180:\scriptsize{$a_{1}$}}] at (a1) {};
			\end{scope}

				
				

				
				
				
				
			
		\end{tikzpicture}
		\caption{ $A_{n,3}$} \label{fig:An3}	
	\end{subfigure}
    \caption{}
\end{figure}

\begin{claim}{\label{claim:Gamman3}}
	For $n\geq 0$,
	$\ind(\Gamma_{n,3}) \simeq \Sp^n$.
\end{claim}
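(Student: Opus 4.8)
The plan is to prove $\ind(\Gamma_{n,3}) \simeq \Sp^n$ by induction on $n$, in exact parallel with the recursive strategy used for $\Gamma_{n,5}$ and $\Gamma_{n,4}$. The base case $n=0$ is immediate: $\Gamma_{0,3}$ is a single edge $g_{11}g_{21}$, i.e.\ $K_2$, so $\ind(\Gamma_{0,3}) \simeq \Sp^0$, matching the formula. For the inductive step, fix $n \geq 1$ and assume $\ind(\Gamma_{k,3}) \simeq \Sp^k$ for all $k < n$; in particular, if the recurrence reduces $\Gamma_{n,3}$ to $\Gamma_{n-1,3}$ (possibly through the intermediary graph $A_{n-1,3}$ whose independence complex one first identifies), the conclusion will follow by a single suspension.

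First I would look at the leftmost column of $\Gamma_{n,3}$ (vertices $g_{11}, g_{21}$) together with the ``diagonal'' edges joining column $1$ to column $2$. As in \Cref{homG_{n,5}} and \Cref{Gamman4}, one expects a folding: because of the alternating diagonal edges, some column-$2$ vertex (say $g_{22}$) satisfies $N(g_{11}) \subseteq N(g_{22})$, so \Cref{Folding Lemma} gives $\ind(\Gamma_{n,3}) \simeq \ind(\Gamma_{n,3} \setminus g_{22})$. After this deletion, $g_{21}$ (or $g_{11}$) should become a simplicial vertex — its remaining neighbors being pairwise adjacent — at which point \Cref{Simplicial Vertex Lemma} applies and expresses $\ind$ of the reduced graph as a wedge of suspensions of independence complexes of link-type subgraphs. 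Alternatively, one identifies an edge-invariant triplet or a pendant vertex and peels off columns two at a time. The key bookkeeping point is that removing (essentially) the first two columns of $\Gamma_{n,3}$ leaves a copy of $\Gamma_{n-1,3}$, with any extra vertices organized so that the intermediary graph $A_{n-1,3}$ contributes nothing new homotopically (its independence complex being contractible, or contributing a sphere of the same dimension that gets absorbed). Tracking the exact number of suspensions is what pins down the exponent $n$.

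Concretely, I expect the recurrence to take the shape $\ind(\Gamma_{n,3}) \simeq \Sigma \ind(\Gamma_{n-1,3})$ (up to possibly a wedge summand coming from $A_{n-1,3}$ that turns out to be trivial or to reinforce the same sphere), so that by the inductive hypothesis $\ind(\Gamma_{n,3}) \simeq \Sigma \Sp^{n-1} = \Sp^{n}$. If a wedge of the form $\Sigma \ind(A_{n-1,3}) \vee \Sigma^2 \ind(\Gamma_{n-2,3})$ appears instead (mirroring \Cref{Gamman4}), I would first compute $\ind(A_{n,3})$ — very likely $A_{n,3}$ has a simplicial or foldable vertex making $\ind(A_{n,3}) \simeq \Sp^{n}$ or contractible — and then check that the two wedge summands land in the same dimension $n$ (here $1 + \dim\ind(A_{n-1,3})$ and $2 + (n-2)$ must both equal $n$), so the wedge collapses to the single sphere $\Sp^n$.

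The main obstacle will be getting the combinatorics of the diagonal edges exactly right: in $P_n \times P_3$ the categorical product creates a ``brick-wall'' pattern of crossing diagonals whose parity depends on $i+j \bmod 2$, so the available folding/simplicial-vertex/edge-invariant-triplet moves at the left end differ between even and odd columns. I would need to verify that after the first folding the right vertex genuinely becomes simplicial (all its surviving neighbors pairwise adjacent) — this is where a careful picture like \Cref{fig:Gamman3} is essential — and that the residual graph is honestly isomorphic to $\Gamma_{n-1,3}$ (plus harmless extra vertices) rather than some slightly different graph requiring yet another intermediary. Once the isomorphism of the reduced graph with $\Gamma_{n-1,3}$ (and the homotopy type of $A_{n-1,3}$) is established, the dimension count is routine and the induction closes.
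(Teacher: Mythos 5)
Your plan is essentially the paper's argument: the paper folds away both $g_{22}$ (since $N(g_{11})\subseteq N(g_{22})$) and $g_{12}$ (since $N(g_{21})\subseteq N(g_{12})$), observes that the remaining graph is the disjoint union of the edge $g_{11}g_{21}$ and a copy of $\Gamma_{n-1,3}$, and concludes $\ind(\Gamma_{n,3})\simeq\Sigma\ind(\Gamma_{n-1,3})\simeq\Sigma^n\ind(K_2)\simeq\Sp^n$, exactly the recurrence you anticipate (your one-fold-then-pendant-vertex variant works equally well, and the hedged fallback involving $A_{n-1,3}$ is never needed).
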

\begin{proof}
	Since $N(g_{11}) \subseteq N(g_{22})$ in $\Gamma_{n,3}$ and $N(g_{21}) \subseteq N(g_{12})$ in $\Gamma_{n,3}\setminus g_{22}$, using \Cref{Folding Lemma}, we get that $ \ind(\Gamma_{n,3}) \simeq \ind(\Gamma_{n,3} \setminus \{g_{12},g_{22}\}).$
	It is straightforward to see that $\Gamma_{n,3} \setminus \{g_{12},g_{22}\} $ has two connected components; one component is an edge $g_{11}g_{21}$, and the other is isomorphic to $\Gamma_{n-1,3}$. Therefore, $ \ind(\Gamma_{n,3}) \simeq \Sigma\ind(\Gamma_{n-1,3})$. Consequently, we get $ \ind(\Gamma_{n,3}) \simeq \Sigma^n\ind(\Gamma_{0,3}).$
	Since $\Gamma_{0,3} \cong K_2$, the result follows by induction.
\end{proof}

\noindent (ii) \textbf{Graph }$A_{n,3}$  \\
For $n\geq 0$, we define the graph $A_{n,3}$ as follows:

$ V(A_{n,3}) =  V(\Gamma_{n,3})\cup\{a_1\}$ and 
$E(A_{n,3}) =  E(\Gamma_{n,3})\cup \{a_1g_{11}, a_1g_{21}\}.  $


\begin{claim}\label{lemma: An3}
For $n \geq 0$, $ \ind(A_{n,3}) \simeq \bigvee^{2}\Sigma \ind(A_{n-1,3}) \simeq \bigvee^{2^{n+1}} \Sp^n.$
\end{claim}
\begin{proof}
Since $a_1$ is a simplicial vertex in the graph $A_{n,3}$ (see \Cref{fig:An3}) with $N(a_1)=\{g_{11},g_{21}\}$, from \Cref{Simplicial Vertex Lemma}, we have $ \ind(A_{n,3})\simeq \Sigma \ind(A_{n,3}\setminus \n[g_{11}])\vee \Sigma\ind(A_{n,3}\setminus \n[g_{21}]).$
Since both the graphs $A_{n,3}\setminus \n[g_{11}]$ and $A_{n,3}\setminus \n[g_{21}]$ are isomorphic to $A_{n-1,3}$, we have $\ind(A_{n,3})\simeq \bigvee^{2}\Sigma \ind(A_{n-1,3})$. Proceeding in this way for $n$ steps, we obtain $\mathrm{Ind}(A_{n,3}) \simeq \bigvee^{2^n} \Sigma^n \mathrm{Ind}(A_{0,3})$. Observe that the graph $A_{0,3}$ is isomorphic to $K_3$, and hence $\ind(A_{0,3})\simeq \Sp^0\vee\Sp^0$. Therefore, $\mathrm{Ind}(A_{n,3}) \simeq {\bigvee}^{2^n} \Sigma^n (\mathbb{S}^0 \vee \mathbb{S}^0) \simeq {\bigvee}^{2^n} (\mathbb{S}^n\vee S^n)$. This completes the proof.
\end{proof}

\noindent (iii) {\bf Graphs $\Lambda_{n,3}$ and $\widetilde{\Gamma}_{n,3}$}  \\
For $n\geq 0$, we define the graphs $\Lambda_{n,3}$ (see \Cref{fig:Lambdan3}) and $\widetilde{\Gamma}_{n,3}$ (see \Cref{fig:Lambdan3tilde}) as follows:

$V(\Lambda_{n,3}) =  V(\Gamma_{n,3})\cup\{g'_{1},g'_{2}\}$,  
$E(\Lambda_{n,3}) =  E(\Gamma_{n,3})\cup \{g'_{1}g_{11}, g'_{2}g_{21}, g'_{1}g'_{2},g'_{1}g_{21},g'_{2}g_{11}\}$, and 
\begin{equation*}
\begin{split} V(\widetilde{\Gamma}_{n,3}) = & V(\Gamma_{n,3})\cup\{g_{12n+2},g_{22n+2}\},\\
	E(\widetilde{\Gamma}_{n,3}) = & E(\Gamma_{n,3})\cup \{g_{12n+1}g_{12n+2}, g_{22n+1}g_{22n+2}, g_{12n+2}g_{22n+2}\}.    
\end{split}
\end{equation*}
	\begin{figure}[ht]
		\centering
		\begin{subfigure}[b]{0.4\textwidth}
			\begin{tikzpicture}[scale=0.3]
				\begin{scope}
					\foreach \x in {1,2,...,8}
					{
						\node[vertex] (1\x) at (3*\x,3*2) {};
					}
					\foreach \x in {1,2,...,8}
					{
						\node[vertex] (2\x) at (3*\x,2*2) {};
					}

					\foreach \y [evaluate={\y as \x using int({\y+1})}] in {1,2,...,4,6,7} 
					{
						\draw[edge] (1\y) -- (1\x);
						\draw[edge] (2\y) -- (2\x);
					}
					
					\foreach \z in {1,2,3,4,5,...,8}
					{\draw[edge] (1\z) -- (2\z);
					}
					
					\foreach \z [evaluate={\znext=int({1+\z})}] in {2,4,7}
					{
						\draw[edge] (1\z) -- (2\znext);
					}
					\foreach \z [evaluate={\znext=int({\z-1})}] in {3,5,8}
					{
						\draw[edge] (1\z) -- (2\znext);
					}
					
					\coordinate (c15) at (15);
					\coordinate (c16) at (16);

					\draw[dotted, thick,shorten >=4pt, shorten <=4pt] (15.5,5) -- (17.5,5);

					\draw[decoration={brace,raise=5pt,mirror},decorate]
					(21) -- node[below=6pt] {$1$}(23);
					\draw[decoration={brace,raise=5pt,mirror},decorate]
					(23) -- node[below=6pt] {$2$}(25);
					\draw[decoration={brace,raise=5pt,mirror},decorate]
					(26) -- node[below=6pt] {$n$}(28);
					
					\node[label={[label distance=-3pt]90:\scriptsize{$g_{11}$}}] at (11) {};
					\foreach \x in {2,...,5}
					{
						\node[label={[label distance=-3pt]90:\scriptsize{$g_{1\x}$}}] at (1\x) {};
					}

					\node[label={[label distance=-3pt]90:\scriptsize{$g_{1\,{2n+1}}$}}] at (18) {};
					\node[label={[label distance=-3pt]90:\scriptsize{$\cdots$}}] at (16) {};
					
					\node[label={[label distance=-3pt]0:\scriptsize{$g_{2\,{2n+1}}$}}] at (28) {};

				\end{scope}
				
				\begin{scope}[shift={(2,0)}]
					\node[vertex] (e1) at (-2,6) {} ;
					\node[vertex] (e2) at (-2,4) {} ;

					\draw[edge] (e1) -- (e2) ;
					\draw[edge] (e1) -- (11) ;
					\draw[edge] (e2) -- (21) ;
					\draw[edge] (e1) -- (21) ;
					\draw[edge] (e2) -- (11) ;
					
					\node[label={[label distance=-3pt]90:\scriptsize{$g'_{1}$}}] at (e1) {};
					\node[label={[label distance=-3pt]270:\scriptsize{$g'_{2}$}}] at (e2) {};
				\end{scope}

			\end{tikzpicture}
			\caption{ $\Lambda_{n,3}$}
			\label{fig:Lambdan3}	
		\end{subfigure} 
	
		\begin{subfigure}[b]{0.4\textwidth}
			\begin{tikzpicture}[scale=0.3]
				\begin{scope}
					\foreach \x in {1,2,...,8,9}
					{
						\node[vertex] (1\x) at (3*\x,3*2) {};
					}
					\foreach \x in {1,2,...,8,9}
					{
						\node[vertex] (2\x) at (3*\x,2*2) {};
					}

					\foreach \y [evaluate={\y as \x using int({\y+1})}] in {1,2,...,4,6,7,8} 
					{
						\draw[edge] (1\y) -- (1\x);
						\draw[edge] (2\y) -- (2\x);
					}
					
					\foreach \z in {1,2,3,4,5,...,8,9}
					{\draw[edge] (1\z) -- (2\z);
					}
					
					\foreach \z [evaluate={\znext=int({1+\z})}] in {2,4,7}
					{
						\draw[edge] (1\z) -- (2\znext);
					}
					\foreach \z [evaluate={\znext=int({\z-1})}] in {3,5,8}
					{
						\draw[edge] (1\z) -- (2\znext);
					}
					
					\coordinate (c15) at (15);
					\coordinate (c16) at (16);

					\draw[dotted, thick,shorten >=4pt, shorten <=4pt] (15.5,5) -- (17.5,5);

					\draw[decoration={brace,raise=5pt,mirror},decorate]
					(21) -- node[below=8pt] {$1$}(23);
					\draw[decoration={brace,raise=5pt,mirror},decorate]
					(23) -- node[below=6pt] {$2$}(25);
					\draw[decoration={brace,raise=5pt,mirror},decorate]
					(26) -- node[below=6pt] {$n$}(28);
					
					\node[label={[label distance=-3pt]90:\scriptsize{$g_{11}$}}] at (11) {};
					\foreach \x in {2,...,5}
					{
						\node[label={[label distance=-3pt]90:\scriptsize{$g_{1\x}$}}] at (1\x) {};
					}
					\node[label={[label distance=-3pt]180:\scriptsize{$g_{21}$}}] at (21) {};

					\node[label={[label distance=-3pt]90:\scriptsize{$g_{1\,{2n+2}}$}}] at (19) {};
					\node[label={[label distance=-3pt]90:\scriptsize{$\cdots$}}] at (16) {};
					
					\node[label={[label distance=-3pt]270:\scriptsize{$g_{2\,{2n+2}}$}}] at (29) {};
					
				\end{scope}

			\end{tikzpicture}
			\caption{$\widetilde{\Gamma}_{n,3}$} \label{fig:Lambdan3tilde}	
		\end{subfigure}
		\caption{}
	\end{figure}

\begin{claim}\label{Lambdan3}
For $n\geq0$, $ \ind(\Lambda_{n,3}) \simeq \bigvee^{2^{n+2}-1} \Sp^n.$
\end{claim}
\begin{proof}
The proof is by induction on $n$. Clearly,  $\ind(\Lambda_{0,3}) \simeq \bigvee^{3}\Sp^0$. Assume that $n \geq 1$ and the \Cref{Lambdan3} holds for all $0 \leq k < n$. Since $g'_{1}$ is a simplicial vertex and $N(g'_1)=\{g'_2, g_{11},g_{21}\}$ in the graph $\Lambda_{n,3}$ (see \Cref{fig:Lambdan3}), from \Cref{Simplicial Vertex Lemma}, we have 
\begin{equation*}
	\begin{split}
		\ind(\Lambda_{n,3}) & \simeq \Sigma\ind(\Lambda_{n,3} \setminus \n[g'_2]) \vee \Sigma\ind(\Lambda_{n,3} \setminus \n[g_{11}]) \vee \Sigma\ind(\Lambda_{n,3} \setminus \n[g_{21}]). 
	\end{split}
\end{equation*}
Observe that, the graph $\Lambda_{n,3} \setminus \n[g'_2]$ is isomorphic to $\Lambda_{n-1,3}$, whereas both the graphs $\Lambda_{n,3} \setminus \n[g_{11}]$ and $\Lambda_{n,3} \setminus \n[g_{21}]$ are isomorphic to $A_{n-1,3}$. Therefore, we get $$\ind(\Lambda_{n,3})\simeq \Sigma\ind(\Lambda_{n-1,3}) \vee \Sigma\ind(A_{n-1,3}) \vee \Sigma\ind(A_{n-1,3}).$$ Now, from the induction hypothesis and \Cref{lemma: An3}, we conclude that
$\ind(\Lambda_{n,3})  \simeq \bigvee^{2^{n+2}-1}\Sp^{n}$
\end{proof}

\begin{claim}\label{eq:Lambdatilde_n}
For $n \geq 0 $, $\ind(\widetilde{\Gamma}_{n,3}) \simeq \Sp^{n}$.
\end{claim}
\begin{proof}
The proof for $\ind(\widetilde{\Gamma}_{n,3})$ is essentially verbatim to the proof of \Cref{claim:Gamman3}. 
\end{proof}

\begin{theorem}\label{PnxP3}
For $n\geq 0$,  $\M(P_{2n+2} \times P_3) \simeq \Sp^{2n+1}$ and $ \M(P_{2n+3} \times P_3) \simeq \bigvee^{2^{n+2} - 1} \Sp^{2n+1}.$
\end{theorem}
\begin{proof}
Since the line graph of the categorical product $P_{2n+2} \times P_3$ is the disjoint union of two copies of $\Gamma_{n,3}$, using \Cref{prop: join} and \Cref{claim:Gamman3}, we get 
$\M(P_{2n+2} \times P_3)\simeq \ind(\Gamma_{n,3})\ast \ind(\Gamma_{n,3}) \simeq \Sp^{2n+1}.$
Similarly, the line graph of the categorical product $P_{2n+3} \times P_3$ is the disjoint union of $\Lambda_{n,3}$ and $\widetilde{\Gamma}_{n,3}$. Therefore, using \Cref{prop: join}, \Cref{Lambdan3,,eq:Lambdatilde_n} we get  
$\M(P_{2n+3} \times P_3) \simeq \ind(\Lambda_{n,3})\ast \ind(\widetilde{\Gamma}_{n,3})\simeq \bigvee^{2^{n+2} - 1} \Sp^{2n+1}.$
\end{proof}

\section{Matching complexes of \texorpdfstring{$P_n \times P_4$}{Pn x P4}} \label{section:PnP4}
In this section, we aim to determine the homotopy type of the matching complex of the categorical product $P_n\times P_4$.
The graph $P_n\times P_4$ is disconnected, and contains two connected components. Regardless of whether
$n$ is odd or even, these respective components are isomorphic.
For $n \geq 0$, we denote the connected components of the line graphs of $P_{2n+2}\times P_4$ and $P_{2n+3}\times P_4$ by $\Gamma_{n,4}$ (see \Cref{fig:Gn4}) and $\widetilde{\Gamma}_{n,4}$ (see \Cref{fig:tildeGamman4}), respectively. Therefore, we have the following:
\begin{equation*}
	\M(P_{2n+2}\times P_4) \simeq \ind(\Gamma_{n,4}) * \ind(\Gamma_{n,4}), 
\end{equation*}
\begin{equation*}
	\M(P_{2n+3}\times P_4) \simeq \ind(\widetilde{\Gamma}_{n,4}) * \ind(\widetilde{\Gamma}_{n,4}).
\end{equation*}

During the computation of the homotopy type of the independence complexes of graphs $\Gamma_{n,4}$ and $\widetilde{\Gamma}_{n,4}$ for $n\geq 0$, we encounter several intermediary graphs denoted as $A_{n,4}, \widetilde{A}_{n,4}, B_{n,4}$, and $\widetilde{B}_{n,4}$. We show that the independence complexes of $\Gamma_{n,4}$ and $\widetilde{\Gamma}_{n,4}$ are wedges of suspensions of these intermediary graphs. By induction on $n$, we show that the independence complexes of $\Gamma_{n,4}$, $\widetilde{\Gamma}_{n,4}$, and the intermediary graphs are homotopy equivalent to a wedge of spheres. The graphs $A_{n,4}, \widetilde{A}_{n,4}, B_{n,4}$, and $\widetilde{B}_{n,4}$ are shown in \Cref{fig:An4,,fig:Bn4,,fig:Cn4,,fig:Dn4}, respectively. In the following subsection, we define the graphs $\Gamma_{n,4}$, $\widetilde{\Gamma}_{n,4}$, along with four intermediary graphs. In the subsequent subsections, we then determine the homotopy types of the independence complexes of $\Gamma_{n,4}$, $\widetilde{\Gamma}_{n,4}$, and these intermediary graphs.

\subsection{Definitions of Graphs $\Gamma_{n,4}, \widetilde{\Gamma}_{n,4}$ and Other Intermediary Graphs }\label{PnP4Graphs}


\noindent (i){\textbf{ Graph} $\Gamma_{n,4}$}

For $n\geq 0$, we define the graph $\Gamma_{n,4}$ as follows:
\begin{equation*}
	\begin{split}
		V(\Gamma_{n,4}) = & \set{g_{ij} : i=1,2,3 \  \text{ and }j = {1,2,\dots,2n{+}1}},\\
		E(\Gamma_{n,4}) = & \ \set{g_{ij}g_{i+1\,j} : i=1,2 \text{ and } j=1,2,\dots,2n{+}1} \cup \set{g_{ij}g_{i\,j+1} : i=1,2,3 \text{ and } j=1,2,\dots,2n} \\
		& \cup \set{g_{ij}g_{i+1\,j+1}, g_{ij+1}g_{i+1\,j} :  i=1,2, \; j=1,2,\dots,2n,  \text{ and } i+j\equiv 1 \mod{2}}.
	\end{split}
\end{equation*}

\noindent (ii) {\textbf{Graph} $\widetilde{\Gamma}_{n,4}$}

For $n\geq 0$, we define the graph $\widetilde{\Gamma}_{n,4}$ as follows:
\begin{equation*}
	\begin{split}
		V(\widetilde{\Gamma}_{n,4}) = & V(\Gamma_{n,4})\cup\{g_{1\,2n+2},g_{2\,2n+2},g_{3\,2n+2}\},\\
		E(\widetilde{\Gamma}_{n,4}) = & E(\Gamma_{n,4})\cup \{g_{1\,2n+2}g_{2\,2n+2}, g_{2\,2n+2}g_{3\,2n+2}, g_{1\,2n+1}g_{1\,2n+2}\}\\
		& \cup \{g_{2\,2n+1}g_{2\,2n+2}, g_{3\,2n+1}g_{3\,2n+2},g_{2\,2n+2}g_{3\,2n+1},g_{3\,2n+2}g_{2\,2n+1}\}.
	\end{split}
\end{equation*}

\begin{figure}[ht]
	\begin{subfigure}[b]{0.4\textwidth}
		\centering
		\begin{tikzpicture}[scale=0.3]
			\begin{scope}
				\foreach \x in {1,2,...,8}
				{
					\node[vertex] (1\x) at (2*\x,3*2) {};
				}
				\foreach \x in {1,2,...,8}
				{
					\node[vertex] (2\x) at (2*\x,2*2) {};
				}
				\foreach \x in {1,2,...,8}
				{
					\node[vertex] (3\x) at (2*\x,1*2) {};
				}

				\foreach \y [evaluate={\y as \x using int({\y+1})}] in {1,2,...,4,6,7} 
				{
					\draw[edge] (1\y) -- (1\x);
					\draw[edge] (2\y) -- (2\x);
					\draw[edge] (3\y) -- (3\x);
				}
				
				\foreach \z in {1,2,3,4,5,...,8}
				{\draw[edge] (1\z) -- (2\z);
					\draw[edge] (2\z) -- (3\z);
				}
				
				\foreach \z [evaluate={\znext=int({1+\z})}] in {2,4,7}
				{
					\draw[edge] (1\z) -- (2\znext);
				}
				\foreach \z [evaluate={\znext=int({\z-1})}] in {3,5,8}
				{
					\draw[edge] (1\z) -- (2\znext);
				}
				
				\foreach \z [evaluate={\znext=int({1+\z})}] in {1,3,6}
				{
					\draw[edge] (2\z) -- (3\znext);
				}
				\foreach \z [evaluate={\znext=int({\z-1})}] in {2,4,7}
				{
					\draw[edge] (2\z) -- (3\znext);
				}
				\coordinate (c15) at (15);
				\coordinate (c16) at (16);

				\draw[dotted, thick,shorten >=4pt, shorten <=4pt] (10,5) -- (12,5);
				\draw[dotted, thick,shorten >=4pt, shorten <=4pt] (10,3) -- (12,3);
				
				\draw[decoration={brace,raise=5pt,mirror},decorate]
				(2,1) -- node[below=6pt] {\small $1$}(6,1);
				\draw[decoration={brace,raise=5pt,mirror},decorate]
				(6,1) -- node[below=6pt] {\small $2$}(10,1);
				\draw[decoration={brace,raise=5pt,mirror},decorate]
				(12,1) -- node[below=6pt] {\small $n$}(16,1);
				
				\foreach \x in {1,2,...,5}
				{
					\node[label={[label distance=-3pt]90:\scriptsize{$g_{1\x}$}}] at (1\x) {};
					\node[label={[label distance=-3pt]-90:\scriptsize{$g_{3\x}$}}] at (3\x) {};
				}
				\node[label={[label distance=-3pt]180:\scriptsize{$g_{21}$}}] at (21) {};
				\node[label={[label distance=-3pt]90:\scriptsize{$\cdots$}}] at (16) {};
				\node[label={[label distance=-3pt]-90:\scriptsize{$\cdots$}}] at (36) {};
				
				\node[label={[label distance=-3pt]90:\scriptsize{$g_{1\,{2n+1}}$}}] at (18) {};
				\node[label={[label distance=-3pt]0:\scriptsize{$g_{2\,{2n+1}}$}}] at (28) {};
				\node[label={[label distance=-3pt]0:\scriptsize{$g_{3\,{2n+1}}$}}] at (38) {};
				
				\node[label={[label distance=-8pt]135:\scriptsize{$g_{22}$}}] at (22) {};
				\node[label={[label distance=-8pt]-135:\scriptsize{$g_{23}$}}] at (23) {};
				\node[label={[label distance=-8pt]135:\scriptsize{$g_{24}$}}] at (24) {};
				\node[label={[label distance=-8pt]-135:\scriptsize{$g_{25}$}}] at (25) {};
				
			\end{scope}
			
				
				
			
		\end{tikzpicture}
		\caption{$\Gamma_{n,4}$} \label{fig:Gn4}	
	\end{subfigure} 
	\begin{subfigure}[b]{0.4\textwidth}
		\begin{tikzpicture}[scale=0.3]
			\begin{scope}[shift={(-1,0)}]
				\foreach \x in {1,2,...,8,9}
				{
					\node[vertex] (1\x) at (2*\x,3*2) {};
				}
				\foreach \x in {1,2,...,8,9}
				{
					\node[vertex] (2\x) at (2*\x,2*2) {};
				}
				\foreach \x in {1,2,...,8,9}
				{
					\node[vertex] (3\x) at (2*\x,1*2) {};
				}

				\foreach \y [evaluate={\y as \x using int({\y+1})}] in {1,2,...,4,6,7,8} 
				{
					\draw[edge] (1\y) -- (1\x);
					\draw[edge] (2\y) -- (2\x);
					\draw[edge] (3\y) -- (3\x);
				}
				
				\foreach \z in {1,2,3,4,5,...,8,9}
				{\draw[edge] (1\z) -- (2\z);
					\draw[edge] (2\z) -- (3\z);
				}
				
				\foreach \z [evaluate={\znext=int({1+\z})}] in {2,4,7}
				{
					\draw[edge] (1\z) -- (2\znext);
				}
				\foreach \z [evaluate={\znext=int({\z-1})}] in {3,5,8}
				{
					\draw[edge] (1\z) -- (2\znext);
				}
				
				\foreach \z [evaluate={\znext=int({1+\z})}] in {1,3,6}
				{
					\draw[edge] (2\z) -- (3\znext);
				}
				\foreach \z [evaluate={\znext=int({\z-1})}] in {2,4,7}
				{
					\draw[edge] (2\z) -- (3\znext);
				}
				\coordinate (c15) at (15);
				\coordinate (c16) at (16);
				
				\draw[edge] (28) -- (39);
				\draw[edge] (29) -- (38);

				\draw[dotted, thick,shorten >=4pt, shorten <=4pt] (10,5) -- (12,5);
				\draw[dotted, thick,shorten >=4pt, shorten <=4pt] (10,3) -- (12,3);
				
				\draw[decoration={brace,raise=5pt,mirror},decorate]
				(2,1) -- node[below=6pt] {\small $1$}(6,1);
				\draw[decoration={brace,raise=5pt,mirror},decorate]
				(6,1) -- node[below=6pt] {\small $2$}(10,1);
				\draw[decoration={brace,raise=5pt,mirror},decorate]
				(12,1) -- node[below=6pt] {\small $n$}(16,1);
				
				\foreach \x in {1,2,...,5}
				{
					\node[label={[label distance=-3pt]90:\scriptsize{$g_{1\x}$}}] at (1\x) {};
					\node[label={[label distance=-3pt]-90:\scriptsize{$g_{3\x}$}}] at (3\x) {};
				}
				\node[label={[label distance=-3pt]180:\scriptsize{$g_{21}$}}] at (21) {};
				\node[label={[label distance=-3pt]90:\scriptsize{$\cdots$}}] at (16) {};
				\node[label={[label distance=-3pt]-90:\scriptsize{$\cdots$}}] at (36) {};
				
				\node[label={[label distance=-3pt]90:\scriptsize{$g_{1\,{2n+2}}$}}] at (19) {};
				\node[label={[label distance=-3pt]0:\scriptsize{$g_{2\,{2n+2}}$}}] at (29) {};
				\node[label={[label distance=-3pt]0:\scriptsize{$g_{3\,{2n+2}}$}}] at (39) {};
				
				\node[label={[label distance=-8pt]135:\scriptsize{$g_{22}$}}] at (22) {};
				\node[label={[label distance=-8pt]-135:\scriptsize{$g_{23}$}}] at (23) {};
				\node[label={[label distance=-8pt]135:\scriptsize{$g_{24}$}}] at (24) {};
				\node[label={[label distance=-8pt]-135:\scriptsize{$g_{25}$}}] at (25) {};
				
			\end{scope}
			
				
				

				
				
		\end{tikzpicture}
		\caption{$\widetilde{\Gamma}_{n,4}$} \label{fig:tildeGamman4}	
	\end{subfigure}
	\caption{}
\end{figure}


\noindent (iii) {\textbf{Graph} $A_{n,4}$}

For $n\geq 0$, we define the graph $A_{n,4}$ as follows:

$V(A_{n,4}) =  V(\Gamma_{n,4})\cup\{a_1\}$ and $
E(A_{n,4}) =  E(\Gamma_{n,4})\cup \{a_1g_{11}, a_1g_{21}\}.$


\medskip



\begin{figure}[h]
\begin{subfigure}[]{0.4\textwidth}
	\centering
	\begin{tikzpicture}[scale=0.3]
		\begin{scope}
			\foreach \x in {1,2,...,8}
			{
				\node[vertex] (1\x) at (2*\x,3*2) {};
			}
			\foreach \x in {1,2,...,8}
			{
				\node[vertex] (2\x) at (2*\x,2*2) {};
			}
			\foreach \x in {1,2,...,8}
			{
				\node[vertex] (3\x) at (2*\x,1*2) {};
			}

			\foreach \y [evaluate={\y as \x using int({\y+1})}] in {1,2,...,4,6,7} 
			{
				\draw[edge] (1\y) -- (1\x);
				\draw[edge] (2\y) -- (2\x);
				\draw[edge] (3\y) -- (3\x);
			}
			
			\foreach \z in {1,2,3,4,5,...,8}
			{\draw[edge] (1\z) -- (2\z);
				\draw[edge] (2\z) -- (3\z);
			}
			
			\foreach \z [evaluate={\znext=int({1+\z})}] in {2,4,7}
			{
				\draw[edge] (1\z) -- (2\znext);
			}
			\foreach \z [evaluate={\znext=int({\z-1})}] in {3,5,8}
			{
				\draw[edge] (1\z) -- (2\znext);
			}
			
			\foreach \z [evaluate={\znext=int({1+\z})}] in {1,3,6}
			{
				\draw[edge] (2\z) -- (3\znext);
			}
			\foreach \z [evaluate={\znext=int({\z-1})}] in {2,4,7}
			{
				\draw[edge] (2\z) -- (3\znext);
			}
			\coordinate (c15) at (15);
			\coordinate (c16) at (16);

			\draw[dotted, thick,shorten >=4pt, shorten <=4pt] (10,5) -- (12,5);
			\draw[dotted, thick,shorten >=4pt, shorten <=4pt] (10,3) -- (12,3);
			
			\draw[decoration={brace,raise=5pt,mirror},decorate]
			(2,1) -- node[below=6pt] {\small $1$}(6,1);
			\draw[decoration={brace,raise=5pt,mirror},decorate]
			(6,1) -- node[below=6pt] {\small $2$}(10,1);
			\draw[decoration={brace,raise=5pt,mirror},decorate]
			(12,1) -- node[below=6pt] {\small $n$}(16,1);
			
			\foreach \x in {1,2,...,5}
			{
				\node[label={[label distance=-3pt]90:\scriptsize{$g_{1\x}$}}] at (1\x) {};
				\node[label={[label distance=-3pt]-90:\scriptsize{$g_{3\x}$}}] at (3\x) {};
			}
           
			\node[label={[label distance=-3pt]180:\scriptsize{$g_{21}$}}] at (21) {};
			\node[label={[label distance=-3pt]90:\scriptsize{$\cdots$}}] at (16) {};
			\node[label={[label distance=-3pt]-90:\scriptsize{$\cdots$}}] at (36) {};
			
			\node[label={[label distance=-3pt]90:\scriptsize{$g_{1\,{2n+1}}$}}] at (18) {};
			\node[label={[label distance=-3pt]0:\scriptsize{$g_{2\,{2n+1}}$}}] at (28) {};
			\node[label={[label distance=-3pt]0:\scriptsize{$g_{3\,{2n+1}}$}}] at (38) {};
			
			\node[label={[label distance=-8pt]135:\scriptsize{$g_{22}$}}] at (22) {};
			\node[label={[label distance=-8pt]-135:\scriptsize{$g_{23}$}}] at (23) {};
			\node[label={[label distance=-8pt]135:\scriptsize{$g_{24}$}}] at (24) {};
			\node[label={[label distance=-8pt]-135:\scriptsize{$g_{25}$}}] at (25) {};
		\end{scope}

		\begin{scope}[shift={(2,0)}]
			\node[vertex] (a) at (-2,6) {} ;
			
			\draw[edge] (a) -- (11) ;
			\draw[edge] (a) -- (21) ;
			
			 \node[label={[label distance=-3pt]180:\scriptsize{$a_1$}}] at (a) {};   
		\end{scope}
		
			
			
			

		
	\end{tikzpicture}
	\caption{$A_{n,4}$} \label{fig:An4}	
\end{subfigure}
\begin{subfigure}[]{0.4\textwidth}
	\centering
	\begin{tikzpicture}[scale=0.3]
		\begin{scope}
			\foreach \x in {1,2,...,8,9}
			{
				\node[vertex] (1\x) at (2*\x,3*2) {};
			}
			\foreach \x in {1,2,...,8,9}
			{
				\node[vertex] (2\x) at (2*\x,2*2) {};
			}
			\foreach \x in {1,2,...,8,9}
			{
				\node[vertex] (3\x) at (2*\x,1*2) {};
			}

			\foreach \y [evaluate={\y as \x using int({\y+1})}] in {1,2,...,4,6,7} 
			{
				\draw[edge] (1\y) -- (1\x);
				\draw[edge] (2\y) -- (2\x);
				\draw[edge] (3\y) -- (3\x);
			}
			
			\foreach \z in {1,2,3,4,5,...,8}
			{\draw[edge] (1\z) -- (2\z);
				\draw[edge] (2\z) -- (3\z);
			}
			
			\foreach \z [evaluate={\znext=int({1+\z})}] in {2,4,7}
			{
				\draw[edge] (1\z) -- (2\znext);
			}
			\foreach \z [evaluate={\znext=int({\z-1})}] in {3,5,8}
			{
				\draw[edge] (1\z) -- (2\znext);
			}
			
			\foreach \z [evaluate={\znext=int({1+\z})}] in {1,3,6}
			{
				\draw[edge] (2\z) -- (3\znext);
			}
			\foreach \z [evaluate={\znext=int({\z-1})}] in {2,4,7}
			{
				\draw[edge] (2\z) -- (3\znext);
			}
			\coordinate (c15) at (15);
			\coordinate (c16) at (16);

			
			\draw[edge] (19) -- (29);
			\draw[edge] (29) -- (39);
			\draw[edge] (19) -- (18);
			\draw[edge] (29) -- (28);
			\draw[edge] (29) -- (38);
			\draw[edge] (39) -- (38);
			\draw[edge] (39) -- (28);

			\draw[dotted, thick,shorten >=4pt, shorten <=4pt] (10,5) -- (12,5);
			\draw[dotted, thick,shorten >=4pt, shorten <=4pt] (10,3) -- (12,3);
			
			\draw[decoration={brace,raise=5pt,mirror},decorate]
			(2,1) -- node[below=6pt] {\small $1$}(6,1);
			\draw[decoration={brace,raise=5pt,mirror},decorate]
			(6,1) -- node[below=6pt] {\small $2$}(10,1);
			\draw[decoration={brace,raise=5pt,mirror},decorate]
			(12,1) -- node[below=6pt] {\small $n$}(16,1);
			
			\foreach \x in {1,2,...,5}
			{
				\node[label={[label distance=-3pt]90:\scriptsize{$g_{1\x}$}}] at (1\x) {};
				\node[label={[label distance=-3pt]-90:\scriptsize{$g_{3\x}$}}] at (3\x) {};
			}
			\node[label={[label distance=-3pt]180:\scriptsize{$g_{21}$}}] at (21) {};
			\node[label={[label distance=-3pt]90:\scriptsize{$\cdots$}}] at (16) {};
			\node[label={[label distance=-3pt]-90:\scriptsize{$\cdots$}}] at (36) {};
			
			\node[label={[label distance=-3pt]90:\scriptsize{$g_{1\,{2n+2}}$}}] at (19) {};
			\node[label={[label distance=-3pt]0:\scriptsize{$g_{2\,{2n+2}}$}}] at (29) {};
			\node[label={[label distance=-3pt]0:\scriptsize{$g_{3\,{2n+2}}$}}] at (39) {};
			
			\node[label={[label distance=-8pt]135:\scriptsize{$g_{22}$}}] at (22) {};
			\node[label={[label distance=-8pt]-135:\scriptsize{$g_{23}$}}] at (23) {};
			\node[label={[label distance=-8pt]135:\scriptsize{$g_{24}$}}] at (24) {};
			\node[label={[label distance=-8pt]-135:\scriptsize{$g_{25}$}}] at (25) {};
		\end{scope}

		\begin{scope}[shift={(2,0)}]
			\node[vertex] (a1) at (-2,6) {} ;
			
			\draw[edge] (a1) -- (11) ;
			\draw[edge] (a1) -- (21) ;
			
			\node[label={[label distance=-3pt]180:\scriptsize{$a_1$}}] at (a1) {};   
		\end{scope}
		
			
			
			
			

		
	\end{tikzpicture}
	\caption{$\widetilde{A}_{n,4}$} \label{fig:Bn4}	
\end{subfigure}
\caption{}
\end{figure}

\noindent (iv) {\textbf{Graph} $\widetilde{A}_{n,4}$}

For $n\geq 0$, we define the graph $\widetilde{A}_{n,4}$ as follows:

$ V(\widetilde{A}_{n,4}) =  V(\widetilde{\Gamma}_{n,4})\cup\{a_1\}$ and $
E(\widetilde{A}_{n,4}) =  E(\widetilde{\Gamma}_{n,4})\cup \{a_1g_{11},a_1g_{21} \}.$

\medskip

\noindent (v) \textbf{Graph }$B_{n,4}$

For $n\geq 0$, we define the graph $B_{n,4}$ as follows:

$V(B_{n,4}) =  V(\Gamma_{n,4})\cup\{b_1\}$ and 
$E(B_{n,4}) =  E(\Gamma_{n,4})\cup \{b_1g_{11}, b_1g_{21}, b_1g_{31}\}$.  

\medskip

\noindent(vi) \textbf{Graph }$\widetilde{B}_{n,4}$

For $n\geq 0$, we define the graph $\widetilde{B}_{n,4}$ as follows:

$V(\widetilde{B}_{n,4}) =  V(\widetilde{\Gamma}_{n,4})\cup\{b_1\}$ and 
$E(\widetilde{B}_{n,4}) = E(\widetilde{\Gamma}_{n,4})\cup \{b_1g_{11},b_1g_{21},b_1g_{31}\}$.


	\begin{figure}[ht]
		\begin{tikzpicture}[scale=0.3]
			\begin{scope}
				\foreach \x in {1,2,...,8}
				{
					\node[vertex] (1\x) at (3*\x,3*2) {};
				}
				\foreach \x in {1,2,...,8}
				{
					\node[vertex] (2\x) at (3*\x,2*2) {};
				}
				\foreach \x in {1,2,...,8}
				{
					\node[vertex] (3\x) at (3*\x,1*2) {};
				}

				\foreach \y [evaluate={\y as \x using int({\y+1})}] in {1,2,...,4,6,7} 
				{
					\draw[edge] (1\y) -- (1\x);
					\draw[edge] (2\y) -- (2\x);
					\draw[edge] (3\y) -- (3\x);
				}
				
				\foreach \z in {1,2,3,4,5,...,8}
				{\draw[edge] (1\z) -- (2\z);
					\draw[edge] (2\z) -- (3\z);
				}
				
				\foreach \z [evaluate={\znext=int({1+\z})}] in {2,4,7}
				{
					\draw[edge] (1\z) -- (2\znext);
				}
				\foreach \z [evaluate={\znext=int({\z-1})}] in {3,5,8}
				{
					\draw[edge] (1\z) -- (2\znext);
				}
				
				\foreach \z [evaluate={\znext=int({1+\z})}] in {1,3,6}
				{
					\draw[edge] (2\z) -- (3\znext);
				}
				\foreach \z [evaluate={\znext=int({\z-1})}] in {2,4,7}
				{
					\draw[edge] (2\z) -- (3\znext);
				}
				\coordinate (c15) at (15);
				\coordinate (c16) at (16);

				\draw[dotted, thick,shorten >=4pt, shorten <=4pt] (15.5,5) -- (17.5,5);
				\draw[dotted, thick,shorten >=4pt, shorten <=4pt] (15.5,3) -- (17.5,3);
				
					\draw[decoration={brace,raise=5pt,mirror},decorate]
				(3,1) -- node[below=6pt] {\small $1$}(9,1);
				\draw[decoration={brace,raise=5pt,mirror},decorate]
				(9,1) -- node[below=6pt] {\small $2$}(15,1);
				\draw[decoration={brace,raise=5pt,mirror},decorate]
				(18,1) -- node[below=6pt] {\small $n$}(24,1);
				\foreach \x in {1,2,...,5}
				{
					\node[label={[label distance=-3pt]90:\scriptsize{$g_{1\x}$}}] at (1\x) {};
					\node[label={[label distance=-3pt]-90:\scriptsize{$g_{3\x}$}}] at (3\x) {};
				}
				\node[label={[label distance=-8pt]-110:\scriptsize{$g_{21}$}}] at (21) {};
				\node[label={[label distance=-3pt]90:\scriptsize{$\cdots$}}] at (16) {};
				\node[label={[label distance=-3pt]-90:\scriptsize{$\cdots$}}] at (36) {};
				
				\node[label={[label distance=-3pt]90:\scriptsize{$g_{1\,{2n+1}}$}}] at (18) {};
				\node[label={[label distance=-3pt]0:\scriptsize{$g_{2\,{2n+1}}$}}] at (28) {};
				\node[label={[label distance=-3pt]0:\scriptsize{$g_{3\,{2n+1}}$}}] at (38) {};
				
				\node[label={[label distance=-8pt]135:\scriptsize{$g_{22}$}}] at (22) {};
				\node[label={[label distance=-8pt]-135:\scriptsize{$g_{23}$}}] at (23) {};
				\node[label={[label distance=-8pt]135:\scriptsize{$g_{24}$}}] at (24) {};
				\node[label={[label distance=-8pt]-135:\scriptsize{$g_{25}$}}] at (25) {};
				
			\end{scope}
			\begin{scope}[shift={(2,0)}]
				\node[vertex] (c1) at (-2,4) {} ;
				
				\draw[edge] (c1) -- (11) ;
				\draw[edge] (c1) -- (21) ;
				\draw[edge] (c1) -- (31) ;
				
				\node[label={[label distance=-3pt]180:\scriptsize{$b_1$}}] at (c1) {};   
			\end{scope}

		\end{tikzpicture}
		\caption{$B_{n,4}$} \label{fig:Cn4}	
	\end{figure}

	\begin{figure}[ht]
		\begin{tikzpicture}[scale=0.3]
			\begin{scope}
				\foreach \x in {1,2,...,8,9}
				{
					\node[vertex] (1\x) at (3*\x,3*2) {};
				}
				\foreach \x in {1,2,...,8,9}
				{
					\node[vertex] (2\x) at (3*\x,2*2) {};
				}
				\foreach \x in {1,2,...,8,9}
				{
					\node[vertex] (3\x) at (3*\x,1*2) {};
				}

				\foreach \y [evaluate={\y as \x using int({\y+1})}] in {1,2,...,4,6,7,8} 
				{
					\draw[edge] (1\y) -- (1\x);
					\draw[edge] (2\y) -- (2\x);
					\draw[edge] (3\y) -- (3\x);
				}
				
				\foreach \z in {1,2,3,4,5,...,8,9}
				{\draw[edge] (1\z) -- (2\z);
					\draw[edge] (2\z) -- (3\z);
				}
				
				\foreach \z [evaluate={\znext=int({1+\z})}] in {2,4,7}
				{
					\draw[edge] (1\z) -- (2\znext);
				}
				\foreach \z [evaluate={\znext=int({\z-1})}] in {3,5,8}
				{
					\draw[edge] (1\z) -- (2\znext);
				}
				
				\foreach \z [evaluate={\znext=int({1+\z})}] in {1,3,6,8}
				{
					\draw[edge] (2\z) -- (3\znext);
				}
				\foreach \z [evaluate={\znext=int({\z-1})}] in {2,4,7,9}
				{
					\draw[edge] (2\z) -- (3\znext);
				}
				\coordinate (c15) at (15);
				\coordinate (c16) at (16);

		\draw[dotted, thick,shorten >=4pt, shorten <=4pt] (15.5,5) -- (17.5,5);
	\draw[dotted, thick,shorten >=4pt, shorten <=4pt] (15.5,3) -- (17.5,3);

					\draw[decoration={brace,raise=5pt,mirror},decorate]
				(3,1) -- node[below=6pt] {\small $1$}(9,1);
				\draw[decoration={brace,raise=5pt,mirror},decorate]
				(9,1) -- node[below=6pt] {\small $2$}(15,1);
				\draw[decoration={brace,raise=5pt,mirror},decorate]
				(18,1) -- node[below=6pt] {\small $n$}(24,1);
				
				\foreach \x in {1,2,...,5}
				{
					\node[label={[label distance=-3pt]90:\scriptsize{$g_{1\x}$}}] at (1\x) {};
					\node[label={[label distance=-3pt]-90:\scriptsize{$g_{3\x}$}}] at (3\x) {};
				}
				\node[label={[label distance=-8pt]-110:\scriptsize{$g_{21}$}}] at (21) {};
				\node[label={[label distance=-3pt]90:\scriptsize{$\cdots$}}] at (16) {};
				\node[label={[label distance=-3pt]-90:\scriptsize{$\cdots$}}] at (36) {};
				
				\node[label={[label distance=-3pt]90:\scriptsize{$g_{1\,{2n+2}}$}}] at (19) {};
				\node[label={[label distance=-3pt]0:\scriptsize{$g_{2\,{2n+2}}$}}] at (29) {};
				\node[label={[label distance=-3pt]0:\scriptsize{$g_{3\,{2n+2}}$}}] at (39) {};
				
				\node[label={[label distance=-8pt]135:\scriptsize{$g_{22}$}}] at (22) {};
				\node[label={[label distance=-8pt]-135:\scriptsize{$g_{23}$}}] at (23) {};
				\node[label={[label distance=-8pt]135:\scriptsize{$g_{24}$}}] at (24) {};
				\node[label={[label distance=-8pt]-135:\scriptsize{$g_{25}$}}] at (25) {};
				
			\end{scope}
			\begin{scope}[shift={(2,0)}]
				\node[vertex] (d1) at (-2,4) {} ;
				
				\draw[edge] (d1) -- (11) ;
				\draw[edge] (d1) -- (21) ;
				\draw[edge] (d1) -- (31) ;
				
				\node[label={[label distance=-3pt]180:\scriptsize{$b_1$}}] at (d1) {};   
			\end{scope}

		\end{tikzpicture}
		\caption{$\widetilde{B}_{n,4}$} \label{fig:Dn4}	
	\end{figure}

\subsection{\texorpdfstring{Independence complexes of  $\Gamma_{n,4}$ and $\widetilde{\Gamma}_{n,4}$}{Independence complexes of Γn,4 and Λn,5}}{\label{subsection:Gamma4 and Lembda4}}

In this section,  we  establish recursive relations among the independence complexes of the graphs in 
$\{\Gamma_{n,4},\widetilde{\Gamma}_{n,4},A_{n,4}, \widetilde{A}_{n,4}, B_{n,4},\widetilde{B}_{n,4}\}$.

\begin{claim}\label{Gamman4} For $n\geq 0$,
	
	\begin{equation*}
		\ind(\Gamma_{n,4}) \simeq \begin{cases}
			\mathbb{S}^0 & \text{if $n =0$,} \\
			\mathbb{S}^1\vee \mathbb{S}^1  & \text{if $n =1$,} \\
			\Sigma \ind(A_{n-1,4}) \vee \Sigma^{2} \ind(\widetilde{\Gamma}_{n-2,4})
			& \text{if $n \geq 2$.}
		\end{cases}
	\end{equation*}
	
\end{claim}

\begin{proof}
	Since $\Gamma_{0,4}$ is a path on three vertices (see \Cref{fig:Gn4}), from \Cref{Ind(path)}, we get $\ind(\Gamma_{0,4})\simeq \mathbb{S}^0$. Now, consider $n\geq 1$. Since $N(g_{11}) \subseteq N(g_{22})$ in the graph $\Gamma_{n,4}$, we have $\ind(\Gamma_{n,4}) \simeq \ind(\Gamma_{n,4} \setminus g_{22})$. Let $\Gamma_{n,4}^1 \coloneq\Gamma_{n,4} \setminus g_{22}$. Since $g_{31}$ is a simplicial vertex in $\Gamma_{n,4}^1$ with $N(g_{31})=\{g_{21},g_{32}\}$,  \Cref{Simplicial Vertex Lemma} implies that $ \ind(\Gamma_{n,4}^1) \simeq \Sigma \ind(\Gamma_{n,4}^1 \setminus \n[g_{21}]) \vee \Sigma \ind(\Gamma_{n,4}^1 \setminus \n[g_{32}])$. Observe that the graph $\Gamma_{n,4}^1 \setminus \n[g_{21}]$ is isomorphic to $A_{n-1,4}$ (see \Cref{fig:An4}). Therefore, $\ind(\Gamma_{n,4}^1)\simeq \Sigma \ind(A_{n-1,4}) \vee \Sigma \ind(\Gamma_{n,4}^1 \setminus \n[g_{32}])$. In case of $n=1$, the graph $\Gamma_{1,4}^1 \setminus \n[g_{32}]$ is isomorphic to $A_{0,4}$ (see \Cref{fig:An4}). Therefore, $\ind(\Gamma_{1,4}^1)\simeq \Sigma \ind(A_{0,4}) \vee \Sigma \ind(A_{0,4})$. Using \Cref{An in pnp4}, we get $\ind(\Gamma_{1,4})\simeq \mathbb{S}^1\vee \mathbb{S}^1$.  
	
	Now, consider $n\geq 2$. Since $g_{11}$ is a simplicial vertex and $g_{12}$ is the only neighbor of $g_{11}$ in the graph $\Gamma_{n,4}^1 \setminus \n[g_{32}]$, we get  $ \ind(\Gamma_{n,4}^1 \setminus \n[g_{32}])\simeq \Sigma\ind(\Gamma_{n,4}^1 \setminus \n[g_{32},g_{12}])$. Observe that  the graph $\Gamma_{n,4}^1 \setminus \n[g_{32},g_{12}]$ is isomorphic to $\widetilde{\Gamma}_{n-2,4}$ ( see \Cref{fig:tildeGamman4}) and therefore  $\ind(\Gamma_{n,4}^1 \setminus \n[g_{32}]) \simeq \Sigma\ind(\widetilde{\Gamma}_{n-2,4})$. Thus, $\ind(\Gamma_{n,4}) \simeq \Sigma \ind(A_{n-1,4}) \vee \Sigma^{2} \ind(\widetilde{\Gamma}_{n-2,4})$.
\end{proof}

\begin{claim}\label{Gamma'n4} For $n\geq 0$,
	\begin{equation*}
		\ind(\widetilde{\Gamma}_{n,4}) \simeq \begin{cases}
			\mathbb{S}^1 & \text{if $n =0$,} \\
            \vee^3\mathbb{S}^2 & \text{if $n =1$,} \\
			\Sigma \ind(\widetilde{A}_{n-1,4}) \vee \Sigma^{2}\ind(\Gamma_{n-2,4})
			& \text{if $n \geq 2$.}
		\end{cases}
	\end{equation*}
\end{claim}
\begin{proof}
	Since $N(g_{11})\subseteq N(g_{22})$ in the graph $\widetilde{\Gamma}_{n,4}$ (see \Cref{fig:tildeGamman4}), $\ind(\widetilde{\Gamma}_{n,4}) \simeq \ind(\widetilde{\Gamma}_{n,4} \setminus g_{22})$. If  $n=0$, then  $N(g_{12})\subseteq N(g_{21})$ in $\widetilde{\Gamma}_{0,4} \setminus g_{22}$. Hence $\ind(\widetilde{\Gamma}_{0,4}) \simeq \ind(\widetilde{\Gamma}_{0,4} \setminus \{g_{21},g_{22}\})$. Now, $\widetilde{\Gamma}_{0,4} \setminus \{g_{21},g_{22}\}$ is the disjoint union of two edges $g_{11}g_{12}$ and $g_{31}g_{32}$. Therefore, $\ind(\widetilde{\Gamma}_{0,4}) \simeq \mathbb{S}^1$.

If $n=1$, then $N(g_{14})\subseteq N(g_{23})$ in the graph $\widetilde{\Gamma}_{1,4}\setminus g_{22}$. Thus, $\ind(\widetilde{\Gamma}_{1,4}) \simeq \ind(\widetilde{\Gamma}_{1,4} \setminus \{g_{22},g_{23}\})$. Let $\widetilde{\Gamma}_{1,4}^1:=\widetilde{\Gamma}_{1,4} \setminus \{g_{22},g_{23}\}$. Then $g_{31}$ is a simplicial vertex in $\widetilde{\Gamma}_{1,4}^1$ with $N(g_{31})=\{g_{21}, g_{32}\}$. From \Cref{Simplicial Vertex Lemma}, $\ind(\widetilde{\Gamma}_{1,4}^1)\simeq \Sigma \ind(\widetilde{\Gamma}_{1,4}^1\setminus \n[g_{21}])\vee \Sigma \ind(\widetilde{\Gamma}_{1,4}^1\setminus \n[g_{32}])$. Since the graph $\widetilde{\Gamma}_{1,4}^1\setminus \n[g_{32}]$ is a path on $6$ vertices, $\ind(\widetilde{\Gamma}_{1,4}^1\setminus \n[g_{32}])\simeq \Sp^1$. In the graph 
$\widetilde{\Gamma}_{1,4}^1\setminus \n[g_{21}]$, we have $N(g_{12})\subseteq N(g_{14})$. Thus from \Cref{Folding Lemma}, $\ind(\widetilde{\Gamma}_{1,4}^1\setminus \n[g_{21}])\simeq \ind((\widetilde{\Gamma}_{1,4}^1\setminus \n[g_{21}])\cup\{g_{14}\})$. Observe that $(\widetilde{\Gamma}_{1,4}^1\setminus \n[g_{21}])\cup\{g_{14}\}$ is disjoint union of an edge $\{g_{12}g_{13}\}$ and a cycle of length $3$. This implies that $\ind((\widetilde{\Gamma}_{1,4}^1\setminus \n[g_{21}])\cup\{g_{14}\})\simeq \Sp^1 \vee \Sp^1$. Therefore, $\ind(\widetilde{\Gamma}_{1,4}^1)\simeq \bigvee^{3} \Sp^2$. This implies that $\ind(\widetilde{\Gamma}_{1,4})\simeq \vee^{3} \Sp^2$.
   
	Now, let  $n\geq 2$. Following similar reduction to the \Cref{Gamman4}, we get 
		$ \ind(\widetilde{\Gamma}_{n,4}) \simeq \Sigma \ind(\widetilde{A}_{n-1,4}) \vee \Sigma^{2}\ind(\Gamma_{n-2,4}).$
\end{proof}

\begin{claim} For $n \geq 0$, \label{An in pnp4} 
\begin{equation*}
	\ind(A_{n,4}) \simeq \begin{cases}
		\mathbb{S}^0 & \text{if $n =0$,} \\
		\mathbb{S}^1\vee \mathbb{S}^2 & \text{if $n =1$,} \\
		\Sigma \ind(A_{n-1,4}) \vee \Sigma^{2}\ind(\widetilde{A}_{n-2,4})\vee \Sigma^{3}\ind (B_{n-2,4})
		& \text{if $n \geq 2$.}
	\end{cases}
\end{equation*}
\end{claim}
\begin{proof}

\begin{figure}[h!]
	\centering
	\begin{subfigure}[b]{0.3\textwidth}
		\begin{tikzpicture}[scale=0.3]
			\begin{scope}
				\foreach \x in {3,4,...,5,6}
				{
					\node[vertex] (1\x) at (2*\x,3*2) {};
				}
				\foreach \x in {2,...,6}
				{
					\node[vertex] (2\x) at (2*\x,2*2) {};
				}
				\foreach \x in {1,2,...,6}
				{
					\node[vertex] (3\x) at (2*\x,1*2) {};
				}

				\foreach \y [evaluate={\y as \x using int({\y+1})}] in {3,5} 
				{
					\draw[edge] (1\y) -- (1\x);
					
				}
				\foreach \y [evaluate={\y as \x using int({\y+1})}] in {2,3,5} 
				{
					\draw[edge] (2\y) -- (2\x);
				}
				\foreach \y [evaluate={\y as \x using int({\y+1})}] in {1,2,3,5} 
				{
					\draw[edge] (3\y) -- (3\x);
				}

				\foreach \z in {3,4,5,6}
				{\draw[edge] (1\z) -- (2\z);
					\draw[edge] (2\z) -- (3\z);
				}
				\draw[edge] (22) -- (32);
				\draw[edge] (15) -- (26);
				
				\foreach \z [evaluate={\znext=int({\z-1})}] in {3,6}
				{
					\draw[edge] (1\z) -- (2\znext);
				}
				
				\foreach \z [evaluate={\znext=int({1+\z})}] in {3}
				{
					\draw[edge] (2\z) -- (3\znext);
				}
				\foreach \z [evaluate={\znext=int({\z-1})}] in {2,4}
				{
					\draw[edge] (2\z) -- (3\znext);
				}
				\coordinate (c15) at (15);
				\coordinate (c16) at (16);

				\draw[dotted, thick,shorten >=4pt, shorten <=4pt] (8,5) -- (10,5);
				\draw[dotted, thick,shorten >=4pt, shorten <=4pt] (8,3) -- (10,3);

				\node[label={[label distance=-3pt]90:\scriptsize{$g_{22}$}}] at (22) {};
				\foreach \x in {3,4}
				{
					\node[label={[label distance=-3pt]90:\scriptsize{$g_{1\x}$}}] at (1\x) {};
				}
				
				\node[label={[label distance=-3pt]90:\scriptsize{$g_{1\,{2n+1}}$}}] at (16) {};

				\node[label={[label distance=-3pt]270:\scriptsize{$g_{3\,{2n+1}}$}}] at (36) {};
				
				\node[label={[label distance=-3pt]270:\scriptsize{$g_{32}$}}] at (32) {};
				\node[label={[label distance=-3pt]180:\scriptsize{$g_{31}$}}] at (31) {};
			\end{scope}
		\end{tikzpicture}
		\caption{$A_{n,4}^1$}
		\label{subfig:An41}  
	\end{subfigure}      
	\begin{subfigure}[b]{0.3\textwidth}
		\begin{tikzpicture}[scale=0.3]    
			\begin{scope}
				\foreach \x in {3,...,5,6,7}
				{
					\node[vertex] (1\x) at (2*\x,3*2) {};
				}
				\foreach \x in {5,...,7}
				{
					\node[vertex] (2\x) at (2*\x,2*2) {};
				}
				\node[vertex] (23) at (6,2*2) {};
				\foreach \x in {4,...,7}
				{
					\node[vertex] (3\x) at (2*\x,1*2) {};
				}

				\foreach \y [evaluate={\y as \x using int({\y+1})}] in {3,...,4,6} 
				{
					\draw[edge] (1\y) -- (1\x);
					
				}
				\foreach \y [evaluate={\y as \x using int({\y+1})}] in {6} 
				{
					\draw[edge] (2\y) -- (2\x);
				}
				\foreach \y [evaluate={\y as \x using int({\y+1})}] in {4,6} 
				{
					\draw[edge] (3\y) -- (3\x);
				}

				\foreach \z in {5,...,7}
				{\draw[edge] (1\z) -- (2\z);
					\draw[edge] (2\z) -- (3\z);
				}
				\draw[edge] (13) -- (23);
				\foreach \z [evaluate={\znext=int({1+\z})}] in {4,6}
				{
					\draw[edge] (1\z) -- (2\znext);
				}
				
				\draw[edge] (17) -- (26);
				\draw[edge] (23) -- (34);

				\coordinate (c15) at (15);
				\coordinate (c16) at (16);

				\draw[dotted, thick,shorten >=4pt, shorten <=4pt] (10,5) -- (12,5);
				\draw[dotted, thick,shorten >=4pt, shorten <=4pt] (10,3) -- (12,3);

				\node[label={[label distance=-3pt]180:\scriptsize{$g_{23}$}}] at (23) {};
				\foreach \x in {3,...,5}
				{
					\node[label={[label distance=-3pt]90:\scriptsize{$g_{1\x}$}}] at (1\x) {};
				}
				
				\node[label={[label distance=-3pt]90:\scriptsize{$g_{1\,{2n+1}}$}}] at (17) {};

				\node[label={[label distance=-3pt]270:\scriptsize{$g_{3\,{2n+1}}$}}] at (37) {};
				
				\node[label={[label distance=-3pt]270:\scriptsize{$g_{34}$}}] at (34) {};
				\node[label={[label distance=-3pt]180:\scriptsize{$g_{25}$}}] at (25) {};
				
			\end{scope}
		\end{tikzpicture}
		\caption{$A_{n,4}^2$}
		\label{subfig:An42}
	\end{subfigure}
	\begin{subfigure}[b]{0.3\textwidth}
		\begin{tikzpicture}[scale=0.3]    
			\begin{scope}
				\foreach \x in {3,...,5,6,7}
				{
					\node[vertex] (1\x) at (2*\x,3*2) {};
				}
				\foreach \x in {5,...,7}
				{
					\node[vertex] (2\x) at (2*\x,2*2) {};
				}
				\node[vertex] (23) at (6,2*2) {};
				\foreach \x in {4,...,7}
				{
					\node[vertex] (3\x) at (2*\x,1*2) {};
				}

				\foreach \y [evaluate={\y as \x using int({\y+1})}] in {3,...,4,6} 
				{
					\draw[edge] (1\y) -- (1\x);
					
				}
				\foreach \y [evaluate={\y as \x using int({\y+1})}] in {6} 
				{
					\draw[edge] (2\y) -- (2\x);
				}
				\foreach \y [evaluate={\y as \x using int({\y+1})}] in {4,6} 
				{
					\draw[edge] (3\y) -- (3\x);
				}

				\foreach \z in {5,...,7}
				{\draw[edge] (1\z) -- (2\z);
					\draw[edge] (2\z) -- (3\z);
				}
				\draw[edge] (13) -- (23);
				\foreach \z [evaluate={\znext=int({1+\z})}] in {4,6}
				{
					\draw[edge] (1\z) -- (2\znext);
				}
				
				\draw[edge] (17) -- (26);
				\draw[edge] (23) -- (34);
				\draw[edge] (34) -- (15);
				\draw[edge] (34) -- (25);
				\coordinate (c15) at (15);
				\coordinate (c16) at (16);

				\draw[dotted, thick,shorten >=4pt, shorten <=4pt] (10,5) -- (12,5);
				\draw[dotted, thick,shorten >=4pt, shorten <=4pt] (10,3) -- (12,3);

				\node[label={[label distance=-3pt]180:\scriptsize{$g_{23}$}}] at (23) {};
				\foreach \x in {3,...,5}
				{
					\node[label={[label distance=-3pt]90:\scriptsize{$g_{1\x}$}}] at (1\x) {};
				}
				
				\node[label={[label distance=-3pt]90:\scriptsize{$g_{1\,{2n+1}}$}}] at (17) {};

				\node[label={[label distance=-3pt]270:\scriptsize{$g_{3\,{2n+1}}$}}] at (37) {};
				
				\node[label={[label distance=-3pt]270:\scriptsize{$g_{34}$}}] at (34) {};
				\node[label={[label distance=-3pt]180:\scriptsize{$g_{25}$}}] at (25) {};
				
			\end{scope}
		\end{tikzpicture}
		\caption{$A_{n,4}^3$}
		\label{subfig:An43}
	\end{subfigure}
	\caption{}
\end{figure} 
Since $\n(g_{31})\subseteq \n(a_1)\cap \n(g_{11})$ in the graph $A_{0,4}$ (see \Cref{fig:An4}), using \Cref{Folding Lemma}, we get  $\ind(A_{0,4})\simeq  \ind(A_{0,4}\setminus \{a_1,g_{11}\})\simeq \mathbb{S}^0$. Let $n\geq 1$.  Since $a_1$ is a simplicial vertex in $A_{n,4}$ with $\n(a_1)=\{g_{11},g_{21}\}$, from \Cref{Simplicial Vertex Lemma}, we get
\begin{align} \label{eq:An41}
	\ind(A_{n,4}) & \simeq \Sigma\ind(A_{n,4}\setminus \n[g_{11}])\vee \Sigma \ind(A_{n,4}\setminus \n[g_{21}]) \nonumber \\
	& \simeq  \Sigma \ind(A_{n,4}\setminus \n[g_{11}]) \vee  \Sigma \ind (A_{n-1,4}), 
\end{align} 
where the second homotopy equivalence follows from the observation that $A_{n,4}\setminus \n[g_{21}]$ is isomorphic to  $A_{n-1,4}$. Let $A_{n,4}^1\coloneq A_{n,4}\setminus \n[g_{11}]$ (see \Cref{subfig:An41}). Since $g_{31}$ is a simplicial vertex in the graph $A_{n,4}^1$ with $N(g_{31})=\{g_{22},g_{32}\}$, we have 
\begin{equation}\label{eq:An42}
	\ind(A_{n,4}^1)\simeq  \Sigma\ind(A_{n,4}^1\setminus \n[g_{22}])\vee \Sigma \ind(A_{n,4}^1\setminus \n[g_{32}]).
\end{equation}
In particular, for $n=1$, the graph $A_{1,4}^1\setminus \n[g_{22}]$ consists of a single vertex $g_{33}$, and $A_{1,4}^1\setminus \n[g_{32}]$ is a graph consisting of exactly one edge $g_{13}g_{23}$. Therefore, $\ind(A_{1,4}^1)\simeq \mathbb{S}^1 $, and it follows that $\ind(A_{1,4})\simeq\mathbb{S}^1\vee \mathbb{S}^2$. 

Now, consider $n\geq 2$. Observe that the graph $A_{n,4}^1\setminus \n[g_{22}]$ is isomorphic to $\widetilde{A}_{n-2,4}$ (see \Cref{fig:Bn4}). Therefore, $\ind(A_{n,4}^1)\simeq \Sigma \ind (\widetilde{A}_{n-2,4}) \vee \Sigma \ind(A_{n,4}^1\setminus \n[g_{32}])$.

We now compute $\ind(A_{n,4}^1\setminus \n[g_{32}])$. Since $N(g_{13})\subseteq N(g_{24})$ in the graph $A_{n,4}^1\setminus \n[g_{32}]$ (see \Cref{subfig:An41}), $\ind(A_{n,4}^1\setminus \n[g_{32}])\simeq \ind((A_{n,4}^1\setminus \n[g_{32}])\setminus g_{24}) $. Let $A^{2}_{n,4}:=(A_{n,4}^1\setminus \n[g_{32}])\setminus g_{24}$. Then $\ind(A_{n,4}^1\setminus \n[g_{32}])\simeq \ind(A^{2}_{n,4})$. Observe that $[g_{34},g_{25};g_{13}]$ and $[g_{34},g_{15};g_{13}]$ are edge-invariant triplets in the graphs $A^{2}_{n,4}$ (see \Cref{subfig:An42}) and $A^{2}_{n,4} + \{g_{34}g_{25}\}$, respectively. Therefore, $\ind(A^{2}_{n,4}) \simeq \ind(A^{2}_{n,4} +\{g_{34}g_{25}, g_{34}g_{15} \})$. Let the resultant graph that we obtain be $A^{3}_{n,4}$ (see \Cref{subfig:An43}). Then $\ind(A^{2}_{n,4})\simeq\ind(A^{3}_{n,4})$.
Since $[g_{23},g_{34};g_{14}]$ is an edge-invariant triplet in the graph $A^{3}_{n,4}$ (see \Cref{subfig:An43}), we have $\ind(A^{3}_{n,4})\simeq \ind(A^{3}_{n,4}-\{g_{23}g_{34}\})$. Furthermore, $N(g_{23})\subseteq N(g_{14})$ in the graph $A^{3}_{n,4}-\{g_{23}g_{34}\}$ implies that  $\ind(A^3_{n,4})\simeq \ind((A^3_{n,4}-\{g_{23}g_{34}\})\setminus g_{14})$. Since the graph $(A^3_{n,4}-\{g_{23}g_{34}\})\setminus g_{14}$ consists of two connected components: one isomorphic to the complete graph $K_2$ and the other component is isomorphic to the graph $B_{n-2,4}$ (see \Cref{fig:Cn4}), we obtain the following 
\begin{equation}\label{eq: An43}
	\ind(A^{1}_{n,4}\setminus \n[g_{32}])\simeq \ind(A^2_{n,4})\simeq \ind(A^3_{n,4})\simeq \Sigma\ind(B_{n-2,4}).
\end{equation}
Now, the result follows from (\ref{eq:An41}), (\ref{eq:An42}) and (\ref{eq: An43}).
\end{proof}

\begin{claim}  For $n \geq 0$,\label{Bn4}
\begin{equation*} 
\ind(\widetilde{A}_{n,4}) \simeq \begin{cases}
	\mathbb{S}^1\vee\mathbb{S}^1 & \text{if  $n =0$,} \\
	\vee^{3}\mathbb{S}^2 & \text{if $n =1$,} \\
	\Sigma \ind(\widetilde{A}_{n-1,4}) \vee \Sigma^{2}\ind(A_{n-2,4})\vee \Sigma^{3}\ind(\widetilde{B}_{n-2,4})
	& \text{if $n \geq 2$.}
\end{cases}
\end{equation*}
\end{claim}  
\begin{proof}
Since $a_1$ is a simplicial vertex in $\widetilde{A}_{n,4}$ (see \Cref{fig:Bn4}) with $\n(a_1)=\{g_{11},g_{21}\}$, we get $\ind(\widetilde{A}_{n,4})\simeq \Sigma\ind(\widetilde{A}_{n,4}\setminus \n[g_{21}])\vee \Sigma\ind(\widetilde{A}_{n,4}\setminus \n[g_{11}])$. The graph $\widetilde{A}_{0,4}\setminus \n[g_{21}]$ consists of an isolated vertex $g_{12}$, and $\widetilde{A}_{0,4}\setminus \n[g_{11}]$ is isomorphic to the complete graph $K_3$. Therefore,  $\ind(\widetilde{A}_{0,4})\simeq \mathbb{S}^1\vee \mathbb{S}^1$.

 In the case of $\widetilde{A}_{1,4}$, the graph $\widetilde{A}_{1,4}\setminus \n[g_{21}]$ is isomorphic to $\widetilde{A}_{0,4}$. Hence, $\ind(\widetilde{A}_{1,4}\setminus \n[g_{21}])\simeq \mathbb{S}^1\vee \mathbb{S}^1$. Let $\widetilde{A}^1_{1,4}\coloneq\widetilde{A}_{1,4}\setminus \n[g_{11}]$. Then, $g_{31}$ is a simplicial vertex in the graph $\widetilde{A}^1_{1,4}$ with $\n(g_{31})=\{g_{22},g_{32}\}$, implying that $\ind(\widetilde{A}^1_{1,4})\simeq \Sigma\ind(\widetilde{A}^1_{1,4} \setminus \n[g_{22}])\vee \Sigma\ind(\widetilde{A}^1_{1,4}\setminus \n[g_{32}])$. Since $\n(g_{13})\subseteq \n(g_{24})$ in the graph $\widetilde{A}^1_{1,4}\setminus \n[g_{32}]$,  we get $\ind(\widetilde{A}^1_{1,4}\setminus \n[g_{32}])\simeq \ind((\widetilde{A}^1_{1,4}\setminus \n[g_{32}])\setminus g_{24})$. However, the graph $(\widetilde{A}^1_{1,4}\setminus \n[g_{32}])\setminus g_{24}$ is a path on $4$ vertices, implying that $\ind(\widetilde{A}^1_{1,4}\setminus  \n[g_{32}])$ is contractible. On the other hand, the graph $\widetilde{A}^1_{1,4}\setminus \n[g_{22}]$ is isomorphic to $A_{0,4}$. Therefore, $\ind(\widetilde{A}^1_{1,4})\simeq \mathbb{S}^1$, and it follows that $\ind(\widetilde{A}_{1,4})\simeq \vee^{3} \mathbb{S}^2$.
For $n\geq2$, the proof of the reduction follows similar lines as that of \Cref{An in pnp4}.
\end{proof}

\begin{claim}\label{Dn4} For $n\geq 0$,
\begin{equation*}
\ind(B_{n,4}) \simeq \begin{cases}
	\mathbb{S}^0\vee\mathbb{S}^0 & \text{for $n =0$,} \\
	\mathbb{S}^1 & \text{for $n =1$,} \\
	\Sigma \ind(A_{n-1,4}) \vee \Sigma^{2}\ind(\widetilde{A}_{n-2,4})\vee \Sigma^{2}\ind(\widetilde{B}_{n-2,4})
	& \text{for $n \geq 2$.}
\end{cases}
\end{equation*}
\end{claim}
\begin{proof}
Since $\n(g_{11})\subseteq \n(g_{31})$ in the graph $B_{0,4}$ (see \Cref{fig:Cn4}),  we get $\ind(B_{0,4})\simeq \ind(B_{0,4}\setminus g_{31})$. However, the graph  $B_{0,4}\setminus g_{31}$ is isomorphic to $K_3$, and therefore  $\ind(B_{0,4})\simeq \mathbb{S}^0\vee \mathbb{S}^0$.

For $n\geq 1$, the inclusion map $\ind(B_{n,4} \setminus \n[g_{21}]) \hookrightarrow \ind(B_{n,4} \setminus g_{21})$ is null homotopic, because $\ind(B_{n,4} \setminus \n[g_{21}])\ast\{b_1\}\subseteq \ind(B_{n,4} \setminus g_{21})$. Since the graph $B_{n,4}\setminus \n[g_{21}]$ is isomorphic to $A_{n-1,4}$, from \Cref{Link and Deletion}, we get
\begin{equation*}\label{Dn}
\begin{split}
	\ind(B_{n,4})\simeq  \ind(B_{n,4}\setminus g_{21})\vee \Sigma \ind(B_{n,4}\setminus \n[g_{21}]) & \simeq  \ind(B_{n,4}\setminus g_{21})\vee \Sigma \ind(A_{n-1,4}).
\end{split}
\end{equation*}

\begin{figure}[h!]
\centering
\begin{subfigure}[b]{0.3\textwidth}
	\begin{tikzpicture}[scale=0.3]
		\begin{scope}
			\foreach \x in {1,2,...,5}
			{
				\node[vertex] (1\x) at (2*\x,3*2) {};
			}
			\foreach \x in {2,...,5}
			{
				\node[vertex] (2\x) at (2*\x,2*2) {};
			}
			\foreach \x in {1,2,...,5}
			{
				\node[vertex] (3\x) at (2*\x,1*2) {};
			}

			\foreach \y [evaluate={\y as \x using int({\y+1})}] in {1,2,...,4} 
			{
				\draw[edge] (1\y) -- (1\x);
			}
			\foreach \y [evaluate={\y as \x using int({\y+1})}] in {2,3,4} 
			{
				\draw[edge] (2\y) -- (2\x);
			}
			\foreach \y [evaluate={\y as \x using int({\y+1})}] in {1,2,...,4} 
			{
				\draw[edge] (3\y) -- (3\x);
			}
			
			\foreach \z in {2,3,4,5}
			{
				\draw[edge] (1\z) -- (2\z);
			}
			\foreach \z in {2,3,4,5}
			{
				\draw[edge] (2\z) -- (3\z);
			}
			
			\foreach \z [evaluate={\znext=int({1+\z})}] in {2,4}
			{
				\draw[edge] (1\z) -- (2\znext);
			}
			\foreach \z [evaluate={\znext=int({\z-1})}] in {3,5}
			{
				\draw[edge] (1\z) -- (2\znext);
			}
			
			\foreach \z [evaluate={\znext=int({1+\z})}] in {3}
			{
				\draw[edge] (2\z) -- (3\znext);
			}
			\foreach \z [evaluate={\znext=int({\z-1})}] in {4}
			{
				\draw[edge] (2\z) -- (3\znext);
			}

			\draw[dotted, thick,shorten >=4pt, shorten <=4pt] (10,5) -- (12,5);
			\draw[dotted, thick,shorten >=4pt, shorten <=4pt] (10,3) -- (12,3);

			\node[label={[label distance=-3pt]90:\scriptsize{$g_{11}$}}] at (11) {};
			\foreach \x in {2,...,5}
			{
				\node[label={[label distance=-3pt]90:\scriptsize{$g_{1\x}$}}] at (1\x) {};
			}
			
			\node[label={[label distance=-8pt]60:\scriptsize{$g_{31}$}}] at (31) {};
			\node[label={[label distance=-8pt]-30:\scriptsize{$g_{22}$}}] at (22) {};
			\node[label={[label distance=-3pt]270:\scriptsize{$g_{32}$}}] at (32) {};
			\node[label={[label distance=-8pt]60:\scriptsize{$g_{23}$}}] at (23) {};

		\end{scope}
		\begin{scope}[shift={(2,0)}]
			\node[vertex] (c1) at (-2,4) {} ;
			
			\draw[edge] (c1) -- (11) ;
			\draw[edge] (c1) -- (31) ;
			
			\node[label={[label distance=-3pt]180:\scriptsize{$b_1$}}] at (c1) {};   
			
		\end{scope}

	\end{tikzpicture}
	\caption{$B_{n,4}^2$}
	\label{subfig:Cn4-2}
\end{subfigure}
\hspace{2mm}
\begin{subfigure}[b]{0.2\textwidth}
	\begin{tikzpicture}[scale=0.3]
		\begin{scope}
			\foreach \x in {2,...,5}
			{
				\node[vertex] (1\x) at (2*\x,3*2) {};
			}
			\foreach \x in {2,...,5}
			{
				\node[vertex] (2\x) at (2*\x,2*2) {};
			}
			\foreach \x in {2,...,5}
			{
				\node[vertex] (3\x) at (2*\x,1*2) {};
			}

			\foreach \y [evaluate={\y as \x using int({\y+1})}] in {2,...,4} 
			{
				\draw[edge] (1\y) -- (1\x);
			}
			\foreach \y [evaluate={\y as \x using int({\y+1})}] in {2,3,4} 
			{
				\draw[edge] (2\y) -- (2\x);
			}
			\foreach \y [evaluate={\y as \x using int({\y+1})}] in {2,...,4} 
			{
				\draw[edge] (3\y) -- (3\x);
			}
			
			\foreach \z in {2,3,4,5}
			{
				\draw[edge] (1\z) -- (2\z);
			}
			\foreach \z in {2,3,4,5}
			{
				\draw[edge] (2\z) -- (3\z);
			}
			
			\foreach \z [evaluate={\znext=int({1+\z})}] in {2,4}
			{
				\draw[edge] (1\z) -- (2\znext);
			}
			\foreach \z [evaluate={\znext=int({\z-1})}] in {3,5}
			{
				\draw[edge] (1\z) -- (2\znext);
			}
			
			\foreach \z [evaluate={\znext=int({1+\z})}] in {3}
			{
				\draw[edge] (2\z) -- (3\znext);
			}
			\foreach \z [evaluate={\znext=int({\z-1})}] in {4}
			{
				\draw[edge] (2\z) -- (3\znext);
			}

			\draw[dotted, thick,shorten >=4pt, shorten <=4pt] (10,5) -- (12,5);
			\draw[dotted, thick,shorten >=4pt, shorten <=4pt] (10,3) -- (12,3);

			\foreach \x in {2,...,5}
			{
				\node[label={[label distance=-3pt]90:\scriptsize{$g_{1\x}$}}] at (1\x) {};
			}

			\draw[edge] (12) to[bend right=45] (32);

			\node[label={[label distance=-8pt]-30:\scriptsize{$g_{22}$}}] at (22) {};
			\node[label={[label distance=-3pt]270:\scriptsize{$g_{32}$}}] at (32) {};
			\node[label={[label distance=-8pt]60:\scriptsize{$g_{23}$}}] at (23) {};
		\end{scope}
	\end{tikzpicture}
	\caption{$B_{n,4}^3$}
	\label{subfig:Cn4-3}
\end{subfigure}        
\hspace{1mm}
\begin{subfigure}[b]{0.2\textwidth}
	\begin{tikzpicture}[scale=0.3]
		\begin{scope}
			\foreach \x in {3,4,5}
			{
				\node[vertex] (1\x) at (2*\x,3*2) {};
			}
			\foreach \x in {2,4,5}
			{
				\node[vertex] (2\x) at (2*\x,2*2) {};
			}
			\foreach \x in {2,...,5}
			{
				\node[vertex] (3\x) at (2*\x,1*2) {};
			}

			\foreach \y [evaluate={\y as \x using int({\y+1})}] in {3,4} 
			{
				\draw[edge] (1\y) -- (1\x);
			}
			\foreach \y [evaluate={\y as \x using int({\y+1})}] in {4} 
			{
				\draw[edge] (2\y) -- (2\x);
			}
			\foreach \y [evaluate={\y as \x using int({\y+1})}] in {2,...,4} 
			{
				\draw[edge] (3\y) -- (3\x);
			}
			
			\foreach \z in {4,5}
			{
				\draw[edge] (1\z) -- (2\z);
			}
			\foreach \z in {2,4,5}
			{
				\draw[edge] (2\z) -- (3\z);
			}
			
			\foreach \z [evaluate={\znext=int({1+\z})}] in {4}
			{
				\draw[edge] (1\z) -- (2\znext);
			}
			\foreach \z [evaluate={\znext=int({\z-1})}] in {3,5}
			{
				\draw[edge] (1\z) -- (2\znext);
			}
			
			\foreach \z [evaluate={\znext=int({1+\z})}] in {}
			{
				\draw[edge] (2\z) -- (3\znext);
			}
			\foreach \z [evaluate={\znext=int({\z-1})}] in {4}
			{
				\draw[edge] (2\z) -- (3\znext);
			}

			\draw[dotted, thick,shorten >=4pt, shorten <=4pt] (10,5) -- (12,5);
			\draw[dotted, thick,shorten >=4pt, shorten <=4pt] (10,3) -- (12,3);

			\foreach \x in {3,...,5}
			{
				\node[label={[label distance=-3pt]90:\scriptsize{$g_{1\x}$}}] at (1\x) {};
			}

			\node[label={[label distance=-8pt]-30:\scriptsize{$g_{22}$}}] at (22) {};
			\node[label={[label distance=-8pt]60:\scriptsize{$g_{32}$}}] at (32) {};
			\node[label={[label distance=-3pt]270:\scriptsize{$g_{33}$}}] at (33) {};           
		\end{scope}
	\end{tikzpicture}
	\caption{$B_{n,4}^4$}
	\label{subfig:Cn4-4}
\end{subfigure}
\begin{subfigure}[b]{0.2\textwidth}
	\begin{tikzpicture}[scale=0.3]
		\begin{scope}
			\foreach \x in {3,4,5}
			{
				\node[vertex] (1\x) at (2*\x,3*2) {};
			}
			\foreach \x in {2,4,5}
			{
				\node[vertex] (2\x) at (2*\x,2*2) {};
			}
			\foreach \x in {2,...,5}
			{
				\node[vertex] (3\x) at (2*\x,1*2) {};
			}

			\foreach \y [evaluate={\y as \x using int({\y+1})}] in {4} 
			{
				\draw[edge] (1\y) -- (1\x);
			}
			\foreach \y [evaluate={\y as \x using int({\y+1})}] in {4} 
			{
				\draw[edge] (2\y) -- (2\x);
			}
			\foreach \y [evaluate={\y as \x using int({\y+1})}] in {2,...,4} 
			{
				\draw[edge] (3\y) -- (3\x);
			}
			
			\foreach \z in {4,5}
			{
				
				\draw[edge] (1\z) -- (2\z);
			}
			\foreach \z in {2,4,5}
			{
				\draw[edge] (2\z) -- (3\z);
			}
			
			\foreach \z [evaluate={\znext=int({1+\z})}] in {4}
			{
				\draw[edge] (1\z) -- (2\znext);
			}
			\foreach \z [evaluate={\znext=int({\z-1})}] in {3,5}
			{
				\draw[edge] (1\z) -- (2\znext);
			}
			
			\foreach \z [evaluate={\znext=int({1+\z})}] in {}
			{
				\draw[edge] (2\z) -- (3\znext);
			}
			\foreach \z [evaluate={\znext=int({\z-1})}] in {4}
			{
				\draw[edge] (2\z) -- (3\znext);
			}

			\draw[dotted, thick,shorten >=4pt, shorten <=4pt] (10,5) -- (12,5);
			\draw[dotted, thick,shorten >=4pt, shorten <=4pt] (10,3) -- (12,3);

			\foreach \x in {3,...,5}
			{
				\node[label={[label distance=-3pt]90:\scriptsize{$g_{1\x}$}}] at (1\x) {};
			}
			
			\draw[edge] (33) -- (14);
			
			\node[label={[label distance=-8pt]-30:\scriptsize{$g_{22}$}}] at (22) {};
			\node[label={[label distance=-8pt]60:\scriptsize{$g_{32}$}}] at (32) {};
			\node[label={[label distance=-3pt]270:\scriptsize{$g_{33}$}}] at (33) {};           
		\end{scope}
	\end{tikzpicture}
	\caption{$B_{n,4}^5$}
	\label{subfig:Cn4-5}
\end{subfigure}

\caption{}
\end{figure}

Let $B_{n,4}^1 \coloneq B_{n,4}\setminus g_{21}$. Since $[g_{31},g_{22};g_{11}]$ is an edge invariant triplet in the graph $B_{n,4}^1$,  $\ind(B_{n,4}^1)\simeq \ind(B_{n,4}^1-\{g_{31}g_{22}\})$. Let $ B_{n,4}^2\coloneq B_{n,4}^1-\{g_{31}g_{22}\}$. Then $\ind(B_{n,4}^1)\simeq \ind(B_{n,4}^2)$.
Since  $[g_{12},g_{32};b_1]$ is an edge-invariant triplet in the graph $B_{n,4}^2$, $\ind(B_{n, 4}^2) \simeq \ind(B_{n,4}^2+\{g_{12}g_{32}\})$. Furthermore,  $[g_{31},g_{32};g_{11}]$ is an edge-invariant triplet in the graph $B_{n,4}^2+\{g_{12}g_{32}\}$. Therefore, $\ind(B_{n,4}^2)\simeq \ind((B_{n,4}^2+\{g_{12}g_{32}\})-\{g_{31}g_{32}\})$. Observe that $\n(g_{31})\subseteq \n(g_{11})$ in $(B_{n,4}^2+\{g_{12}g_{32}\})-\{g_{31}g_{32}\}$. Hence $\ind(B_{n,4}^2)\simeq\ind(((B_{n,4}^2+\{g_{12}g_{32}\})-\{g_{31}g_{32}\})\setminus g_{11})$. The resultant graph we obtain has two connected components: one consists of the edge $g_{31}b_1$, and we denote the other component by $B_{n,4}^3$ (see \Cref{subfig:Cn4-3}). This implies that $\ind(B_{n,4}^2)\simeq \Sigma \ind(B_{n,4}^3)$.

Since  $\ind(B_{n,4}^3 \setminus \n[g_{12}])*\{g_{22}\} \subseteq \ind(B_{n,4}^3\setminus g_{12})$, the inclusion map  $\ind(B_{n,4}^3 \setminus \n[g_{12}]) \hookrightarrow \ind(B_{n,4}^3 \setminus g_{12})$ is null-homotopic. Therefore, by \Cref{Link and Deletion}, we get 
\begin{equation} 
\begin{split}\label{eq:D3n}
	\ind(B_{n,4}^3)\simeq  \ind(B_{n,4}^3\setminus g_{12})\vee \Sigma \ind(B_{n,4}^3\setminus \n[g_{12}]).
\end{split}
\end{equation}
Now, consider the case $n=1$. Since $g_{33}$ is an isolated vertex in $B_{1,4}^3\setminus \n[g_{12}]$, $\ind(B_{1,4}^3\setminus \n[g_{12}])$ is contractible. Observe that $\n(g_{32})\subseteq \n(g_{23})$ in the graph $B_{1,4}^3\setminus g_{12}$. Thus, using \Cref{Folding Lemma}, we find that $\ind(B_{1,4}^3\setminus g_{12})\simeq \ind(B_{1,4}^3\setminus \{g_{12},g_{23}\})$. Since the graph $B_{1,4}^3\setminus \{g_{12},g_{23}\}$ is a path on $4$- vertices, $\ind(B_{1,4}^3\setminus g_{12})$ is contractible. Therefore, $\ind(B_{1,4}^3)$ is contractible. This implies that $\ind(B_{1,4}^1)$ is contractible. Hence, $\ind(B_{1,4})\simeq \Sigma\ind(A_{0,4})\simeq \mathbb{S}^1$.

Now, consider $n\geq 2$. Observe that the graph $B_{n,4}^3 \setminus \n[g_{12}]$ is isomorphic to the graph $\widetilde{A}_{n-2,4}$. Thus, $\ind(B_{n,4}^3)\simeq  \ind(B_{n,4}^3\setminus g_{12})\vee \Sigma \ind(\widetilde{A}_{n-2,4}).$

Since $\n(g_{32})\subseteq \n(g_{23})$ in $B_{n,4}^3\setminus g_{12}$,  $\ind(B_{n,4}^3\setminus g_{12})\simeq \ind(B_{n,4}^3\setminus \{g_{12},g_{23}\})$. Let $B_{n,4}^4\coloneq B_{n,4}^3\setminus \{g_{12},g_{23}\}$. Then $\ind(B_{n,4}^3\setminus g_{12})\simeq \ind(B_{n,4}^4)$. Since $[g_{33},g_{14};g_{22}]$ and $[g_{13},g_{14}; g_{32}]$ are edge-invariant triplets in graphs $B_{n,4}^4$ and $B_{n,4}^4+\{g_{33}g_{14}\}$, respectively, \Cref{Edge deletion 1} provides us with $\ind(B_{n,4}^4)\simeq \ind((B_{n,4}^4+\{g_{33}g_{14}\})-\{g_{13}g_{14}\})$. Let  $B_{n,4}^5 \coloneq (B_{n,4}^4+\{g_{33}g_{14}\})-\{g_{13}g_{14}\}$. Then, $\ind(B_{n,4}^4)\simeq \ind(B_{n,4}^5)$. Since $\n(g_{13})\subseteq \n(g_{32}$) in the graph $B_{n,4}^5$,  $\ind(B_{n,4}^5)\simeq \ind(B_{n,4}^5\setminus g_{32})$. 
The graph $B_{n,4}^5\setminus g_{32}$ consists of two connected components: one component consists of the edge  $g_{13}g_{22}$, and the other component is isomorphic to $\widetilde{B}_{n-2,4}$. Therefore, $\ind(B_{n,4}^4)\simeq \ind(B_{n,4}^5)\simeq \Sigma \ind(\widetilde{B}_{n-2,4})$.
Hence, $\ind(B_{n,4}^3\setminus g_{12})\simeq \Sigma \ind(\widetilde{B}_{n-2,4})$. Thus, the result follows.
\end{proof}

\begin{claim}\label{Fn4} For $n\geq 0$,
\begin{equation*}
\ind(\widetilde{B}_{n,4}) \simeq \begin{cases}
	\mathbb{S}^1\vee \mathbb{S}^1  & \text{if $n =0$,} \\
	\bigvee^5 \mathbb{S}^2 & \text{if $n =1$,} \\
	\Sigma \ind(\widetilde{A}_{n-1,4}) \vee \Sigma^{2}\ind(A_{n-2,4})\vee \Sigma^{2}\ind(B_{n-2,4})
	& \text{if $n \geq 2$.}
\end{cases}
\end{equation*}
\end{claim}
\begin{proof}
Since $g_{32}$ is a simplicial vertex in $\widetilde{B}_{0,4}$ (see \Cref{fig:Dn4}) with $\n(g_{32})=\{g_{21},g_{22},g_{31}\}$, $ \ind(\widetilde{B}_{0,4})\simeq \Sigma \ind(\widetilde{B}_{0,4}\setminus\n[g_{21}])\vee \Sigma\ind(\widetilde{B}_{0,4}\setminus\n[g_{22}])\vee \Sigma\ind(\widetilde{B}_{0,4}\setminus\n[g_{31}]).$ 
Observe that both the graphs $\widetilde{B}_{0,4}\setminus\n[g_{22}]$ and  $\widetilde{B}_{0,4}\setminus\n[g_{31}]$ are paths on two vertices and the graph $\widetilde{B}_{0,4}\setminus\n[g_{21}]$ consists of an isolated vertex $g_{12}$. Hence $\ind(\widetilde{B}_{0,4})\simeq \mathbb{S}^1 \vee \Sp^1.$ 

Now, we compute $\ind(\widetilde{B}_{1,4})$.  The inclusion map 
$\ind(\widetilde{B}_{1,4}\setminus \n[g_{21}])\hookrightarrow \ind(\widetilde{B}_{1,4}\setminus g_{21})$ is null homotopic because $\ind(\widetilde{B}_{1,4}\setminus \n[g_{21}])\ast \{b_1\}$ is a subcomplex of $\ind(\widetilde{B}_{1,4}\setminus g_{21})$. Thus, from \Cref{Link and Deletion}, we get 
$\ind(\widetilde{B}_{1,4})\simeq \Sigma\ind(\widetilde{B}_{1,4}\setminus \n[g_{21}])\vee\ind(\widetilde{B}_{1,4}\setminus g_{21})$.
Observe that  $\widetilde{B}_{1,4}\setminus \n[g_{21}]$ is isomorphic to $\widetilde{A}_{0,4}$. Therefore, $\ind(\widetilde{A}_{0,4})\simeq \mathbb{S}^1\vee\mathbb{S}^1$ implies that $\ind(\widetilde{B}_{1,4})\simeq \mathbb{S}^2\vee\mathbb{S}^2\vee\ind(\widetilde{B}_{1,4}\setminus g_{21}).$
Let $\widetilde{B}_{1,4}^1:=\widetilde{B}_{1,4}\setminus g_{21}$. Then $g_{34}$ is a simplicial vertex in $\widetilde{B}_{1,4}^1$ with $\n(g_{34})=\{g_{23}, g_{33},g_{24}\}$. Using \Cref{Simplicial Vertex Lemma}, we have 
$\ind(\widetilde{B}_{1,4}^1)\simeq \Sigma\ind(\widetilde{B}_{1,4}^1\setminus \n[g_{23}])\vee\Sigma \ind(\widetilde{B}_{1,4}^1\setminus \n[g_{33}])\vee \Sigma \ind(\widetilde{B}_{1,4}^1\setminus \n[g_{24}]).$
Since $\n(g_{14})\subseteq \n(g_{22 })\cap \n(g_{12})$  in $\widetilde{B}_{1,4}^1\setminus \n[g_{33}]$,  $\ind(\widetilde{B}_{1,4}^1\setminus \n[g_{33}])\simeq \ind((\widetilde{B}_{1,4}^1\setminus \n[g_{33}])\setminus\{g_{12},g_{22}\})$. Observe that $(\widetilde{B}_{1,4}^1\setminus \n[g_{33}])\setminus\{g_{12},g_{22}\}$ is a disjoint union of two paths on $2$ and $3$ vertices. Hence $\ind(\widetilde{B}_{1,4}^1\setminus \n[g_{33}])\simeq \mathbb{S}^1$. 

Since the graph $\widetilde{B}_{1,4}^1\setminus \n[g_{23}]$ contains an isolated vertex $g_{14}$,  $\ind(\widetilde{B}_{1,4}^1\setminus \n[g_{23}])$ is contractible.

Let $\widetilde{B}_{1,4}^2\coloneq \widetilde{B}_{1,4}^1\setminus \n[g_{24}]$. Since $g_{13}$ is a simplicial vertex in  $\widetilde{B}_{1,4}^2$ with $\n(g_{13})=\{g_{12},g_{22}\}$, we have  $\ind(\widetilde{B}_{1,4}^2)\simeq \Sigma\ind(\widetilde{B}_{1,4}^2\setminus\n[g_{12}])\vee \Sigma \ind(\widetilde{B}_{1,4}^2\setminus\n[g_{22}])$. However, $\widetilde{B}_{1,4}^2\setminus\n[g_{22}]$ is an edge $b_{1}g_{11}$ and $\widetilde{B}_{1,4}^2\setminus\n[g_{12}]$
is a path on three vertices $b_1, g_{31}$ and $g_{32}$ and hence, $\ind(\widetilde{B}_{1,4}^2)\simeq \mathbb{S}^1\vee \mathbb{S}^1$. Consequently, $\ind(\widetilde{B}_{1,4})\simeq \vee^5\mathbb{S}^2$.
Further, for $n\geq 2$, the reduction is similar to the proof of \Cref{Dn4} which yields that
$
\ind(\widetilde{B}_{n,4}) \simeq \Sigma \ind(\widetilde{A}_{n-1,4}) \vee \Sigma^{2}\ind(A_{n-2,4})\vee \Sigma^{2}\ind(B_{n-2,4}).
$
\end{proof}
\begin{theorem}\label{IntermediaryPn4}
For any graph $G_{n,4}\in\{\Gamma_{n,4},\widetilde{\Gamma}_{n,4},A_{n,4},\widetilde{A}_{n,4},B_{n,4},\widetilde{B}_{n,4}\}$, $n\geq 0$, the complex $\ind(G_{n,4})$ is homotopy equivalent to a wedge of spheres.
\end{theorem}
\begin{proof}
The proof follows from \Crefrange{Gamman4}{Fn4}, and the induction hypothesis.
\end{proof}

\subsection{\texorpdfstring{Homotopy type of $\M(P_n \times P_4)$}{Homotopy type of\M(Pn × P4)}}\label{PnP4homotopy}

In this section, we conclude the homotopy type of $\M(P_n \times P_4)$ along with the minimum and maximum dimensions of the spheres in the independence complexes of the graphs in $\{\Gamma_{n, 4}, \widetilde{\Gamma}_{n, 4}, A_{n, 4},$ $ \widetilde{A}_{n, 4}, B_{n, 4}, \widetilde{B}_{n, 4}\}$.

\begin{theorem}\label{pnxp4}
For $n\geq 1$,  $\M(P_n\times P_4)$ is homotopy equivalent to a wedge of spheres.  
\end{theorem}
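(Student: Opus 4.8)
The plan is to combine two ingredients that are already available by the time this theorem is stated: the decomposition of $\M(P_n\times P_4)$ as a join of independence complexes of the graphs $\Gamma_{n',4}$ or $\Lambda_{n',4}$, and \Cref{IntermediaryPn4}, which asserts that those independence complexes are homotopy equivalent to wedges of spheres. Given these, the argument is short, so I would present it as follows.

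First I would fix $n\geq 2$ and split on its parity. If $n=2n'+2$, then, as recorded at the beginning of this section, $L(P_n\times P_4)$ is the disjoint union of two copies of $\Gamma_{n',4}$, so by \Cref{prop: join} we have $\M(P_n\times P_4)\simeq\ind(\Gamma_{n',4})\ast\ind(\Gamma_{n',4})$; if $n=2n'+3$, then $L(P_n\times P_4)$ is the disjoint union of two copies of $\Lambda_{n',4}$, hence $\M(P_n\times P_4)\simeq\ind(\Lambda_{n',4})\ast\ind(\Lambda_{n',4})$. These two identifications of line graphs are the combinatorial checks encoded in the definitions of $\Gamma_{n,4}$ and $\Lambda_{n,4}$ and illustrated in \Cref{fig:Gn4,fig:Lambdan4}; here one uses that $P_n$ is bipartite, which is also why the two connected components of $P_n\times P_4$ are isomorphic for every $n$. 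The remaining case $n=1$ is degenerate, since $P_1\times P_4$ has no edges and $\M(P_1\times P_4)$ has the empty set as its only simplex.

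Next, by \Cref{IntermediaryPn4}, both $\ind(\Gamma_{n',4})$ and $\ind(\Lambda_{n',4})$ are homotopy equivalent to wedges of spheres. It then suffices to invoke the elementary fact that a join of two wedges of spheres is again a wedge of spheres: using $X\ast Y\simeq\Sigma(X\wedge Y)$, the distributivity of the smash product over wedges up to homotopy, and $\mathbb{S}^{p}\wedge\mathbb{S}^{q}\cong\mathbb{S}^{p+q}$, one obtains $\big(\bigvee_{i}\mathbb{S}^{a_i}\big)\ast\big(\bigvee_{j}\mathbb{S}^{b_j}\big)\simeq\bigvee_{i,j}\mathbb{S}^{a_i+b_j+1}$. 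Feeding the homotopy types of $\ind(\Gamma_{n',4})$ and $\ind(\Lambda_{n',4})$ from \Cref{IntermediaryPn4} into this formula settles both parity cases and completes the proof.

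I do not expect a genuine obstacle in this theorem: its substantive content is precisely \Cref{IntermediaryPn4}, i.e.\ \Crefrange{Gamman4}{Fn4}, where the independence complexes of $\Gamma_{n,4}$, $\Lambda_{n,4}$ and of the auxiliary families $A_{n,4}$, $B_{n,4}$, $C_{n,4}$, $D_{n,4}$ are computed inductively via the folding lemma, the simplicial-vertex lemma, edge-invariant triplets, and \Cref{Link and Deletion}. What needs mild care here is only the explicit matching of the two components of $L(P_n\times P_4)$ with copies of $\Gamma_{n',4}$ or $\Lambda_{n',4}$ (a direct comparison with the stated vertex and edge sets) and the join-of-wedges lemma; tracking the top and bottom sphere dimensions of $\ind(\Gamma_{n',4})$ and $\ind(\Lambda_{n',4})$ through the same join formula additionally yields the bounds of \Cref{proposition:dim_PnP4}.
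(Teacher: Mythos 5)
Your proposal is correct and follows essentially the same route as the paper: decompose $\M(P_n\times P_4)$ as $\ind(\Gamma_{n',4})\ast\ind(\Gamma_{n',4})$ or $\ind(\Lambda_{n',4})\ast\ind(\Lambda_{n',4})$ according to parity and then apply \Cref{IntermediaryPn4}. The only difference is that you spell out the join-of-wedges-of-spheres computation, which the paper leaves implicit.
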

\begin{proof}
Since $P_1\times P_4$ is a discrete graph, $\M(P_1\times P_4)$ is a void complex. For $n\geq 0$, we have relations $\M(P_{2n+2}\times P_4)\simeq\ind(\Gamma_{n,4})\ast \ind(\Gamma_{n,4})$ and $\M(P_{2n+3}\times P_4)\simeq\ind(\widetilde{\Gamma}_{n,4})\ast \ind(\widetilde{\Gamma}_{n,4})$. Now, the result follows from \Cref{IntermediaryPn4}.
\end{proof}

For a graph $G$ whose independence complex is homotopy equivalent to a wedge of spheres, let $\dimax{G}$ and $\dimin{G}$ denote the maximum and minimum dimension of spheres appearing in the homotopy type of $\ind(G)$.

\begin{proposition} \label{proposition:dimPnP4allgraphs}
For $n\geq 0$, the maximum and minimum dimension of the spheres in the homotopy type of the independence complexes of graphs in $\{\Gamma_{n, 4}, \widetilde{\Gamma}_{n, 4}, A_{n, 4}, \widetilde{A}_{n, 4}, B_{n, 4}, \widetilde{B}_{n, 4},\}$  are given in $\Cref{tab: max Pn4}$ and $\Cref{tab: min Pn4}$, respectively.
{\tiny \begin{table}[h!]
		\centering
		\renewcommand{\arraystretch}{1.7}
		\setlength{\tabcolsep}{6pt} 
		\begin{tabular}{|c|c|c|c|c|c|c|}
			\hline
			\multirow{2}{*}{$n \equiv$} & \multicolumn{6}{c|}{$d_{\max}$ of independence complexes of graphs} \\ \cline{2-7}
			& $\Gamma_{n,4}$ & $\widetilde{\Gamma}_{n,4}$ & $A_{n,4}$ & $\widetilde{A}_{n,4}$ & $B_{n,4}$ & $\widetilde{B}_{n,4}$ \\ \hline
			0 (mod 7) & $10(\frac{n}{7})$ & $10(\frac{n}{7})(n\neq 0)$ & $10(\frac{n}{7})$ & $10(\frac{n}{7})+1$ & $10(\frac{n}{7})$ & $10(\frac{n}{7})+1$ \\ \hline
			1 (mod 7) & $10(\frac{n-1}{7})+1$ & $10(\frac{n-1}{7})+2$ & $10(\frac{n-1}{7})+2$ & $10(\frac{n-1}{7})+2$ & $10(\frac{n-1}{7})+1$ & $10(\frac{n-1}{7})+2$ \\ \hline
			2 (mod 7) & $10(\frac{n-2}{7})+3$ & $10(\frac{n-2}{7})+3$ & $10(\frac{n-2}{7})+3$ & $10(\frac{n-2}{7})+4$ & $10(\frac{n-2}{7})+3$ & $10(\frac{n-2}{7})+4$ \\ \hline
			3 (mod 7) & $10(\frac{n-3}{7})+4$ & $10(\frac{n-3}{7})+5$ & $10(\frac{n-3}{7})+4$ & $10(\frac{n-3}{7})+5$ & $10(\frac{n-3}{7})+4$ & $10(\frac{n-3}{7})+5$ \\ \hline
			4 (mod 7) & $10(\frac{n-4}{7})+5$ & $10(\frac{n-4}{7})+6$ & $10(\frac{n-4}{7})+6$ & $10(\frac{n-4}{7})+7$ & $10(\frac{n-4}{7})+6$ & $10(\frac{n-4}{7})+6$ \\ \hline
			5 (mod 7) & $10(\frac{n-5}{7})+7$ & $10(\frac{n-5}{7})+8$ & $10(\frac{n-5}{7})+7$ & $10(\frac{n-5}{7})+8$ & $10(\frac{n-5}{7})+7$ & $10(\frac{n-5}{7})+8$ \\ \hline
			6 (mod 7) & $10(\frac{n-6}{7})+8$ & $10(\frac{n-6}{7})+9$ & $10(\frac{n-6}{7})+9$ & $10(\frac{n-6}{7})+9$ & $10(\frac{n-6}{7})+9$ & $10(\frac{n-6}{7})+9$ \\ \hline
		\end{tabular}
		\caption{}
		\label{tab: max Pn4}
\end{table}}
{\footnotesize
	\begin{table}[h!]
		\centering
		\renewcommand{\arraystretch}{1.4}
		\setlength{\tabcolsep}{5pt} 
		\begin{tabular}{|c|c|c|}
			\hline
			\multirow{3}{*}{$n$} & \multicolumn{2}{c|}{$d_{\mathit{min}}$ of independence complexes of graphs} \\ \cline{2-3}
			& $\Gamma_{n,4}$, $A_{n,4}$, $B_{n,4}$ & $\widetilde{\Gamma}_{n,4}$, $\widetilde{A}_{n,4}$, $\widetilde{B}_{n,4}$ \\ \cline{2-3}
			& $n$ & $n+1$ \\ 
			\hline
		\end{tabular}
		\caption{}
		\label{tab: min Pn4}
\end{table}}

\end{proposition}
\begin{proof}
We prove the statement using induction on $n$. The base case for $n=0$ follows from \Crefrange{Gamman4}{Fn4}. Assume the statement is true for $k < n$. For $\Gamma_{n,4}$, we have
\begin{equation}{\label{eq: max-gamman4-eq1}}
\ind(\Gamma_{n,4})\simeq \Sigma \ind(A_{n-1,4}) \vee \Sigma^{2} \ind(\widetilde{\Gamma}_{n-2,4}),
\end{equation}
which implies that
\begin{equation*}
\dimin{\Gamma_{n,4}} = \min \set{1 + \dimin{A_{n-1,4}}, 2 + \dimin{\widetilde{\Gamma}_{n-2,4}}},
\end{equation*}
and 
\begin{equation}{\label{eq: max-gamman4-eq2}}
\dimax{\Gamma_{n,4}} = \max \set{1 + \dimax{A_{n-1,4}}, 2 + \dimax{\widetilde{\Gamma}_{n-2,4}}}.
\end{equation}
In particular, using the induction hypothesis, we have $\dimin{\Gamma_{n,4}} = \min\set{1+(n-1), 2+(n-1)} = n$. Further, for $n \equiv r \mod 7$ with $0 \leq r \leq 6$, \eqref{eq: max-gamman4-eq2} yields the following.
\begin{table}[h!]
	{\tiny
		\renewcommand{\arraystretch}{1.7} 
		\setlength{\tabcolsep}{6pt} 
		\begin{tabular}{|c|c|c|c|}
			\hline
			$n \equiv$ & $1+\dimax{A_{n-1,4}}$ & $2+\dimax{\widetilde{\Gamma}_{n-2,4}}$ & $\dimax{\Gamma_{n,4}}$ \\
			\hline
			$0 \mod{7}$ & $1+10\big(\frac{n-1-6}{7}\big)+9$ & $2+10\big(\frac{n-2-5}{7}\big)+8$ & $10\big(\frac{n}{7}\big)$ \\
			\hline
			$1 \mod{7}$ & $1+10\big(\frac{n-1}{7}\big)$ & $2+10\big(\frac{n-2-6}{7}\big)+9$ & $10\big(\frac{n-1}{7}\big)+1$ \\
			\hline
			$2 \mod{7}$ & $1+10\big(\frac{n-1-1}{7}\big)+2$ & $2+10\big(\frac{n-2}{7}\big)$ & $10\big(\frac{n-2}{7}\big)+3$ \\
			\hline
			$3 \mod{7}$ & $1+10\big(\frac{n-1-2}{7}\big)+3$ & $2+10\big(\frac{n-2-1}{7}\big)+2$ & $10\big(\frac{n-3}{7}\big)+4$ \\
			\hline
			$4 \mod{7}$ & $1+10\big(\frac{n-1-3}{7}\big)+4$ & $2+10\big(\frac{n-2-2}{7}\big)+3$ & $10\big(\frac{n-4}{7}\big)+5$ \\
			\hline
			$5 \mod{7}$ & $1+10\big(\frac{n-1-4}{7}\big)+6$ & $2+10\big(\frac{n-2-3}{7}\big)+5$ & $10\big(\frac{n-5}{7}\big)+7$ \\
			\hline
			$6 \mod{7}$ & $1+10\big(\frac{n-1-5}{7}\big)+7$ & $2+10\big(\frac{n-2-4}{7}\big)+6$ & $10\big(\frac{n-6}{7}\big)+8$ \\
			\hline
	\end{tabular}}
	\caption{}
	\label{tab:Pn4minmax}
\end{table}

By a similar argument, we can conclude the maximum and minimum dimensions of independence complexes of the graphs $\widetilde{\Gamma}_{n,4},A_{n,4},\widetilde{A}_{n,4},B_{n,4}$ and $\widetilde{B}_{n,4}$.
\end{proof}

\begin{rem}

The maximum and minimum dimension of spheres in the homotopy type of $\M(P_n \times P_4)$ as given in \Cref{proposition:dim_PnP4} can be concluded from $\Cref{proposition:dimPnP4allgraphs}$. 
\end{rem}

\section{\texorpdfstring{Matching complexes of $P_n \times P_5$}{Matching complexes of Pn x P5}}\label{section:PnP5}

In this section, we compute the homotopy type of the matching complex of the categorical product $P_n \times P_5$. Since $P_n$ is bipartite, $P_n \times P_5$ (and its line graph) is disconnected and contains exactly two connected components. When $n$ is even, these components are isomorphic; whereas, for odd $n$, they are non-isomorphic.

For $n \geq 0$, we denote the connected component of the line graph of $P_{2n+2}\times P_5$ by $\Gamma_{n,5}$ (see \Cref{fig:Gn}). Since the line graph  $L(P_{2n+2}\times P_5) $ is the same as $ \Gamma_{n,5} \sqcup \Gamma_{n,5}$, the matching complex
\begin{equation*}
    \M(P_{2n+2}\times P_5) = \ind(L(P_{2n+2}\times P_5)) \simeq \ind(\Gamma_{n,5}) * \ind(\Gamma_{n,5}).
\end{equation*}
Similarly, for $n\geq 0$, let $\Lambda_{n,5}$ and $\widetilde{\Gamma}_{n,5}$ (see \Cref{fig:Lambda-n,fig:G'n}) denote the two non-isomorphic connected components of the line graph of $P_{2n+3} \times P_5$. Therefore, we have
\begin{equation*}
    \M(P_{2n+3}\times P_5) = \ind(L(P_{2n+3}\times P_5)) \simeq \ind(\Lambda_{n,5}) * \ind(\widetilde{\Gamma}_{n,5}).
\end{equation*}

We now define the graphs $\Gamma_{n, 5}, \Lambda_{n, 5}$, and $\widetilde{\Gamma}_{n, 5}$. 

\begin{definition}
For $n\geq 0$, the graph $\Gamma_{n,5}$ comprises of the vertex set $$V(\Gamma_{n,5}) = \set{g_{ij} \given i=1,2,3,4 \text{ and }j = {1,2,\dots,2n{+}1}},$$ where the vertices are arranged in $4$ rows and $2n+1$ columns. The edge set of $\Gamma_{n,5}$ is given by 
\begin{align*}
    E(\Gamma_{n,5}) = &\set{g_{ij}g_{i+1\,j} \given i=1,2,3 \  \text{and} \ j=1,2,\dots,2n{+}1}  \\ & \cup \set{g_{ij}g_{i\,j+1} \given i=1,2,3,4 \ \text{and}  \ j=1,2,\dots,2n} \\
    & \cup \set{g_{ij}g_{i+1\,j+1}, g_{ij+1}g_{i+1\,j} \given i=1,2,3, \; j=1,2,\dots,2n, \text{ and } i+j\equiv 1 \mod{2}}.
\end{align*}
\end{definition}

\begin{figure}[ht]
		\centering
		\begin{tikzpicture}[scale=0.3]
			\begin{scope}
\foreach \x in {1,2,...,8}
            {
            \node[vertex] (1\x) at (3*\x,6) {};
            }
\foreach \x in {1,2,...,8}
            {
            \node[vertex] (2\x) at (3*\x,2*2) {};
            }
\foreach \x in {1,2,...,8}
            {
            \node[vertex] (3\x) at (3*\x,1*2) {};
            }
\foreach \x in {1,2,...,8}
            {
            \node[vertex] (4\x) at (3*\x,0*2) {};
            }
            
\foreach \y [evaluate={\y as \x using int({\y+1})}] in {1,2,...,4,6,7} 
            {
            \draw[edge] (1\y) to (1\x);
            \draw[edge] (2\y) -- (2\x);
            \draw[edge] (3\y) -- (3\x);
            \draw[edge] (4\y) -- (4\x);
            }
            
\foreach \z in {1,2,3,4,5,...,8}
            {\draw[edge] (1\z) -- (2\z);
            \draw[edge] (2\z) -- (3\z);
            \draw[edge] (3\z) -- (4\z);
            }
            
\foreach \z [evaluate={\znext=int({1+\z})}] in {2,4,7}
            {
            \draw[edge] (1\z) -- (2\znext);
            \draw[edge] (3\z) -- (4\znext);
            }
            \foreach \z [evaluate={\znext=int({\z-1})}] in {3,5,8}
            {
            \draw[edge] (1\z) -- (2\znext);
            \draw[edge] (3\z) -- (4\znext);
            }

\foreach \z [evaluate={\znext=int({1+\z})}] in {1,3,6}
            {
            \draw[edge] (2\z) -- (3\znext);
            }
            \foreach \z [evaluate={\znext=int({\z-1})}] in {2,4,7}
            {
            \draw[edge] (2\z) -- (3\znext);
            }

\draw[dotted, thick,shorten >=4pt, shorten <=4pt] (15.5,5) -- (17.5,5);
            \draw[dotted, thick,shorten >=4pt, shorten <=4pt] (15.5,3) -- (17.5,3);
            \draw[dotted, thick,shorten >=4pt, shorten <=4pt] (15.5,1) -- (17.5,1);
            
\draw[decoration={brace,raise=5pt,mirror},decorate, below]
            (41 |- 52,-0.8) -- node[below=6pt] {\scriptsize{$1$}}(43|- 0,-0.8);
            \draw[decoration={brace,raise=5pt,mirror},decorate]
            (43|- 0,-0.8) -- node[below=6pt] {\scriptsize{$2$}}(45|- 0,-0.8);
            \draw[decoration={brace,raise=5pt,mirror},decorate]
            (46|- 0,-0.8) -- node[below=6pt] {\scriptsize{$n$}}(48|- 0,-0.8);

        \foreach \x in {2,3,4,5}
        {
        \node[label={[label distance = -3pt]-90:\tiny{$g_{4\x}$}}] at (4\x) {};
        }
        \node[label={[label distance=-3pt]-90:\tiny{$g_{4\,{2n+1}}$}}] at (48) {};
        \node[label={[label distance=-3pt]-90:\tiny{$\cdots$}}] at (46) {};

        \foreach \x in {2,4}
        {
        \node[label={[label distance = -8pt]-30:\tiny{$g_{2\x}$}}] at (2\x) {};
        \node[label={[label distance = -8pt]30:\tiny{$g_{3\x}$}}] at (3\x) {};
        }

        \foreach \x in {3,5}
        {
        \node[label={[label distance = -8pt]30:\tiny{$g_{2\x}$}}] at (2\x) {};
        \node[label={[label distance = -8pt]-30:\tiny{$g_{3\x}$}}] at (3\x) {};
        }

\node[label={[label distance=-3pt]90:\scriptsize{{\tiny{$g_{11}$}}}}] at (11) {};
            \foreach \x in {2,...,5}
            {
            \node[label={[label distance=-3pt]90:\tiny{$g_{1\x}$}}] at (1\x) {};
            }
            \node[label={[label distance=-3pt]180:\tiny{$g_{21}$}}] at (21) {};
            \node[label={[label distance=-3pt]180:\tiny{$g_{31}$}}] at (31) {};
            \node[label={[label distance=-3pt]180:\tiny{$g_{41}$}}] at (41) {};;

            \node[label={[label distance=-3pt]90:\tiny{$g_{1\,{2n+1}}$}}] at (18) {};
            \node[label={[label distance=-3pt]90:\tiny{$\cdots$}}] at (16) {};
            \node[label={[label distance=-3pt]0:\tiny{$g_{2\,{2n+1}}$}}] at (28) {};
            \node[label={[label distance=-3pt]0:\tiny{$g_{3\,{2n+1}}$}}] at (38) {};

            \node[] at (9,-4) {(b) $\Gamma_{n,5}$, for $n\geq 1$};  
            
        \end{scope}

\begin{scope}[shift = {(-2,0)}]
            \foreach \x in {1,2,3,4}
            {\node[vertex] (x\x1) at (0,2*(4-\x) {};}

            \foreach \x [evaluate = \x as \y using int(\x+1)] in {1,2,3}
            {\draw[edge] (x\x1) -- (x\y1) ;}

            \foreach \y in {1,2,3,4}
            {
            \node[label={[label distance=-3pt]0:\tiny{$g_{\y 1}$}}] at (x\y1) {};
            }
            \node[] at (0,-4) {(a) $\Gamma_{0,5}$}; 
        \end{scope}

		\end{tikzpicture}
	\caption{$\Gamma_{n,5}$} \label{fig:Gn}	
	\end{figure}

		\begin{figure}
		\centering
		\begin{tikzpicture}[scale=0.3]
			\begin{scope}
				\foreach \x in {1,2,...,8}
				{
					\node[vertex] (1\x) at (3*\x,3*2) {};
				}
				\foreach \x in {1,2,...,8}
				{
					\node[vertex] (2\x) at (3*\x,2*2) {};
				}
				\foreach \x in {1,2,...,8}
				{
					\node[vertex] (3\x) at (3*\x,1*2) {};
				}
				\foreach \x in {1,2,...,8}
				{
					\node[vertex] (4\x) at (3*\x,0*2) {};
				}
				
				\foreach \z in {1,2,3,4}
				{
					\foreach \y [evaluate={\y as \x using {\y+1}}] in {1,2,...,4,6,7} 
					{
						\draw[edge] (\z\y) -- (\z\x);
					}
				}
				
				\foreach \z in {1,2,3,4,5,...,8}
				{\draw[edge] (1\z) -- (2\z);
					\draw[edge] (2\z) -- (3\z);
					\draw[edge] (3\z) -- (4\z);
				}
				
				\foreach \z [evaluate={\znext=int({1+\z})}] in {2,4,7}
				{
					\draw[edge] (1\z) -- (2\znext);
					\draw[edge] (3\z) -- (4\znext);
				}
				\foreach \z [evaluate={\znext=int({\z-1})}] in {3,5,8}
				{
					\draw[edge] (1\z) -- (2\znext);
					\draw[edge] (3\z) -- (4\znext);
				}
				
				\foreach \z [evaluate={\znext=int({1+\z})}] in {1,3,6}
				{
					\draw[edge] (2\z) -- (3\znext);
				}
				\foreach \z [evaluate={\znext=int({\z-1})}] in {2,4,7}
				{
					\draw[edge] (2\z) -- (3\znext);
				}
				\coordinate (c15) at (15);
				\coordinate (c16) at (16);

				\draw[dotted, thick,shorten >=4pt, shorten <=4pt] (15.5,5) -- (17.5,5);
				\draw[dotted, thick,shorten >=4pt, shorten <=4pt] (15.5,3) -- (17.5,3);
				\draw[dotted, thick,shorten >=4pt, shorten <=4pt] (15.5,1) -- (17.5,1);
				
				\draw[decoration={brace,raise=5pt,mirror},decorate, below]
				(41 |- 52,-0.8) -- node[below=6pt] {\scriptsize{$1$}}(43|- 0,-0.8);
				\draw[decoration={brace,raise=5pt,mirror},decorate]
				(43|- 0,-0.8) -- node[below=6pt] {\scriptsize{$2$}}(45|- 0,-0.8);
				\draw[decoration={brace,raise=5pt,mirror},decorate]
				(46|- 0,-0.8) -- node[below=6pt] {\scriptsize{$n$}}(48|- 0,-0.8);

				\foreach \x in {1,2,3,4,5}
				{
					\node[label={[label distance = -3pt]-90:\tiny{$g_{4\x}$}}] at (4\x) {};
				}
				\node[label={[label distance=-3pt]-90:\tiny{$g_{4\,{2n+1}}$}}] at (48) {};
				\node[label={[label distance=-3pt]-90:\tiny{$\cdots$}}] at (46) {};
				\node[label={[label distance=-3pt]0:}] at (28) {};
				\node[label={[label distance=-3pt]0:}] at (38) {};
				
				\foreach \x in {2,4}
				{
					\node[label={[label distance = -8pt]-30:\tiny{$g_{2\x}$}}] at (2\x) {};
					\node[label={[label distance = -8pt]30:\tiny{$g_{3\x}$}}] at (3\x) {};
				}
				\node[label={[label distance = -8pt]190:\tiny{$g_{21}$}}] at (21) {};
				\node[label={[label distance = -8pt]-30:\scriptsize{$g_{31}$}}] at (31) {};
				\foreach \x in {3,5}
				{
					\node[label={[label distance = -8pt]30:\tiny{$g_{2\x}$}}] at (2\x) {};
					\node[label={[label distance = -8pt]-30:\tiny{$g_{3\x}$}}] at (3\x) {};
				}
				
				\node[label={[label distance=-3pt]90:\tiny{$g_{11}$}}] at (11) {};
				\foreach \x in {2,...,5}
				{
					\node[label={[label distance=-3pt]90:\tiny{$g_{1\x}$}}] at (1\x) {};
				}
				
				\node[label={[label distance=-3pt]90:\tiny{$g_{1\,{2n+1}}$}}] at (18) {};
                \node[label={[label distance=-3pt]0:\tiny{$g_{2\,{2n+1}}$}}] at (28) {};
                \node[label={[label distance=-3pt]0:\tiny{$g_{3\,{2n+1}}$}}] at (38) {};
				\node[label={[label distance=-3pt]90:\scriptsize{$\cdots$}}] at (16) {};
			\end{scope}		
			
			\begin{scope}[shift={(2,0)}]
				\node[vertex] (l1) at (-2,6) {} ;
				\node[vertex] (l2) at (-2,4) {} ;
				\node[vertex] (l3) at (-2,2) {} ;
				\node[vertex] (l4) at (-2,0) {} ;

				\draw[edge] (l1) -- (l2) ;
				\draw[edge] (l2) -- (l3) ;
				\draw[edge] (l3) -- (l4) ;
				\draw[edge] (l1) -- (11) ;
				\draw[edge] (l2) -- (21) ;
				\draw[edge] (l3) -- (31) ;
				\draw[edge] (l4) -- (41) ;
				\draw[edge] (l1) -- (21) ;
				\draw[edge] (l2) -- (11) ;
				\draw[edge] (l4) -- (31) ;
				\draw[edge] (l3) -- (41) ;
				
				\node[label={[label distance=-3pt]180:\scriptsize{$l_{1}$}}] at (l1) {};
				\node[label={[label distance=-3pt]180:\scriptsize{$l_{2}$}}] at (l2) {};
				\node[label={[label distance=-3pt]180:\scriptsize{$l_{3}$}}] at (l3) {};
				\node[label={[label distance=-3pt]180:\scriptsize{$l_{4}$}}] at (l4) {};
				
				\node[] at (6,-4) {(b) $\Lambda_{n,5}$, for $n\geq 1$};   
			\end{scope}
			
			\begin{scope}[shift = {(-4.5,0)}]
				\foreach \x in {1,2,3,4}
				{\node[vertex] (x\x1) at (0,2*(4-\x) {};}
				
				\foreach \x [evaluate = \x as \y using int(\x+1)] in {1,2,3}
				{\draw[edge] (x\x1) -- (x\y1) ;}
				
				\node[vertex] (l1) at (-3,6) {} ;
				\node[vertex] (l2) at (-3,4) {} ;
				\node[vertex] (l3) at (-3,2) {} ;
				\node[vertex] (l4) at (-3,0) {} ;

				\draw[edge] (l1) -- (l2) ;
				\draw[edge] (l2) -- (l3) ;
				\draw[edge] (l3) -- (l4) ;
				\draw[edge] (l1) -- (x11) ;
				\draw[edge] (l2) -- (x21) ;
				\draw[edge] (l3) -- (x31) ;
				\draw[edge] (l4) -- (x41) ;
				\draw[edge] (l1) -- (x21) ;
				\draw[edge] (l2) -- (x11) ;
				\draw[edge] (l4) -- (x31) ;
				\draw[edge] (l3) -- (x41) ;
				
				\foreach \y in {1,2,3,4}
				{
					\node[label={[label distance=-3pt]0:\tiny{$g_{\y 1}$}}] at (x\y1) {};
				}
				
				\node[label={[label distance=-3pt]180:\scriptsize{$l_{1}$}}] at (l1) {};
				\node[label={[label distance=-3pt]180:\scriptsize{$l_{2}$}}] at (l2) {};
				\node[label={[label distance=-3pt]180:\scriptsize{$l_{3}$}}] at (l3) {};
				\node[label={[label distance=-3pt]180:\scriptsize{$l_{4}$}}] at (l4) {};
				
				\node[] at (-1.5,-4) {(a) $\Lambda_{0,5}$}; 
			\end{scope}
			
		\end{tikzpicture}
		\caption{$\Lambda_{n, 5}$} \label{fig:Lambda-n}	
	\end{figure}

\begin{definition}
For $n \geq 0$,  we define the graph $\Lambda_{n,5}$ as follows:
\begin{align*}
    V(\Lambda_{n,5}) = &V(\Gamma_{n,5}) \cup \{l_1,l_2,l_3,l_4\} , \\
    E(\Lambda_{n,5}) = & E(\Gamma_{n,5}) \cup\{l_1l_2,l_2l_3,l_3l_4,g_{11}l_1,g_{11}l_2,g_{21}l_1,g_{21}l_2,g_{31}l_3,g_{31}l_4,g_{41}l_3,g_{41}l_4 \}.
\end{align*}
\end{definition}

\begin{definition}
For $n \geq 0$, we define the graph $\widetilde{\Gamma}_{n, 5}$ as follows: 
\begin{equation*}
    \begin{split}
    V(\widetilde{\Gamma}_{n,5}) =  & \ V(\Gamma_{n,5})\cup \{g_{1\,2n+2},g_{2\,2n+2},g_{3\,2n+2},g_{4\,2n+2}\} \\
    E(\widetilde{\Gamma}_{n,5}) = & \ E(\Gamma_{n,5}) \cup \set{g_{i\,2n+1}g_{i\,2n+2} \given i=1,2,3,4}  \cup 
      \set{g_{i\,2n+2}g_{i+1\,2n+2} \given i = 1,2,3} \\
    & \cup \set{g_{2\,2n+1}g_{3\,2n+2}, g_{3\,2n+1}g_{2\,2n+2}}.
    \end{split}
\end{equation*}
\end{definition}

\begin{figure}[ht]
	\centering
	\begin{tikzpicture}[scale=0.3]
		\begin{scope}
			\foreach \x in {1,2,...,9}
			{
				\node[vertex] (1\x) at (3*\x,3*2) {};
			}
			\foreach \x in {1,2,...,9}
			{
				\node[vertex] (2\x) at (3*\x,2*2) {};
			}
			\foreach \x in {1,2,...,9}
			{
				\node[vertex] (3\x) at (3*\x,1*2) {};
			}
			\foreach \x in {1,2,...,9}
			{
				\node[vertex] (4\x) at (3*\x,0*2) {};
			}
			
			\foreach \y [evaluate={\y as \x using int({\y+1})}] in {1,2,...,4,6,7,8} 
			{
				\draw[edge] (1\y) -- (1\x);
				\draw[edge] (2\y) -- (2\x);
				\draw[edge] (3\y) -- (3\x);
				\draw[edge] (4\y) -- (4\x);
			}
			
			\foreach \z in {1,2,3,4,5,...,8,9}
			{\draw[edge] (1\z) -- (2\z);
				\draw[edge] (2\z) -- (3\z);
				\draw[edge] (3\z) -- (4\z);
			}
			
			\foreach \z [evaluate={\znext=int({1+\z})}] in {2,4,7}
			{
				\draw[edge] (1\z) -- (2\znext);
				\draw[edge] (3\z) -- (4\znext);
			}
			\foreach \z [evaluate={\znext=int({\z-1})}] in {3,5,8}
			{
				\draw[edge] (1\z) -- (2\znext);
				\draw[edge] (3\z) -- (4\znext);
			}
			
			\foreach \z [evaluate={\znext=int({1+\z})}] in {1,3,6,8}
			{
				\draw[edge] (2\z) -- (3\znext);
			}
			\foreach \z [evaluate={\znext=int({\z-1})}] in {2,4,7,9}
			{
				\draw[edge] (2\z) -- (3\znext);
			}
			\coordinate (c15) at (15);
			\coordinate (c16) at (16);

			\draw[dotted, thick,shorten >=4pt, shorten <=4pt] (15.5,5) -- (17.5,5);
			\draw[dotted, thick,shorten >=4pt, shorten <=4pt] (15.5,3) -- (17.5,3);
			\draw[dotted, thick,shorten >=4pt, shorten <=4pt] (15.5,1) -- (17.5,1);
			
			\draw[decoration={brace,raise=5pt,mirror},decorate, below]
			(41 |- 52,-0.8) -- node[below=6pt] {\scriptsize{$1$}}(43|- 0,-0.8);
			\draw[decoration={brace,raise=5pt,mirror},decorate]
			(43|- 0,-0.8) -- node[below=6pt] {\scriptsize{$2$}}(45|- 0,-0.8);
			\draw[decoration={brace,raise=5pt,mirror},decorate]
			(46|- 0,-0.8) -- node[below=6pt] {\scriptsize{$n$}}(48|- 0,-0.8);

			\foreach \x in {2,3,4,5}
			{
				\node[label={[label distance = -3pt]-90:\scriptsize{$g_{4\x}$}}] at (4\x) {};
			}

			\foreach \x in {2,4}
			{
				\node[label={[label distance = -8pt]-30:\scriptsize{$g_{2\x}$}}] at (2\x) {};
				\node[label={[label distance = -8pt]30:\scriptsize{$g_{3\x}$}}] at (3\x) {};
			}
			
			\foreach \x in {3,5}
			{
				\node[label={[label distance = -8pt]30:\scriptsize{$g_{2\x}$}}] at (2\x) {};
				\node[label={[label distance = -8pt]-30:\scriptsize{$g_{3\x}$}}] at (3\x) {};
			}

			\node[label={[label distance=-3pt]90:\scriptsize{$g_{11}$}}] at (11) {};
			\foreach \x in {2,...,5}
			{
				\node[label={[label distance=-3pt]90:\scriptsize{$g_{1\x}$}}] at (1\x) {};
			}
			\node[label={[label distance=-3pt]180:\scriptsize{$g_{21}$}}] at (21) {};
			\node[label={[label distance=-3pt]180:\scriptsize{$g_{31}$}}] at (31) {};
			\node[label={[label distance=-3pt]180:\scriptsize{$g_{41}$}}] at (41) {};;
			\node[label={[label distance=-3pt]90:\scriptsize{$g_{1\,{2n+2}}$}}] at (19) {};
			\node[label={[label distance=-3pt]90:\scriptsize{$\cdots\cdots$}}] at (16) {};

			\node[label={[label distance=-3pt]0:\scriptsize{$g_{2\,{2n+2}}$}}] at (29) {};
			\node[label={[label distance=-3pt]0:\scriptsize{$g_{3\,{2n+2}}$}}] at (39) {};
			\node[label={[label distance=-3pt]0:\scriptsize{$g_{4\,{2n+2}}$}}] at (49) {};
			
			\node[] at (14.5,-4) {(b) $\widetilde{\Gamma}_{n,5}$, for $n\geq 1$};   
		\end{scope}

		\begin{scope}[shift={(-5,0)}]
			\foreach \x in {1,2,3,4}
			{\node[vertex] (x\x1) at (-2,2*(4-\x) {};
				\node[vertex] (x\x2) at (1,2*(4-\x) {};
			}
			
			\foreach \x [evaluate = \x as \y using int(\x+1)] in {1,2,3}
			{\draw[edge] (x\x1) -- (x\y1) ;
				\draw[edge] (x\x2) -- (x\y2) ;
			}
			
			\draw[edge] (x11) -- (x12) ;
			\draw[edge] (x21) -- (x22) ;
			\draw[edge] (x21) -- (x32) ;
			\draw[edge] (x31) -- (x22) ;
			\draw[edge] (x31) -- (x32) ;
			\draw[edge] (x41) -- (x42) ;
			
			\foreach \y in {1,2,3,4}
			{
				\node[label={[label distance=-3pt]180:\scriptsize{$g_{\y 1}$}}] at (x\y1) {};
				\node[label={[label distance=-3pt]0:\scriptsize{$g_{\y 2}$}}] at (x\y2) {};
			}
			
			\node[] at (0,-4) {(a) $\widetilde{\Gamma}_{0,5}$}; 
		\end{scope}
		
	\end{tikzpicture}
	\caption{$\widetilde{\Gamma}_{n,5}$} \label{fig:G'n}	
\end{figure}

In the following section,  we compute the homotopy types of $\ind(\Gamma_{n, 5})$ and $\ind(\Lambda_{n, 5})$. The homotopy type of $\ind(\widetilde{\Gamma}_{n, 5})$ is computed in \Cref{subsection:GammaTilde}. Finally, we give the minimum and maximum dimension of spheres in the homotopy type of $\M(P_n \times P_5)$ in \Cref{subsection:PnP5dimensionbound}.

\subsection{\texorpdfstring{Independence complexes of  $\Gamma_{n,5}$ and $\Lambda_{n,5}$}{Independence complexes of Γn,5 and Λn,5}}{\label{subsection:Gamma and Lembda}}

In this section, we introduce $6$ new classes of graphs, namely $A_{n,5}, B_{n,5}, C_{n,5}, D_{n,5},$ $E_{n,5}$, and $F_{n,5}$ for $n \geq 0$. We show that the independence complexes of $\Gamma_{n,5}$ and $\Lambda_{n,5}$ can be expressed in terms of wedges of suspensions of independence complexes of these intermediary graphs. Using induction on $n$, we establish that the independence complexes of $\Gamma_{n,5}$, $\Lambda_{n,5}$, and these intermediary graphs are homotopy equivalent to a wedge of spheres.

We begin with the description of the intermediary graphs. 

\subsubsection{Definitions of intermediary graphs} \label{subsection:PnP5Basecase}

\noindent{(i)\bf{ Graph $A_{n,5}$.}}
For $n\geq 0$, we define the graph $A_{n,5}$ as follows:

$V(A_{n,5})= V(\Gamma_{n,5})\cup \{a_1,a_2\}$ and 
$E(A_{n,5})= E(\Gamma_{n,5})\cup \{a_{1}g_{11},a_{1}g_{21},a_{2}g_{31},a_{2}g_{41}\}$.

\begin{figure}[h!]
	\centering
	
	\begin{tikzpicture}[scale=0.3]
		\begin{scope}
			\foreach \x in {1,2,...,8}
			{
				\node[vertex] (1\x) at (3*\x,3*2) {};
			}
			\foreach \x in {1,2,...,8}
			{
				\node[vertex] (2\x) at (3*\x,2*2) {};
			}
			\foreach \x in {1,2,...,8}
			{
				\node[vertex] (3\x) at (3*\x,1*2) {};
			}
			\foreach \x in {1,2,...,8}
			{
				\node[vertex] (4\x) at (3*\x,0*2) {};
			}

			\foreach \z in {1,2,3,4}
			{
				\foreach \y [evaluate={\y as \x using {\y+1}}] in {1,2,...,4,6,7} 
				{
					\draw[edge] (\z\y) -- (\z\x);
				}
			}
			
			\foreach \z in {1,2,3,4,5,...,8}
			{\draw[edge] (1\z) -- (2\z);
				\draw[edge] (2\z) -- (3\z);
				\draw[edge] (3\z) -- (4\z);
			}
			
			\foreach \z [evaluate={\znext=int({1+\z})}] in {2,4,7}
			{
				\draw[edge] (1\z) -- (2\znext);
				\draw[edge] (3\z) -- (4\znext);
			}
			\foreach \z [evaluate={\znext=int({\z-1})}] in {3,5,8}
			{
				\draw[edge] (1\z) -- (2\znext);
				\draw[edge] (3\z) -- (4\znext);
			}
			
			\foreach \z [evaluate={\znext=int({1+\z})}] in {1,3,6}
			{
				\draw[edge] (2\z) -- (3\znext);
			}
			\foreach \z [evaluate={\znext=int({\z-1})}] in {2,4,7}
			{
				\draw[edge] (2\z) -- (3\znext);
			}
			\coordinate (c15) at (15);
			\coordinate (c16) at (16);

			\draw[dotted, thick,shorten >=4pt, shorten <=4pt] (15.5,5) -- (17.5,5);
			\draw[dotted, thick,shorten >=4pt, shorten <=4pt] (15.5,3) -- (17.5,3);
			\draw[dotted, thick,shorten >=4pt, shorten <=4pt] (15.5,1) -- (17.5,1);
			
			\draw[decoration={brace,raise=5pt,mirror},decorate, below]
			(41 |- 52,-0.8) -- node[below=6pt] {\scriptsize{$1$}}(43|- 0,-0.8);
			\draw[decoration={brace,raise=5pt,mirror},decorate]
			(43|- 0,-0.8) -- node[below=6pt] {\scriptsize{$2$}}(45|- 0,-0.8);
			\draw[decoration={brace,raise=5pt,mirror},decorate]
			(46|- 0,-0.8) -- node[below=6pt] {\scriptsize{$n$}}(48|- 0,-0.8);
		\end{scope}

		\foreach \x in {1,2,3,4,5}
		{
			\node[label={[label distance = -3pt]-90:\scriptsize{$g_{4\x}$}}] at (4\x) {};
		}
		\node[label={[label distance=-3pt]-90:\scriptsize{$g_{4\,{2n+1}}$}}] at (48) {};
		\node[label={[label distance=-3pt]-90:\scriptsize{$\cdots$}}] at (46) {};
		\node[label={[label distance=-3pt]0:\scriptsize{$g_{2\,{2n+1}}$}}] at (28) {};
		\node[label={[label distance=-3pt]0:\scriptsize{$g_{3\,{2n+1}}$}}] at (38) {};
		
		\foreach \x in {2,4}
		{
			\node[label={[label distance = -8pt]-30:\scriptsize{$g_{2\x}$}}] at (2\x) {};
			\node[label={[label distance = -8pt]30:\scriptsize{$g_{3\x}$}}] at (3\x) {};
		}
		
		\foreach \x in {3,5}
		{
			\node[label={[label distance = -8pt]30:\scriptsize{$g_{2\x}$}}] at (2\x) {};
			\node[label={[label distance = -8pt]-30:\scriptsize{$g_{3\x}$}}] at (3\x) {};
		}

		\begin{scope}[shift={(2,0)}]
			\node[vertex] (a1) at (-2,6) {} ;
			\node[vertex] (a2) at (-2,0) {} ;
			
			\draw[edge] (a1) -- (11) ;
			\draw[edge] (a1) -- (21) ;
			\draw[edge] (a2) -- (31) ;
			\draw[edge] (a2) -- (41) ;
			
			\node[label={[label distance=-3pt]180:\scriptsize{$a_{1}$}}] at (a1) {};   
			\node[label={[label distance=-3pt]180:\scriptsize{$a_{2}$}}] at (a2) {};
			
			\node[label={[label distance=-3pt]90:\scriptsize{$g_{11}$}}] at (11) {};
			\foreach \x in {2,...,5}
			{
				\node[label={[label distance=-3pt]90:\scriptsize{$g_{1\x}$}}] at (1\x) {};
			}
			\node[label={[label distance=-3pt]90:\scriptsize{$g_{1\,{2n+1}}$}}] at (18) {};
			\node[label={[label distance=-3pt]90:\scriptsize{$\cdots$}}] at (16) {};
			\foreach \x in {2,3}
			{
				\node[label={[label distance=-3pt]180:\scriptsize{$g_{\x1}$}}] at (\x1) {};
			}
			
			\node[] at (11.5,-4) {(b) $A_{n,5}$, for $n\geq 1$};
		\end{scope}
		\begin{scope}[shift = {(-6,0)}]
			\foreach \x in {1,2,3,4}
			{\node[vertex] (x\x1) at (1,2*(4-\x) {};}
			
			\foreach \x [evaluate = \x as \y using int(\x+1)] in {1,2,3}
			{\draw[edge] (x\x1) -- (x\y1) ;}
			
			\node[vertex] (a1) at (-2,6) {} ;
			\node[vertex] (a2) at (-2,0) {} ;
			
			\draw[edge] (a1) -- (x11) ;
			\draw[edge] (a1) -- (x21) ;
			\draw[edge] (a2) -- (x31) ;
			\draw[edge] (a2) -- (x41) ;
			
			\foreach \y in {1,2,3,4}
			{
				\node[label={[label distance=-3pt]0:\scriptsize{$g_{\y 1}$}}] at (x\y1) {};
			}
			\node[label={[label distance=-3pt]180:\scriptsize{$a_{1}$}}] at (a1) {};   
			\node[label={[label distance=-3pt]180:\scriptsize{$a_{2}$}}] at (a2) {};
			
			\node[] at (0,-4) {(a) $A_{0,5}$};
		\end{scope}
		
	\end{tikzpicture}
	\caption{$A_{n,5}$} \label{fig:An}	
\end{figure} 


\noindent{(ii)\bf{ Graph $B_{n,5}$.}}\label{B-1}
For $n\geq 0$, we define the graph $B_{n,5}$ as follows:
\begin{equation*}
    \begin{split}
V(B_{n,5})=& V(\Gamma_{n,5})\cup \{b_1,b_2, b_3,b_4,b_5,b_6\},\\ 
E(B_{n,5})=& E(\Gamma_{n,5})\cup \{b_{1}b_{2}, b_{1}b_{4},b_{1}b_{5},b_{2}b_{6},b_{3}b_{4},b_{4}b_{5},b_{5}b_{6}\}\\
& \cup \{b_{3}g_{11},b_{3}g_{21},b_{4}g_{11},b_{4}g_{21},b_{5}g_{31},b_{5}g_{41},b_{6}g_{31},b_{6}g_{41}\}.
\end{split}
\end{equation*}

\begin{figure}[h]
	\centering
	
	\begin{tikzpicture}[scale=0.3]
		\begin{scope}
			\foreach \x in {1,2,...,8}
			{
				\node[vertex] (1\x) at (3*\x,3*2) {};
			}
			\foreach \x in {1,2,...,8}
			{
				\node[vertex] (2\x) at (3*\x,2*2) {};
			}
			\foreach \x in {1,2,...,8}
			{
				\node[vertex] (3\x) at (3*\x,1*2) {};
			}
			\foreach \x in {1,2,...,8}
			{
				\node[vertex] (4\x) at (3*\x,0*2) {};
			}
			
			\foreach \z in {1,2,3,4}
			{
				\foreach \y [evaluate={\y as \x using {\y+1}}] in {1,2,...,4,6,7} 
				{
					\draw[edge] (\z\y) -- (\z\x);
				}
			}
			
			\foreach \z in {1,2,3,4,5,...,8}
			{\draw[edge] (1\z) -- (2\z);
				\draw[edge] (2\z) -- (3\z);
				\draw[edge] (3\z) -- (4\z);
			}
			
			\foreach \z [evaluate={\znext=int({1+\z})}] in {2,4,7}
			{
				\draw[edge] (1\z) -- (2\znext);
				\draw[edge] (3\z) -- (4\znext);
			}
			\foreach \z [evaluate={\znext=int({\z-1})}] in {3,5,8}
			{
				\draw[edge] (1\z) -- (2\znext);
				\draw[edge] (3\z) -- (4\znext);
			}
			
			\foreach \z [evaluate={\znext=int({1+\z})}] in {1,3,6}
			{
				\draw[edge] (2\z) -- (3\znext);
			}
			\foreach \z [evaluate={\znext=int({\z-1})}] in {2,4,7}
			{
				\draw[edge] (2\z) -- (3\znext);
			}
			\coordinate (c15) at (15);
			\coordinate (c16) at (16);

			\draw[dotted, thick,shorten >=4pt, shorten <=4pt] (15.5,5) -- (17.5,5);
			\draw[dotted, thick,shorten >=4pt, shorten <=4pt] (15.5,3) -- (17.5,3);
			\draw[dotted, thick,shorten >=4pt, shorten <=4pt] (15.5,1) -- (17.5,1);
			
			\draw[decoration={brace,raise=5pt,mirror},decorate, below]
			(41 |- 52,-0.8) -- node[below=6pt] {\scriptsize{$1$}}(43|- 0,-0.8);
			\draw[decoration={brace,raise=5pt,mirror},decorate]
			(43|- 0,-0.8) -- node[below=6pt] {\scriptsize{$2$}}(45|- 0,-0.8);
			\draw[decoration={brace,raise=5pt,mirror},decorate]
			(46|- 0,-0.8) -- node[below=6pt] {\scriptsize{$n$}}(48|- 0,-0.8);
		\end{scope}

		\foreach \x in {1,2,3,4,5}
		{
			\node[label={[label distance = -3pt]-90:\scriptsize{$g_{4\x}$}}] at (4\x) {};
		}
		\node[label={[label distance=-3pt]-90:\scriptsize{$g_{4\,{2n+1}}$}}] at (48) {};
		\node[label={[label distance=-3pt]-90:\scriptsize{$\cdots$}}] at (46) {};
		\node[label={[label distance=-3pt]0:\scriptsize{$g_{2\,{2n+1}}$}}] at (28) {};
		\node[label={[label distance=-3pt]0:\scriptsize{$g_{3\,{2n+1}}$}}] at (38) {};
		
		\foreach \x in {2,4}
		{
			\node[label={[label distance = -8pt]-30:\scriptsize{$g_{2\x}$}}] at (2\x) {};
			\node[label={[label distance = -8pt]30:\scriptsize{$g_{3\x}$}}] at (3\x) {};
		}
		
		\foreach \x in {1,3,5}
		{
			\node[label={[label distance = -8pt]30:\scriptsize{$g_{2\x}$}}] at (2\x) {};
			\node[label={[label distance = -8pt]-30:\scriptsize{$g_{3\x}$}}] at (3\x) {};
		}
		
		\begin{scope}[shift={(2,0)}]
			
			\node[vertex] (b1) at (-2,6) {} ;
			\node[vertex] (b2) at (-2,4) {} ;
			\node[vertex] (b3) at (-2,2) {} ;
			\node[vertex] (b4) at (-2,0) {} ;
			\node[vertex] (b31) at (-4,2) {} ;
			\node[vertex] (b41) at (-4,0) {} ;

			\draw[edge] (b1) -- (11) ;
			\draw[edge] (b1) -- (21) ;
			\draw[edge] (b1) -- (b2) ;
			\draw[edge] (b2) -- (b3) ;
			\draw[edge] (b2) -- (21) ;
			\draw[edge] (b2) -- (11) ;
			\draw[edge] (b3) -- (31) ;
			\draw[edge] (b3) -- (41) ;
			\draw[edge] (b3) -- (b4) ;
			\draw[edge] (b4) -- (41) ;
			\draw[edge] (b4) -- (31) ;
			
			\draw[edge] (b31) -- (b3) ;
			\draw[edge] (b31) -- (b2) ;
			\draw[edge] (b31) -- (b41) ;
			\draw[edge] (b41) -- (b4) ;

			\node[label={[label distance=-3pt]180:\scriptsize{$b_{3}$}}] at (b1) {};
			\node[label={[label distance=-3pt]180:\scriptsize{$b_{4}$}}] at (b2) {};
			\node[label={[label distance=-7pt]220:\scriptsize{$b_{5}$}}] at (b3) {};
			\node[label={[label distance=-3pt]270:\scriptsize{$b_{6}$}}] at (b4) {};
			\node[label={[label distance=-3pt]180:\scriptsize{$b_{1}$}}] at (b31) {};
			\node[label={[label distance=-3pt]180:\scriptsize{$b_{2}$}}] at (b41) {};
			
			\node[label={[label distance=-3pt]90:\scriptsize{$g_{11}$}}] at (11) {};
			\foreach \x in {2,...,5}
			{
				\node[label={[label distance=-3pt]90:\scriptsize{$g_{1\x}$}}] at (1\x) {};
			}
			\node[label={[label distance=-3pt]90:\scriptsize{$g_{1\,{2n+1}}$}}] at (18) {};
			\node[label={[label distance=-3pt]90:\scriptsize{$\cdots$}}] at (16) {};
			
			\node[] at (9,-4) {(b) $B_{n,5}$, for $n\geq 1$};  
			
		\end{scope}
		
		\begin{scope}[shift = {(-8,0)}]
			\foreach \x in {1,2,3,4}
			{\node[vertex] (x\x1) at (1,2*(4-\x) {};}
			
			\foreach \x [evaluate = \x as \y using int(\x+1)] in {1,2,3}
			{\draw[edge] (x\x1) -- (x\y1) ;}
			
			\node[vertex] (b1) at (-2,6) {} ;
			\node[vertex] (b2) at (-2,4) {} ;
			\node[vertex] (b3) at (-2,2) {} ;
			\node[vertex] (b4) at (-2,0) {} ;
			\node[vertex] (b31) at (-4,2) {} ;
			\node[vertex] (b41) at (-4,0) {} ;

			\draw[edge] (b1) -- (x11) ;
			\draw[edge] (b1) -- (x21) ;
			\draw[edge] (b1) -- (b2) ;
			\draw[edge] (b2) -- (b3) ;
			\draw[edge] (b2) -- (x21) ;
			\draw[edge] (b2) -- (x11) ;
			\draw[edge] (b3) -- (x31) ;
			\draw[edge] (b3) -- (x41) ;
			\draw[edge] (b3) -- (b4) ;
			\draw[edge] (b4) -- (x41) ;
			\draw[edge] (b4) -- (x31) ;
			
			\draw[edge] (b31) -- (b3) ;
			\draw[edge] (b31) -- (b2) ;
			\draw[edge] (b31) -- (b41) ;
			\draw[edge] (b41) -- (b4) ;

			\node[label={[label distance=-3pt]180:\scriptsize{$b_{3}$}}] at (b1) {};
			\node[label={[label distance=-3pt]180:\scriptsize{$b_{4}$}}] at (b2) {};
			\node[label={[label distance=-7pt]220:\scriptsize{$b_{5}$}}] at (b3) {};
			\node[label={[label distance=-3pt]270:\scriptsize{$b_{6}$}}] at (b4) {};
			\node[label={[label distance=-3pt]180:\scriptsize{$b_{1}$}}] at (b31) {};
			\node[label={[label distance=-3pt]180:\scriptsize{$b_{2}$}}] at (b41) {};
			
			\foreach \y in {1,2,3,4}
			{
				\node[label={[label distance=-3pt]0:\scriptsize{$g_{\y 1}$}}] at (x\y1) {};
			}
			
			\node[] at (-1,-4) {(a) $B_{0,5}$}; 
		\end{scope}
		
	\end{tikzpicture}
	\caption{$B_{n,5}$} \label{fig:Bn}	
\end{figure}

\noindent{(iii)\bf{ Graph $C_{n,5}$.}}
For $n\geq 0$, we define the graph $C_{n,5}$ as follows: 

$V(C_{n,5})=V(\Gamma_{n,5})\cup \{c_1\},$ and 
$E(C_{n,5})=E(\Gamma_{n,5})\cup \{c_{1}g_{31},c_{1}g_{41}\}$.

\begin{figure}[h]
	\centering
	
	\begin{tikzpicture}[scale=0.3]
		\begin{scope}
			\foreach \x in {1,2,...,8}
			{
				\node[vertex] (1\x) at (3*\x,3*2) {};
			}
			\foreach \x in {1,2,...,8}
			{
				\node[vertex] (2\x) at (3*\x,2*2) {};
			}
			\foreach \x in {1,2,...,8}
			{
				\node[vertex] (3\x) at (3*\x,1*2) {};
			}
			\foreach \x in {1,2,...,8}
			{
				\node[vertex] (4\x) at (3*\x,0*2) {};
			}
			
			\foreach \z in {1,2,3,4}
			{
				\foreach \y [evaluate={\y as \x using {\y+1}}] in {1,2,...,4,6,7} 
				{
					\draw[edge] (\z\y) -- (\z\x);
				}
			}
			
			\foreach \z in {1,2,3,4,5,...,8}
			{\draw[edge] (1\z) -- (2\z);
				\draw[edge] (2\z) -- (3\z);
				\draw[edge] (3\z) -- (4\z);
			}
			
			\foreach \z [evaluate={\znext=int({1+\z})}] in {2,4,7}
			{
				\draw[edge] (1\z) -- (2\znext);
				\draw[edge] (3\z) -- (4\znext);
			}
			\foreach \z [evaluate={\znext=int({\z-1})}] in {3,5,8}
			{
				\draw[edge] (1\z) -- (2\znext);
				\draw[edge] (3\z) -- (4\znext);
			}
			
			\foreach \z [evaluate={\znext=int({1+\z})}] in {1,3,6}
			{
				\draw[edge] (2\z) -- (3\znext);
			}
			\foreach \z [evaluate={\znext=int({\z-1})}] in {2,4,7}
			{
				\draw[edge] (2\z) -- (3\znext);
			}
			\coordinate (c15) at (15);
			\coordinate (c16) at (16);

			\draw[dotted, thick,shorten >=4pt, shorten <=4pt] (15.5,5) -- (17.5,5);
			\draw[dotted, thick,shorten >=4pt, shorten <=4pt] (15.5,3) -- (17.5,3);
			\draw[dotted, thick,shorten >=4pt, shorten <=4pt] (15.5,1) -- (17.5,1);
			
			\draw[decoration={brace,raise=5pt,mirror},decorate, below]
			(41 |- 52,-0.8) -- node[below=6pt] {\scriptsize{$1$}}(43|- 0,-0.8);
			\draw[decoration={brace,raise=5pt,mirror},decorate]
			(43|- 0,-0.8) -- node[below=6pt] {\scriptsize{$2$}}(45|- 0,-0.8);
			\draw[decoration={brace,raise=5pt,mirror},decorate]
			(46|- 0,-0.8) -- node[below=6pt] {\scriptsize{$n$}}(48|- 0,-0.8);

			\foreach \x in {1,2,3,4,5}
			{
				\node[label={[label distance = -3pt]-90:\scriptsize{$g_{4\x}$}}] at (4\x) {};
			}
			\node[label={[label distance=-3pt]-90:\scriptsize{$g_{4\,{2n+1}}$}}] at (48) {};
			\node[label={[label distance=-3pt]-90:\scriptsize{$\cdots$}}] at (46) {};
			\node[label={[label distance=-3pt]0:\scriptsize{$g_{2\,{2n+1}}$}}] at (28) {};
			\node[label={[label distance=-3pt]0:\scriptsize{$g_{3\,{2n+1}}$}}] at (38) {};
			
			\foreach \x in {2,4}
			{
				\node[label={[label distance = -8pt]-30:\scriptsize{$g_{2\x}$}}] at (2\x) {};
				\node[label={[label distance = -8pt]30:\scriptsize{$g_{3\x}$}}] at (3\x) {};
			}
			
			\foreach \x in {3,5}
			{
				\node[label={[label distance = -8pt]30:\scriptsize{$g_{2\x}$}}] at (2\x) {};
				\node[label={[label distance = -8pt]-30:\scriptsize{$g_{3\x}$}}] at (3\x) {};
			}
			
			\node[label={[label distance=-3pt]90:\scriptsize{$g_{11}$}}] at (11) {};
			\foreach \x in {2,...,5}
			{
				\node[label={[label distance=-3pt]90:\scriptsize{$g_{1\x}$}}] at (1\x) {};
			} 
			\node[label={[label distance=-3pt]90:\scriptsize{$g_{1\,{2n+1}}$}}] at (18) {};
			\node[label={[label distance=-3pt]90:\scriptsize{$\cdots$}}] at (16) {};
			\foreach \x in {2,3}
			{
				\node[label={[label distance=-3pt]180:\scriptsize{$g_{\x1}$}}] at (\x1) {};
			}
			\node[] at (11,-4) {(b) $C_{n,5}$, for $n\geq 1$};
		\end{scope}

		\begin{scope}[shift={(2,0)}]
			\node[vertex] (c2) at (-2,0) {} ;
			
			\draw[edge] (c2) -- (31) ;
			\draw[edge] (c2) -- (41) ;
			
			\node[label={[label distance=-3pt]270:\scriptsize{$c_{1}$}}] at (c2) {};
		\end{scope}

		\begin{scope}[shift = {(-5,0)}]
			\foreach \x in {1,2,3,4}
			{\node[vertex] (x\x1) at (1,2*(4-\x) {};}
			
			\foreach \x [evaluate = \x as \y using int(\x+1)] in {1,2,3}
			{\draw[edge] (x\x1) -- (x\y1) ;}
			
			\node[vertex] (c2) at (-2,0) {} ;
			
			\draw[edge] (c2) -- (x31) ;
			\draw[edge] (c2) -- (x41) ;
			
			\foreach \y in {1,2,3,4}
			{
				\node[label={[label distance=-3pt]0:\scriptsize{$g_{\y 1}$}}] at (x\y1) {};
			}
			\node[label={[label distance=-3pt]270:\scriptsize{$c_{1}$}}] at (c2) {};
			
			\node[] at (0,-4) {(a) $C_{0,5}$};
		\end{scope}
	\end{tikzpicture}
	\caption{$C_{n,5}$} \label{fig:Cn}	
\end{figure} 

\medskip

\noindent{(iv)\bf{ Graph $D_{n,5}$.}}
For $n\geq 0$, we define the graph $D_{n,5}$ as follows: 

$V(D_{n,5})=  V(\Gamma_{n,5})\cup \{d_{1},d_{2}\}$, and 
$E(D_{n,5})=  E(\Gamma_{n,5})\cup \{d_{1}d_{2},d_{1}g_{11},d_{1}g_{21},d_{2}g_{31},d_{2}g_{41}\}$.

\begin{figure}[h]
	\centering
	
	\begin{tikzpicture}[scale=0.3]
		\begin{scope}
			\foreach \x in {1,2,...,8}
			{
				\node[vertex] (1\x) at (3*\x,3*2) {};
			}
			\foreach \x in {1,2,...,8}
			{
				\node[vertex] (2\x) at (3*\x,2*2) {};
			}
			\foreach \x in {1,2,...,8}
			{
				\node[vertex] (3\x) at (3*\x,1*2) {};
			}
			\foreach \x in {1,2,...,8}
			{
				\node[vertex] (4\x) at (3*\x,0*2) {};
			}
			
			\foreach \z in {1,2,3,4}
			{
				\foreach \y [evaluate={\y as \x using {\y+1}}] in {1,2,...,4,6,7} 
				{
					\draw[edge] (\z\y) -- (\z\x);
				}
			}
			
			\foreach \z in {1,2,3,4,5,...,8}
			{\draw[edge] (1\z) -- (2\z);
				\draw[edge] (2\z) -- (3\z);
				\draw[edge] (3\z) -- (4\z);
			}
			
			\foreach \z [evaluate={\znext=int({1+\z})}] in {2,4,7}
			{
				\draw[edge] (1\z) -- (2\znext);
				\draw[edge] (3\z) -- (4\znext);
			}
			\foreach \z [evaluate={\znext=int({\z-1})}] in {3,5,8}
			{
				\draw[edge] (1\z) -- (2\znext);
				\draw[edge] (3\z) -- (4\znext);
			}
			
			\foreach \z [evaluate={\znext=int({1+\z})}] in {1,3,6}
			{
				\draw[edge] (2\z) -- (3\znext);
			}
			\foreach \z [evaluate={\znext=int({\z-1})}] in {2,4,7}
			{
				\draw[edge] (2\z) -- (3\znext);
			}
			\coordinate (c15) at (15);
			\coordinate (c16) at (16);

			\draw[dotted, thick,shorten >=4pt, shorten <=4pt] (15.5,5) -- (17.5,5);
			\draw[dotted, thick,shorten >=4pt, shorten <=4pt] (15.5,3) -- (17.5,3);
			\draw[dotted, thick,shorten >=4pt, shorten <=4pt] (15.5,1) -- (17.5,1);
			
			\draw[decoration={brace,raise=5pt,mirror},decorate, below]
			(41 |- 52,-0.8) -- node[below=6pt] {\scriptsize{$1$}}(43|- 0,-0.8);
			\draw[decoration={brace,raise=5pt,mirror},decorate]
			(43|- 0,-0.8) -- node[below=6pt] {\scriptsize{$2$}}(45|- 0,-0.8);
			\draw[decoration={brace,raise=5pt,mirror},decorate]
			(46|- 0,-0.8) -- node[below=6pt] {\scriptsize{$n$}}(48|- 0,-0.8);

			\foreach \x in {1,2,3,4,5}
			{
				\node[label={[label distance = -3pt]-90:\scriptsize{$g_{4\x}$}}] at (4\x) {};
			}
			\node[label={[label distance=-3pt]-90:\scriptsize{$g_{4\,{2n+1}}$}}] at (48) {};
			\node[label={[label distance=-3pt]-90:\scriptsize{$\cdots$}}] at (46) {};
			\node[label={[label distance=-3pt]0:\scriptsize{$g_{2\,{2n+1}}$}}] at (28) {};
			\node[label={[label distance=-3pt]0:\scriptsize{$g_{3\,{2n+1}}$}}] at (38) {};
			
			\foreach \x in {2,4}
			{
				\node[label={[label distance = -8pt]-30:\scriptsize{$g_{2\x}$}}] at (2\x) {};
				\node[label={[label distance = -8pt]30:\scriptsize{$g_{3\x}$}}] at (3\x) {};
			}
			
			\foreach \x in {1,3,5}
			{
				\node[label={[label distance = -8pt]30:\scriptsize{$g_{2\x}$}}] at (2\x) {};
				\node[label={[label distance = -8pt]-30:\scriptsize{$g_{3\x}$}}] at (3\x) {};
			}

			\node[label={[label distance=-3pt]90:\scriptsize{$g_{11}$}}] at (11) {};
			\foreach \x in {2,...,5}
			{
				\node[label={[label distance=-3pt]90:\scriptsize{$g_{1\x}$}}] at (1\x) {};
			}
			\node[label={[label distance=-3pt]90:\scriptsize{$g_{1\,{2n+1}}$}}] at (18) {};
			\node[label={[label distance=-3pt]90:\scriptsize{$\cdots$}}] at (16) {};
		\end{scope}		
		
		\begin{scope}[shift={(2,0)}]
			\node[vertex] (d1) at (-2,4) {} ;
			\node[vertex] (d2) at (-2,2) {} ;
			
			\draw[edge] (d1) -- (11) ;
			\draw[edge] (d1) -- (21) ;
			\draw[edge] (d1) -- (d2) ;
			\draw[edge] (d2) -- (31) ;
			\draw[edge] (d2) -- (41) ;
			
			\node[label={[label distance=-3pt]90:\scriptsize{$d_{1}$}}] at (d1) {};
			\node[label={[label distance=-3pt]270:\scriptsize{$d_{2}$}}] at (d2) {};
			
			\node[] at (11,-4) {(b) $D_{n,5}$, for $n\geq 1$};   
			
		\end{scope}
		
		\begin{scope}[shift = {(-5,0)}]
			\foreach \x in {1,2,3,4}
			{\node[vertex] (x\x1) at (1,2*(4-\x) {};}
			
			\foreach \x [evaluate = \x as \y using int(\x+1)] in {1,2,3}
			{\draw[edge] (x\x1) -- (x\y1) ;}
			
			\node[vertex] (d1) at (-2,4) {} ;
			\node[vertex] (d2) at (-2,2) {} ;
			
			\draw[edge] (d1) -- (x11) ;
			\draw[edge] (d1) -- (x21) ;
			\draw[edge] (d1) -- (d2) ;
			\draw[edge] (d2) -- (x31) ;
			\draw[edge] (d2) -- (x41) ;
			
			\foreach \y in {1,2,3,4}
			{
				\node[label={[label distance=-3pt]0:\scriptsize{$g_{\y 1}$}}] at (x\y1) {};
			}
			
			\node[label={[label distance=-3pt]90:\scriptsize{$d_{1}$}}] at (d1) {};
			\node[label={[label distance=-3pt]270:\scriptsize{$d_{2}$}}] at (d2) {};
			
			\node[] at (0,-4) {(a) $D_{0,5}$}; 
			
		\end{scope}
		
	\end{tikzpicture}
	\caption{$D_{n,5}$} \label{fig:Dn}	
\end{figure}

\noindent{(v)\bf{ Graph $E_{n,5}$.}}
For $n\geq 0$, we define the graph $E_{n,5}$ as follows: 

$V(E_{n,5})=V(\Gamma_{n,5})\cup \{e_{1}\}$ and 
$E(E_{n,5})=E(\Gamma_{n,5})\cup \{e_{1}g_{11},e_{1}g_{21},e_{1}g_{31},e_{1}g_{41}\}$.

\begin{figure}[h]
    \centering
    
    \begin{tikzpicture}[scale=0.3]
        \begin{scope}
\foreach \x in {1,2,...,8}
            {
            \node[vertex] (1\x) at (3*\x,3*2) {};
            }
\foreach \x in {1,2,...,8}
            {
            \node[vertex] (2\x) at (3*\x,2*2) {};
            }
\foreach \x in {1,2,...,8}
            {
            \node[vertex] (3\x) at (3*\x,1*2) {};
            }
\foreach \x in {1,2,...,8}
            {
            \node[vertex] (4\x) at (3*\x,0*2) {};
            }
            
\foreach \z in {1,2,3,4}
            {
            \foreach \y [evaluate={\y as \x using {\y+1}}] in {1,2,...,4,6,7} 
            {
            \draw[edge] (\z\y) -- (\z\x);
            }
            }
            
\foreach \z in {1,2,3,4,5,...,8}
            {\draw[edge] (1\z) -- (2\z);
            \draw[edge] (2\z) -- (3\z);
            \draw[edge] (3\z) -- (4\z);
            }
            
\foreach \z [evaluate={\znext=int({1+\z})}] in {2,4,7}
            {
            \draw[edge] (1\z) -- (2\znext);
            \draw[edge] (3\z) -- (4\znext);
            }
            \foreach \z [evaluate={\znext=int({\z-1})}] in {3,5,8}
            {
            \draw[edge] (1\z) -- (2\znext);
            \draw[edge] (3\z) -- (4\znext);
            }

\foreach \z [evaluate={\znext=int({1+\z})}] in {1,3,6}
            {
            \draw[edge] (2\z) -- (3\znext);
            }
            \foreach \z [evaluate={\znext=int({\z-1})}] in {2,4,7}
            {
            \draw[edge] (2\z) -- (3\znext);
            }
            \coordinate (c15) at (15);
            \coordinate (c16) at (16);

\draw[dotted, thick,shorten >=4pt, shorten <=4pt] (15.5,5) -- (17.5,5);
			\draw[dotted, thick,shorten >=4pt, shorten <=4pt] (15.5,3) -- (17.5,3);
			\draw[dotted, thick,shorten >=4pt, shorten <=4pt] (15.5,1) -- (17.5,1);
            
\draw[decoration={brace,raise=5pt,mirror},decorate, below]
            (41 |- 52,-0.8) -- node[below=6pt] {\scriptsize{$1$}}(43|- 0,-0.8);
            \draw[decoration={brace,raise=5pt,mirror},decorate]
            (43|- 0,-0.8) -- node[below=6pt] {\scriptsize{$2$}}(45|- 0,-0.8);
            \draw[decoration={brace,raise=5pt,mirror},decorate]
            (46|- 0,-0.8) -- node[below=6pt] {\scriptsize{$n$}}(48|- 0,-0.8);

        \foreach \x in {1,2,3,4,5}
        {
        \node[label={[label distance = -3pt]-90:\scriptsize{$g_{4\x}$}}] at (4\x) {};
        }
        \node[label={[label distance=-3pt]-90:\scriptsize{$g_{4\,{2n+1}}$}}] at (48) {};
        \node[label={[label distance=-3pt]-90:\scriptsize{$\cdots$}}] at (46) {};
        \node[label={[label distance=-3pt]0:\scriptsize{$g_{2\,{2n+1}}$}}] at (28) {};
        \node[label={[label distance=-3pt]0:\scriptsize{$g_{3\,{2n+1}}$}}] at (38) {};

        \foreach \x in {2,4}
        {
        \node[label={[label distance = -8pt]-30:\scriptsize{$g_{2\x}$}}] at (2\x) {};
        \node[label={[label distance = -8pt]30:\scriptsize{$g_{3\x}$}}] at (3\x) {};
        }

        \foreach \x in {1,3,5}
        {
        \node[label={[label distance = -8pt]30:\scriptsize{$g_{2\x}$}}] at (2\x) {};
        \node[label={[label distance = -8pt]-30:\scriptsize{$g_{3\x}$}}] at (3\x) {};
        }

\node[label={[label distance=-3pt]90:\scriptsize{$g_{11}$}}] at (11) {};
            \foreach \x in {2,...,5}
            {
            \node[label={[label distance=-3pt]90:\scriptsize{$g_{1\x}$}}] at (1\x) {};
            }
            \node[label={[label distance=-3pt]90:\scriptsize{$g_{1\,{2n+1}}$}}] at (18) {};
            \node[label={[label distance=-3pt]90:\scriptsize{$\cdots$}}] at (16) {};

            \node[] at (11,-4) {(b) $E_{n,5}$, for $n\geq 1$};   

        \end{scope}		

\begin{scope}[shift={(2,0)}]
            \node[vertex] (e1) at (-2,3) {} ;
            
            \draw[edge] (e1) -- (11) ;
            \draw[edge] (e1) -- (21) ;
            \draw[edge] (e1) -- (31) ;
            \draw[edge] (e1) -- (41) ;

            \node[label={[label distance=-3pt]180:\scriptsize{$e_{1}$}}] at (e1) {};
        \end{scope}

\begin{scope}[shift = {(-5,0)}]
            \foreach \x in {1,2,3,4}
            {\node[vertex] (x\x1) at (1,2*(4-\x) {};}

            \foreach \x [evaluate = \x as \y using int(\x+1)] in {1,2,3}
            {\draw[edge] (x\x1) -- (x\y1) ;}

            \node[vertex] (e1) at (-2,3) {} ;
            
            \draw[edge] (e1) -- (x11) ;
            \draw[edge] (e1) -- (x21) ;
            \draw[edge] (e1) -- (x31) ;
            \draw[edge] (e1) -- (x41) ;

            \node[label={[label distance=-3pt]180:\scriptsize{$e_{1}$}}] at (e1) {};
            \foreach \y in {1,2,3,4}
            {
            \node[label={[label distance=-3pt]0:\scriptsize{$g_{\y 1}$}}] at (x\y1) {};
            }

            \node[] at (0,-4) {(a) $E_{0,5}$}; 
        \end{scope}

    \end{tikzpicture}
    \caption{$E_{n,5}$} \label{fig:En}	
\end{figure}

\noindent{(vi)\bf{ Graph $F_{n,5}$.}}
For $n\geq 0$, we define the graph $F_{n,5}$ as follows:
\begin{equation*}
    \begin{split}
V(F_{n,5})=& V(\Gamma_{n,5})\cup \{f_1,f_2, f_3\},\\
E(F_{n,5})=& E(\Gamma_{n,5})\cup \{f_{1}f_{2}, f_{2}f_{3},f_{1}g_{11},f_{1}g_{21},f_{2}g_{31},f_{2}g_{41},f_{3}g_{31},f_{3}g_{41}\}.
\end{split}
\end{equation*}

\begin{figure}[h]
    \centering
    
    \begin{tikzpicture}[scale=0.3]
        \begin{scope}
\foreach \x in {1,2,...,8}
            {
            \node[vertex] (1\x) at (3*\x,3*2) {};
            }
\foreach \x in {1,2,...,8}
            {
            \node[vertex] (2\x) at (3*\x,2*2) {};
            }
\foreach \x in {1,2,...,8}
            {
            \node[vertex] (3\x) at (3*\x,1*2) {};
            }
\foreach \x in {1,2,...,8}
            {
            \node[vertex] (4\x) at (3*\x,0*2) {};
            }
            
\foreach \z in {1,2,3,4}
            {
            \foreach \y [evaluate={\y as \x using {\y+1}}] in {1,2,...,4,6,7} 
            {
            \draw[edge] (\z\y) -- (\z\x);
            }
            }
            
\foreach \z in {1,2,3,4,5,...,8}
            {\draw[edge] (1\z) -- (2\z);
            \draw[edge] (2\z) -- (3\z);
            \draw[edge] (3\z) -- (4\z);
            }
            
\foreach \z [evaluate={\znext=int({1+\z})}] in {2,4,7}
            {
            \draw[edge] (1\z) -- (2\znext);
            \draw[edge] (3\z) -- (4\znext);
            }
            \foreach \z [evaluate={\znext=int({\z-1})}] in {3,5,8}
            {
            \draw[edge] (1\z) -- (2\znext);
            \draw[edge] (3\z) -- (4\znext);
            }

\foreach \z [evaluate={\znext=int({1+\z})}] in {1,3,6}
            {
            \draw[edge] (2\z) -- (3\znext);
            }
            \foreach \z [evaluate={\znext=int({\z-1})}] in {2,4,7}
            {
            \draw[edge] (2\z) -- (3\znext);
            }
            \coordinate (c15) at (15);
            \coordinate (c16) at (16);

\draw[dotted, thick,shorten >=4pt, shorten <=4pt] (15.5,5) -- (17.5,5);
			\draw[dotted, thick,shorten >=4pt, shorten <=4pt] (15.5,3) -- (17.5,3);
			\draw[dotted, thick,shorten >=4pt, shorten <=4pt] (15.5,1) -- (17.5,1);
            
\draw[decoration={brace,raise=5pt,mirror},decorate, below]
            (41 |- 52,-0.8) -- node[below=6pt] {\scriptsize{$1$}}(43|- 0,-0.8);
            \draw[decoration={brace,raise=5pt,mirror},decorate]
            (43|- 0,-0.8) -- node[below=6pt] {\scriptsize{$2$}}(45|- 0,-0.8);
            \draw[decoration={brace,raise=5pt,mirror},decorate]
            (46|- 0,-0.8) -- node[below=6pt] {\scriptsize{$n$}}(48|- 0,-0.8);

        \foreach \x in {1,2,3,4,5}
        {
        \node[label={[label distance = -3pt]-90:\scriptsize{$g_{4\x}$}}] at (4\x) {};
        }
        \node[label={[label distance=-3pt]-90:\scriptsize{$g_{4\,{2n+1}}$}}] at (48) {};
        \node[label={[label distance=-3pt]-90:\scriptsize{$\cdots$}}] at (46) {};
        \node[label={[label distance=-3pt]0:\scriptsize{$g_{2\,{2n+1}}$}}] at (28) {};
        \node[label={[label distance=-3pt]0:\scriptsize{$g_{3\,{2n+1}}$}}] at (38) {};

        \foreach \x in {2,4}
        {
        \node[label={[label distance = -8pt]-30:\scriptsize{$g_{2\x}$}}] at (2\x) {};
        \node[label={[label distance = -8pt]30:\scriptsize{$g_{3\x}$}}] at (3\x) {};
        }

        \foreach \x in {1,3,5}
        {
        \node[label={[label distance = -8pt]30:\scriptsize{$g_{2\x}$}}] at (2\x) {};
        \node[label={[label distance = -8pt]-30:\scriptsize{$g_{3\x}$}}] at (3\x) {};
        }

\node[label={[label distance=-3pt]90:\scriptsize{$g_{11}$}}] at (11) {};
            \foreach \x in {2,...,5}
            {
            \node[label={[label distance=-3pt]90:\scriptsize{$g_{1\x}$}}] at (1\x) {};
            }

            \node[label={[label distance=-3pt]90:\scriptsize{$g_{1\,{2n+1}}$}}] at (18) {};
            \node[label={[label distance=-3pt]90:\scriptsize{$\cdots$}}] at (16) {};

            \node[] at (11,-4) {(b) $F_{n,5}$, for $n\geq 1$};   
        \end{scope}		

\begin{scope}[shift={(2,0)}]
            \node[vertex] (f2) at (-2,4) {} ;
            \node[vertex] (f3) at (-2,2) {} ;
            \node[vertex] (f4) at (-2,0) {} ;

            \draw[edge] (f2) -- (f3) ;
            \draw[edge] (f3) -- (f4) ;
            \draw[edge] (f2) -- (21) ;
            \draw[edge] (f3) -- (31) ;
            \draw[edge] (f4) -- (41) ;
            \draw[edge] (f2) -- (11) ;
            \draw[edge] (f4) -- (31) ;
            \draw[edge] (f3) -- (41) ;

            \node[label={[label distance=-3pt]180:\scriptsize{$f_{1}$}}] at (f2) {};
            \node[label={[label distance=-3pt]180:\scriptsize{$f_{2}$}}] at (f3) {};
            \node[label={[label distance=-3pt]180:\scriptsize{$f_{3}$}}] at (f4) {};
            
        \end{scope}
\begin{scope}[shift = {(-5,0)}]
            \foreach \x in {1,2,3,4}
            {\node[vertex] (x\x1) at (1,2*(4-\x) {};}

            \foreach \x [evaluate = \x as \y using int(\x+1)] in {1,2,3}
            {\draw[edge] (x\x1) -- (x\y1) ;}

            \node[vertex] (f2) at (-2,4) {} ;
            \node[vertex] (f3) at (-2,2) {} ;
            \node[vertex] (f4) at (-2,0) {} ;

            \draw[edge] (f2) -- (f3) ;
            \draw[edge] (f3) -- (f4) ;
            \draw[edge] (f2) -- (x21) ;
            \draw[edge] (f3) -- (x31) ;
            \draw[edge] (f4) -- (x41) ;
            \draw[edge] (f2) -- (x11) ;
            \draw[edge] (f4) -- (x31) ;
            \draw[edge] (f3) -- (x41) ;

            \node[label={[label distance=-3pt]180:\scriptsize{$f_{1}$}}] at (f2) {};
            \node[label={[label distance=-3pt]180:\scriptsize{$f_{2}$}}] at (f3) {};
            \node[label={[label distance=-3pt]180:\scriptsize{$f_{3}$}}] at (f4) {};
            
            \foreach \y in {1,2,3,4}
            {
            \node[label={[label distance=-3pt]0:\scriptsize{$g_{\y 1}$}}] at (x\y1) {};
            }

            \node[] at (0,-4) {(a) $F_{0,5}$}; 
 
        \end{scope}
        
    \end{tikzpicture}
    \caption{$F_{n,5}$} \label{fig:Fn}	
\end{figure}

\subsubsection{ Independence complexes  of $\Gamma_{n,5},\Lambda_{n,5}$ and  intermediary graphs defined in \Cref{subsection:PnP5Basecase}}\label{subsection:GeneralCasePnP5} 

In this section,  we  establish recursive relations among the independence complexes of the graphs in 
$\{\Gamma_{n,5},\Lambda_{n,5},A_{n,5},B_{n,5},C_{n,5},D_{n,5},E_{n,5}, $ $F_{n,5}\}$. 

\begin{claim}{\label{homG_{n,5}}}
    {\small
    \begin{equation*}
        \ind(\Gamma_{n,5}) \simeq \begin{cases}
        a \	point &   \text{if $n = 0$,} \\
             \bigvee^3\mathbb{S}^2 & \text{if $n = 1$,} \\
           \Sigma^3\ind(D_{n-2,5}) \vee\Sigma^3\ind(D_{n-2,5}) \vee \Sigma^3\ind(A_{n-2,5}) & \text{if $n \geq 2$.}
        \end{cases}
    \end{equation*}
    }
\end{claim}

    \begin{figure}[h!]
		\centering   
		\begin{subfigure}[b]{0.28\linewidth}
			\begin{tikzpicture}[scale=0.3]
				\begin{scope}
					\foreach \x in {1}
					{
						\node[vertex] (1\x) at (2*\x,3*2) {};
					}
				\foreach \x in {2,3}
						{
							\node[vertex] (1\x) at (3*\x,3*2) {};
					}
					\foreach \x in {3}
					{
						\node[vertex] (2\x) at (3*\x,2*2) {};
					}
					\foreach \x in {3}
					{
						\node[vertex] (3\x) at (3*\x,1*2) {};
					}
					\foreach \x in {2,3}
					{
						\node[vertex] (4\x) at (3*\x,0*2) {};
					}
					
					\node[vertex] (21) at (2,4) {};
					\node[vertex] (31) at (2,2) {};
					\node[vertex] (41) at (2,0) {};
					
					\foreach \y [evaluate={\y as \x using int({\y+1})}] in {1,2} 
					{
						\draw[edge] (1\y) -- (1\x);
					}
					
					\foreach \y [evaluate={\y as \x using int({\y+1})}] in {1,2} 
					{
						\draw[edge] (4\y) -- (4\x);
					}

					\foreach \z in {1,3}
					{
						\draw[edge] (1\z) -- (2\z);
					}
					\foreach \z in {1,3}
					{
						\draw[edge] (2\z) -- (3\z);}
					\foreach \z in {1,3}
					{
						\draw[edge] (3\z) -- (4\z);
					}
					
					
					\draw[edge] (42) -- (33);
					\draw[edge] (12) -- (23);

					\draw[dotted, thick,shorten >=4pt, shorten <=4pt] (9,5) -- (11,5);
					\draw[dotted, thick,shorten >=4pt, shorten <=4pt] (9,3) -- (11,3);
					\draw[dotted, thick,shorten >=4pt, shorten <=4pt] (9,1) -- (11,1);
					
					\foreach \x in {1,2,3}
					{
						\node[label={[label distance=-3pt]90:\footnotesize{$g_{1\x}$}}] at (1\x) {};
						\node[label={[label distance=-3pt]270:\footnotesize{$g_{4\x}$}}] at (4\x) {};
					}

					\node[label={[label distance=-3pt]180:\footnotesize{$g_{21}$}}] at (21) {};
					\node[label={[label distance=-3pt]180:\footnotesize{$g_{31}$}}] at (31) {};

					\node[label={[label distance=-3pt]180:\footnotesize{$g_{23}$}}] at (23) {};
					\node[label={[label distance=-3pt]180:\footnotesize{$g_{33}$}}] at (33) {};

				\end{scope}
			\end{tikzpicture}
			\caption{$\Gamma_{n,5}^1$}
			\label{subfig: gamma-n-15}
		\end{subfigure}
		\begin{subfigure}[b]{0.28\linewidth}
			\begin{tikzpicture}[scale=0.3]
				\begin{scope}
					\foreach \x in {1}
				{
					\node[vertex] (1\x) at (2*\x,3*2) {};
				}
				\foreach \x in {2,3}
				{
					\node[vertex] (1\x) at (3*\x,3*2) {};
				}
					\foreach \x in {3}
					{
						\node[vertex] (2\x) at (3*\x,2*2) {};
					}
					\foreach \x in {3}
					{
						\node[vertex] (3\x) at (3*\x,1*2) {};
					}
					\foreach \x in {2,3}
					{
						\node[vertex] (4\x) at (3*\x,0*2) {};
					}
					
					\node[vertex] (21) at (2,4) {};
					\node[vertex] (31) at (2,2) {};
					\node[vertex] (41) at (2,0) {};
					
					\foreach \y [evaluate={\y as \x using int({\y+1})}] in {1,2} 
					{
						\draw[edge] (1\y) -- (1\x);
					}
					
					\foreach \y [evaluate={\y as \x using int({\y+1})}] in {1,2} 
					{
						\draw[edge] (4\y) -- (4\x);
					}

					\foreach \z in {1,3}
					{
						\draw[edge] (1\z) -- (2\z);
					}
					\foreach \z in {1,3}
					{
						\draw[edge] (2\z) -- (3\z);}
					\foreach \z in {1,3}
					{
						\draw[edge] (3\z) -- (4\z);
					}
					
					
					\draw[edge] (41) -- (12);
					\draw[edge] (42) -- (33);
					\draw[edge] (12) -- (23);

				\draw[dotted, thick,shorten >=4pt, shorten <=4pt] (9,5) -- (11,5);
					\draw[dotted, thick,shorten >=4pt, shorten <=4pt] (9,3) -- (11,3);
					\draw[dotted, thick,shorten >=4pt, shorten <=4pt] (9,1) -- (11,1);

					\foreach \x in {1,2,3}
					{
						\node[label={[label distance=-3pt]90:\footnotesize{$g_{1\x}$}}] at (1\x) {};
						\node[label={[label distance=-3pt]270:\footnotesize{$g_{4\x}$}}] at (4\x) {};
					}

					\node[label={[label distance=-3pt]180:\footnotesize{$g_{21}$}}] at (21) {};
					\node[label={[label distance=-3pt]180:\footnotesize{$g_{31}$}}] at (31) {};
					
					\node[label={[label distance=-3pt]180:\footnotesize{$g_{23}$}}] at (23) {};
					\node[label={[label distance=-3pt]180:\footnotesize{$g_{33}$}}] at (33) {};

				\end{scope}
			\end{tikzpicture}
			\caption{$\Gamma_{n,5}^2$}
			\label{subfig: gamma-n-25}
		\end{subfigure}
		\begin{subfigure}[b]{0.28\linewidth}
			\begin{tikzpicture}[scale=0.3]
				\begin{scope}
					\foreach \x in {2,3}
					{
						\node[vertex] (1\x) at (3*\x,3*2) {};
					}
					\foreach \x in {3}
					{
						\node[vertex] (2\x) at (3*\x,2*2) {};
					}
					\foreach \x in {3}
					{
						\node[vertex] (3\x) at (3*\x,1*2) {};
					}
					\foreach \x in {2,3}
					{
						\node[vertex] (4\x) at (3*\x,0*2) {};
					}
					
					\node[vertex] (21) at (2,4) {};
					\node[vertex] (31) at (2,2) {};
					\node[vertex] (41) at (2,0) {};
					
					\foreach \y [evaluate={\y as \x using int({\y+1})}] in {2} 
					{
						\draw[edge] (1\y) -- (1\x);
					}
					\foreach \y [evaluate={\y as \x using int({\y+1})}] in {} 
					{
						\draw[edge] (2\y) -- (2\x);
					}
					\foreach \y [evaluate={\y as \x using int({\y+1})}] in {} 
					{
						\draw[edge] (3\y) -- (3\x);
					}
					\foreach \y [evaluate={\y as \x using int({\y+1})}] in {1,2} 
					{
						\draw[edge] (4\y) -- (4\x);
					}

					\foreach \z in {3}
					{
						\draw[edge] (1\z) -- (2\z);
					}
					\foreach \z in {1,3}
					{
						\draw[edge] (2\z) -- (3\z);}
					\foreach \z in {3}
					{
						\draw[edge] (3\z) -- (4\z);
					}
					
					
					\draw[edge] (41) -- (12);
					\draw[edge] (42) -- (33);
					\draw[edge] (12) -- (23);

				\draw[dotted, thick,shorten >=4pt, shorten <=4pt] (9,5) -- (11,5);
					\draw[dotted, thick,shorten >=4pt, shorten <=4pt] (9,3) -- (11,3);
					\draw[dotted, thick,shorten >=4pt, shorten <=4pt] (9,1) -- (11,1);
				
					\foreach \x in {1,2,3}
					{
						\node[label={[label distance=-3pt]90:\footnotesize{$g_{1\x}$}}] at (1\x) {};
						\node[label={[label distance=-3pt]270:\footnotesize{$g_{4\x}$}}] at (4\x) {};
					}

					\node[label={[label distance=-3pt]180:\footnotesize{$g_{21}$}}] at (21) {};
					\node[label={[label distance=-3pt]180:\footnotesize{$g_{31}$}}] at (31) {};
					
					\node[label={[label distance=-3pt]180:\footnotesize{$g_{23}$}}] at (23) {};
					\node[label={[label distance=-3pt]180:\footnotesize{$g_{33}$}}] at (33) {};

				\end{scope}
			\end{tikzpicture}
			\caption{$\Gamma_{n,5}^3$}
			\label{subfig: gamma-n-35}
		\end{subfigure}
	
		\begin{subfigure}[b]{0.28\linewidth}
			\begin{tikzpicture}[scale=0.3]
				\begin{scope}
					\foreach \x in {2,3}
					{
						\node[vertex] (1\x) at (3*\x,3*2) {};
					}
					\foreach \x in {3}
					{
						\node[vertex] (2\x) at (3*\x,2*2) {};
					}
					\foreach \x in {3}
					{
						\node[vertex] (3\x) at (3*\x,1*2) {};
					}
					\foreach \x in {2,3}
					{
						\node[vertex] (4\x) at (3*\x,0*2) {};
					}
					
					\node[vertex] (21) at (2,4) {};
					\node[vertex] (31) at (2,2) {};
					\node[vertex] (41) at (2,0) {};
					
					\foreach \y [evaluate={\y as \x using int({\y+1})}] in {2} 
					{
						\draw[edge] (1\y) -- (1\x);
					}
					\foreach \y [evaluate={\y as \x using int({\y+1})}] in {} 
					{
						\draw[edge] (2\y) -- (2\x);
					}
					\foreach \y [evaluate={\y as \x using int({\y+1})}] in {} 
					{
						\draw[edge] (3\y) -- (3\x);
					}
					\foreach \y [evaluate={\y as \x using int({\y+1})}] in {1} 
					{
						\draw[edge] (4\y) -- (4\x);
					}

					\foreach \z in {3}
					{\draw[edge] (1\z) -- (2\z);}
					\foreach \z in {1,3}
					{
						\draw[edge] (2\z) -- (3\z);}
					\foreach \z in {3}
					{
						\draw[edge] (3\z) -- (4\z);
					}
					
					
					\draw[edge] (41) -- (12);
					\draw[edge] (12) -- (23);
					\draw[edge] (13) to[bend right=30] (33);
					\draw[edge] (13) to[bend right=30] (43);
					\draw[edge] (23) to[bend right=30] (43);

					\draw[dotted, thick,shorten >=4pt, shorten <=4pt] (9,5) -- (11,5);
					\draw[dotted, thick,shorten >=4pt, shorten <=4pt] (9,3) -- (11,3);
					\draw[dotted, thick,shorten >=4pt, shorten <=4pt] (9,1) -- (11,1);
					
					\foreach \x in {1,2,3}
					{
						\node[label={[label distance=-3pt]90:\footnotesize{$g_{1\x}$}}] at (1\x) {};
						\node[label={[label distance=-3pt]270:\footnotesize{$g_{4\x}$}}] at (4\x) {};
					}

					\node[label={[label distance=-3pt]180:\footnotesize{$g_{21}$}}] at (21) {};
					\node[label={[label distance=-3pt]180:\footnotesize{$g_{31}$}}] at (31) {};

				\end{scope}
			\end{tikzpicture}
			\caption{$\Gamma_{n,5}^4$}
			\label{subfig: gamma-n-45}
		\end{subfigure}
			\begin{subfigure}[b]{0.28\linewidth}
			\begin{tikzpicture}[scale=0.3]
				\begin{scope}
					
						\foreach \x in {3,4,5}
					{
						\node[vertex] (1\x) at (3*\x,3*2) {};
					}
					\foreach \x in {3,4,5}
					{
						\node[vertex] (1\x) at (3*\x,3*2) {};
					}
					\foreach \x in {3,4,5}
					{
						\node[vertex] (2\x) at (3*\x,2*2) {};
					}
					\foreach \x in {3,4,5}
					{
						\node[vertex] (3\x) at (3*\x,1*2) {};
					}
					\foreach \x in {3,4,5}
					{
						\node[vertex] (4\x) at (3*\x,0*2) {};
					}
					
					\foreach \y [evaluate={\y as \x using int({\y+1})}] in {3,4} 
					{
						\draw[edge] (1\y) -- (1\x);
						\draw[edge] (2\y) -- (2\x);
						\draw[edge] (3\y) -- (3\x);
						\draw[edge] (4\y) -- (4\x);
					}
					
					\foreach \z in {3,4,5}
					{\draw[edge] (1\z) -- (2\z);
						\draw[edge] (2\z) -- (3\z);
						\draw[edge] (3\z) -- (4\z);
					}
					
					\draw[edge] (13) to [bend right=45] (33);
					\draw[edge] (13) to [bend right=45] (43);
					\draw[edge] (23) to [bend right=45] (43);

					\foreach \z [evaluate={\znext=int({1+\z})}] in {4}
					{
						\draw[edge] (1\z) -- (2\znext);
						\draw[edge] (3\z) -- (4\znext);
					}
					\foreach \z [evaluate={\znext=int({\z-1})}] in {5}
					{
						\draw[edge] (1\z) -- (2\znext);
						\draw[edge] (3\z) -- (4\znext);
					}
					
					\foreach \z [evaluate={\znext=int({1+\z})}] in {3}
					{
						\draw[edge] (2\z) -- (3\znext);
					}
					\foreach \z [evaluate={\znext=int({\z-1})}] in {4}
					{
						\draw[edge] (2\z) -- (3\znext);
					}

					\draw[dotted, thick,shorten >=4pt, shorten <=4pt] (15,5) -- (17,5);
				\draw[dotted, thick,shorten >=4pt, shorten <=4pt] (15,3) -- (17,3);
				\draw[dotted, thick,shorten >=4pt, shorten <=4pt] (15,1) -- (17,1);

					\foreach \x in {3,...,5}
					{
						\node[label={[label distance=-3pt]90:\footnotesize{$g_{1\x}$}}] at (1\x) {};
						\node[label={[label distance=-3pt]270:\footnotesize{$g_{4\x}$}}] at (4\x) {};
					}
					\node[label={[label distance=-8pt]45:\footnotesize{$g_{23}$}}] at (23) {};
					\node[label={[label distance=-8pt]-45:\footnotesize{$g_{33}$}}] at (33) {};
					
				\end{scope}
			\end{tikzpicture}
			\caption{$\Gamma_{n,5}^5$}
			\label{subfig: gamma-n-55}
		\end{subfigure}
			\begin{subfigure}[b]{0.28\linewidth}
			\begin{tikzpicture}[scale=0.3]
				\begin{scope}
					\foreach \x in {3,4,5}
					{
						\node[vertex] (1\x) at (3*\x,3*2) {};
					}
					\foreach \x in {4,5}
					{
						\node[vertex] (2\x) at (3*\x,2*2) {};
					}
					\foreach \x in {3,4,5}
					{
						\node[vertex] (3\x) at (3*\x,1*2) {};
					}
					\foreach \x in {3,4,5}
					{
						\node[vertex] (4\x) at (3*\x,0*2) {};
					}
					
					\foreach \y [evaluate={\y as \x using int({\y+1})}] in {3,4} 
					{
						\draw[edge] (1\y) -- (1\x);
						\draw[edge] (3\y) -- (3\x);
						\draw[edge] (4\y) -- (4\x);
					}
					\foreach \y [evaluate={\y as \x using int({\y+1})}] in {4} 
					{
						\draw[edge] (2\y) -- (2\x);
					}
					\draw[edge] (33) -- (24);
					\foreach \z in {3,4,5}
					{
						\draw[edge] (3\z) -- (4\z);
					}
					\foreach \z in {4,5}
					{\draw[edge] (2\z) -- (3\z);
						\draw[edge] (1\z) -- (2\z);
					}
					
					\draw[edge] (13) to [bend right=45] (33);
					\draw[edge] (13) to [bend right=45] (43);

					\foreach \z [evaluate={\znext=int({1+\z})}] in {4}
					{
						\draw[edge] (1\z) -- (2\znext);
						\draw[edge] (3\z) -- (4\znext);
					}
					\foreach \z [evaluate={\znext=int({\z-1})}] in {5}
					{
						\draw[edge] (1\z) -- (2\znext);
						\draw[edge] (3\z) -- (4\znext);
					}

					\draw[dotted, thick,shorten >=4pt, shorten <=4pt] (15,5) -- (17,5);
				\draw[dotted, thick,shorten >=4pt, shorten <=4pt] (15,3) -- (17,3);
				\draw[dotted, thick,shorten >=4pt, shorten <=4pt] (15,1) -- (17,1);

					\foreach \x in {3,...,5}
					{
						\node[label={[label distance=-3pt]90:\footnotesize{$g_{1\x}$}}] at (1\x) {};
						\node[label={[label distance=-3pt]270:\footnotesize{$g_{4\x}$}}] at (4\x) {};
					}
					\node[label={[label distance=0pt]90:\footnotesize{$g_{33}$}}] at (33) {};
					\node[label={[label distance=-8pt]-30:\footnotesize{$g_{24}$}}] at (24) {};
					\node[label={[label distance=-8pt]30:\footnotesize{$g_{34}$}}] at (34) {};

				\end{scope}
			\end{tikzpicture}
			\caption{$\Gamma_{n,5}^6$}
			\label{subfig: gamma-n-65}
		\end{subfigure}
	
		\begin{subfigure}[b]{0.28\linewidth}
			\begin{tikzpicture}[scale=0.3]
				\begin{scope}
					\foreach \x in {3,4,5}
					{
						\node[vertex] (1\x) at (3*\x,3*2) {};
					}
					\foreach \x in {4,5}
					{
						\node[vertex] (2\x) at (3*\x,2*2) {};
					}
					\foreach \x in {3,4,5}
					{
						\node[vertex] (3\x) at (3*\x,1*2) {};
					}
					\foreach \x in {3,4,5}
					{
						\node[vertex] (4\x) at (3*\x,0*2) {};
					}
					
					\foreach \y [evaluate={\y as \x using int({\y+1})}] in {3,4} 
					{
						
						\draw[edge] (3\y) -- (3\x);
						\draw[edge] (4\y) -- (4\x);
					}
					\foreach \y [evaluate={\y as \x using int({\y+1})}] in {4} 
					{
						\draw[edge] (2\y) -- (2\x);
						\draw[edge] (1\y) -- (1\x);
					}
					\draw[edge] (33) -- (24);
					\foreach \z in {3,4,5}
					{
						\draw[edge] (3\z) -- (4\z);
					}
					\foreach \z in {5}
					{\draw[edge] (2\z) -- (3\z);
						\draw[edge] (1\z) -- (2\z);
					}
					\foreach \z in {4}
					{\draw[edge] (3\z) -- (2\z);}
					
					\draw[edge] (13) to [bend right=45] (33);
					\draw[edge] (13) to [bend right=45] (43);
					\draw[edge] (14) to [bend right=45] (34);
					\draw[edge] (14) to [bend right=45] (44);
					\draw[edge] (24) to [bend right=45] (44);

					\foreach \z [evaluate={\znext=int({1+\z})}] in {4}
					{
						\draw[edge] (1\z) -- (2\znext);
						\draw[edge] (3\z) -- (4\znext);
					}
					\foreach \z [evaluate={\znext=int({\z-1})}] in {5}
					{
						\draw[edge] (1\z) -- (2\znext);
						\draw[edge] (3\z) -- (4\znext);
					}
					

					\draw[dotted, thick,shorten >=4pt, shorten <=4pt] (15,5) -- (17,5);
				\draw[dotted, thick,shorten >=4pt, shorten <=4pt] (15,3) -- (17,3);
				\draw[dotted, thick,shorten >=4pt, shorten <=4pt] (15,1) -- (17,1);
					
					\foreach \x in {3,...,5}
					{
						\node[label={[label distance=-3pt]90:\footnotesize{$g_{1\x}$}}] at (1\x) {};
						\node[label={[label distance=-3pt]270:\footnotesize{$g_{4\x}$}}] at (4\x) {};
					}
					\node[label={[label distance=0pt]90:\footnotesize{$g_{33}$}}] at (33) {};
					\node[label={[label distance=-8pt]-30:\footnotesize{$g_{24}$}}] at (24) {};
					\node[label={[label distance=-8pt]30:\footnotesize{$g_{34}$}}] at (34) {};
					
					
				\end{scope}
			\end{tikzpicture}
			\caption{$\Gamma_{n,5}^7$}
			\label{subfig: gamma-n-75}
		\end{subfigure}
		\begin{subfigure}[b]{0.28\linewidth}
			\begin{tikzpicture}[scale=0.3]
				\begin{scope}
					\foreach \x in {3,4,5}
					{
						\node[vertex] (1\x) at (3*\x,3*2) {};
					}
					\foreach \x in {5}
					{
						\node[vertex] (2\x) at (3*\x,2*2) {};
					}
					\foreach \x in {3,4,5}
					{
						\node[vertex] (3\x) at (3*\x,1*2) {};
					}
					\foreach \x in {3,4,5}
					{
						\node[vertex] (4\x) at (3*\x,0*2) {};
					}
					
					\foreach \y [evaluate={\y as \x using int({\y+1})}] in {3,4} 
					{
						
						\draw[edge] (3\y) -- (3\x);
						\draw[edge] (4\y) -- (4\x);
					}
					\foreach \y [evaluate={\y as \x using int({\y+1})}] in {4} 
					{
						\draw[edge] (1\y) -- (1\x);
					}
					
					\foreach \z in {3,4,5}
					{
						\draw[edge] (3\z) -- (4\z);
					}
					\foreach \z in {5}
					{\draw[edge] (2\z) -- (3\z);
						\draw[edge] (1\z) -- (2\z);
					}

					\draw[edge] (13) to [bend right=45] (33);
					\draw[edge] (13) to [bend right=45] (43);
					\draw[edge] (14) to [bend right=45] (34);
					\draw[edge] (14) to [bend right=45] (44);

					\foreach \z [evaluate={\znext=int({1+\z})}] in {4}
					{
						\draw[edge] (1\z) -- (2\znext);
						\draw[edge] (3\z) -- (4\znext);
					}
					\foreach \z [evaluate={\znext=int({\z-1})}] in {5}
					{
						\draw[edge] (3\z) -- (4\znext);
					}
					

					\draw[dotted, thick,shorten >=4pt, shorten <=4pt] (15,5) -- (17,5);
				\draw[dotted, thick,shorten >=4pt, shorten <=4pt] (15,3) -- (17,3);
				\draw[dotted, thick,shorten >=4pt, shorten <=4pt] (15,1) -- (17,1);
					
					\foreach \x in {3,...,5}
					{
						\node[label={[label distance=-3pt]90:\footnotesize{$g_{1\x}$}}] at (1\x) {};
						\node[label={[label distance=-3pt]270:\footnotesize{$g_{4\x}$}}] at (4\x) {};
					}
					\node[label={[label distance=0pt]90:\footnotesize{$g_{33}$}}] at (33) {};
					\node[label={[label distance=-8pt]30:\footnotesize{$g_{34}$}}] at (34) {};
					
					
				\end{scope}
			\end{tikzpicture}
			\caption{$\Gamma_{n,5}^8$}
			\label{subfig: gamma-n-85}
		\end{subfigure}
			\begin{subfigure}[b]{0.28\linewidth}
			\begin{tikzpicture}[scale=0.3]
				\begin{scope}
					\foreach \x in {4,5,6,7}
					{
						\node[vertex] (1\x) at (3*\x,3*2) {};
					}
					\foreach \x in {5,6,7}
					{
						\node[vertex] (2\x) at (3*\x,2*2) {};
					}
					\foreach \x in {5,6,7}
					{
						\node[vertex] (3\x) at (3*\x,1*2) {};
					}
					\foreach \x in {4,5,6,7}
					{
						\node[vertex] (4\x) at (3*\x,0*2) {};
					}
					
					\foreach \y [evaluate={\y as \x using int({\y+1})}] in {5,6} 
					{
						\draw[edge] (1\y) -- (1\x);
						\draw[edge] (2\y) -- (2\x);
						\draw[edge] (3\y) -- (3\x);
						\draw[edge] (4\y) -- (4\x);
					}
					\draw[edge] (14) -- (15);
					\draw[edge] (44) -- (45);
					\draw[edge] (44) -- (35);
					\draw[edge] (14) -- (25);

					\foreach \z in {5,6,7}
					{\draw[edge] (1\z) -- (2\z);
						\draw[edge] (2\z) -- (3\z);
						\draw[edge] (3\z) -- (4\z);
					}
					
					\draw[edge] (14) to [bend right=45] (44);

					\foreach \z [evaluate={\znext=int({1+\z})}] in {6}
					{
						\draw[edge] (1\z) -- (2\znext);
						\draw[edge] (3\z) -- (4\znext);
					}
					\foreach \z [evaluate={\znext=int({\z-1})}] in {7}
					{
						\draw[edge] (1\z) -- (2\znext);
						\draw[edge] (3\z) -- (4\znext);
					}
					
					\foreach \z [evaluate={\znext=int({1+\z})}] in {5}
					{
						\draw[edge] (2\z) -- (3\znext);
					}
					\foreach \z [evaluate={\znext=int({\z-1})}] in {6}
					{
						\draw[edge] (2\z) -- (3\znext);
					}

					\draw[dotted, thick,shorten >=4pt, shorten <=4pt] (23,5) -- (21,5);
					\draw[dotted, thick,shorten >=4pt, shorten <=4pt] (23,3) -- (21,3);
					\draw[dotted, thick,shorten >=4pt, shorten <=4pt] (23,1) -- (21,1);

					\foreach \x in {5,...,7}
					{
						\node[label={[label distance=-3pt]90:\footnotesize{$g_{1\x}$}}] at (1\x) {};
						\node[label={[label distance=-3pt]270:\footnotesize{$g_{4\x}$}}] at (4\x) {};
					}
					
				\end{scope}
			\end{tikzpicture}
			\caption{$\Gamma_{n,5}^8\setminus \n[g_{33}]$}
			
			\label{subfig: gamma-n-5g33-nbd}
		\end{subfigure}
		\caption{} \label{fig:G-tilda}	
	\end{figure}

\begin{proof} Since $\Gamma_{0,5}$ is a path with 4-vertices, by \Cref{Ind(path)}, we have $\ind(\Gamma_{0,5})$ is homotopy equivalent to a point. For the remaining part of the proof, we assume that $n\geq 1$.

Since $N_{}(g_{11}) \subseteq \on{}{g_{22}}$ and $N_{}(g_{41}) \subseteq \on{}{g_{32}}$ in $\Gamma_{n,5}$ and $\Gamma_{n,5} \setminus g_{22}$, respectively (see \Cref{fig:Gn}), using \Cref{Folding Lemma}, we get that $\ind(\Gamma_{n,5}) \simeq \ind(\Gamma_{n,5} \setminus \{g_{22},g_{32}\})$. Let $\Gamma_{n,5}^1 \coloneq \Gamma_{n,5} \setminus \{g_{22},g_{32}\}$. 
Since  $[g_{41},g_{12};g_{21}]$ forms an edge-invariant triplet in $\Gamma_{n,5}^1$ (see \Cref{subfig: gamma-n-15}), from \Cref{Edge deletion 1}, we have $\ind(\Gamma_{n,5}^1) \simeq \ind(\Gamma_{n,5}^1{+}\{g_{41}g_{12}\})$. Let $ \Gamma_{n,5}^2 \eqcolon \Gamma_{n,5}^1{+}\{g_{41}g_{12}\}$.
Similarly, $[g_{31},g_{41};g_{11}]$ forms an edge-invariant triplet in $\Gamma_{n,5}^2$ (see \Cref{subfig: gamma-n-25}), which allows us to delete the edge $g_{31}g_{41}$ from $\Gamma_{n,5}^2$ without altering the homotopy type of $\ind(\Gamma_{n,5}^2)$. Now, since $N(g_{31}) \subseteq N(g_{11})$ in $\Gamma_{n,5}^2 - \{g_{31}g_{41}\}$,  using \Cref{Folding Lemma}, we get that $\ind(\Gamma_{n, 5}^2-\{g_{31}g_{41}\}) \simeq \ind((\Gamma_{n, 5}^2 - \{g_{31}g_{41}\}) \setminus g_{11})$. Let 
$\Gamma_{n,5}^3 \eqcolon (\Gamma_{n, 5}^2 - \{g_{31}g_{41}\}) \setminus g_{11}$ (see \Cref{subfig: gamma-n-35}). 
Observe that $[g_{13},g_{33};g_{41}]$  is an edge-invariant triplet in 
$\Gamma_{n, 5}^3$, $[g_{13},g_{43};g_{41}]$  is an edge-invariant triplet in 
$\Gamma_{n, 5}^3 + \{g_{13}g_{33}\}$, and  $[g_{23},g_{43};g_{41}]$  is an edge-invariant triplet in 
$\Gamma_{n, 5}^3 + \{g_{13} g_{33}, g_{13}g_{43} \}$. Hence $\ind(\Gamma_{n, 5}^3) \simeq \ind(\Gamma_{n, 5}^3 + \{g_{13} g_{33}, g_{13}g_{43}, g_{23}g_{43}\})$. Furthermore, $[g_{42},g_{33};g_{12}]$ is an edge-invariant triplet in $\Gamma_{n, 5}^3 + \{g_{13} g_{33}, g_{13}g_{43}, g_{23}g_{43} \}$, and $[g_{42},g_{43};g_{12}]$ is an edge-invariant triplet in $\Gamma_{n, 5}^3 + \{g_{13} g_{33}, g_{13}g_{43}, g_{23}g_{43} \}-\{g_{33}g_{42}\}$. Let $\Gamma_{n,5}^4 \coloneq \Gamma_{n, 5}^3 + \{g_{13} g_{33}, g_{13}g_{43}, g_{23}g_{43}\}-\{g_{33}g_{42},g_{42}g_{43}\}$ (see \Cref{subfig: gamma-n-45}).    Then, we have $\ind(\Gamma_{n,5}^3) \simeq \ind(\Gamma_{n,5}^4)$. 
In $\Gamma_{n,5}^4$, since $N(g_{42}) \subseteq N(g_{12})$, we have $\ind(\Gamma_{n,5}^4) \simeq \ind(\Gamma_{n,5}^4 \setminus g_{12})$. The  graph, $\Gamma_{n,5}^4 \setminus g_{12}$, consists of three connected components, namely $g_{21}g_{31}, g_{41}g_{42}$ and the third component denoted by $\Gamma_{n,5}^5$ depicted in \Cref{subfig: gamma-n-55}. From \Cref{prop: join}, it follows that 
\begin{equation*}
    \ind(\Gamma_{n,5}) \simeq \Sigma^{2}\ind(\Gamma_{n,5}^5).
\end{equation*}

In particular, for $\Gamma_{1,5}$, the graph $\Gamma^5_{1,5}$ is isomorphic to the complete graph $K_4$. Therefore, $\ind(\Gamma_{1,5})\simeq \Sigma^{2}\ind(K_4)\simeq \mathbb{S}^2\vee \mathbb{S}^2 \vee \mathbb{S}^2$.

Now, let $n \geq 2$. Note that, $\Gamma_{n,5}^5\setminus\cn{}{g_{23}}$ is isomorphic to $A_{n-2,5}$ (see \Cref{fig:An}) and $\ind(\Gamma_{n,5}^5\setminus\cn{}{g_{23}})* \{g_{33}\}$ is a subcomplex of $\ind(\Gamma_{n,5}^5\setminus g_{23})$, which implies that $\ind(\Gamma_{n,5}^5\setminus\cn{}{g_{23}})$ is contractible in $\ind(\Gamma_{n,5}^5\setminus g_{23})$, that is, $\ind(\Gamma_{n,5}^5\setminus\cn{}{g_{23}}) \hookrightarrow \ind(\Gamma_{n,5}^5\setminus g_{23})$ is null homotopic.
Hence from \Cref{Link and Deletion}, we have
\begin{equation}{\label{eq: Gn lk del}}
    \ind(\Gamma_{n,5}) \simeq \Sigma^2\left(\ind(\Gamma_{n,5}^5\setminus g_{23}) \vee \Sigma\ind(A_{n-2,5}) \right).
\end{equation}
 We now compute the homotopy type of $\ind(\Gamma_{n,5}^5\setminus g_{23})$. Let $\Gamma_{n,5}^6:=\Gamma_{n,5}^5\setminus g_{23}$ depicted in \Cref{subfig: gamma-n-65}. Since $[g_{14},g_{34};g_{43}]$, $[g_{24},g_{44};g_{13}]$, and $[g_{14},g_{44};g_{33}]$ are edge-invariant triplets in $\Gamma_{n,5}^6$, $\Gamma_{n,5}^6 + \{g_{14}g_{34}\}$, and $\Gamma_{n,5}^6 + \{g_{14}g_{34}, g_{24}g_{44}\}$ respectively, using \Cref{Edge deletion 1}, we have $\ind(\Gamma_{n,5}^6) \simeq \ind(\Gamma_{n,5}^6+\{g_{14}g_{34}, g_{24}g_{44}$, $g_{14}g_{44}\})$. Moreover, $[g_{14},g_{24};g_{43}]$ is an edge-invariant triplet in $\Gamma_{n,5}^6+\{g_{14}g_{34}, g_{24}g_{44}$, $ g_{14}g_{44}\}$ and  $[g_{13},g_{14};g_{24}]$ is an edge-invariant triplet in $\Gamma_{n,5}^6+\{g_{14}g_{34}, g_{24}g_{44}$, $g_{14}g_{44}\}-\{g_{14}g_{24}\}$. Let $\Gamma_{n,5}^7 \coloneq \Gamma_{n,5}^6+\{g_{14}g_{34}, g_{24}g_{44}$, $g_{14}g_{44}\}-\{g_{14}g_{24},g_{13}g_{14}\}$ 
 (see \Cref{subfig: gamma-n-75}). Then, we have $\ind(\Gamma_{n,5}^6) \simeq \ind(\Gamma_{n,5}^7)$. 

 Observe that $N(g_{14})\subseteq N(g_{24})$ in the graph $\Gamma_{n,5}^7$. Let $\Gamma_{n,5}^8:=\Gamma_{n,5}^7\setminus g_{24}$. Therefore, $\ind(\Gamma_{n,5}^6) \simeq\ind(\Gamma_{n,5}^7) \simeq \ind(\Gamma_{n,5}^8)$. Furthermore, $g_{13}$ is a simplicial vertex in $\Gamma_{n,5}^8$ with $N(g_{13})=\{g_{33},g_{43}\}$, as shown in \Cref{subfig: gamma-n-85}. Using \Cref{Simplicial Vertex Lemma}, we have 
 
 \begin{equation}{\label{eq: Gn 75}}
 \ind(\Gamma_{n,5}^8)\simeq\Sigma\ind(\Gamma_{n,5}^8\setminus \n[g_{33}])\vee\Sigma\ind(\Gamma_{n,5}^8\setminus \n[g_{43}])
 \end{equation}
However, both the graphs $\Gamma_{n,5}^8\setminus \n[g_{33}]$ (as shown in \Cref{subfig: gamma-n-5g33-nbd}) and $\Gamma_{n,5}^8\setminus \n[g_{43}]$ are isomorphic to $D_{n-2,5}$. Hence, using \Cref{eq: Gn 75} we get $\ind(\Gamma_{n,5}^6)\simeq\ind(\Gamma_{n,5}^8) \simeq\Sigma\ind(D_{n-2,5})\vee\Sigma\ind(D_{n-2,5})$. Therefore, from \Cref{eq: Gn lk del} we have $$\ind(\Gamma_{n,5}) \simeq \Sigma^3\ind(D_{n-2,5}) \vee\Sigma^3\ind(D_{n-2,5}) \vee \Sigma^3\ind(A_{n-2,5}).$$
This completes the proof.
\end{proof}

\begin{claim}{\label{homLambdan}}
{\small
    \begin{equation*}
        \ind(\Lambda_{n,5}) \simeq \begin{cases}
        \vee^{7}\mathbb{S}^1 &   \text{if $n = 0$,} \\\left({\bigvee}^{2} \Sigma^2\ind(A_{n-1,5})\right) \vee  \left({\bigvee}^{2} \Sigma^2\ind(C_{n-1,5})\right) \vee  \left({\bigvee}^{2} \Sigma^2\ind(F_{n-1,5})\right) \vee \Sigma^2\ind(D_{n-1,5}) & \text{if $n \geq 1$.}
        \end{cases}
    \end{equation*}
    }

\end{claim}
\begin{figure}[h!]
	\centering
	\begin{subfigure}[b]{0.3\textwidth}
		\begin{tikzpicture}[scale=0.3]
			\begin{scope}
				\foreach \x in {2,...,5}
				{
					\node[vertex] (1\x) at (2*\x,3*2) {};
				}
				\foreach \x in {2,3,4,5}
				{
					\node[vertex] (2\x) at (2*\x,2*2) {};
				}
				\foreach \x in {1,2,...,5}
				{
					\node[vertex] (3\x) at (2*\x,1*2) {};
				}
				\foreach \x in {1,2,3,...,5}
				{
					\node[vertex] (4\x) at (2*\x,0*2) {};
				}

				\foreach \y [evaluate={\y as \x using {\y+1}}] in {2,3,4} 
				{
					\draw[edge] (1\y) -- (1\x);
				}
				\foreach \y [evaluate={\y as \x using {\y+1}}] in {2,3,4} 
				{
					\draw[edge] (2\y) -- (2\x);
				}
				\foreach \y [evaluate={\y as \x using {\y+1}}] in {1,2,...,4} 
				{
					\draw[edge] (3\y) -- (3\x);
				}
				\foreach \y [evaluate={\y as \x using {\y+1}}] in {1,2,3,...,4} 
				{
					\draw[edge] (4\y) -- (4\x);
				}
				
				\foreach \z in {2,3,4,5}
				{
					\draw[edge] (1\z) -- (2\z);
				}
				\foreach \z in {2,3,4,5}
				{
					\draw[edge] (2\z) -- (3\z);
				}
				\foreach \z in {1,2,3,4,5}
				{
					\draw[edge] (3\z) -- (4\z);
				}
				
				\foreach \z [evaluate={\znext=int({1+\z})}] in {2,4}
				{
					\draw[edge] (1\z) -- (2\znext);
				}
				\foreach \z [evaluate={\znext=int({1+\z})}] in {2,4}
				{
					\draw[edge] (3\z) -- (4\znext);
				}
				\foreach \z [evaluate={\znext=int({\z-1})}] in {3,5}
				{
					\draw[edge] (1\z) -- (2\znext);
				}
				\foreach \z [evaluate={\znext=int({\z-1})}] in {3,5}
				{
					\draw[edge] (3\z) -- (4\znext);
				}
				
				\foreach \z [evaluate={\znext=int({1+\z})}] in {3}
				{
					\draw[edge] (2\z) -- (3\znext);
				}
				\foreach \z [evaluate={\znext=int({\z-1})}] in {2,4}
				{
					\draw[edge] (2\z) -- (3\znext);
				}
				
				\draw[dotted, thick,shorten >=4pt, shorten <=4pt] (10,5) -- (12,5);
				\draw[dotted, thick,shorten >=4pt, shorten <=4pt] (10,3) -- (12,3);
				\draw[dotted, thick,shorten >=4pt, shorten <=4pt] (10,1) -- (12,1);
				
				\foreach \x in {2,...,5}
				{
					\node[label={[label distance=-3pt]90:\scriptsize{$g_{1\x}$}}] at (1\x) {};
					\node[label={[label distance=-3pt]-90:\scriptsize{$g_{4\x}$}}] at (4\x) {};
				}
				\node[label={[label distance=-3pt]-90:\scriptsize{$g_{41}$}}] at (41) {};
				\node[label={[label distance=-5pt]180:\scriptsize{$g_{31}$}}] at (31) {};
				\node[label={[label distance=-5pt]180:\scriptsize{$g_{22}$}}] at (22) {};
			\end{scope}
			\begin{scope}[shift={(2,0)}]
				
				\node[vertex] (l4) at (-2,0) {} ;

				\draw[edge] (l4) -- (41) ;
				\draw[edge] (l4) -- (31) ;
				
				\node[label={[label distance=-3pt]180:\small{$l_{4}$}}] at (l4) {};

			\end{scope}

		\end{tikzpicture}
		\caption{$\Lambda_{n,5}\setminus \n[l_2]$}
		\label{subfig: Lambda-n-l2}
	\end{subfigure}
	\begin{subfigure}[b]{0.3\textwidth}
		\begin{tikzpicture}[scale=0.3]
			\begin{scope}
				\foreach \x in {3,...,5}
				{
					\node[vertex] (1\x) at (2*\x,3*2) {};
				}
				\foreach \x in {2,3,4,5}
				{
					\node[vertex] (2\x) at (2*\x,2*2) {};
				}
				\foreach \x in {1,2,...,5}
				{
					\node[vertex] (3\x) at (2*\x,1*2) {};
				}
				\foreach \x in {1,2,3,...,5}
				{
					\node[vertex] (4\x) at (2*\x,0*2) {};
				}

				\foreach \y [evaluate={\y as \x using {\y+1}}] in {3,4} 
				{
					\draw[edge] (1\y) -- (1\x);
				}
				\foreach \y [evaluate={\y as \x using {\y+1}}] in {2,3,4} 
				{
					\draw[edge] (2\y) -- (2\x);
				}
				\foreach \y [evaluate={\y as \x using {\y+1}}] in {1,2,...,4} 
				{
					\draw[edge] (3\y) -- (3\x);
				}
				\foreach \y [evaluate={\y as \x using {\y+1}}] in {1,2,3,...,4} 
				{
					\draw[edge] (4\y) -- (4\x);
				}
				
				\foreach \z in {3,4,5}
				{
					\draw[edge] (1\z) -- (2\z);
				}
				\foreach \z in {2,3,4,5}
				{
					\draw[edge] (2\z) -- (3\z);
				}
				\foreach \z in {1,2,3,4,5}
				{
					\draw[edge] (3\z) -- (4\z);
				}
				
				\foreach \z [evaluate={\znext=int({1+\z})}] in {4}
				{
					\draw[edge] (1\z) -- (2\znext);
				}
				\foreach \z [evaluate={\znext=int({1+\z})}] in {2,4}
				{
					\draw[edge] (3\z) -- (4\znext);
				}
				\foreach \z [evaluate={\znext=int({\z-1})}] in {3,5}
				{
					\draw[edge] (1\z) -- (2\znext);
				}
				\foreach \z [evaluate={\znext=int({\z-1})}] in {3,5}
				{
					\draw[edge] (3\z) -- (4\znext);
				}
				
				\foreach \z [evaluate={\znext=int({1+\z})}] in {3}
				{
					\draw[edge] (2\z) -- (3\znext);
				}
				\foreach \z [evaluate={\znext=int({\z-1})}] in {2,4}
				{
					\draw[edge] (2\z) -- (3\znext);
				}
				
				\draw[dotted, thick,shorten >=4pt, shorten <=4pt] (10,5) -- (12,5);
				\draw[dotted, thick,shorten >=4pt, shorten <=4pt] (10,3) -- (12,3);
				\draw[dotted, thick,shorten >=4pt, shorten <=4pt] (10,1) -- (12,1);
				
				\foreach \x in {3,...,5}
				{
					\node[label={[label distance=-3pt]90:\scriptsize{$g_{1\x}$}}] at (1\x) {};
				}
				\foreach \x in {1,...,5}
				{
					\node[label={[label distance=-3pt]-90:\scriptsize{$g_{4\x}$}}] at (4\x) {};
				}
				\node[label={[label distance=-5pt]180:\scriptsize{$g_{22}$}}] at (22) {};
				\node[label={[label distance=-3pt]90:\scriptsize{$g_{31}$}}] at (31) {};
				
			\end{scope}
			
			\begin{scope}[shift={(2,0)}]
				
				\node[vertex] (l3) at (-2,2) {} ;
				\node[vertex] (l4) at (-2,0) {} ;

				\draw[edge] (l3) -- (l4) ;
				
				\draw[edge] (l3) -- (31) ;
				\draw[edge] (l4) -- (41) ;
				
				\draw[edge] (l4) -- (31) ;
				\draw[edge] (l3) -- (41) ;

				\node[label={[label distance=-3pt]180:\small{$l_{3}$}}] at (l3) {};
				\node[label={[label distance=-3pt]180:\small{$l_{4}$}}] at (l4) {};

			\end{scope}
			
		\end{tikzpicture}
		\caption{$\Lambda_{n,5}^1$}
		\label{subfig: Lambda-n-1}
	\end{subfigure}
	\begin{subfigure}[b]{0.3\textwidth}
		\begin{tikzpicture}[scale=0.3]
			\begin{scope}
				\foreach \x in {2,...,5}
				{
					\node[vertex] (1\x) at (2*\x,3*2) {};
				}
				\foreach \x in {3,4,5}
				{
					\node[vertex] (2\x) at (2*\x,2*2) {};
				}
				\foreach \x in {3,...,5}
				{
					\node[vertex] (3\x) at (2*\x,1*2) {};
				}
				\foreach \x in {1,2,3,...,5}
				{
					\node[vertex] (4\x) at (2*\x,0*2) {};
				}

				\foreach \y [evaluate={\y as \x using {\y+1}}] in {2,3,4} 
				{
					\draw[edge] (1\y) -- (1\x);
				}
				\foreach \y [evaluate={\y as \x using {\y+1}}] in {3,4} 
				{
					\draw[edge] (2\y) -- (2\x);
				}
				\foreach \y [evaluate={\y as \x using {\y+1}}] in {3,4} 
				{
					\draw[edge] (3\y) -- (3\x);
				}
				\foreach \y [evaluate={\y as \x using {\y+1}}] in {1,2,3,...,4} 
				{
					\draw[edge] (4\y) -- (4\x);
				}
				
				\foreach \z in {3,4,5}
				{
					\draw[edge] (1\z) -- (2\z);
				}
				\foreach \z in {3,4,5}
				{
					\draw[edge] (2\z) -- (3\z);
				}
				\foreach \z in {3,4,5}
				{
					\draw[edge] (3\z) -- (4\z);
				}
				
				\foreach \z [evaluate={\znext=int({1+\z})}] in {2,4}
				{
					\draw[edge] (1\z) -- (2\znext);
				}
				\foreach \z [evaluate={\znext=int({1+\z})}] in {4}
				{
					\draw[edge] (3\z) -- (4\znext);
				}
				\foreach \z [evaluate={\znext=int({\z-1})}] in {5}
				{
					\draw[edge] (1\z) -- (2\znext);
				}
				\foreach \z [evaluate={\znext=int({\z-1})}] in {3,5}
				{
					\draw[edge] (3\z) -- (4\znext);
				}
				
				\foreach \z [evaluate={\znext=int({1+\z})}] in {3}
				{
					\draw[edge] (2\z) -- (3\znext);
				}
				\foreach \z [evaluate={\znext=int({\z-1})}] in {4}
				{
					\draw[edge] (2\z) -- (3\znext);
				}
				
				\draw[dotted, thick,shorten >=4pt, shorten <=4pt] (10,5) -- (12,5);
				\draw[dotted, thick,shorten >=4pt, shorten <=4pt] (10,3) -- (12,3);
				\draw[dotted, thick,shorten >=4pt, shorten <=4pt] (10,1) -- (12,1);
				
				\foreach \x in {2,...,5}
				{
					\node[label={[label distance=-3pt]90:\scriptsize{$g_{1\x}$}}] at (1\x) {};
				}
				\foreach \x in {1,2,...,5}
				{
					\node[label={[label distance=-3pt]-90:\scriptsize{$g_{4\x}$}}] at (4\x) {};
				}
				\node[label={[label distance=-5pt]180:\scriptsize{$g_{23}$}}] at (23) {};
				\node[label={[label distance=-5pt]180:\scriptsize{$g_{33}$}}] at (33) {};
			\end{scope}
			
			\begin{scope}[shift={(2,0)}]
				
				\node[vertex] (l3) at (-2,2) {} ;
				\node[vertex] (l4) at (-2,0) {} ;

				\draw[edge] (l3) -- (l4) ;
				\draw[edge] (l4) -- (41) ;
				\draw[edge] (l3) -- (41) ;

				\node[label={[label distance=-3pt]180:\small{$l_{3}$}}] at (l3) {};
				\node[label={[label distance=-3pt]180:\small{$l_{4}$}}] at (l4) {};

			\end{scope}
			
		\end{tikzpicture}
		\caption{$\Lambda_{n,5}^2$}
		\label{subfig: Lambda-n-2}
	\end{subfigure}
	
	\caption{}
\end{figure} 
\begin{proof}
	The proof follows primarily from the application of \Cref{Simplicial Vertex Lemma}. Since $l_1$ is a simplicial vertex and $N(l_1)=\{g_{11},g_{21},l_2\}$ in $\Lambda_{n,5}$ (see \Cref{fig:Lambda-n}), we have
	\begin{equation}{\label{eq: En-e1-sim}}
		\ind(\Lambda_{n,5}) \simeq \Sigma \ind(\Lambda_{n,5} \setminus \n[g_{11}]) \vee \Sigma \ind(\Lambda_{n,5} \setminus \n[g_{21}]) \vee \Sigma \ind(\Lambda_{n,5} \setminus \n[l_{2}]).
	\end{equation}
	First, let $n = 0$. Observe that the graphs  $\Lambda_{0,5} \setminus \n[g_{21}]$ and $\Lambda_{0,5} \setminus \n[l_2]$ are isomorphic to the complete graph $K_3$ and the graph $\Lambda_{0,5} \setminus \n[g_{11}]$ is isomorphic to the complete graph $K_4$. Therefore, 
	$\ind (\Lambda_{0,5})\simeq\Sigma\ind(K_3) \vee \Sigma\ind(K_3) \vee \Sigma\ind(K_4)
	\simeq \vee^{7}\mathbb{S}^1.$

Now, assume that $n \geq 1$. 	Note that $\Lambda_{n,5} \setminus \n[l_{2}]$ is isomorphic to $C_{n+1,5}^5$ (see \Cref{subfig: C-n-5} in \Cref{homCn}). Therefore, it follows from \eqref{eq:Cn5-sim-vert} that
	\begin{equation}{\label{eq: En-e2}}
		\ind(\Lambda_{n,5} \setminus \n[l_{2}]) \simeq \Sigma \ind(A_{n-1,5}) \vee \Sigma\ind(F_{n-1,5}).
	\end{equation}
	Let $\Lambda_{n,5}^1 := \Lambda_{n,5} \setminus \n[g_{11}]$ (see \Cref{subfig: Lambda-n-1}) and  $\Lambda_{n,5}^2 := \Lambda_{n,5} \setminus \n[g_{21}]$ (see \Cref{subfig: Lambda-n-2}). Since $l_4$ is a simplicial vertex in $\Lambda_{n,5}^1$ with $N(l_4)=\{ l_3,g_{31},g_{41}\}$, using \Cref{Simplicial Vertex Lemma}, we have
	\begin{equation*}
		\begin{split}
			\ind(\Lambda_{n,5}^1) &\simeq \Sigma \ind(\Lambda_{n,5}^1 \setminus \n[l_3]) \vee \Sigma \ind(\Lambda_{n,5}^1 \setminus \n[g_{31}]) \vee \Sigma \ind(\Lambda_{n,5}^1 \setminus \n[g_{41}]).
		\end{split}
	\end{equation*}
	Observe that the graphs $\Lambda_{n,5}^1 \setminus \n[l_3]$, $\Lambda_{n,5}^1 \setminus \n[g_{31}]$, and $\Lambda_{n,5}^1 \setminus \n[g_{41}]$ are isomorphic to the graphs $F_{n-1,5}$,  $C_{n-1,5}$ and $D_{n-1,5}$, respectively (see  \Cref{fig:Fn}, \ref{fig:Cn} and \ref{fig:Dn} respectively). Thus, we have \begin{equation}{\label{En-e21-sim-e4}}
		\begin{split}
			\ind(\Lambda_{n,5}^1) &\simeq \Sigma \ind(F_{n-1,5}) \vee \Sigma \ind(C_{n-1,5}) \vee \Sigma \ind(D_{n-1,5}).
		\end{split}
	\end{equation}
	
	Further, for the simplicial vertex $l_3$ with $N(l_3)=\{l_4,g_{41}\}$ in $\Lambda_{n,5}^2$,  we have
	\begin{equation}{\label{En-e22-sim-e3}}
		\begin{split}
			\ind(\Lambda_{n,5}^2) &\simeq \Sigma \ind(\Lambda_{n,5}^2 \setminus \n[l_4]) \vee \Sigma \ind(\Lambda_{n,5}^2 \setminus \n[g_{41}]) \simeq \Sigma \ind(A_{n-1,5}) \vee \Sigma \ind(C_{n-1,5}),
		\end{split}
	\end{equation}
	wherein the graphs $\Lambda_{n,5}^2 \setminus \n[l_4]$ and $\Lambda_{n,5}^2 \setminus \n[g_{41}]$ are isomorphic to graphs $A_{n-1,5}$ and $C_{n-1,5}$, respectively.
	Combining \eqref{eq: En-e1-sim}, \eqref{eq: En-e2}, \eqref{En-e21-sim-e4}, and \eqref{En-e22-sim-e3}, we get the desired result.
\end{proof}

\begin{figure}[h!]
    \centering
    \begin{subfigure}[b]{0.4\textwidth}
    \begin{tikzpicture}[scale=0.3]
            \begin{scope}
\foreach \x in {3,4,5}
            {
            \node[vertex] (1\x) at (3*\x,3*2) {};
            }
\foreach \x in {2,3,4,5}
            {
            \node[vertex] (2\x) at (3*\x,2*2) {};
            }
\foreach \x in {1,2,...,5}
            {
            \node[vertex] (3\x) at (3*\x,1*2) {};
            }
\foreach \x in {1,2,3,...,5}
            {
            \node[vertex] (4\x) at (3*\x,0*2) {};
            }

            \foreach \y [evaluate={\y as \x using {\y+1}}] in {3,4} 
            {
            \draw[edge] (1\y) -- (1\x);
            }
            \foreach \y [evaluate={\y as \x using {\y+1}}] in {2,3,4} 
            {
            \draw[edge] (2\y) -- (2\x);
            }
            \foreach \y [evaluate={\y as \x using {\y+1}}] in {1,2,...,4} 
            {
            \draw[edge] (3\y) -- (3\x);
            }
            \foreach \y [evaluate={\y as \x using {\y+1}}] in {1,2,3,...,4} 
            {
            \draw[edge] (4\y) -- (4\x);
            }
            
\foreach \z in {3,4,5}
            {\draw[edge] (1\z) -- (2\z);}
            \foreach \z in {2,3,4,5}
            {
            \draw[edge] (2\z) -- (3\z);
            }
            \foreach \z in {1,2,3,4,5}
            {
            \draw[edge] (3\z) -- (4\z);
            }
            
\foreach \z [evaluate={\znext=int({1+\z})}] in {4}
            {
            \draw[edge] (1\z) -- (2\znext);
            }
            \foreach \z [evaluate={\znext=int({1+\z})}] in {2,4}
            {
            \draw[edge] (3\z) -- (4\znext);
            }
            \foreach \z [evaluate={\znext=int({\z-1})}] in {3,5}
            {
            \draw[edge] (1\z) -- (2\znext);
            }
            \foreach \z [evaluate={\znext=int({\z-1})}] in {3,5}
            {
            \draw[edge] (3\z) -- (4\znext);
            }

\foreach \z [evaluate={\znext=int({1+\z})}] in {3}
            {
            \draw[edge] (2\z) -- (3\znext);
            }
            \foreach \z [evaluate={\znext=int({\z-1})}] in {2,4}
            {
            \draw[edge] (2\z) -- (3\znext);
            }
            \coordinate (c15) at (15);
            \coordinate (c16) at (16);

\draw[dotted, thick,shorten >=4pt, shorten <=4pt] (15,5) -- (17,5);
            \draw[dotted, thick,shorten >=4pt, shorten <=4pt] (15,3) -- (17,3);
            \draw[dotted, thick,shorten >=4pt, shorten <=4pt] (15,1) -- (17,1);
            
        \end{scope}		

\begin{scope}[shift={(2,0)}]
            \node[vertex] (a2) at (-2,0) {} ;

            \draw[edge] (a2) -- (31) ;
            \draw[edge] (a2) -- (41) ;

            \node[label={[label distance=-3pt]180:\scriptsize{$a_{2}$}}] at (a2) {};

            \foreach \x in {3,...,5}
            {
            \node[label={[label distance=-3pt]90:\scriptsize{$g_{1\x}$}}] at (1\x) {};
            }
            \foreach \x in {1,2,...,5}
            {
            \node[label={[label distance=-3pt]-90:\scriptsize{$g_{4\x}$}}] at (4\x) {};
            }
            \node[label={[label distance=-3pt]180:\scriptsize{$g_{22}$}}] at (22) {};

           \foreach \x in {3}
            {
            \node[label={[label distance=-3pt]180:\scriptsize{$g_{\x1}$}}] at (\x1) {};
            }
        \end{scope}
    \end{tikzpicture}
    \caption{$A_{n,5} \setminus \n[g_{11}]$}
    \end{subfigure}
    \begin{subfigure}[b]{0.4\textwidth}
    \begin{tikzpicture}[scale=0.3]
            \begin{scope}
\foreach \x in {2,3,4,5}
            {
            \node[vertex] (1\x) at (3*\x,3*2) {};
            }
\foreach \x in {3,4,5}
            {
            \node[vertex] (2\x) at (3*\x,2*2) {};
            }
\foreach \x in {3,...,5}
            {
            \node[vertex] (3\x) at (3*\x,1*2) {};
            }
\foreach \x in {1,2,3,...,5}
            {
            \node[vertex] (4\x) at (3*\x,0*2) {};
            }

            \foreach \y [evaluate={\y as \x using {\y+1}}] in {2,3,4} 
            {
            \draw[edge] (1\y) -- (1\x);
            }
            \foreach \y [evaluate={\y as \x using {\y+1}}] in {3,4} 
            {
            \draw[edge] (2\y) -- (2\x);
            }
            \foreach \y [evaluate={\y as \x using {\y+1}}] in {3,...,4} 
            {
            \draw[edge] (3\y) -- (3\x);
            }
            \foreach \y [evaluate={\y as \x using {\y+1}}] in {1,2,3,...,4} 
            {
            \draw[edge] (4\y) -- (4\x);
            }
            
\foreach \z in {3,4,5}
            {\draw[edge] (1\z) -- (2\z);}
            \foreach \z in {3,4,5}
            {
            \draw[edge] (2\z) -- (3\z);
            }
            \foreach \z in {3,4,5}
            {
            \draw[edge] (3\z) -- (4\z);
            }
            
\foreach \z [evaluate={\znext=int({1+\z})}] in {2,4}
            {
            \draw[edge] (1\z) -- (2\znext);
            }
            \foreach \z [evaluate={\znext=int({1+\z})}] in {4}
            {
            \draw[edge] (3\z) -- (4\znext);
            }
            \foreach \z [evaluate={\znext=int({\z-1})}] in {5}
            {
            \draw[edge] (1\z) -- (2\znext);
            }
            \foreach \z [evaluate={\znext=int({\z-1})}] in {3,5}
            {
            \draw[edge] (3\z) -- (4\znext);
            }

\foreach \z [evaluate={\znext=int({1+\z})}] in {3}
            {
            \draw[edge] (2\z) -- (3\znext);
            }
            \foreach \z [evaluate={\znext=int({\z-1})}] in {4}
            {
            \draw[edge] (2\z) -- (3\znext);
            }
            \coordinate (c15) at (15);
            \coordinate (c16) at (16);

\draw[dotted, thick,shorten >=4pt, shorten <=4pt] (15,5) -- (17,5);
            \draw[dotted, thick,shorten >=4pt, shorten <=4pt] (15,3) -- (17,3);
            \draw[dotted, thick,shorten >=4pt, shorten <=4pt] (15,1) -- (17,1);
            
        \end{scope}		

\begin{scope}[shift={(2,0)}]
            
            \node[vertex] (a2) at (-2,0) {} ;

            \draw[edge] (a2) -- (41) ;

            \node[label={[label distance=-3pt]180:\scriptsize{$a_{2}$}}] at (a2) {};

            \foreach \x in {2,...,5}
            {
            \node[label={[label distance=-3pt]90:\scriptsize{$g_{1\x}$}}] at (1\x) {};
            }

            \foreach \x in {1,2,...,5}
            {
            \node[label={[label distance=-3pt]-90:\scriptsize{$g_{4\x}$}}] at (4\x) {};
            }
            \node[label={[label distance=-5pt]180:\scriptsize{$g_{23}$}}] at (23) {};
            \node[label={[label distance=-5pt]180:\scriptsize{$g_{33}$}}] at (33) {};

        \end{scope}
    \end{tikzpicture}
    \caption{$A_{n,5} \setminus \n[g_{21}]$}
    
    \end{subfigure}
    \caption{}
    \label{fig:An5minus}
\end{figure}

\begin{claim}{\label{homA_{n,5}}}
	\begin{equation*}\ind(A_{n,5}) \simeq  \begin{cases}
			 \vee^3\Sp^1   & \ \ \text{if $n = 0$,} \\
			\Sigma^2\ind(C_{n-1,5}) \vee \Sigma^2\ind(C_{n-1,5}) \vee \Sigma^2\ind(D_{n-1,5}) & \ \ \text{if $n \geq 1$.} \\
		\end{cases}
	\end{equation*}
	
\end{claim}

\begin{proof}

Since $a_1$ is a simplicial vertex in $A_{n,5}$ (see \Cref{fig:An}) with $N(a_1)=\{g_{11},g_{21}\}$, using \Cref{Simplicial Vertex Lemma}, we have   
    \begin{equation*}
        \ind(A_{n,5}) \simeq \Sigma\ind(A_{n,5} \setminus \n[g_{11}]) \vee \Sigma\ind(A_{n,5} \setminus \n[g_{21}]).
    \end{equation*}
    First, let $n = 0$.  Note that $A_{0,5} \setminus \n[g_{11}]$ is isomorphic to the complete graph $K_3$, and $A_{0,5} \setminus \n[g_{21}]$ is isomorphic to $K_2$. Therefore, $\ind(A_{0,5} \setminus \n[g_{11}])\simeq \mathbb{S}^0\vee \mathbb{S}^0$ and $\ind(A_{0,5} \setminus \n[g_{21}])\simeq \mathbb{S}^0$. Hence, we have the stated conclusion for $n=0$.
    
Now, assume that $n \geq 1$.  Since $a_2$ is a simplicial vertex in $A_{n,5} \setminus \n[g_{11}]$ with $N(a_2)=\{g_{31},g_{41}\}$, and a simplicial vertex in $A_{n,5} \setminus \n[g_{21}]$ with $N(a_2)=\{g_{41}\}$. Therefore, \Cref{Simplicial Vertex Lemma} provides us with
\begin{align*}
    \ind(A_{n,5}) \simeq \Sigma^2\ind(A_{n,5}\setminus\n[g_{11},g_{31}]) &\vee \Sigma^2\ind(A_{n,5}\setminus\n[g_{11},g_{41}]) \vee \Sigma^2\ind(A_{n,5}\setminus\n[g_{21},g_{41}]).
\end{align*}  
Note that the graphs $A_{n,5} \setminus \n[g_{11},g_{31}]$ and $A_{n,5} \setminus \n[g_{21},g_{41}]$ are isomorphic to $C_{n-1,5}$, whereas the graph $A_{n,5} \setminus \n[g_{11},g_{41}]$ is isomorphic to $D_{n-1,5}$ (refer to \Cref{fig:An5minus,,fig:Cn,,fig:Dn}). Therefore,
   $ \ind(A_{n,5}) \simeq \Sigma^2\ind(C_{n-1,5}) \vee \Sigma^2\ind(D_{n-1,5}) \vee \Sigma^2\ind(C_{n-1,5})$.
\end{proof}

\begin{claim}{\label{homBn}}

\begin{equation*}
	\ind(B_{n,5}) \simeq \begin{cases}
		\Sp^1 \vee \Sp^2  & \ \ \text{if \ $n = 0$}, \\
	\Sigma\ind(C_{n,5}) \vee \Sigma^2\ind(\Lambda_{n-1,5}) & \ \ \text{if \ $n \geq 1$}. 
	\end{cases}
	\end{equation*}
\end{claim}
\begin{figure}[h]
    \centering
    \begin{subfigure}[b]{0.3\linewidth}
        \begin{tikzpicture}[scale=0.3]
        \begin{scope}
\foreach \x in {1,2,3}
            {
            \node[vertex] (1\x) at (2*\x,3*2) {};
            }
\foreach \x in {1,2,3}
            {
            \node[vertex] (2\x) at (2*\x,2*2) {};
            }
\foreach \x in {1,2,3}
            {
            \node[vertex] (3\x) at (2*\x,1*2) {};
            }
\foreach \x in {1,2,3}
            {
            \node[vertex] (4\x) at (2*\x,0*2) {};
            }

            \foreach \y [evaluate={\y as \x using {\y+1}}] in {1,2} 
            {
            \draw[edge] (1\y) -- (1\x);
            }
            \foreach \y [evaluate={\y as \x using {\y+1}}] in {1,2} 
            {
            \draw[edge] (2\y) -- (2\x);
            }
            \foreach \y [evaluate={\y as \x using {\y+1}}] in {1,2} 
            {
            \draw[edge] (3\y) -- (3\x);
            }
            \foreach \y [evaluate={\y as \x using {\y+1}}] in {1,2} 
            {
            \draw[edge] (4\y) -- (4\x);
            }
            
\foreach \z in {1,2,3}
            {\draw[edge] (1\z) -- (2\z);}
            \foreach \z in {1,2,3}
            {
            \draw[edge] (2\z) -- (3\z);
            }
            \foreach \z in {1,2,3}
            {
            \draw[edge] (3\z) -- (4\z);
            }
            
\foreach \z [evaluate={\znext=int({1+\z})}] in {2}
            {
            \draw[edge] (1\z) -- (2\znext);
            \draw[edge] (3\z) -- (4\znext);
            }
            \foreach \z [evaluate={\znext=int({\z-1})}] in {3}
            {
            \draw[edge] (1\z) -- (2\znext);
            \draw[edge] (3\z) -- (4\znext);
            }

\foreach \z [evaluate={\znext=int({1+\z})}] in {1}
            {
            \draw[edge] (2\z) -- (3\znext);
            }
            \foreach \z [evaluate={\znext=int({\z-1})}] in {2}
            {
            \draw[edge] (2\z) -- (3\znext);
            }
            \coordinate (c15) at (15);
            \coordinate (c16) at (16);

\draw[dotted, thick,shorten >=4pt, shorten <=4pt] (6,5) -- (8,5);
            \draw[dotted, thick,shorten >=4pt, shorten <=4pt] (6,3) -- (8,3);
            \draw[dotted, thick,shorten >=4pt, shorten <=4pt] (6,1) -- (8,1);

        \end{scope}		

\begin{scope}[shift={(2,0)}]
            
            \node[vertex] (b1) at (-2,6) {} ;
            \node[vertex] (b2) at (-2,4) {} ;
            \node[vertex] (b4) at (-2,0) {} ;
            \node[vertex] (b31) at (-4,2) {} ;
            \node[vertex] (b41) at (-4,0) {} ;

            \draw[edge] (b1) -- (11) ;
            \draw[edge] (b1) -- (21) ;
            \draw[edge] (b1) -- (b2) ;
            \draw[edge] (b2) -- (21) ;
            \draw[edge] (b2) -- (11) ;
            \draw[edge] (b4) -- (41) ;
            \draw[edge] (b4) -- (31) ;

            \draw[edge] (b31) -- (b41) ;
            \draw[edge] (b41) -- (b4) ;
            \draw[edge] (b31) -- (b2) ;

            \draw[edge] (41) -- (b2) ;

            \node[label={[label distance=-3pt]180:\footnotesize{$b_{3}$}}] at (b1) {};
            \node[label={[label distance=-3pt]180:\footnotesize{$b_{4}$}}] at (b2) {};
            \node[label={[label distance=-3pt]270:\footnotesize{$b_{6}$}}] at (b4) {};
            \node[label={[label distance=-3pt]180:\footnotesize{$b_{1}$}}] at (b31) {};
            \node[label={[label distance=-3pt]180:\footnotesize{$b_{2}$}}] at (b41) {};

            \node[label={[label distance=-8pt]30:\footnotesize{$g_{21}$}}] at (21) {};
            \node[label={[label distance=-8pt]-30:\footnotesize{$g_{31}$}}] at (31) {};
            \foreach \x in {1,2,3}
            {
            \node[label={[label distance=-3pt]90:\footnotesize{$g_{1\x}$}}] at (1\x) {};
            \node[label={[label distance=-3pt]-90:\footnotesize{$g_{4\x}$}}] at (4\x) {};
            }

        \end{scope}
        
    \end{tikzpicture}
    \caption{$B_{n,5}^1$}
    \label{subfig: B-n-1}
    \end{subfigure}
    \begin{subfigure}[b]{0.3\linewidth}
        \begin{tikzpicture}[scale=0.3]
        \begin{scope}
\foreach \x in {1,2,3}
            {
            \node[vertex] (1\x) at (2*\x,3*2) {};
            }
\foreach \x in {1,2,3}
            {
            \node[vertex] (2\x) at (2*\x,2*2) {};
            }
\foreach \x in {1,2,3}
            {
            \node[vertex] (3\x) at (2*\x,1*2) {};
            }
\foreach \x in {1,2,3}
            {
            \node[vertex] (4\x) at (2*\x,0*2) {};
            }

            \foreach \y [evaluate={\y as \x using {\y+1}}] in {1,2} 
            {
            \draw[edge] (1\y) -- (1\x);
            }
            \foreach \y [evaluate={\y as \x using {\y+1}}] in {1,2} 
            {
            \draw[edge] (2\y) -- (2\x);
            }
            \foreach \y [evaluate={\y as \x using {\y+1}}] in {1,2} 
            {
            \draw[edge] (3\y) -- (3\x);
            }
            \foreach \y [evaluate={\y as \x using {\y+1}}] in {1,2} 
            {
            \draw[edge] (4\y) -- (4\x);
            }
            
\foreach \z in {1,2,3}
            {\draw[edge] (1\z) -- (2\z);}
            \foreach \z in {1,2,3}
            {
            \draw[edge] (2\z) -- (3\z);
            }
            \foreach \z in {1,2,3}
            {
            \draw[edge] (3\z) -- (4\z);
            }
            
\foreach \z [evaluate={\znext=int({1+\z})}] in {2}
            {
            \draw[edge] (1\z) -- (2\znext);
            \draw[edge] (3\z) -- (4\znext);
            }
            \foreach \z [evaluate={\znext=int({\z-1})}] in {3}
            {
            \draw[edge] (1\z) -- (2\znext);
            \draw[edge] (3\z) -- (4\znext);
            }

\foreach \z [evaluate={\znext=int({1+\z})}] in {1}
            {
            \draw[edge] (2\z) -- (3\znext);
            }
            \foreach \z [evaluate={\znext=int({\z-1})}] in {2}
            {
            \draw[edge] (2\z) -- (3\znext);
            }
            \coordinate (c15) at (15);
            \coordinate (c16) at (16);

\draw[dotted, thick,shorten >=4pt, shorten <=4pt] (6,5) -- (8,5);
            \draw[dotted, thick,shorten >=4pt, shorten <=4pt] (6,3) -- (8,3);
            \draw[dotted, thick,shorten >=4pt, shorten <=4pt] (6,1) -- (8,1);

        \end{scope}		

\begin{scope}[shift={(2,0)}]
            
            \node[vertex] (b1) at (-2,6) {} ;
            \node[vertex] (b2) at (-2,4) {} ;
            \node[vertex] (b4) at (-2,0) {} ;
            \node[vertex] (b31) at (-4,2) {} ;
            \node[vertex] (b41) at (-4,0) {} ;

            \draw[edge] (b1) -- (11) ;
            \draw[edge] (b1) -- (21) ;
            \draw[edge] (b1) -- (b2) ;
            \draw[edge] (b2) -- (21) ;
            \draw[edge] (b2) -- (11) ;
            \draw[edge] (b4) -- (41) ;
            \draw[edge] (b4) -- (31) ;

            \draw[edge] (b31) -- (b41) ;
            \draw[edge] (b41) -- (b4) ;

            \draw[edge] (41) -- (b2) ;
            \draw[edge] (31) -- (b2) ;

            \node[label={[label distance=-3pt]180:\footnotesize{$b_{3}$}}] at (b1) {};
            \node[label={[label distance=-3pt]180:\footnotesize{$b_{4}$}}] at (b2) {};
            \node[label={[label distance=-3pt]270:\footnotesize{$b_{6}$}}] at (b4) {};
            \node[label={[label distance=-3pt]180:\footnotesize{$b_{1}$}}] at (b31) {};
            \node[label={[label distance=-3pt]180:\footnotesize{$b_{2}$}}] at (b41) {};

            \node[label={[label distance=-8pt]30:\footnotesize{$g_{21}$}}] at (21) {};
            \node[label={[label distance=-8pt]-30:\footnotesize{$g_{31}$}}] at (31) {};
            \foreach \x in {1,2,3}
            {
            \node[label={[label distance=-3pt]90:\footnotesize{$g_{1\x}$}}] at (1\x) {};
            \node[label={[label distance=-3pt]-90:\footnotesize{$g_{4\x}$}}] at (4\x) {};
            }

        \end{scope}
        
    \end{tikzpicture}
    \caption{$B_{n,5}^2$}
    \label{subfig: B-n-2}
    \end{subfigure}
    \begin{subfigure}[b]{0.3\linewidth}
        \begin{tikzpicture}[scale=0.3]
               \begin{scope}
\foreach \x in {1,2,3}
            {
            \node[vertex] (1\x) at (2*\x,3*2) {};
            }
\foreach \x in {1,2,3}
            {
            \node[vertex] (2\x) at (2*\x,2*2) {};
            }
\foreach \x in {1,2,3}
            {
            \node[vertex] (3\x) at (2*\x,1*2) {};
            }
\foreach \x in {1,2,3}
            {
            \node[vertex] (4\x) at (2*\x,0*2) {};
            }

            \foreach \y [evaluate={\y as \x using {\y+1}}] in {1,2} 
            {
            \draw[edge] (1\y) -- (1\x);
            }
            \foreach \y [evaluate={\y as \x using {\y+1}}] in {1,2} 
            {
            \draw[edge] (2\y) -- (2\x);
            }
            \foreach \y [evaluate={\y as \x using {\y+1}}] in {1,2} 
            {
            \draw[edge] (3\y) -- (3\x);
            }
            \foreach \y [evaluate={\y as \x using {\y+1}}] in {1,2} 
            {
            \draw[edge] (4\y) -- (4\x);
            }
            
\foreach \z in {1,2,3}
            {\draw[edge] (1\z) -- (2\z);}
            \foreach \z in {1,2,3}
            {
            \draw[edge] (2\z) -- (3\z);
            }
            \foreach \z in {1,2,3}
            {
            \draw[edge] (3\z) -- (4\z);
            }
            
\foreach \z [evaluate={\znext=int({1+\z})}] in {2}
            {
            \draw[edge] (1\z) -- (2\znext);
            \draw[edge] (3\z) -- (4\znext);
            }
            \foreach \z [evaluate={\znext=int({\z-1})}] in {3}
            {
            \draw[edge] (1\z) -- (2\znext);
            \draw[edge] (3\z) -- (4\znext);
            }

\foreach \z [evaluate={\znext=int({1+\z})}] in {1}
            {
            \draw[edge] (2\z) -- (3\znext);
            }
            \foreach \z [evaluate={\znext=int({\z-1})}] in {2}
            {
            \draw[edge] (2\z) -- (3\znext);
            }
            \coordinate (c15) at (15);
            \coordinate (c16) at (16);

\draw[dotted, thick,shorten >=4pt, shorten <=4pt] (6,5) -- (8,5);
            \draw[dotted, thick,shorten >=4pt, shorten <=4pt] (6,3) -- (8,3);
            \draw[dotted, thick,shorten >=4pt, shorten <=4pt] (6,1) -- (8,1);

        \end{scope}		

\begin{scope}[shift={(2,0)}]
            
            \node[vertex] (b1) at (-2,6) {} ;
            \node[vertex] (b2) at (-2,4) {} ;
            \node[vertex] (b31) at (-4,2) {} ;
            \node[vertex] (b41) at (-4,0) {} ;

            \draw[edge] (b1) -- (11) ;
            \draw[edge] (b1) -- (21) ;
            \draw[edge] (b1) -- (b2) ;
            \draw[edge] (b2) -- (21) ;
            \draw[edge] (b2) -- (11) ;

            \draw[edge] (b31) -- (b41) ;

            \draw[edge] (41) -- (b2) ;
            \draw[edge] (31) -- (b2) ;

            \node[label={[label distance=-3pt]180:\footnotesize{$b_{3}$}}] at (b1) {};
            \node[label={[label distance=-3pt]180:\footnotesize{$b_{4}$}}] at (b2) {};
            \node[label={[label distance=-3pt]180:\footnotesize{$b_{1}$}}] at (b31) {};
            \node[label={[label distance=-3pt]180:\footnotesize{$b_{2}$}}] at (b41) {};

            \node[label={[label distance=-8pt]30:\footnotesize{$g_{21}$}}] at (21) {};
            \node[label={[label distance=-8pt]-30:\footnotesize{$g_{31}$}}] at (31) {};
            \foreach \x in {1,2,3}
            {
            \node[label={[label distance=-3pt]90:\footnotesize{$g_{1\x}$}}] at (1\x) {};
            \node[label={[label distance=-3pt]-90:\footnotesize{$g_{4\x}$}}] at (4\x) {};
            }

        \end{scope}
        
    \end{tikzpicture}
    \caption{$B_{n,5}^3 \sqcup \{b_1b_2\}$}
    \label{subfig: B-n-3}
    \end{subfigure}
    \caption{}
\end{figure}
 \begin{proof}
 	 First, let $ n =0$. Since $g_{41}$ is a simplicial vertex in $B_{0,5}$ with $N(g_{41})=\{g_{31},b_5,b_{6}\}$,  (see \Cref{fig:Bn}), using \Cref{Simplicial Vertex Lemma}, we get $
 	\ind(B_{0,5}) \simeq \Sigma\ind(B_{0,5}\setminus \n[g_{31}])\vee \Sigma\ind(B_{0,5} \setminus \n[b_{6}])\vee \Sigma\ind(B_{0,5}\setminus \n[b_5])$.
 	Now, $b_2$ is an isolated vertex in the graph $B_{0,5}\setminus \n[b_5]$. Therefore,  $\ind(B_{0,5}\setminus \n[b_5])$ is contractible. Since $\n(b_2)\subseteq \n(b_4)$ in the graph $B_{0,5}\setminus \n[g_{31}]$, using \Cref{Edge deletion 1} we get $\ind(B_{0,5}\setminus \n[g_{31}])\simeq \ind(B_{0,5}\setminus (\n[g_{31}]\cup \{b_4\}))$. Since the graph $B_{0,5}\setminus (\n[g_{31}]\cup \{b_4\})$ consists of two disjoint edges, we have $\ind(B_{0,5}\setminus \n[g_{31}])\simeq\ind(K_2)\ast\ind(K_2)$, that is, $\ind(B_{0,5}\setminus \n[g_{31}])\simeq \mathbb{S}^1$. Similarly, $\n(b_1)\subseteq\n(b_3)\cap\n(g_{11})\cap \n(g_{21})$ in the graph  $B_{0,5} \setminus \n[b_{6}]$, which implies that $\ind(B_{0,5} \setminus \n[b_{6}])\simeq \ind(B_{0,5} \setminus (\n[b_{6}]\cup\{b_3,g_{11},g_{21}\}))$. Since the graph $B_{0,5} \setminus (\n[b_{6}]\cup\{b_3,g_{11},g_{21}\})$ is isomorphic to the complete graph $K_2$, we have $\ind(B_{0,5} \setminus \n[b_{6}])\simeq \mathbb{S}^0$. Therefore, using the fact that $\ind(B_{0,5}\setminus \n[g_{31}])\simeq \mathbb{S}^1$ and $\ind(B_{0,5} \setminus \n[b_{6}])\simeq\mathbb{S}^0$, we get $\ind(B_{0,5}) \simeq \Sp^1 \vee \Sp^2$.

 	Now, assume $n \geq 1$.
 	  Since $N(b_2) \subseteq N(b_5)$ in the graph $B_{n,5}$ (see \Cref{fig:Bn}),  using \Cref{Folding Lemma}, we get $\ind(B_{n,5}) \simeq \ind(B_{n,5} \setminus b_5)$. Observe that $[b_4,g_{41};b_2]$ is an edge-invariant triplet in $B_{n,5}\setminus b_5$. Therefore, by \Cref{Edge deletion 1}, we have $\ind(B_{n,5}) \simeq \ind(B_{n,5}^1)$, where $B_{n,5}^1 \coloneq (B_{n,5}\setminus b_{5}) {+} \{b_4g_{41}\}$ (see \Cref{subfig: B-n-1}). Furthermore, $[b_{4},g_{31};b_2]$ is an edge-invariant triplet in $B_{n,5}^1$, and $[b_1,b_4;b_6]$ is an  edge-invariant triplet in  $B_{n,5}^1+ \{b_4g_{31}\}$. Let $B_{n,5}^2 \coloneqq (B_{n,5}^1{+}\{b_4g_{31}\})- \{b_1b_4\}$ (see \Cref{subfig: B-n-2}). Then, $\ind(B_{n,5}) \simeq \ind(B_{n,5}^2)$. Since  $N(b_1) \subseteq N(b_6)$ in $B_{n,5}^2$, using  \Cref{Folding Lemma}, we have  $\ind(B_{n,5}^2) \simeq \ind(B_{n,5}^2 \setminus b_6)$. Note that the graph $B_{n,5}^2 \setminus b_6$ comprises two connected components: an edge $b_1b_2$ and another component that we denote by $B_{n,5}^3$ (see \Cref{subfig: B-n-3}). This implies that $\ind(B_{n,5}) \simeq \ind(B_{n,5}^2)\simeq \Sigma \ind(B_{n,5}^3)$. Observe that $\ind(B_{n,5}^3 \setminus \n[b_4])*\{b_3\}$ is a subcomplex of $\ind(B_{n,5}^3 \setminus b_4)$, which implies that the inclusion map $\ind(B_{n,5}^3 \setminus \n[b_4]) \hookrightarrow \ind(B_{n,5}^3 \setminus b_4)$ is null homotopic. Therefore, by \Cref{Link and Deletion}, we get
    \begin{equation*}
        \ind(B_{n,5}^3) \simeq \ind(B_{n,5}^3 \setminus b_4) \vee \Sigma\ind(B_{n,5}^3 \setminus \n[b_4]).
    \end{equation*}
    Note that, the graph $B_{n,5}^3 \setminus \n[b_4]$ is isomorphic to the graph $\Lambda_{n-1,5}$, and the graph $B_{n,5}^3 \setminus b_4$ is isomorphic to $C_{n,5}$ (refer to \Cref{fig:Lambda-n} and \Cref{fig:Cn}). Further, since $\ind(B_{n,5}) \simeq \Sigma \ind(B_{n,5}^3)$, we have
    $\ind(B_{n,5}) \simeq \Sigma\ind(C_{n,5}) \vee \Sigma^2\ind(\Lambda_{n-1,5}).$
\end{proof}

\begin{claim}{\label{homCn}} 
    \begin{equation*}
    \ind(C_{n,5}) \simeq \begin{cases} 
    	\mathbb{S}^1  &  \text{if $n = 0$,} \\
    		\vee^3\mathbb{S}^2
              & \text{if $n = 1$,} \\
            \Sigma^{3}\ind(A_{n-2,5}) \bigvee \Sigma^2\ind(E_{n-1,5})\bigvee \Sigma^{3}\ind(F_{n-2,5}) & \text{if $n \geq 2$.}
            \end{cases}
    \end{equation*}
\end{claim}

	\begin{figure}[h!]
		\centering
		\begin{subfigure}[b]{0.22\textwidth}
			\begin{tikzpicture}[scale=0.3]
				\begin{scope}
					\foreach \x in {1,2,...,5}
					{
						\node[vertex] (1\x) at (2.5*\x,3*2) {};
					}
					\foreach \x in {1,3,4,5}
					{
						\node[vertex] (2\x) at (2.5*\x,2*2) {};
					}
					\foreach \x in {2,...,5}
					{
						\node[vertex] (3\x) at (2.5*\x,1*2) {};
					}
					\foreach \x in {3,...,5}
					{
						\node[vertex] (4\x) at (2.5*\x,0*2) {};
					}

					\foreach \y [evaluate={\y as \x using {\y+1}}] in {1,2,3,4} 
					{
						\draw[edge] (1\y) -- (1\x);
					}
					\foreach \y [evaluate={\y as \x using {\y+1}}] in {3,4} 
					{
						\draw[edge] (2\y) -- (2\x);
					}
					\foreach \y [evaluate={\y as \x using {\y+1}}] in {2,...,4} 
					{
						\draw[edge] (3\y) -- (3\x);
					}
					\foreach \y [evaluate={\y as \x using {\y+1}}] in {3,...,4} 
					{
						\draw[edge] (4\y) -- (4\x);
					}
					
					\foreach \z in {1,3,4,5}
					{
						\draw[edge] (1\z) -- (2\z);
					}
					\foreach \z in {3,4,5}
					{
						\draw[edge] (2\z) -- (3\z);
					}
					\foreach \z in {3,4,5}
					{
						\draw[edge] (3\z) -- (4\z);
					}
					
					\foreach \z [evaluate={\znext=int({1+\z})}] in {2,4}
					{
						\draw[edge] (1\z) -- (2\znext);
					}
					\foreach \z [evaluate={\znext=int({1+\z})}] in {2,4}
					{
						\draw[edge] (3\z) -- (4\znext);
					}
					\foreach \z [evaluate={\znext=int({\z-1})}] in {5}
					{
						\draw[edge] (1\z) -- (2\znext);
					}
					\foreach \z [evaluate={\znext=int({\z-1})}] in {5}
					{
						\draw[edge] (3\z) -- (4\znext);
					}
					
					\foreach \z [evaluate={\znext=int({1+\z})}] in {1,3}
					{
						\draw[edge] (2\z) -- (3\znext);
					}
					\foreach \z [evaluate={\znext=int({\z-1})}] in {4}
					{
						\draw[edge] (2\z) -- (3\znext);
					}

					\draw[dotted, thick,shorten >=4pt, shorten <=4pt] (12.2,5) -- (14.2,5);
					\draw[dotted, thick,shorten >=4pt, shorten <=4pt] (12.2,3) -- (14.2,3);
					\draw[dotted, thick,shorten >=4pt, shorten <=4pt] (12.2,1) -- (14.2,1);

					\foreach \x in {1,2,...,5}
					{
						\node[label={[label distance=-3pt]90:\scriptsize{$g_{1\x}$}}] at (1\x) {};
					} 
					\foreach \x in {3,...,5}
					{
						\node[label={[label distance=-3pt]-90:\scriptsize{$g_{4\x}$}}] at (4\x) {};
					} 
					\node[label={[label distance=-3pt]180:\scriptsize{$g_{32}$}}] at (32) {};
					\node[label={[label distance=-3pt]-90:\scriptsize{$g_{21}$}}] at (21) {};
					\node[label={[label distance=-7pt]210:\scriptsize{$g_{23}$}}] at (23) {};
				\end{scope}		
				
			\end{tikzpicture}
			\caption{$C_{n,5}^2$}
			\label{subfig: C-n-2}
		\end{subfigure}
		\begin{subfigure}[b]{0.22\linewidth}
			\begin{tikzpicture}[scale=0.3]
				\begin{scope}
					\foreach \x in {1,2,...,5}
					{
						\node[vertex] (1\x) at (2.5*\x,3*2) {};
					}
					\foreach \x in {1,3,4,5}
					{
						\node[vertex] (2\x) at (2.5*\x,2*2) {};
					}
					\foreach \x in {2,...,5}
					{
						\node[vertex] (3\x) at (2.5*\x,1*2) {};
					}
					\foreach \x in {3,...,5}
					{
						\node[vertex] (4\x) at (2.5*\x,0*2) {};
					}

					\foreach \y [evaluate={\y as \x using {\y+1}}] in {1,3,4} 
					{
						\draw[edge] (1\y) -- (1\x);
					}
					\foreach \y [evaluate={\y as \x using {\y+1}}] in {3,4} 
					{
						\draw[edge] (2\y) -- (2\x);
					}
					\foreach \y [evaluate={\y as \x using {\y+1}}] in {2,...,4} 
					{
						\draw[edge] (3\y) -- (3\x);
					}
					\foreach \y [evaluate={\y as \x using {\y+1}}] in {3,...,4} 
					{
						\draw[edge] (4\y) -- (4\x);
					}
					
					\foreach \z in {1,3,4,5}
					{
						\draw[edge] (1\z) -- (2\z);
					}
					\foreach \z in {3,4,5}
					{
						\draw[edge] (2\z) -- (3\z);
					}
					\foreach \z in {3,4,5}
					{
						\draw[edge] (3\z) -- (4\z);
					}
					
					\foreach \z [evaluate={\znext=int({1+\z})}] in {4}
					{
						\draw[edge] (1\z) -- (2\znext);
					}
					\foreach \z [evaluate={\znext=int({1+\z})}] in {2,4}
					{
						\draw[edge] (3\z) -- (4\znext);
					}
					\foreach \z [evaluate={\znext=int({\z-1})}] in {5}
					{
						\draw[edge] (1\z) -- (2\znext);
					}
					\foreach \z [evaluate={\znext=int({\z-1})}] in {5}
					{
						\draw[edge] (3\z) -- (4\znext);
					}
					
					\foreach \z [evaluate={\znext=int({1+\z})}] in {1,3}
					{
						\draw[edge] (2\z) -- (3\znext);
					}
					\foreach \z [evaluate={\znext=int({\z-1})}] in {4}
					{
						\draw[edge] (2\z) -- (3\znext);
					}

						\draw[dotted, thick,shorten >=4pt, shorten <=4pt] (12.2,5) -- (14.2,5);
					\draw[dotted, thick,shorten >=4pt, shorten <=4pt] (12.2,3) -- (14.2,3);
					\draw[dotted, thick,shorten >=4pt, shorten <=4pt] (12.2,1) -- (14.2,1);
					
					\node[label={[label distance=-3pt]90:\scriptsize{$g_{11}$}}] at (11) {};
					\foreach \x in {2,...,5}
					{
						\node[label={[label distance=-3pt]90:\scriptsize{$g_{1\x}$}}] at (1\x) {};}
					\foreach \x in {3,...,5}
					{
						\node[label={[label distance=-3pt]270:\scriptsize{$g_{4\x}$}}] at (4\x) {};
					} 
					\node[label={[label distance=-3pt]180:\scriptsize{$g_{32}$}}] at (32) {};
					\node[label={[label distance=-3pt]-90:\scriptsize{$g_{21}$}}] at (21) {};
					\node[label={[label distance=-7pt]10:\scriptsize{$g_{23}$}}] at (23) {};

					\draw[edge] (13) --(32) {};
					\draw[edge] (23) --(32) {};

				\end{scope}

			\end{tikzpicture}
			\caption{$C_{n,5}^3$}
			\label{subfig: C-n-3}
		\end{subfigure}
		\begin{subfigure}[b]{0.22\textwidth}
			\begin{tikzpicture}[scale=0.3]
				\begin{scope}
					\foreach \x in {1,2,...,5}
					{
						\node[vertex] (1\x) at (2.5*\x,3*2) {};
					}
					\foreach \x in {3,4,5}
					{
						\node[vertex] (2\x) at (2.5*\x,2*2) {};
					}
					\foreach \x in {3,...,5}
					{
						\node[vertex] (3\x) at (2.5*\x,1*2) {};
					}
					\foreach \x in {2,3,...,5}
					{
						\node[vertex] (4\x) at (2.5*\x,0*2) {};
					}

					\foreach \y [evaluate={\y as \x using {\y+1}}] in {1,2,3,4} 
					{
						\draw[edge] (1\y) -- (1\x);
					}
					\foreach \y [evaluate={\y as \x using {\y+1}}] in {3,4} 
					{
						\draw[edge] (2\y) -- (2\x);
					}
					\foreach \y [evaluate={\y as \x using {\y+1}}] in {3,...,4} 
					{
						\draw[edge] (3\y) -- (3\x);
					}
					\foreach \y [evaluate={\y as \x using {\y+1}}] in {3,...,4} 
					{
						\draw[edge] (4\y) -- (4\x);
					}
					
					\foreach \z in {3,4,5}
					{
						\draw[edge] (1\z) -- (2\z);
					}
					\foreach \z in {3,4,5}
					{
						\draw[edge] (2\z) -- (3\z);
					}
					\foreach \z in {3,4,5}
					{
						\draw[edge] (3\z) -- (4\z);
					}
					
					\foreach \z [evaluate={\znext=int({1+\z})}] in {2,4}
					{
						\draw[edge] (1\z) -- (2\znext);
					}
					\foreach \z [evaluate={\znext=int({1+\z})}] in {4}
					{
						\draw[edge] (3\z) -- (4\znext);
					}
					\foreach \z [evaluate={\znext=int({\z-1})}] in {5}
					{
						\draw[edge] (1\z) -- (2\znext);
					}
					\foreach \z [evaluate={\znext=int({\z-1})}] in {5}
					{
						\draw[edge] (3\z) -- (4\znext);
					}
					
					\foreach \z [evaluate={\znext=int({1+\z})}] in {3}
					{
						\draw[edge] (2\z) -- (3\znext);
					}
					\foreach \z [evaluate={\znext=int({\z-1})}] in {4}
					{
						\draw[edge] (2\z) -- (3\znext);
					}

					\draw[dotted, thick,shorten >=4pt, shorten <=4pt] (12.2,5) -- (14.2,5);
					\draw[dotted, thick,shorten >=4pt, shorten <=4pt] (12.2,3) -- (14.2,3);
					\draw[dotted, thick,shorten >=4pt, shorten <=4pt] (12.2,1) -- (14.2,1);

					\draw[edge] (42) -- (33);
					\draw[edge] (42) -- (43);
					
					\foreach \x in {1,2,...,5}
					{
						\node[label={[label distance=-3pt]90:\scriptsize{$g_{1\x}$}}] at (1\x) {};
					} 
					\foreach \x in {2,3,...,5}
					{
						\node[label={[label distance=-3pt]-90:\scriptsize{$g_{4\x}$}}] at (4\x) {};
					}
					\node[label={[label distance=-5pt]180:\scriptsize{$g_{23}$}}] at (23) {};
					\node[label={[label distance=-5pt]180:\scriptsize{$g_{33}$}}] at (33) {};
				\end{scope}		
				
			\end{tikzpicture}
			\caption{$C_{n,5}^4$}
			\label{subfig: C-n-4}
		\end{subfigure}
		\begin{subfigure}[b]{0.22\linewidth}
			\begin{tikzpicture}[scale=0.3]
				\begin{scope}
					\foreach \x in {4,5}
					{
						\node[vertex] (1\x) at (2.5*\x,3*2) {};
					}
					\foreach \x in {4,5}
					{
						\node[vertex] (2\x) at (2.5*\x,2*2) {};
					}
					\foreach \x in {3,...,5}
					{
						\node[vertex] (3\x) at (2.5*\x,1*2) {};
					}
					\foreach \x in {2,3,...,5}
					{
						\node[vertex] (4\x) at (2.5*\x,0*2) {};
					}

					\foreach \y [evaluate={\y as \x using {\y+1}}] in {4} 
					{
						\draw[edge] (1\y) -- (1\x);
					}
					\foreach \y [evaluate={\y as \x using {\y+1}}] in {4} 
					{
						\draw[edge] (2\y) -- (2\x);
					}
					\foreach \y [evaluate={\y as \x using {\y+1}}] in {3,...,4} 
					{
						\draw[edge] (3\y) -- (3\x);
					}
					\foreach \y [evaluate={\y as \x using {\y+1}}] in {2,3,...,4} 
					{
						\draw[edge] (4\y) -- (4\x);
					}
					
					\foreach \z in {4,5}
					{\draw[edge] (1\z) -- (2\z);}
					\foreach \z in {4,5}
					{
						\draw[edge] (2\z) -- (3\z);
					}
					\foreach \z in {3,4,5}
					{
						\draw[edge] (3\z) -- (4\z);
					}
					
					\foreach \z [evaluate={\znext=int({1+\z})}] in {4}
					{
						\draw[edge] (1\z) -- (2\znext);
					}
					\foreach \z [evaluate={\znext=int({1+\z})}] in {4}
					{
						\draw[edge] (3\z) -- (4\znext);
					}
					\foreach \z [evaluate={\znext=int({\z-1})}] in {5}
					{
						\draw[edge] (1\z) -- (2\znext);
					}
					\foreach \z [evaluate={\znext=int({\z-1})}] in {3,5}
					{
						\draw[edge] (3\z) -- (4\znext);
					}
					
					\foreach \z [evaluate={\znext=int({1+\z})}] in {}
					{
						\draw[edge] (2\z) -- (3\znext);
					}
					\foreach \z [evaluate={\znext=int({\z-1})}] in {4}
					{
						\draw[edge] (2\z) -- (3\znext);
					}

					\draw[dotted, thick,shorten >=4pt, shorten <=4pt] (12.2,5) -- (14.2,5);
					\draw[dotted, thick,shorten >=4pt, shorten <=4pt] (12.2,3) -- (14.2,3);
					\draw[dotted, thick,shorten >=4pt, shorten <=4pt] (12.2,1) -- (14.2,1);

					\foreach \x in {4,5}
					{
						\node[label={[label distance=-3pt]90:\scriptsize{$g_{1\x}$}}] at (1\x) {};
					} 
					\foreach \x in {2,3,4,5}
					{
						\node[label={[label distance=-3pt]-90:\scriptsize{$g_{4\x}$}}] at (4\x) {};
					} 
					\node[label={[label distance=-3pt]180:\scriptsize{$g_{33}$}}] at (33) {};
					\node[label={[label distance=-5pt]180:\scriptsize{$g_{24}$}}] at (24) {};
				\end{scope}

			\end{tikzpicture}
			\caption{$C_{n,5}^5$}
			\label{subfig: C-n-5}
		\end{subfigure}
		\caption{}
		\label{fig:Cn-inter}
	\end{figure}

\begin{proof}

	Let $n = 0$.  Since $N(g_{11})\subseteq N(g_{31})$ in the graph $C_{0,5}$ (see \Cref{fig:Cn}), using \Cref{Folding Lemma}, we conclude that $\ind(C_{0,5})\simeq\ind(C_{0,5}\setminus g_{31})$. However, the graph $C_{0,5}\setminus g_{31}$ consists of exactly two disjoint edges. Therefore, $\ind(C_{0,5})\simeq \mathbb{S}^1.$

   Now, assume $n \geq 1$. Since $N(g_{11}) \subseteq N(g_{22})$ in the graph $C_{n,5}$,  by \Cref{Folding Lemma}, we have $\ind(C_{n,5}) \simeq \ind(C_{n,5} \setminus g_{22})$. We denote the graph $C_{n,5} \setminus g_{22}$ by $C_{n,5}^1$. Since $c_1$ is a simplicial vertex and $N(c_1) = \{g_{31}, g_{41}\}$ in $C_{n,5}^1$, using \Cref{Simplicial Vertex Lemma}, we get that 
    \begin{equation}{\label{eq: Cn sim-vertex}}
         \ind(C_{n,5}^1) \simeq \Sigma \ind(C_{n,5}^1 \setminus \n[g_{31}]) \vee \Sigma \ind(C_{n,5}^1 \setminus \n[g_{41}]).
    \end{equation}
    Let $C_{n,5}^2 \coloneqq C_{n,5}^1 \setminus \n[g_{41}]$ (see \Cref{subfig: C-n-2}). Observe that $[g_{13},g_{32};g_{11}]$ and $[g_{23},g_{32};g_{11}]$ are edge-invariant triplets in $C_{n,5}^2$ and $C_{n,5}^2{+}\{g_{13}g_{32}\}$, respectively. Therefore, using \Cref{Edge deletion 1}, $\ind(C_{n,5}^2)\simeq \ind(C_{n,5}^2{+}\{g_{13}g_{32},$ $g_{23}g_{32}\})$. Analogously, $[g_{12},g_{13};g_{21}]$ and $[g_{12},g_{23};g_{21}]$ form edge invariant triplets in $C_{n,5}^2+\{g_{13}g_{32},g_{32}g_{23}\}$ and $C_{n,5}^2+\{g_{13}g_{32},g_{32}g_{23}\}-\{g_{12}g_{13}\}$, respectively. Thus, we have $\ind(C_{n,5}^2+\{g_{13}g_{32},g_{32}g_{23}\}) \simeq \ind(C_{n,5}^2+\{g_{13}g_{32},g_{32}g_{23}\}-\{g_{12}g_{13},g_{12}g_{23}\})$. We denote the  graph $C_{n,5}^2+\{g_{13}g_{32},g_{32}g_{23}\}-\{g_{12}g_{13},g_{12}g_{23}\}$ by $C_{n,5}^3$, depicted in \Cref{subfig: C-n-3}. Then, $\ind(C_{n,5}^2)\simeq\ind(C_{n,5}^3)$. Furthermore, $N(g_{12}) \subseteq N(g_{21})$ in $C_{n,5}^3$. Therefore,  by \Cref{Folding Lemma}, we have $\ind(C_{n,5}^3) \simeq \ind(C_{n,5}^3 \setminus g_{21})$. Observe that the graph $C_{n,5}^3 \setminus g_{21}$ consists of two connected components, one of which is an edge $g_{11}g_{12}$ and the other connected component is isomorphic to $E_{n-1,5}$ (see \Cref{fig:En}). Thus, we have
    \begin{equation}{\label{eq: Cn simeq En-1}}
        \ind(C_{n,5}^2)\simeq \Sigma \ind(E_{n-1,5}).
    \end{equation}
    On the other hand, let $C_{n,5}^4 \coloneqq C_{n,5}^1 \setminus \n[g_{31}]$ (see \Cref{subfig: C-n-4}). Since $g_{11}$ is a simplicial vertex in  $C_{n,5}^4$ with a unique neighbour $g_{12}$, by \Cref{Simplicial Vertex Lemma}, we have $\ind(C_{n,5}^4) \simeq \Sigma \ind(C_{n,5}^4 \setminus \n[g_{12}])$. Let  $C_{n,5}^5 \coloneq  C_{n,5}^4 \setminus \n[g_{12}] $ (see \Cref{subfig: C-n-5}).  Now, for $n=1$, we have $C_{1,5}^5 \simeq K_3$. Therefore, from  \eqref{eq: Cn sim-vertex} and \eqref{eq: Cn simeq En-1}, we get that $\ind(C_{1,5}) \simeq \Sigma^2\ind(E_{0,5}) \vee \Sigma^2\ind(K_3)$.
      Since $\ind(E_{0,5})\simeq \Sp^0$ (using \Cref{homEn}) and $\ind(K_3) \simeq \Sp^0 \vee \Sp^0$, we have $\ind(C_{1,5}) \simeq \vee^3 \Sp^2$. 
    
    Let  $n\geq 2$. Since $g_{42}$ is a simplicial vertex and $N(g_{42})=\{g_{33},g_{43}\}$ in $C_{n,5}^5$,  \Cref{Simplicial Vertex Lemma} provides us 
    \begin{equation*}
        \ind(C_{n,5}^5) \simeq \Sigma \ind(C_{n,5}^5 \setminus \n[g_{33}]) \vee \Sigma\ind(C_{n,5}^5 \setminus \n[g_{43}]).
    \end{equation*}
    However, the graph $C_{n,5}^5 \setminus \n[g_{33}]$ is isomorphic to the graph $A_{n-2,5}$ and the graph $C_{n,5}^5 \setminus \n[g_{43}]$ is isomorphic to $F_{n-2,5}$ (refer to \Cref{fig:An} and \Cref{fig:Fn}). Therefore, we have
    \begin{equation}{\label{eq:Cn5-sim-vert}}
        \ind(C_{n,5}^4) \simeq \Sigma \ind(C_{n,5}^5) \simeq \Sigma^2\ind(A_{n-2,5}) \vee \Sigma^2 \ind(F_{n-2,5}),
    \end{equation}
which, along with \eqref{eq: Cn sim-vertex} and \eqref{eq: Cn simeq En-1} gives the required homotopy equivalence.
\end{proof}

\begin{claim}{\label{homDn}} 
    \begin{equation*}
        \ind(D_{n,5}) \simeq \begin{cases}
        	 \vee^2\mathbb{S}^1 & \text{if $n =0$,} \\
              \vee^3\mathbb{S}^2 & \text{if $n =1$,} \\
            \Sigma^3\ind(\Lambda_{n-2,5}) \vee \Sigma^2\ind(E_{n-1,5}) \vee \Sigma^2 \ind(E_{n-1,5}) & \text{if $n \geq 2$.}
        \end{cases}
    \end{equation*}
\end{claim}
\begin{proof} 
	
First, let $n=0$. 	Since  $g_{11}$ is a simplicial vertex in $D_{0,5}$ (see \Cref{fig:Dn}) with $N(g_{11})=\{g_{21},d_1\}$, we get
	$\ind (D_{0,5})\simeq\Sigma\ind(D_{0,5} \setminus \n[g_{21}]) \vee \Sigma\ind(D_{0,5} \setminus \n[d_1]).$
	However, both the graphs $D_{0,5} \setminus \n[g_{21}]$, and $D_{0,5} \setminus \n[d_1]$ are isomorphic to the complete graph $K_2$. Therefore, both  $\ind(D_{0,5} \setminus \n[g_{21}])$ and  $\ind(D_{0,5} \setminus \n[d_1])$ are homotopy equivalent to $\mathbb{S}^0$. Hence the result.
	
	Now, assume $n \geq 1$.  Observe that  $[g_{31},g_{12};d_{1}]$ and $[g_{21},g_{42};d_{2}]$ are edge-invariant triplets in $D_{n,5}$ (see \Cref{fig:Dn}) and $D_{n,5}{+}\{g_{31}g_{12}\}$, respectively. Hence by  using \Cref{Edge deletion 1}, we have $\ind(D_{n,5})\simeq \ind(D_{n,5}{+}\{g_{31}g_{12},g_{21}g_{42}\})$. Furthermore, $[d_{1},g_{21};g_{41}]$ and $[d_{2},g_{31};g_{11}]$ are edge-invariant triplets in $D_{n,5}{+}\{g_{31}g_{12},g_{21}g_{42}\}$ and $(D_{n,5}{+}\{g_{31}g_{12},g_{21}g_{42}\})- \{d_1g_{21}\}$, respectively. Let $D_{n,5}^1$ be the graph that we obtain after removing the edges $d_1g_{21}$ and $d_2g_{31}$ from $D_{n,5}{+}\{g_{31}g_{12},g_{21}g_{42}\}$ (see \Cref{subfig: D-n-1}). Then, it follows from \Cref{Edge deletion 1} that $\ind(D_{n,5})\simeq\ind(D_{n,5}^1)$.

Now, we proceed with $D_{n,5}^1$. Observe that $[g_{21},g_{41};d_{1}]$, $[g_{12},g_{41};d_{1}]$, and $[d_{2},g_{41};g_{11}]$ are edge-invariant triplets in the graphs $D_{n,5}^1$, $D_{n,5}^1+\{g_{21}g_{41}\}$, and $D_{n,5}^1+\{g_{21}g_{41}, g_{12}g_{41}\}$, respectively. Let $D_{n,5}^2\coloneq(D_{n,5}^1+\{g_{21}g_{41}, g_{41}g_{12}\})-\{d_2g_{41}\}$ (see \Cref{subfig: D-n-2}). Then \Cref{Edge deletion 1} implies that $\ind(D_{n,5}^1)\simeq \ind(D_{n,5}^2)$. Note that $N(d_2)\subseteq N(g_{11})$ in the resultant graph $D_{n,5}^2$. Therefore, from \Cref{Folding Lemma}, we have $\ind(D_{n,5}^2)\simeq \ind(D_{n,5}^2\setminus g_{11})$. Since $[g_{12},g_{33};g_{21}]$, and $[g_{12},g_{43};g_{21}]$ are edge-invariant triplets in the graphs $D_{n,5}^2\setminus g_{11}$ and $(D_{n,5}^2\setminus g_{11})+\{g_{12}g_{33}\}$, respectively, we get $\ind(D_{n,5}^2\setminus g_{11})\simeq \ind((D_{n,5}^2\setminus g_{11})+\{g_{12}g_{33},g_{12}g_{43}\})$. Let us take $D_{n,5}^3\coloneq (D_{n,5}^2\setminus g_{11})+\{g_{12}g_{33},g_{12}g_{43}\}$ (see \Cref{subfig: D-n-3}). Then, $\ind(D_{n,5}^2)\simeq \ind(D_{n,5}^3)$. Furthermore, $[g_{31},g_{12};g_{42}]$ and $[g_{12},g_{41};g_{32}]$ are edge-invariant triplets in the graphs $D_{n,5}^3$ and $D_{n,5}^3-\{g_{31}g_{12}\}$, respectively. Hence   $\ind(D_{n,5}^3)\simeq \ind(D_{n,5}^3-\{g_{31}g_{12},g_{12}g_{41}\})$. Let us denote the graph $D_{n,5}^3-\{g_{31}g_{12},g_{12}g_{41}\}$ by $D_{n,5}^4$ (see \Cref{subfig: D-n-4}). Then, $\ind(D_{n,5}^3)\simeq\ind(D_{n,5}^4)$.

    \begin{figure}[h]
    \centering
    \begin{subfigure}[b]{0.22\textwidth}
    \begin{tikzpicture}[scale=0.3]
        \begin{scope}
\foreach \x in {1,2,3}
            {
            \node[vertex] (1\x) at (2*\x,3*2) {};
            }
\foreach \x in {1,2,3}
            {
            \node[vertex] (2\x) at (2*\x,2*2) {};
            }
\foreach \x in {1,2,3}
            {
            \node[vertex] (3\x) at (2*\x,1*2) {};
            }
\foreach \x in {1,2,3}
            {
            \node[vertex] (4\x) at (2*\x,0*2) {};
            }
            
\foreach \z in {1,2,3,4}
            {
            \foreach \y [evaluate={\y as \x using {\y+1}}] in {1,2} 
            {
            \draw[edge] (\z\y) -- (\z\x);
            }
            }
            \draw[edge] (12) -- (13) ;
            
\foreach \z in {1,2,3}
            {\draw[edge] (1\z) -- (2\z);
            \draw[edge] (2\z) -- (3\z);
            \draw[edge] (3\z) -- (4\z);
            }
            
\foreach \z [evaluate={\znext=int({1+\z})}] in {2}
            {
            \draw[edge] (1\z) -- (2\znext);
            \draw[edge] (3\z) -- (4\znext);
            }
            \foreach \z [evaluate={\znext=int({\z-1})}] in {3}
            {
            \draw[edge] (1\z) -- (2\znext);
            \draw[edge] (3\z) -- (4\znext);
            }

\foreach \z [evaluate={\znext=int({1+\z})}] in {1}
            {
            \draw[edge] (2\z) -- (3\znext);
            }
            \foreach \z [evaluate={\znext=int({\z-1})}] in {2}
            {
            \draw[edge] (2\z) -- (3\znext);
            }

\draw[dotted, thick,shorten >=4pt, shorten <=4pt] (6,5) -- (8,5);
            \draw[dotted, thick,shorten >=4pt, shorten <=4pt] (6,3) -- (8,3);
            \draw[dotted, thick,shorten >=4pt, shorten <=4pt] (6,1) -- (8,1);
            
\end{scope}		

\begin{scope}[shift={(2,0)}]
            \node[vertex] (d1) at (-2,4) {} ;
            \node[vertex] (d2) at (-2,2) {} ;

            \draw[edge] (d1) -- (11) ;
\draw[edge] (d1) -- (d2) ;
\draw[edge] (d2) -- (41) ;

            Edges after preprocessing
           
            \draw[edge] (31) -- (12) ;
            \draw[edge] (21) -- (42) ;

        \end{scope}
\foreach \x in {1,2,3}
            {
            \node[label={[label distance=-3pt]90:\scriptsize{$g_{1\x}$}}] at (1\x) {};
            } 
            \node[label={[label distance=-7pt]180:\scriptsize{$g_{21}$}}] at (21) {};
            \node[label={[label distance=-7pt]180:\scriptsize{$g_{31}$}}] at (31) {};
            \foreach \x in {1,2,3}
            {
            \node[label={[label distance=-3pt]-90:\scriptsize{$g_{4\x}$}}] at (4\x) {};
            } 
            \node[label={[label distance=-3pt]180:\scriptsize{$d_{1}$}}] at (d1) {};
            \node[label={[label distance=-3pt]180:\scriptsize{$d_{2}$}}] at (d2) {};

\end{tikzpicture}
    \caption{$D_{n,5}^1$}
    \label{subfig: D-n-1}
    \end{subfigure}
    \begin{subfigure}[b]{0.22\textwidth}
    \begin{tikzpicture}[scale=0.3]
        \begin{scope}
\foreach \x in {1,2,3}
            {
            \node[vertex] (1\x) at (2*\x,3*2) {};
            }
\foreach \x in {1,2,3}
            {
            \node[vertex] (2\x) at (2*\x,2*2) {};
            }
\foreach \x in {1,2,3}
            {
            \node[vertex] (3\x) at (2*\x,1*2) {};
            }
\foreach \x in {1,2,3}
            {
            \node[vertex] (4\x) at (2*\x,0*2) {};
            }
            
\foreach \z in {1,2,3,4}
            {
            \foreach \y [evaluate={\y as \x using {\y+1}}] in {1,2} 
            {
            \draw[edge] (\z\y) -- (\z\x);
            }
            }
            \draw[edge] (12) -- (13) ;
            
\foreach \z in {1,2,3}
            {\draw[edge] (1\z) -- (2\z);
            \draw[edge] (2\z) -- (3\z);
            \draw[edge] (3\z) -- (4\z);
            }
            
\foreach \z [evaluate={\znext=int({1+\z})}] in {2}
            {
            \draw[edge] (1\z) -- (2\znext);
            \draw[edge] (3\z) -- (4\znext);
            }
            \foreach \z [evaluate={\znext=int({\z-1})}] in {3}
            {
            \draw[edge] (1\z) -- (2\znext);
            \draw[edge] (3\z) -- (4\znext);
            }

\foreach \z [evaluate={\znext=int({1+\z})}] in {1}
            {
            \draw[edge] (2\z) -- (3\znext);
            }
            \foreach \z [evaluate={\znext=int({\z-1})}] in {2}
            {
            \draw[edge] (2\z) -- (3\znext);
            }

\draw[dotted, thick,shorten >=4pt, shorten <=4pt] (6,5) -- (8,5);
            \draw[dotted, thick,shorten >=4pt, shorten <=4pt] (6,3) -- (8,3);
            \draw[dotted, thick,shorten >=4pt, shorten <=4pt] (6,1) -- (8,1);
            
\end{scope}		

\begin{scope}[shift={(2,0)}]
            \node[vertex] (d1) at (-2,4) {} ;
            \node[vertex] (d2) at (-2,2) {} ;

            \draw[edge] (d1) -- (11) ;
\draw[edge] (d1) -- (d2) ;

            Edges after preprocessing
           
            \draw[edge] (31) -- (12) ;
            \draw[edge] (21) -- (42) ;
            \draw[edge] (21) to[bend right=30] (41) ;
            \draw[edge] (12) -- (41) ;

        \end{scope}
\foreach \x in {1,2,3}
            {
            \node[label={[label distance=-3pt]90:\scriptsize{$g_{1\x}$}}] at (1\x) {};
            } 
            \node[label={[label distance=-7pt]180:\scriptsize{$g_{21}$}}] at (21) {};
            \node[label={[label distance=-7pt]180:\scriptsize{$g_{31}$}}] at (31) {};
            \foreach \x in {1,2,3}
            {
            \node[label={[label distance=-3pt]-90:\scriptsize{$g_{4\x}$}}] at (4\x) {};
            } 
            \node[label={[label distance=-3pt]180:\scriptsize{$d_{1}$}}] at (d1) {};
            \node[label={[label distance=-3pt]180:\scriptsize{$d_{2}$}}] at (d2) {};

            \node[label={[label distance=-3pt]0:\scriptsize{$g_{33}$}}] at (33) {};

\end{tikzpicture}
    \caption{$D_{n,5}^2$}
    \label{subfig: D-n-2}
    \end{subfigure}
    \begin{subfigure}[b]{0.22\textwidth}
    \begin{tikzpicture}[scale=0.3]
        \begin{scope}
\foreach \x in {2,3}
            {
            \node[vertex] (1\x) at (2*\x,3*2) {};
            }
\foreach \x in {1,2,3}
            {
            \node[vertex] (2\x) at (2*\x,2*2) {};
            }
\foreach \x in {1,2,3}
            {
            \node[vertex] (3\x) at (2*\x,1*2) {};
            }
\foreach \x in {1,2,3}
            {
            \node[vertex] (4\x) at (2*\x,0*2) {};
            }
            
\foreach \z in {2,3,4}
            {
            \foreach \y [evaluate={\y as \x using {\y+1}}] in {1,2} 
            {
            \draw[edge] (\z\y) -- (\z\x);
            }
            }
            \draw[edge] (12) -- (13) ;
            
\foreach \z in {2,3}
            {\draw[edge] (1\z) -- (2\z);
            \draw[edge] (2\z) -- (3\z);
            \draw[edge] (3\z) -- (4\z);
            }
            \draw[edge] (21) -- (31);
            \draw[edge] (31) -- (41);
            
\foreach \z [evaluate={\znext=int({1+\z})}] in {2}
            {
            \draw[edge] (1\z) -- (2\znext);
            \draw[edge] (3\z) -- (4\znext);
            }
            \foreach \z [evaluate={\znext=int({\z-1})}] in {3}
            {
            \draw[edge] (1\z) -- (2\znext);
            \draw[edge] (3\z) -- (4\znext);
            }

\foreach \z [evaluate={\znext=int({1+\z})}] in {1}
            {
            \draw[edge] (2\z) -- (3\znext);
            }
            \foreach \z [evaluate={\znext=int({\z-1})}] in {2}
            {
            \draw[edge] (2\z) -- (3\znext);
            }

\draw[dotted, thick,shorten >=4pt, shorten <=4pt] (6,5) -- (8,5);
            \draw[dotted, thick,shorten >=4pt, shorten <=4pt] (6,3) -- (8,3);
            \draw[dotted, thick,shorten >=4pt, shorten <=4pt] (6,1) -- (8,1);
            
\end{scope}		

\begin{scope}[shift={(2,0)}]
            \node[vertex] (d1) at (-2,4) {} ;
            \node[vertex] (d2) at (-2,2) {} ;

\draw[edge] (d1) -- (d2) ;

\draw[edge] (21) to[bend right=45] (41) ;
            \draw[edge] (21) -- (42) ;
            \draw[edge] (12) -- (33) ;
            \draw[edge] (12) -- (43) ;
            \draw[edge] (31) -- (12) ;
            \draw[edge] (12) -- (41) ;

        \end{scope}
\foreach \x in {2,3}
            {
            \node[label={[label distance=-3pt]90:\scriptsize{$g_{1\x}$}}] at (1\x) {};
            } 
            \foreach \x in {1,2,3}
            {
            \node[label={[label distance=-3pt]-90:\scriptsize{$g_{4\x}$}}] at (4\x) {};
            } 
            \node[label={[label distance=-5pt]90:\scriptsize{$g_{21}$}}] at (21) {};
            \node[label={[label distance=-8pt]45:\scriptsize{$g_{32}$}}] at (32) {};

            \node[label={[label distance=-3pt]180:\scriptsize{$d_{1}$}}] at (d1) {};
            \node[label={[label distance=-3pt]180:\scriptsize{$d_{2}$}}] at (d2) {};
            \node[label={[label distance=-7pt]180:\scriptsize{$g_{31}$}}] at (31) {};

\end{tikzpicture}
    \caption{$D_{n,5}^3$}
    \label{subfig: D-n-3}
    \end{subfigure}
     \begin{subfigure}[b]{0.22\textwidth}
    \begin{tikzpicture}[scale=0.3]
        \begin{scope}
\foreach \x in {2,3}
            {
            \node[vertex] (1\x) at (2*\x,3*2) {};
            }
\foreach \x in {1,2,3}
            {
            \node[vertex] (2\x) at (2*\x,2*2) {};
            }
\foreach \x in {1,2,3}
            {
            \node[vertex] (3\x) at (2*\x,1*2) {};
            }
\foreach \x in {1,2,3}
            {
            \node[vertex] (4\x) at (2*\x,0*2) {};
            }
            
\foreach \z in {2,3,4}
            {
            \foreach \y [evaluate={\y as \x using {\y+1}}] in {1,2} 
            {
            \draw[edge] (\z\y) -- (\z\x);
            }
            }
            \draw[edge] (12) -- (13) ;
            
\foreach \z in {2,3}
            {\draw[edge] (1\z) -- (2\z);
            \draw[edge] (2\z) -- (3\z);
            \draw[edge] (3\z) -- (4\z);
            }
            \draw[edge] (21) -- (31);
            \draw[edge] (31) -- (41);
            
\foreach \z [evaluate={\znext=int({1+\z})}] in {2}
            {
            \draw[edge] (1\z) -- (2\znext);
            \draw[edge] (3\z) -- (4\znext);
            }
            \foreach \z [evaluate={\znext=int({\z-1})}] in {3}
            {
            \draw[edge] (1\z) -- (2\znext);
            \draw[edge] (3\z) -- (4\znext);
            }

\foreach \z [evaluate={\znext=int({1+\z})}] in {1}
            {
            \draw[edge] (2\z) -- (3\znext);
            }
            \foreach \z [evaluate={\znext=int({\z-1})}] in {2}
            {
            \draw[edge] (2\z) -- (3\znext);
            }

\draw[dotted, thick,shorten >=4pt, shorten <=4pt] (6,5) -- (8,5);
            \draw[dotted, thick,shorten >=4pt, shorten <=4pt] (6,3) -- (8,3);
            \draw[dotted, thick,shorten >=4pt, shorten <=4pt] (6,1) -- (8,1);
            
\end{scope}		

\begin{scope}[shift={(2,0)}]
            \node[vertex] (d1) at (-2.5,4) {} ;
            \node[vertex] (d2) at (-2.5,2) {} ;

\draw[edge] (d1) -- (d2) ;

\draw[edge] (21) to[bend right=45] (41) ;
            \draw[edge] (21) -- (42) ;
            \draw[edge] (12) -- (33) ;
            \draw[edge] (12) -- (43) ;

        \end{scope}
\foreach \x in {2,3}
            {
            \node[label={[label distance=-3pt]90:\scriptsize{$g_{1\x}$}}] at (1\x) {};
            } 
            \foreach \x in {1,2,3}
            {
            \node[label={[label distance=-3pt]-90:\scriptsize{$g_{4\x}$}}] at (4\x) {};
            } 
            \node[label={[label distance=-5pt]180:\scriptsize{$g_{21}$}}] at (21) {};
            \node[label={[label distance=-8pt]45:\scriptsize{$g_{32}$}}] at (32) {};
            
            \node[label={[label distance=-5pt]180:\scriptsize{$d_{1}$}}] at (d1) {};
            \node[label={[label distance=-5pt]180:\scriptsize{$d_{2}$}}] at (d2) {};
            \node[label={[label distance=-6pt]94:\scriptsize{$g_{22}$}}] at (22) {};
            \node[label={[label distance=-5pt]360:\scriptsize{$g_{33}$}}] at (33) {};

\end{tikzpicture}
    \caption{$D_{n,5}^4$}
    \label{subfig: D-n-4}
     \end{subfigure}
        \begin{subfigure}[b]{0.22\textwidth}
    \begin{tikzpicture}[scale=0.3]
        \begin{scope}
\foreach \x in {2,3}
            {
            \node[vertex] (1\x) at (2*\x,3*2) {};
            }
\foreach \x in {2,3}
            {
            \node[vertex] (2\x) at (2*\x,2*2) {};
            }
\foreach \x in {1,3}
            {
            \node[vertex] (3\x) at (2*\x,1*2) {};
            }
\foreach \x in {1,2,3}
            {
            \node[vertex] (4\x) at (2*\x,0*2) {};
            }
            
\foreach \y [evaluate={\y as \x using {\y+1}}] in {2} 
            {
            \draw[edge] (2\y) -- (2\x);
            }
            \foreach \y [evaluate={\y as \x using {\y+1}}] in {} 
            {
            \draw[edge] (3\y) -- (3\x);
            }
            \foreach \y [evaluate={\y as \x using {\y+1}}] in {1,2} 
            {
            \draw[edge] (4\y) -- (4\x);
            }
            \draw[edge] (12) -- (13) ;
            
\foreach \z in {2,3}
            {
            \draw[edge] (1\z) -- (2\z);
            }
            \foreach \z in {3}
            {
            \draw[edge] (2\z) -- (3\z);
            }
            \foreach \z in {3}
            {
            \draw[edge] (3\z) -- (4\z);
            }
      
            \draw[edge] (31) -- (41);
            
\foreach \z [evaluate={\znext=int({1+\z})}] in {2}
            {
            \draw[edge] (1\z) -- (2\znext);
            }
            \foreach \z [evaluate={\znext=int({\z-1})}] in {3}
            {
            \draw[edge] (1\z) -- (2\znext);
            \draw[edge] (3\z) -- (4\znext);
            }

\foreach \z [evaluate={\znext=int({1+\z})}] in {}
            {
            \draw[edge] (2\z) -- (3\znext);
            }
            \foreach \z [evaluate={\znext=int({\z-1})}] in {2}
            {
            \draw[edge] (2\z) -- (3\znext);
            }

\draw[dotted, thick,shorten >=4pt, shorten <=4pt] (6,5) -- (8,5);
            \draw[dotted, thick,shorten >=4pt, shorten <=4pt] (6,3) -- (8,3);
            \draw[dotted, thick,shorten >=4pt, shorten <=4pt] (6,1) -- (8,1);
            
\end{scope}		

\begin{scope}[shift={(2,0)}]
            \node[vertex] (d1) at (-2,4) {} ;
            \node[vertex] (d2) at (-2,2) {} ;

\draw[edge] (d1) -- (d2) ;

            \draw[edge] (12) -- (33) ;
            \draw[edge] (12) -- (43) ;
            \draw[edge] (22) -- (33) ;
            \draw[edge] (22) -- (43) ;

        \end{scope}
\foreach \x in {2,3}
            {
            \node[label={[label distance=-3pt]90:\scriptsize{$g_{1\x}$}}] at (1\x) {};
            } 
            \foreach \x in {1,2,3}
            {
            \node[label={[label distance=-3pt]-90:\scriptsize{$g_{4\x}$}}] at (4\x) {};
            }
            \node[label={[label distance=-4pt]-180:\scriptsize{$g_{22}$}}] at (22) {};
            \node[label={[label distance=-4pt]0:\scriptsize{$g_{23}$}}] at (23) {};
            \node[label={[label distance=-4pt]0:\scriptsize{$g_{33}$}}] at (33) {};
            \node[label={[label distance=-4pt]90:\scriptsize{$g_{31}$}}] at (31) {};
            \node[label={[label distance=-3pt]180:\scriptsize{$d_{1}$}}] at (d1) {};
            \node[label={[label distance=-3pt]180:\scriptsize{$d_{2}$}}] at (d2) {};

\end{tikzpicture}
    \caption{$D_{n,5}^5$}
    \label{subfig: D-n-5}
    \end{subfigure}
    \begin{subfigure}[b]{0.22\textwidth}
    \begin{tikzpicture}[scale=0.3]
        \begin{scope}
\foreach \x in {2,3,4}
            {
            \node[vertex] (1\x) at (2*\x,3*2) {};
            }
\foreach \x in {2,3,4}
            {
            \node[vertex] (2\x) at (2*\x,2*2) {};
            }
\foreach \x in {1,3,4}
            {
            \node[vertex] (3\x) at (2*\x,1*2) {};
            }
\foreach \x in {1,3,4}
            {
            \node[vertex] (4\x) at (2*\x,0*2) {};
            }
            
\foreach \z in {2}
            {
            \foreach \y [evaluate={\y as \x using {\y+1}}] in {2} 
            {
            \draw[edge] (\z\y) -- (\z\x);
            }
            }
            \draw[edge] (12) -- (13) ;
            
            \foreach \z in {1,2,3,4}
            {\draw[edge] (\z3) -- (\z4);
            
            }
            \draw[edge] (33) -- (24) ;
            \draw[edge] (23) -- (34) ;
\foreach \z in {3,4}
            {\draw[edge] (1\z) -- (2\z);
            \draw[edge] (2\z) -- (3\z);
            \draw[edge] (3\z) -- (4\z);
            }
            \draw[edge] (12) -- (22);
\draw[edge] (31) -- (41);
            
\foreach \z [evaluate={\znext=int({1+\z})}] in {2}
            {
            \draw[edge] (1\z) -- (2\znext);
}
            \foreach \z [evaluate={\znext=int({\z-1})}] in {3}
            {
            \draw[edge] (1\z) -- (2\znext);
}

\draw[dotted, thick,shorten >=4pt, shorten <=4pt] (8,5) -- (10,5);
            \draw[dotted, thick,shorten >=4pt, shorten <=4pt] (8,3) -- (10,3);
            \draw[dotted, thick,shorten >=4pt, shorten <=4pt] (8,1) -- (10,1);
            
\end{scope}		

\begin{scope}[shift={(2,0)}]
            \node[vertex] (d1) at (-2,4) {} ;
            \node[vertex] (d2) at (-2,2) {} ;

\draw[edge] (d1) -- (d2) ;

\draw[edge] (12) -- (33) ;
            \draw[edge] (12) -- (43) ;

            \draw[edge] (22) -- (33) ;
            \draw[edge] (22) -- (43) ;

        \end{scope}
\foreach \x in {2,3}
            {
            \node[label={[label distance=-3pt]90:\scriptsize{$g_{1\x}$}}] at (1\x) {};
            } 
            \foreach \x in {3}
            {
            \node[label={[label distance=-7pt]-90:\scriptsize{$g_{4\x}$}}] at (4\x) {};
            } 
             \node[label={[label distance=-3pt]90:\scriptsize{$g_{14}$}}] at (14) {};
            \node[label={[label distance=-3pt]270:\scriptsize{$g_{44}$}}] at (44) {};
            \node[label={[label distance=-3pt]180:\scriptsize{$d_{1}$}}] at (d1) {};
            \node[label={[label distance=-3pt]180:\scriptsize{$d_{2}$}}] at (d2) {};
            \node[label={[label distance=-3pt]90:\scriptsize{$g_{31}$}}] at (31) {};
            \node[label={[label distance=-5pt]180:\scriptsize{$g_{22}$}}] at (22) {};
            \node[label={[label distance=-3pt]-90:\scriptsize{$g_{41}$}}] at (41) {};

\end{tikzpicture}
    \caption{$D_{n,5}^6$}
    \label{subfig: D-n-6}
    \end{subfigure}
    \hfill
    \caption{}
    
    \end{figure} 
Observe that $\ind(D_{n,5}^4 \setminus \n[g_{21}])*\{g_{41}\} \subseteq\ind(D_{n,5}^4 \setminus g_{21})$, {\it i.e.,} $\ind(D_{n,5}^4 \setminus \n[g_{21}])$ is contractible in $\ind(D_{n,5}^4 \setminus g_{21})$. Therefore, \Cref{Link and Deletion} implies that
    \begin{equation}{\label{eq: D'n}}
    \begin{split}
        \ind(D_{n,5}^4) &\simeq \ind(D_{n,5}^4 \setminus g_{21}) \vee \Sigma\ind(D_{n,5}^4 \setminus \n[g_{21}]) 
       \simeq \ind(D_{n,5}^4 \setminus g_{21}) \vee \Sigma^2 \ind(E_{n-1,5}),
    \end{split}
    \end{equation}
    where the second homotopy equivalence holds because the graph $D_{n,5}^4 \setminus \n[g_{21}]$ has two connected components; one component is an edge $d_1d_2$ and the other component is isomorphic to the graph $E_{n-1,5}$ (see \Cref{fig:En}).
    
    Now, consider the graph $D_{n,5}^4 \setminus g_{21}$. Since $N(g_{41})\subseteq N(g_{32})$ in $D_{n,5}^4 \setminus g_{21}$, we have  $\ind(D_{n,5}^4 \setminus g_{21}) \simeq \ind(D_{n,5}^4 \setminus \{g_{21}, g_{32}\})$. Furthermore, $[g_{22},g_{33};g_{41}]$ and $[g_{22},g_{43};g_{41}]$ are edge-invariant triplets in the graphs $D_{n,5}^4 \setminus \{g_{21},g_{32}\}$ and $(D_{n,5}^4 \setminus \{g_{21},g_{32}\})+\{g_{22}g_{33}\}$, respectively. Therefore,  we have $\ind( D_{n,5}^4 \setminus g_{21})\simeq \ind((D_{n,5}^4 \setminus \{g_{21},g_{32}\})+\{g_{22}g_{33},g_{22}g_{43}\})$. Let $D_{n,5}^5$ (see \Cref{subfig: D-n-5}) denote the graph $(D_{n,5}^4 \setminus \{g_{21},g_{32}\})+\{g_{22}g_{33},g_{22}g_{43}\}$. Then, $\ind(D_{n,5}^4 \setminus g_{21})\simeq \ind(D_{n,5}^5)$.

    Since $[g_{22},g_{31};g_{42}]$ is an edge-invariant triplet in the graph $D_{n,5}^5$, we obtain that $\ind(D_{n,5}^5)\simeq \ind(D_{n,5}^5- \{g_{22}g_{31}\})$. Moreover, $N(g_{31})\subseteq N(g_{42})$ in the graph $D_{n,5}^5- \{g_{22}g_{31}\}$ and therefore   $\ind(D_{n,5}^5- \{g_{22}g_{31}\}) \simeq \ind((D_{n,5}^5- \{g_{22}g_{31}\}) \setminus g_{42})$.  Let $D_{n,5}^6$ denote the graph $(D_{n,5}^5- \{g_{22}g_{31}\}) \setminus g_{42}$. Then, $\ind( D_{n,5}^4 \setminus g_{21})\simeq \ind(D_{n,5}^5) \simeq \ind(D_{n,5}^6)$.  
    
Suppose $n=1$. Since $N(g_{13})\subseteq N(g_{33})$ in the graph $D_{1,5}^6$,  we get $\ind(D_{1,5}^6)\simeq \ind(D_{1,5}^6\setminus\{g_{33}\})$. Furthermore, $N(g_{43})\subseteq N(g_{13})\cap N(g_{23})$ in the graph $D_{1,5}^6\setminus\{g_{33}\}$. Therefore, a repeated application of \Cref{Folding Lemma} provides us with $\ind(D_{1,5}^6)\simeq \ind(D_{1,5}^6\setminus\{g_{33},g_{13},g_{23}\})$. Since the graph $D_{1,5}^6\setminus\{g_{33},g_{13},g_{23}\}$ is isomorphic to the disjoint union of $K_3$ and two edges, we have, $\ind (D_{1,5}^6)\simeq \mathbb{S}^2\vee \mathbb{S}^2$. Hence $\ind(D_{1,5}^4 \setminus g_{21})\simeq \ind(D_{1,5}^6) \simeq \mathbb{S}^2\vee \mathbb{S}^2$. Moreover, from \Cref{homEn}, it follows that $\Sigma^2 \ind(E_{0,5})\simeq \mathbb{S}^2$. Hence, $\ind(D_{1,5})\simeq \mathbb{S}^2\vee \mathbb{S}^2\vee \mathbb{S}^2$.

Now, consider $n\geq 2$. Since $\ind(D_{n,5}^6 \setminus \n[g_{12}])*\{g_{22}\} \subseteq \ind(D_{n,5}^6 \setminus g_{12})$, from \Cref{Link and Deletion}, we have 
    \begin{equation}{\label{eq: D''n}}
    \begin{split}
        \ind(D_{n,5}^6) \simeq  \ind(D_{n,5}^6 \setminus g_{12}) \vee \Sigma\ind(D_{n,5}^6 \setminus \n[g_{12}]).
    \end{split}  
    \end{equation}
   However, the graph $D_{n,5}^6 \setminus g_{12}$ consists of three connected components, two copies of $K_2$, and the third component is isomorphic to $E_{n-1,5}$. Similarly, the graph $D_{n,5}^6 \setminus \n[g_{12}]$ has three connected components, two copies of $K_2$, and the third component is isomorphic to $\Lambda_{n-2,5}$. Therefore, from \eqref{eq: D''n}, we get $\ind(D_{n,5}^6)\simeq \Sigma^2\ind(E_{n-1,5}) \vee \Sigma^3\ind(\Lambda_{n-2,5})$. Hence, from \eqref{eq: D'n}, we get that $\ind(D_{n,5}) \simeq \Sigma^2\ind(E_{n-1,5}) \vee \Sigma^3\ind(\Lambda_{n-2,5}) \vee \Sigma^2 \ind(E_{n-1,5})$.
\end{proof}

\begin{figure}[h]
	\centering
	\begin{subfigure}[b]{0.28\textwidth}
		\begin{tikzpicture}[scale=0.3]
			\begin{scope}
				\foreach \x in {1,2,3}
				{
					\node[vertex] (1\x) at (3*\x,3*2) {};
				}
				\foreach \x in {1,2,3}
				{
					\node[vertex] (2\x) at (3*\x,2*2) {};
				}
				\foreach \x in {1,2,3}
				{
					\node[vertex] (3\x) at (3*\x,1*2) {};
				}
				\foreach \x in {1,2,3}
				{
					\node[vertex] (4\x) at (3*\x,0*2) {};
				}

				\foreach \y [evaluate={\y as \x using {\y+1}}] in {1,2} 
				{
					\draw[edge] (1\y) -- (1\x);
				}
				\foreach \y [evaluate={\y as \x using {\y+1}}] in {1,2} 
				{
					\draw[edge] (2\y) -- (2\x);
				}
				\foreach \y [evaluate={\y as \x using {\y+1}}] in {1,2} 
				{
					\draw[edge] (3\y) -- (3\x);
				}
				\foreach \y [evaluate={\y as \x using {\y+1}}] in {1,2} 
				{
					\draw[edge] (4\y) -- (4\x);
				}
				
				\foreach \z in {1,2,3}
				{
					\draw[edge] (1\z) -- (2\z);
				}
				\foreach \z in {1,2,3}
				{
					\draw[edge] (2\z) -- (3\z);
				}
				\foreach \z in {1,2,3}
				{
					\draw[edge] (3\z) -- (4\z);
				}
				
				\foreach \z [evaluate={\znext=int({1+\z})}] in {2}
				{
					\draw[edge] (1\z) -- (2\znext);
				}
				\foreach \z [evaluate={\znext=int({1+\z})}] in {2}
				{
					\draw[edge] (3\z) -- (4\znext);
				}
				\foreach \z [evaluate={\znext=int({\z-1})}] in {3}
				{
					\draw[edge] (1\z) -- (2\znext);
				}
				\foreach \z [evaluate={\znext=int({\z-1})}] in {3}
				{
					\draw[edge] (3\z) -- (4\znext);
				}
				
				\foreach \z [evaluate={\znext=int({1+\z})}] in {}
				{
					\draw[edge] (2\z) -- (3\znext);
				}
				\foreach \z [evaluate={\znext=int({\z-1})}] in {}
				{
					\draw[edge] (2\z) -- (3\znext);
				}
				\draw[dotted, thick,shorten >=4pt, shorten <=4pt] (9,5) -- (11,5);
				\draw[dotted, thick,shorten >=4pt, shorten <=4pt] (9,3) -- (11,3);
				\draw[dotted, thick,shorten >=4pt, shorten <=4pt] (9,1) -- (11,1);

				\node[label={[label distance=-3pt]90:\scriptsize{$g_{11}$}}] at (11) {};
				\foreach \x in {2,3}
				{
					\node[label={[label distance=-3pt]90:\scriptsize{$g_{1\x}$}}] at (1\x) {};
				}
				\foreach \x in {1,2,3}
				{
					\node[label={[label distance=-3pt]-90:\scriptsize{$g_{4\x}$}}] at (4\x) {};
				}
				
				\node[label={[label distance=-7pt]10:\scriptsize{$g_{32}$}}] at (32) {};
				\node[label={[label distance=-7pt]10:\scriptsize{$g_{21}$}}] at (21) {};
				\node[label={[label distance=-7pt]10:\scriptsize{$g_{31}$}}] at (31) {};
				
			\end{scope}		
			
			\begin{scope}[shift={(2,0)}]
				\node[vertex] (e1) at (-1.5,3) {} ;
				
				\draw[edge] (e1) -- (11) ;
				\draw[edge] (e1) -- (21) ;
				\draw[edge] (e1) -- (31) ;
				\draw[edge] (e1) -- (41) ;
				
				\node[label={[label distance=-3pt]180:\scriptsize{$e_{1}$}}] at (e1) {};
			\end{scope}
		\end{tikzpicture}
		\caption{$E_{n,5}^1$}
		\label{subfig: E-n-1}

	\end{subfigure}
	\begin{subfigure}[b]{0.28\textwidth}
		\begin{tikzpicture}[scale=0.3]
			\begin{scope}
				\foreach \x in {1,2,3,4}
				{
					\node[vertex] (1\x) at (3*\x,3*2) {};
				}
				\foreach \x in {1,2,3,4}
				{
					\node[vertex] (2\x) at (3*\x,2*2) {};
				}
				\foreach \x in {1,2,3,4}
				{
					\node[vertex] (3\x) at (3*\x,1*2) {};
				}
				\foreach \x in {1,2,3,4}
				{
					\node[vertex] (4\x) at (3*\x,0*2) {};
				}

				\foreach \y [evaluate={\y as \x using {\y+1}}] in {1,2,3} 
				{
					\draw[edge] (1\y) -- (1\x);
				}
				\foreach \y [evaluate={\y as \x using {\y+1}}] in {1,2,3} 
				{
					\draw[edge] (2\y) -- (2\x);
				}
				\foreach \y [evaluate={\y as \x using {\y+1}}] in {1,2,3} 
				{
					\draw[edge] (3\y) -- (3\x);
				}
				\foreach \y [evaluate={\y as \x using {\y+1}}] in {1,2,3} 
				{
					\draw[edge] (4\y) -- (4\x);
				}
				
				\foreach \z in {1,2,3,4}
				{
					\draw[edge] (1\z) -- (2\z);
				}
				\foreach \z in {1,2,3,4}
				{
					\draw[edge] (2\z) -- (3\z);
				}
				\foreach \z in {1,2,3,4}
				{
					\draw[edge] (3\z) -- (4\z);
				}
				
				\foreach \z [evaluate={\znext=int({1+\z})}] in {2}
				{
					\draw[edge] (1\z) -- (2\znext);
				}
				\foreach \z [evaluate={\znext=int({1+\z})}] in {2}
				{
					\draw[edge] (3\z) -- (4\znext);
				}
				\foreach \z [evaluate={\znext=int({\z-1})}] in {3}
				{
					\draw[edge] (1\z) -- (2\znext);
				}
				\foreach \z [evaluate={\znext=int({\z-1})}] in {3}
				{
					\draw[edge] (3\z) -- (4\znext);
				}
				\draw[edge] (23) -- (34);
				\draw[edge] (33) -- (24);

				\draw[edge] (11) -- (32);
				\draw[edge] (41) -- (32);

				\draw[dotted, thick,shorten >=4pt, shorten <=4pt] (12,5) -- (14,5);
			\draw[dotted, thick,shorten >=4pt, shorten <=4pt] (12,3) -- (14,3);
			\draw[dotted, thick,shorten >=4pt, shorten <=4pt] (12,1) -- (14,1);

				\node[label={[label distance=-3pt]90:\scriptsize{$g_{11}$}}] at (11) {};
				\foreach \x in {2,3}
				{
					\node[label={[label distance=-3pt]90:\scriptsize{$g_{1\x}$}}] at (1\x) {};
				}
				\foreach \x in {1,2,3}
				{
					\node[label={[label distance=-3pt]-90:\scriptsize{$g_{4\x}$}}] at (4\x) {};
				}
				
				\node[label={[label distance=-7pt]10:\scriptsize{$g_{32}$}}] at (32) {};
				\node[label={[label distance=-7pt]30:\scriptsize{$g_{21}$}}] at (21) {};
			\end{scope}		
			
			\begin{scope}[shift={(2,0)}]
				\node[vertex] (e1) at (-1.5,3) {} ;
				
				\draw[edge] (e1) -- (11) ;
				\draw[edge] (e1) -- (21) ;
				\draw[edge] (e1) -- (31) ;
				\draw[edge] (e1) -- (41) ;
				
				\node[label={[label distance=-3pt]180:\scriptsize{$e_{1}$}}] at (e1) {};
			\end{scope}
			
\end{tikzpicture}
		\caption{$E_{n,5}^2$}
		\label{subfig: E-n-2}
	\end{subfigure}
	\begin{subfigure}[b]{0.28\textwidth}
		\begin{tikzpicture}[scale=0.3]
			\begin{scope}
				\foreach \x in {1,2,3}
				{
					\node[vertex] (1\x) at (3*\x,3*2) {};
				}
				\foreach \x in {1,2,3}
				{
					\node[vertex] (2\x) at (3*\x,2*2) {};
				}
				\foreach \x in {1,3}
				{
					\node[vertex] (3\x) at (3*\x,1*2) {};
				}
				\foreach \x in {1,2,3}
				{
					\node[vertex] (4\x) at (3*\x,0*2) {};
				}

				\foreach \y [evaluate={\y as \x using {\y+1}}] in {1,2} 
				{
					\draw[edge] (1\y) -- (1\x);
				}
				\foreach \y [evaluate={\y as \x using {\y+1}}] in {1,2} 
				{
					\draw[edge] (2\y) -- (2\x);
				}
				\foreach \y [evaluate={\y as \x using {\y+1}}] in {} 
				{
					\draw[edge] (3\y) -- (3\x);
				}
				\foreach \y [evaluate={\y as \x using {\y+1}}] in {1,2} 
				{
					\draw[edge] (4\y) -- (4\x);
				}
				
				\foreach \z in {1,2,3}
				{
					\draw[edge] (1\z) -- (2\z);
				}
				\foreach \z in {1,3}
				{
					\draw[edge] (2\z) -- (3\z);
				}
				\foreach \z in {1,3}
				{
					\draw[edge] (3\z) -- (4\z);
				}
				
				\foreach \z [evaluate={\znext=int({1+\z})}] in {2}
				{
					\draw[edge] (1\z) -- (2\znext);
				}
				\foreach \z [evaluate={\znext=int({1+\z})}] in {}
				{
					\draw[edge] (3\z) -- (4\znext);
				}
				\foreach \z [evaluate={\znext=int({\z-1})}] in {3}
				{
					\draw[edge] (1\z) -- (2\znext);
				}
				\foreach \z [evaluate={\znext=int({\z-1})}] in {3}
				{
					\draw[edge] (3\z) -- (4\znext);
				}
				
				\draw[dotted, thick,shorten >=4pt, shorten <=4pt] (9,5) -- (11,5);
			\draw[dotted, thick,shorten >=4pt, shorten <=4pt] (9,3) -- (11,3);
			\draw[dotted, thick,shorten >=4pt, shorten <=4pt] (9,1) -- (11,1);

				\node[label={[label distance=-3pt]90:\scriptsize{$g_{11}$}}] at (11) {};
				\foreach \x in {2,3}
				{
					\node[label={[label distance=-3pt]90:\scriptsize{$g_{1\x}$}}] at (1\x) {};
				}
				\foreach \x in {1,2,3}
				{
					\node[label={[label distance=-3pt]-90:\scriptsize{$g_{4\x}$}}] at (4\x) {};
				}
				\node[label={[label distance=-7pt]30:\scriptsize{$g_{21}$}}] at (21) {};
				\node[label={[label distance=-7pt]-10:\scriptsize{$g_{22}$}}] at (22) {};
				\node[label={[label distance=-7pt]30:\scriptsize{$g_{31}$}}] at (31) {};
			\end{scope}		
			\begin{scope}[shift={(2,0)}]
				\node[vertex] (e1) at (-1.5,3) {} ;
				
				\draw[edge] (e1) -- (11) ;
				\draw[edge] (e1) -- (21) ;
				\draw[edge] (e1) -- (31) ;
				\draw[edge] (e1) -- (41) ;
				
				\node[label={[label distance=-3pt]180:\scriptsize{$e_{1}$}}] at (e1) {};
			\end{scope}
		\end{tikzpicture}
		\caption{$E_{n,5}^3$}
		\label{subfig: E-n-3}

	\end{subfigure}

	\begin{subfigure}[b]{0.4\textwidth}
		\hspace{1 cm} \begin{tikzpicture}[scale=0.3]
			\begin{scope}
				\foreach \x in {1,2,3}
				{
					\node[vertex] (1\x) at (3*\x,3*2) {};
				}
				\foreach \x in {1,2,3}
				{
					\node[vertex] (2\x) at (3*\x,2*2) {};
				}
				\foreach \x in {1,3}
				{
					\node[vertex] (3\x) at (3*\x,1*2) {};
				}
				\foreach \x in {1,2,3}
				{
					\node[vertex] (4\x) at (3*\x,0*2) {};
				}

				\foreach \y [evaluate={\y as \x using {\y+1}}] in {1,2} 
				{
					\draw[edge] (1\y) -- (1\x);
				}
				\foreach \y [evaluate={\y as \x using {\y+1}}] in {1,2} 
				{
					\draw[edge] (2\y) -- (2\x);
				}
				\foreach \y [evaluate={\y as \x using {\y+1}}] in {} 
				{
					\draw[edge] (3\y) -- (3\x);
				}
				\foreach \y [evaluate={\y as \x using {\y+1}}] in {1,2} 
				{
					\draw[edge] (4\y) -- (4\x);
				}
				
				\foreach \z in {1,2,3}
				{
					\draw[edge] (1\z) -- (2\z);
				}
				\foreach \z in {1,3}
				{
					\draw[edge] (2\z) -- (3\z);
				}
				\foreach \z in {1,3}
				{
					\draw[edge] (3\z) -- (4\z);
				}
				
				\foreach \z [evaluate={\znext=int({1+\z})}] in {2}
				{
					\draw[edge] (1\z) -- (2\znext);
				}
				\foreach \z [evaluate={\znext=int({1+\z})}] in {}
				{
					\draw[edge] (3\z) -- (4\znext);
				}
				\foreach \z [evaluate={\znext=int({\z-1})}] in {3}
				{
					\draw[edge] (1\z) -- (2\znext);
				}
				\foreach \z [evaluate={\znext=int({\z-1})}] in {3}
				{
					\draw[edge] (3\z) -- (4\znext);
				}

				\draw[edge] (22) -- (11);
				\draw[edge] (22) -- (41);

				\draw[dotted, thick,shorten >=4pt, shorten <=4pt] (9,5) -- (11,5);
			\draw[dotted, thick,shorten >=4pt, shorten <=4pt] (9,3) -- (11,3);
			\draw[dotted, thick,shorten >=4pt, shorten <=4pt] (9,1) -- (11,1);

				\node[label={[label distance=-3pt]90:\scriptsize{$g_{11}$}}] at (11) {};
				\foreach \x in {2,3}
				{
					\node[label={[label distance=-3pt]90:\scriptsize{$g_{1\x}$}}] at (1\x) {};
				}
				\foreach \x in {1,2,3}
				{
					\node[label={[label distance=-3pt]-90:\scriptsize{$g_{4\x}$}}] at (4\x) {};
				}
				
				\node[label={[label distance=-8pt]-10:\scriptsize{$g_{22}$}}] at (22) {};
				\node[label={[label distance=-8pt]-30:\scriptsize{$g_{21}$}}] at (21) {};
				
			\end{scope}		
			
			\begin{scope}[shift={(2,0)}]
				\node[vertex] (e1) at (-1.5,3) {} ;
				
				\draw[edge] (e1) -- (11) ;
				\draw[edge] (e1) -- (21) ;
				\draw[edge] (e1) -- (31) ;
				\draw[edge] (e1) -- (41) ;
				
				\node[label={[label distance=-3pt]180:\scriptsize{$e_{1}$}}] at (e1) {};
			\end{scope}

		\end{tikzpicture}
		\caption{$E_{n,5}^4$}
		\label{subfig: E-n-4}
	\end{subfigure}
	\begin{subfigure}[b]{0.45\textwidth}
		 \hspace{1 cm} \begin{tikzpicture}[scale=0.3]
			\begin{scope}
				\foreach \x in {1,2,3}
				{
					\node[vertex] (1\x) at (3*\x,3*2) {};
				}
				\foreach \x in {1,3}
				{
					\node[vertex] (2\x) at (3*\x,2*2) {};
				}
				\foreach \x in {1,3}
				{
					\node[vertex] (3\x) at (3*\x,1*2) {};
				}
				\foreach \x in {1,2,3}
				{
					\node[vertex] (4\x) at (3*\x,0*2) {};
				}

				\foreach \y [evaluate={\y as \x using {\y+1}}] in {1,2} 
				{
					\draw[edge] (1\y) -- (1\x);
				}
				\foreach \y [evaluate={\y as \x using {\y+1}}] in {} 
				{
					\draw[edge] (2\y) -- (2\x);
				}
				\foreach \y [evaluate={\y as \x using {\y+1}}] in {} 
				{
					\draw[edge] (3\y) -- (3\x);
				}
				\foreach \y [evaluate={\y as \x using {\y+1}}] in {1,2} 
				{
					\draw[edge] (4\y) -- (4\x);
				}
				
				\foreach \z in {1,3}
				{
					\draw[edge] (1\z) -- (2\z);
				}
				\foreach \z in {1,3}
				{
					\draw[edge] (2\z) -- (3\z);
				}
				\foreach \z in {1,3}
				{
					\draw[edge] (3\z) -- (4\z);
				}
				
				\foreach \z [evaluate={\znext=int({1+\z})}] in {2}
				{
					\draw[edge] (1\z) -- (2\znext);
				}
				\foreach \z [evaluate={\znext=int({1+\z})}] in {}
				{
					\draw[edge] (3\z) -- (4\znext);
				}
				\foreach \z [evaluate={\znext=int({\z-1})}] in {}
				{
					\draw[edge] (1\z) -- (2\znext);
				}
				\foreach \z [evaluate={\znext=int({\z-1})}] in {3}
				{
					\draw[edge] (3\z) -- (4\znext);
				}

				\draw[dotted, thick,shorten >=4pt, shorten <=4pt] (9,5) -- (11,5);
			\draw[dotted, thick,shorten >=4pt, shorten <=4pt] (9,3) -- (11,3);
			\draw[dotted, thick,shorten >=4pt, shorten <=4pt] (9,1) -- (11,1);

				\node[label={[label distance=-3pt]90:\scriptsize{$g_{11}$}}] at (11) {};
				\foreach \x in {2,3}
				{
					\node[label={[label distance=-3pt]90:\scriptsize{$g_{1\x}$}}] at (1\x) {};
				}
				\foreach \x in {1,2,3}
				{
					\node[label={[label distance=-3pt]-90:\scriptsize{$g_{4\x}$}}] at (4\x) {};
				}
				\node[label={[label distance=-5pt]0:\scriptsize{$g_{21}$}}] at (21) {};
				\node[label={[label distance=-5pt]0:\scriptsize{$g_{31}$}}] at (31) {};

			\end{scope}		
			
			\begin{scope}[shift={(2,0)}]
				\node[vertex] (e1) at (-1.5,3) {} ;
				
				\draw[edge] (e1) -- (11) ;
				\draw[edge] (e1) -- (21) ;
				\draw[edge] (e1) -- (31) ;
				\draw[edge] (e1) -- (41) ;
				
				\node[label={[label distance=-3pt]180:\scriptsize{$e_{1}$}}] at (e1) {};
			\end{scope}

		\end{tikzpicture}
		\caption{$E_{n,5}^5$}
		\label{subfig: E-n-5}
	\end{subfigure}
	\caption{}
\end{figure}
\begin{claim}{\label{homEn}} 
    {\small
    \begin{equation*}
      \ind(E_{n,5}) \simeq \begin{cases}
      	\mathbb{S}^0 &  \text{if $n = 0$,} \\
              \bigvee^{10} \mathbb{S}^2 & \text{if $n = 1$,} \\
            \left({\bigvee}^{2} \Sigma^3\ind(A_{n-2,5})\right) \vee \left({\bigvee}^{2} \Sigma^3\ind(F_{n-2,5})\right) \vee \Sigma\ind(A_{n-1,5}) \vee \ind(\Gamma_{n,5})
            & \text{if $n \geq 2$.}
        \end{cases}
    \end{equation*}}
\end{claim}
\begin{proof}
	First, let $n = 0$. Since $N(g_{11})\subseteq N(g_{31})$ in the graph $E_{0,5}$, by \Cref{Folding Lemma}, we get $\ind (E_{0,5})\simeq \ind (E_{0,5}\setminus g_{31}) $. Furthermore, $N(g_{41})\subseteq N(g_{11})$ in $E_{n,5} \setminus g_{31}$ implies $\ind (E_{0,5})\simeq \ind (E_{0,5}\setminus \{g_{11},g_{31}\})$. Note that the graph $E_{0,5}\setminus \{g_{11},g_{31}\}$ is a path on three vertices. Therefore, $ \ind (E_{0,5})\simeq \mathbb{S}^0 $.

    Now, assume that $n \geq 1$. Observe that $[g_{21},g_{32};g_{41}]$ and $[g_{22},g_{31};g_{11}]$ are edge-invariant triplets in the graphs $E_{n,5}$ and $E_{n,5} - \{g_{21}g_{32}\}$, respectively. Thus, we have $\ind(E_{n,5}) \simeq \ind(E_{n,5} - \{g_{21}g_{32},g_{22}g_{31}\})$. Let $E_{n,5}^1 \coloneq E_{n,5} - \{g_{21}g_{32},g_{22}g_{31}\}$ (see \Cref{subfig: E-n-1}). Since $[g_{11},g_{32};g_{41}]$ and $[g_{41},g_{32};g_{21}]$ are edge-invariant triplets in $E_{n,5}^1$ and $E_{n,5}^1{+}\{g_{11}g_{32}\}$, respectively,  we have $\ind(E_{n,5}^1) \simeq \ind(E_{n,5}^1+\{g_{11}g_{32},g_{41}g_{32}\})$. Let $E_{n,5}^2 \coloneq E_{n,5}^1{+}\{g_{11}g_{32},g_{41}g_{32}\}$ (see \Cref{subfig: E-n-2}). Note that $\ind(E_{n,5}^2 \setminus \n[g_{32}])*\{g_{42}\}$ is a subcomplex of $\ind(E_{n,5}^2\setminus g_{32})$. This implies that the inclusion map $\ind(E_{n,5}^2 \setminus \n[g_{32}]) \hookrightarrow \ind(E_{n,5}^2\setminus g_{32})$ is null homotopic, and by \Cref{Link and Deletion}, we have
    \begin{equation}{\label{En first}}
        \ind(E_{n,5})\simeq \ind(E_{n,5}^2) \simeq \ind(E_{n,5}^2\setminus g_{32}) \vee \Sigma \ind(E_{n,5}^2 \setminus \n[g_{32}]).
    \end{equation}
    The graph $E_{n,5}^2 \setminus \n[g_{32}]$ has two connected components; one component consists of the edge $e_1g_{21}$, and the other component is isomorphic to $C_{n,5}^5$ (see \Cref{subfig: C-n-5}). Therefore,
    \begin{equation}{\label{En2-g32}}
        \ind(E_{n,5}^2 \setminus \n[g_{32}]) \simeq \Sigma\ind(C_{n,5}^5).
    \end{equation}
    Let $E_{n,5}^3 \coloneq E_{n,5}^2 \setminus g_{32}$ (see \Cref{subfig: E-n-3}). Using \Cref{Edge deletion 1}, for the edge-invariant triplets $[g_{22},g_{41};g_{11}]$ and $[g_{22},g_{11};g_{31}]$ in $E_{n,5}^3$ and $E_{n,5}^3{+}\{g_{22}g_{41}\}$, respectively, we get $\ind(E_{n,5}^3) \simeq \ind(E_{n,5}^3{+}\{g_{22}g_{11},g_{22}g_{41}\})$. Let $E_{n,5}^4 \coloneq E_{n,5}^3{+}\{g_{22}g_{11},g_{22}g_{41}\}$ (see \Cref{subfig: E-n-4}). Then $\ind(E_{n,5}^3)\simeq \ind(E_{n,5}^4)$. Furthermore,  $\ind(E_{n,5}^4 \setminus \n[g_{22}])*\{g_{12}\}$ is a subcomplex of $\ind(E_{n,5}^4 \setminus g_{22})$. Therefore, using \Cref{Link and Deletion}, we have
    \begin{equation*}
        \ind(E_{n,5}^3) \simeq\ind(E_{n,5}^4) \simeq \ind(E_{n,5}^4 \setminus g_{22}) \vee \Sigma \ind(E_{n,5}^4\setminus \n[g_{22}]).
    \end{equation*}
    The graph $E_{n,5}^4\setminus \n[g_{22}]$ has two connected components; one component consists of the edge $e_1g_{31}$, and the other component is a graph isomorphic to $C_{n,5}^5$ (see \Cref{subfig: C-n-5}). Hence
    \begin{equation}{\label{En4-link-del}}
        \ind(E_{n,5}^3) \simeq \ind(E_{n,5}^4 \setminus g_{22}) \vee \Sigma^2 \ind(C_{n,5}^5).
    \end{equation} 
    Finally, let $E_{n,5}^5 \coloneq E_{n,5}^4 \setminus g_{22}$. Since $\ind(E_{n,5}^5 \setminus \n[e_1])*\{g_{21}\} \subseteq \ind(E_{n,5}^5 \setminus e_1)$, the inclusion map  $\ind(E_{n,5}^5 \setminus \n[e_1])*\{g_{21}\} \hookrightarrow \ind(E_{n,5}^5 \setminus e_1)$ is null homotopic. Therefore, by \Cref{Link and Deletion}, we have
    \begin{equation*}
    \begin{split}
        \ind(E^5_{n,5}) &\simeq \ind(E^5_{n,5} \setminus e_{1}) \vee \Sigma\ind(E^5_{n,5} \setminus \n[e_{1}]).
    \end{split}
    \end{equation*}
    Further, observe that the graph $E^5_{n,5} \setminus e_1$ is isomorphic to $\Gamma_{n,5}\setminus\{g_{22},g_{32}\}$ (see \Cref{fig:Gn}, and \Cref{subfig: E-n-5}), and the graph  $E_{n,5}^5 \setminus \n[e_1]$ is isomorphic to $A_{n-1,5}$ (see \Cref{fig:An}, and \Cref{subfig: E-n-5}). Therefore, $\ind(E^5_{n,5} \setminus e_1) \simeq \ind(\Gamma_{n,5}\setminus\{g_{22},g_{32}\})$ and $\ind(E_{n,5}^5 \setminus \n[e_1]) \simeq \ind(A_{n-1,5})$. Since $N(g_{11}) \subseteq N(g_{22})$ and $N(g_{41}) \subseteq N(g_{32})$ in $\Gamma_{n, 5}$ and $\Gamma_{n, 5} \setminus g_{22}$, respectively (see \Cref{fig:Gn}), we have that $\ind(\Gamma_{n, 5}) \simeq \ind(\Gamma_{n, 5} \setminus \{g_{22}, g_{33}\})$. Therefore,  
    
    \begin{equation}{\label{En5-link-del}}
        \ind(E^5_{n,5}) \simeq \ind(\Gamma_{n,5}) \vee \Sigma\ind(A_{n-1,5}).
    \end{equation}
    Combining  \eqref{En4-link-del} and \eqref{En5-link-del}, we get 
\begin{equation}{\label{En3-final}}
        \ind(E_{n,5}^3) \simeq \ind(\Gamma_{n,5}) \vee \Sigma\ind(A_{n-1,5}) \vee \Sigma^2 \ind(C_{n,5}^5).
    \end{equation} 
   
    Thus, combining the homotopy equivalences in \eqref{En first},\eqref{En2-g32} and \eqref{En3-final}, we get  
    \begin{equation}{\label{En pre}}
        \ind(E_{n,5}) \simeq ( \vee^2 \Sigma^2\ind(C_{n,5}^5)) \vee \Sigma\ind(A_{n-1,5}) \vee \ind(\Gamma_{n,5}).
    \end{equation}
    It follows from \Cref{homA_{n,5},,homG_{n,5},,homCn} that $\ind(A_{0,5})\simeq \mathbb{S}^1\vee\mathbb{S}^1\vee\mathbb{S}^1$, $\ind(\Gamma_{1,5})\simeq \mathbb{S}^2\vee\mathbb{S}^2\vee\mathbb{S}^2 $ and $\ind(C_{1,5}^5)\simeq \mathbb{S}^0\vee\mathbb{S}^0$ (because $C_{1,5}^5 \cong K_3$). Therefore, $\ind(E_{1,5})\simeq \bigvee^{10} \mathbb{S}^2.$ Furthermore, for $n\geq 2$, we have $\ind(C_{n,5}^5) \simeq  \Sigma(\ind(A_{n-2,5}) \vee  \ind(F_{n-2,5}))$ from \eqref{eq:Cn5-sim-vert}. Hence, from \Cref{En pre},
        $\ind(E_{n,5}) \simeq \left({\bigvee}^2 \Sigma^3\ind(A_{n-2,5})\right) \vee  \left({\bigvee}^2\Sigma^3\ind(F_{n-2,5})\right) \vee \Sigma\ind(A_{n-1,5}) \vee \ind(\Gamma_{n,5}).$
\end{proof}

\begin{claim}{\label{homFn}}
	\begin{equation*}
   \ind(F_{n,5}) \simeq  \begin{cases} 
   	\vee^{4}\Sp^1 \hspace{7.3 cm}  \text{if $n = 0$}, \\
   	\Sigma\ind(B_{n-1,5}) \vee \left({\bigvee}^2 
            \Sigma^2\ind(C_{n-1,5})\right) \vee 
            \Sigma^2\ind(D_{n-1,5})  \  \text{if $n \geq 1$}.
            \end{cases}
\end{equation*}

\end{claim}

\begin{figure}[h]
    \centering
    \begin{subfigure}[b]{0.3\textwidth}
        \begin{tikzpicture}[scale=0.3]
        \begin{scope}
\foreach \x in {1,2,...,5}
            {
            \node[vertex] (1\x) at (2*\x,3*2) {};
            }
\foreach \x in {1,2,...,5}
            {
            \node[vertex] (2\x) at (2*\x,2*2) {};
            }
\foreach \x in {2,...,5}
            {
            \node[vertex] (3\x) at (2*\x,1*2) {};
            }
\foreach \x in {2,...,5}
            {
            \node[vertex] (4\x) at (2*\x,0*2) {};
            }

            \foreach \y [evaluate={\y as \x using {\y+1}}] in {1,2,...,4} 
            {
            \draw[edge] (1\y) -- (1\x);
            }
            \foreach \y [evaluate={\y as \x using {\y+1}}] in {1,2,...,4} 
            {
            \draw[edge] (2\y) -- (2\x);
            }
            \foreach \y [evaluate={\y as \x using {\y+1}}] in {2,...,4} 
            {
            \draw[edge] (3\y) -- (3\x);
            }
            \foreach \y [evaluate={\y as \x using {\y+1}}] in {2,...,4} 
            {
            \draw[edge] (4\y) -- (4\x);
            }

\foreach \z in {1,2,3,4,5}
            {
            \draw[edge] (1\z) -- (2\z);
            }
            \foreach \z in {2,3,4,5}
            {
            \draw[edge] (2\z) -- (3\z);
            }
            \foreach \z in {2,3,4,5}
            {
            \draw[edge] (3\z) -- (4\z);
            }
            
\foreach \z [evaluate={\znext=int({1+\z})}] in {2,4}
            {
            \draw[edge] (1\z) -- (2\znext);
            }
            \foreach \z [evaluate={\znext=int({1+\z})}] in {2,4}
            {
            \draw[edge] (3\z) -- (4\znext);
            }
            \foreach \z [evaluate={\znext=int({\z-1})}] in {3,5}
            {
            \draw[edge] (1\z) -- (2\znext);
            }
            \foreach \z [evaluate={\znext=int({\z-1})}] in {3,5}
            {
            \draw[edge] (3\z) -- (4\znext);
            }

\foreach \z [evaluate={\znext=int({1+\z})}] in {1,3}
            {
            \draw[edge] (2\z) -- (3\znext);
            }
            \foreach \z [evaluate={\znext=int({\z-1})}] in {4}
            {
            \draw[edge] (2\z) -- (3\znext);
            }

\draw[dotted, thick,shorten >=4pt, shorten <=4pt] (10,5) -- (12,5);
            \draw[dotted, thick,shorten >=4pt, shorten <=4pt] (10,3) -- (12,3);
            \draw[dotted, thick,shorten >=4pt, shorten <=4pt] (10,1) -- (12,1);

\node[label={[label distance=-3pt]90:\scriptsize{$g_{11}$}}] at (11) {};
            \node[label={[label distance=-3pt]-90:\scriptsize{$g_{21}$}}] at (21) {};
            \foreach \x in {2,...,5}
            {
            \node[label={[label distance=-3pt]90:\scriptsize{$g_{1\x}$}}] at (1\x) {};
            \node[label={[label distance=-3pt]-90:\scriptsize{$g_{4\x}$}}] at (4\x) {};
            }
           \node[label={[label distance=-5pt]180:\scriptsize{$g_{32}$}}] at (32) {};
        \end{scope}

    \end{tikzpicture}
    \caption{$F_{n,5} \setminus \n[f_2]$}
    \label{subfig: Fn-l2}
    \end{subfigure}
    \begin{subfigure}[b]{0.3\textwidth}
        \begin{tikzpicture}[scale=0.3]
        \begin{scope}
\foreach \x in {1,2,...,5}
            {
            \node[vertex] (1\x) at (2*\x,3*2) {};
            }
\foreach \x in {3,...,5}
            {
            \node[vertex] (2\x) at (2*\x,2*2) {};
            }
\foreach \x in {3,...,5}
            {
            \node[vertex] (3\x) at (2*\x,1*2) {};
            }
\foreach \x in {2,...,5}
            {
            \node[vertex] (4\x) at (2*\x,0*2) {};
            }

            \foreach \y [evaluate={\y as \x using {\y+1}}] in {1,2,...,4} 
            {
            \draw[edge] (1\y) -- (1\x);
            }
            \foreach \y [evaluate={\y as \x using {\y+1}}] in {3,...,4} 
            {
            \draw[edge] (2\y) -- (2\x);
            }
            \foreach \y [evaluate={\y as \x using {\y+1}}] in {3,...,4} 
            {
            \draw[edge] (3\y) -- (3\x);
            }
            \foreach \y [evaluate={\y as \x using {\y+1}}] in {2,...,4} 
            {
            \draw[edge] (4\y) -- (4\x);
            }

\foreach \z in {3,4,5}
            {
            \draw[edge] (1\z) -- (2\z);
            }
            \foreach \z in {3,4,5}
            {
            \draw[edge] (2\z) -- (3\z);
            }
            \foreach \z in {3,4,5}
            {
            \draw[edge] (3\z) -- (4\z);
            }
            
\foreach \z [evaluate={\znext=int({1+\z})}] in {2,4}
            {
            \draw[edge] (1\z) -- (2\znext);
            }
            \foreach \z [evaluate={\znext=int({1+\z})}] in {4}
            {
            \draw[edge] (3\z) -- (4\znext);
            }
            \foreach \z [evaluate={\znext=int({\z-1})}] in {5}
            {
            \draw[edge] (1\z) -- (2\znext);
            }
            \foreach \z [evaluate={\znext=int({\z-1})}] in {3,5}
            {
            \draw[edge] (3\z) -- (4\znext);
            }

\foreach \z [evaluate={\znext=int({1+\z})}] in {3}
            {
            \draw[edge] (2\z) -- (3\znext);
            }
            \foreach \z [evaluate={\znext=int({\z-1})}] in {4}
            {
            \draw[edge] (2\z) -- (3\znext);
            }

\draw[dotted, thick,shorten >=4pt, shorten <=4pt] (10,5) -- (12,5);
            \draw[dotted, thick,shorten >=4pt, shorten <=4pt] (10,3) -- (12,3);
            \draw[dotted, thick,shorten >=4pt, shorten <=4pt] (10,1) -- (12,1);

\node[label={[label distance=-3pt]90:\scriptsize{$g_{11}$}}] at (11) {};
            \foreach \x in {2,...,5}
            {
            \node[label={[label distance=-3pt]90:\scriptsize{$g_{1\x}$}}] at (1\x) {};
            }
            \foreach \x in {2,...,5}
            {
            \node[label={[label distance=-3pt]-90:\scriptsize{$g_{4\x}$}}] at (4\x) {};
            }

        \end{scope}

\begin{scope}[shift={(2,0)}]
            \node[vertex] (f2) at (-2,4) {} ;
            \draw[edge] (f2) -- (11) ;

            \node[label={[label distance=-3pt]180:\scriptsize{$f_{1}$}}] at (f2) {};
           
            \node[label={[label distance=-5pt]180:\scriptsize{$g_{23}$}}] at (23) {};
            \node[label={[label distance=-5pt]180:\scriptsize{$g_{33}$}}] at (33) {};
        \end{scope}

    \end{tikzpicture}
    \caption{$F_{n,5} \setminus \n[g_{31}]$}
    \label{subfig: Fn-g31}
    \end{subfigure}
    \begin{subfigure}[b]{0.3\textwidth}
        \begin{tikzpicture}[scale=0.3]
        \begin{scope}
\foreach \x in {1,2,...,5}
            {
            \node[vertex] (1\x) at (2*\x,3*2) {};
            }
\foreach \x in {1,2,...,5}
            {
            \node[vertex] (2\x) at (2*\x,2*2) {};
            }
\foreach \x in {2,...,5}
            {
            \node[vertex] (3\x) at (2*\x,1*2) {};
            }
\foreach \x in {3,...,5}
            {
            \node[vertex] (4\x) at (2*\x,0*2) {};
            }

            \foreach \y [evaluate={\y as \x using {\y+1}}] in {1,2,...,4} 
            {
            \draw[edge] (1\y) -- (1\x);
            }
            \foreach \y [evaluate={\y as \x using {\y+1}}] in {1,2,...,4} 
            {
            \draw[edge] (2\y) -- (2\x);
            }
            \foreach \y [evaluate={\y as \x using {\y+1}}] in {2,...,4} 
            {
            \draw[edge] (3\y) -- (3\x);
            }
            \foreach \y [evaluate={\y as \x using {\y+1}}] in {3,...,4} 
            {
            \draw[edge] (4\y) -- (4\x);
            }

\foreach \z in {1,2,3,4,5}
            {
            \draw[edge] (1\z) -- (2\z);
            }
            \foreach \z in {2,3,4,5}
            {
            \draw[edge] (2\z) -- (3\z);
            }
            \foreach \z in {3,4,5}
            {
            \draw[edge] (3\z) -- (4\z);
            }
            
\foreach \z [evaluate={\znext=int({1+\z})}] in {2,4}
            {
            \draw[edge] (1\z) -- (2\znext);
            }
            \foreach \z [evaluate={\znext=int({1+\z})}] in {2,4}
            {
            \draw[edge] (3\z) -- (4\znext);
            }
            \foreach \z [evaluate={\znext=int({\z-1})}] in {3,5}
            {
            \draw[edge] (1\z) -- (2\znext);
            }
            \foreach \z [evaluate={\znext=int({\z-1})}] in {5}
            {
            \draw[edge] (3\z) -- (4\znext);
            }

\foreach \z [evaluate={\znext=int({1+\z})}] in {1,3}
            {
            \draw[edge] (2\z) -- (3\znext);
            }
            \foreach \z [evaluate={\znext=int({\z-1})}] in {4}
            {
            \draw[edge] (2\z) -- (3\znext);
            }

\draw[dotted, thick,shorten >=4pt, shorten <=4pt] (10,5) -- (12,5);
            \draw[dotted, thick,shorten >=4pt, shorten <=4pt] (10,3) -- (12,3);
            \draw[dotted, thick,shorten >=4pt, shorten <=4pt] (10,1) -- (12,1);

\node[label={[label distance=-3pt]90:\scriptsize{$g_{11}$}}] at (11) {};
            \node[label={[label distance=-3pt]-90:\scriptsize{$g_{21}$}}] at (21) {};
            \foreach \x in {2,...,5}
            {
            \node[label={[label distance=-3pt]90:\scriptsize{$g_{1\x}$}}] at (1\x) {};
            }
            \foreach \x in {3,...,5}
            {
            \node[label={[label distance=-3pt]-90:\scriptsize{$g_{4\x}$}}] at (4\x) {};
            }
            \node[label={[label distance=-5pt]180:\scriptsize{$g_{32}$}}] at (32) {};

        \end{scope}

\begin{scope}[shift={(2,0)}]
            \node[vertex] (f2) at (-2,4) {} ;
  
            \draw[edge] (f2) -- (21) ;
            \draw[edge] (f2) -- (11) ;

            \node[label={[label distance=-3pt]180:\scriptsize{$f_{1}$}}] at (f2) {};

        \end{scope}

    \end{tikzpicture}
    \caption{$F_{n,5} \setminus \n[g_{41}]$}
    \label{subfig: Fn-g41}
    \end{subfigure}
    
    \caption{}	
\end{figure}
 \begin{proof}

 	First, let $n = 0$.  Since   $g_{11}$ is a simplicial vertex in $F_{0,5}$  with $N(g_{11})=\{g_{21},f_{1}\}$, (see \Cref{fig:Fn}) using \Cref{Simplicial Vertex Lemma} we get 
 	$\ind (F_{0,5})\simeq\Sigma\ind(F_{0,5} \setminus \n[g_{21}]) \vee \Sigma\ind(F_{0,5} \setminus \n[f_1]).$
 	Since both the graphs $F_{0,5} \setminus \n[g_{21}]$ and $F_{0,5} \setminus \n[f_1]$ are isomorphic to the complete graph $K_3$, the result follows in this case.

  Now, assume that $n \geq 1$.   Since $f_3$ is a simplicial vertex and $N(f_3)=\{f_2,g_{31},g_{41}\}$ in the graph $F_{n,5}$ (see \Cref{fig:Fn}), from \Cref{Simplicial Vertex Lemma}, we get
    \begin{equation}{\label{Ln-sim-l3}}
    \begin{split}
        \ind(F_{n,5}) & \simeq \Sigma\ind(F_{n,5} \setminus \n[f_2]) \vee \Sigma\ind(F_{n,5} \setminus \n[g_{31}]) \vee \Sigma\ind(F_{n,5} \setminus \n[g_{41}]).
    \end{split}
    \end{equation}
    Observe that the graph $F_{n,5}\setminus \n[f_2]$ is isomorphic to $ B_{n-1,5}$ (see \Cref{fig:Bn} and \Cref{subfig: Fn-l2}). Hence, $\ind(F_{n,5}\setminus \n[f_2])\simeq \ind(B_{n-1,5})$. 
    
   Since  $N(f_1) = \{g_{11}\}$  in $F_{n,5} \setminus \n[g_{31}]$ (see \Cref{subfig: Fn-g31}),   using \Cref{Simplicial Vertex Lemma}, we have $\ind(F_{n,5} \setminus \n[g_{31}]) \simeq \Sigma \ind(F_{n,5} \setminus \n[g_{31},g_{11}])$. However, the graph $F_{n,5} \setminus \n[g_{31},g_{11}]$ is isomorphic to $ C_{n-1,5}$ and  therefore, $\ind(F_{n,5} \setminus \n[g_{31}]) \simeq \Sigma \ind(C_{n-1,5})$. 
    
    Finally, consider the graph $F_{n,5} \setminus \n[g_{41}]$. Since $f_1$ is a simplicial vertex in $F_{n,5} \setminus \n[g_{41}]$, with $N(f_1)=\{g_{11},g_{21} \}$, from \Cref{Simplicial Vertex Lemma}, we have 
    \begin{equation}{\label{Ln-sim-l1}}
    \begin{split}
        \ind(F_{n,5} \setminus \n[g_{41}]) \simeq \Sigma\ind(F_{n,5} \setminus \n[g_{41},g_{11}]) \vee \Sigma \ind(F_{n,5} \setminus \n[g_{41},g_{21}]).
    \end{split}
    \end{equation}

    Note that, the graphs $F_{n,5} \setminus \n[g_{41},g_{11}] $ and $F_{n,5} \setminus \n[g_{41},g_{21}] $ are isomorphic to  $D_{n-1,5}$ and $ C_{n-1,5}$, respectively (refer to \Cref{fig:Cn,,fig:Dn}). Therefore, $\ind(F_{n,5} \setminus \n[g_{41}])\simeq \Sigma \ind(D_{n-1,5})\vee\Sigma \ind(C_{n-1,5})$. Now, the result follows from \eqref{Ln-sim-l3}.
\end{proof}

\begin{theorem} \label{Theorem:PnP5firstcase}
 For any graph $G_{n,5}\in \set{\Gamma_{n,5},\Lambda_{n,5}, A_{n,5}, B_{n,5}, C_{n,5}, D_{n,5}, E_{n,5}, F_{n,5}:  n\geq 0 }$, $\ind(G_{n,5})$ is homotopy equivalent to a wedge of spheres.
\end{theorem}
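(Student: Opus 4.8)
The plan is to prove the theorem by strong induction on $n$, treating $\ind(\Gamma_{n,5})$ and $\ind(\Lambda_{n,5})$ on an equal footing with the six intermediary graphs and feeding the eight recurrences of \Cref{homG_{n,5}}, \Cref{homA_{n,5}}, \Cref{homBn}, \Cref{homCn}, \Cref{homDn}, \Cref{homLambdan}, \Cref{homEn} and \Cref{homFn} into one another. Two elementary facts keep the class of ``wedge of spheres'' closed under every operation that appears on the right-hand sides of these recurrences: the suspension of a wedge of spheres is again a wedge of spheres, since $\Sigma\,\mathbb{S}^{d}\simeq\mathbb{S}^{d+1}$ and suspension commutes with finite wedges up to homotopy; and a finite wedge of wedges of spheres is a wedge of spheres. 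Throughout I regard a contractible complex as a (possibly empty) wedge of spheres, so that degenerate cases such as $\ind(\Gamma_{0,5})\simeq\ind(P_{4})$ --- a point by \Cref{Ind(path)} --- cause no difficulty.

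For the base case $n=0$, \Cref{subsection:PnP5Basecase} already establishes that each of $\ind(\Gamma_{0,5})$, $\ind(\Lambda_{0,5})$, $\ind(A_{0,5})$, $\ind(B_{0,5})$, $\ind(C_{0,5})$, $\ind(D_{0,5})$, $\ind(E_{0,5})$ and $\ind(F_{0,5})$ is homotopy equivalent to a wedge of spheres. Along the way I also record the small exceptional values that surface inside the recurrences themselves, namely $\ind(\Gamma_{1,5})\simeq\bigvee_{3}\mathbb{S}^{2}$, $\ind(\Gamma_{2,5})\simeq\bigvee_{7}\mathbb{S}^{4}$, $\ind(C_{1,5})\simeq\bigvee_{3}\mathbb{S}^{2}$, $\ind(D_{1,5})\simeq\bigvee_{3}\mathbb{S}^{2}$, $\ind(E_{1,5})\simeq\bigvee_{10}\mathbb{S}^{2}$ and $B_{-1}\cong K_{2}$, all of which are wedges of spheres.

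For the inductive step, fix $n\geq 1$ and assume the conclusion for every level $k$ with $0\leq k<n$. The crucial structural point is that, read at level $n$, the eight recurrences form an acyclic dependency pattern. Indeed, by \Cref{homG_{n,5}} the complex $\ind(\Gamma_{n,5})$ is expressed through suspensions of independence complexes at levels $n-2$ and $n-3$; by \Cref{homCn}, $\ind(C_{n,5})$ through levels $n-1$ and $n-2$; by \Cref{homA_{n,5}}, \Cref{homDn}, \Cref{homFn} and \Cref{homLambdan}, the complexes $\ind(A_{n,5})$, $\ind(D_{n,5})$, $\ind(F_{n,5})$ and $\ind(\Lambda_{n,5})$ through levels at most $n-1$; and the only same-level dependencies are that of $\ind(B_{n,5})$ on $\ind(C_{n,5})$ via \Cref{homBn}, and that of $\ind(E_{n,5})$ on $\ind(\Gamma_{n,5})$ via \Cref{homEn}. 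Hence, processing the graphs in the order $\Gamma_{n,5},\,C_{n,5},\,A_{n,5},\,D_{n,5},\,F_{n,5},\,\Lambda_{n,5},\,B_{n,5},\,E_{n,5}$ and invoking the induction hypothesis, the two closure facts above, and --- for the finitely many exceptional small values of $n$ --- the explicit wedges recorded in the base case, we obtain that $\ind(G_{n,5})$ is a wedge of spheres for all eight graphs. This closes the induction.

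The genuine obstacle is not any individual homotopy computation, since those are carried out in \Cref{homG_{n,5}} through \Cref{homFn}; rather, it is the bookkeeping needed to certify the acyclicity of the level-$n$ dependency pattern, i.e.\ to confirm that no level-$n$ recurrence secretly references a second level-$n$ graph whose own recurrence references the first. A direct inspection of the right-hand sides shows that the only same-level arrows are $B_{n,5}\to C_{n,5}$ and $E_{n,5}\to\Gamma_{n,5}$, and since both $C_{n,5}$ and $\Gamma_{n,5}$ depend solely on strictly smaller levels, the ordering above is legitimate and the induction is well founded.
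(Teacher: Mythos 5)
Your proposal is correct and takes essentially the same route as the paper: strong induction on $n$ using the base computations of \Cref{subsection:PnP5Basecase} together with the recurrences in \Crefrange{homG_{n,5}}{homFn}, your only addition being the explicit check that the two same-level dependencies ($B_{n,5}\to C_{n,5}$ via \Cref{homBn} and $E_{n,5}\to\Gamma_{n,5}$ via \Cref{homEn}) are acyclic, which the paper leaves implicit. One small correction: the base cases for $\Gamma_{0,5}$ (contractible) and $\Lambda_{0,5}$ ($\simeq\bigvee_{7}\mathbb{S}^{1}$, via the simplicial vertex $g_{11}$) are established inside the paper's proof of \Cref{Theorem:PnP5firstcase} itself rather than in \Cref{subsection:PnP5Basecase}, so your argument should include that short computation for $\Lambda_{0,5}$ instead of citing it.
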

\begin{proof}
 For any graph $G_{0, 5} \in \set{\Gamma_{0,5},\Lambda_{0,5}, A_{0,5}, B_{0,5}, C_{0,5}, D_{0,5}, E_{0,5}, F_{0,5} }$, $\ind(G_{0, 5})$ is homotopy equivalent to a wedge of spheres from \Crefrange{homG_{n,5}}{homFn}.   Now using induction on $n$, and recursive relations established in \Crefrange{homG_{n,5}}{homFn}, we conclude that, for any graph $G_{n, 5} \in \set{ \Gamma_{n,5},\Lambda_{n,5}, A_{n,5}, B_{n,5}, C_{n,5}, D_{n,5}, E_{n,5}, F_{n,5} : n\geq 0}$, $\ind(G_{n, 5})$ is homotopy equivalent to a wedge of spheres. 
\end{proof}
\subsection{Independence complex of \texorpdfstring{$\widetilde{\Gamma}_{n,5}$}{Ind(Lambda n, for n >=0}} \label{subsection:GammaTilde}

Recall that for $n\geq 0$, the line graph of $P_{2n+3}\times P_5$ comprises of two connected components $\Lambda_{n,5}$ and $\widetilde{\Gamma}_{n,5}$ (see \Cref{fig:Lambda-n,,fig:G'n}). The computation of $\ind(\widetilde{\Gamma}_{n,5})$ follows a similar approach as that of $\ind(\Gamma_{n,5})$. The intermediary graphs occurring during the computation of $\ind(\widetilde{\Gamma}_{n,5})$ resemble those defined in \Cref{subsection:PnP5Basecase}, with an inclusion of certain additional vertices and edges.
For each $H_{n,5} \in \{\Gamma_{n,5},\Lambda_{n,5},A_{n,5},B_{n,5},C_{n,5},D_{n,5},E_{n,5},F_{n,5}\}$, where $n\in\mathbb{N}\cup\{0\}$, we define
\begin{equation*}
    \begin{split}
    V(\widetilde{H}_{n,5}) =  & V(H_{n,5})\cup \{g_{1\,2n+2},g_{2\,2n+2},g_{3\,2n+2},g_{4\,2n+2}\}, \\
    E(\widetilde{H}_{n,5}) = & E(H_{n,5}) \cup \set{g_{i\,2n+1}g_{i\,2n+2} : i=1,2,3,4} 
    \cup \set{g_{i\,2n+2}g_{i+1\,2n+2} : i = 1,2,3} \\
    & \cup \set{g_{2\,2n+1}g_{3\,2n+2}, g_{3\,2n+1}g_{2\,2n+2}}.
    \end{split}
\end{equation*}

\noindent In this section, we compute the independence complexes of graphs $\widetilde{\Gamma}_{n,5},\widetilde{\Lambda}_{n,5}, \widetilde{A}_{n,5}, \widetilde{B}_{n,5},$ $\widetilde{C}_{n,5}$, $\widetilde{D}_{n,5}, \widetilde{E}_{n,5}$, and $\widetilde{F}_{n,5}$. In computing the independence complexes of these graphs, as discussed in \Cref{subsection:GeneralCasePnP5}, our approach focuses on performing computations primarily on one end (the leftmost part) of the graph while keeping the other end (the rightmost part) fixed. Notably, for $n\geq 1$, observe that the left ends of the graphs $H_{n,5}$ and $\widetilde{H}_{n,5}$ coincide, where $H_{n,5}\in \{\Gamma_{n,5},\Lambda_{n,5}, A_{n, 5}, B_{n,5}, C_{n, 5}, D_{n, 5}, E_{n, 5}, F_{n, 5}\}$. This observation suggests a method for computing the independence complexes of graphs $\widetilde{\Gamma}_{n,5},\widetilde{\Lambda}_{n,5}, \widetilde{A}_{n,5}, \widetilde{B}_{n,5}, \widetilde{C}_{n,5}, \widetilde{D}_{n,5}, \widetilde{E}_{n,5}$, and $\widetilde{F}_{n,5}$ using a similar strategy to that outlined in \Cref{subsection:GeneralCasePnP5}. Furthermore, for $n\geq 1$, these intermediary graphs adhere to a similar recurrence relation as those described in the preceding section.  Since the base graphs of these intermediary graphs are completely different from the graphs in \Cref{subsection:PnP5Basecase}, we provide the complete computations for base cases. Also, we give the explicit recurrence relation for each graph and higher values of $n$.

\begin{figure}
    \centering
    \begin{subfigure}[b]{0.23\textwidth}
        \begin{tikzpicture}[scale=0.28]
    \begin{scope}
\foreach \x in {1,2,...,4}
        {
        \node[vertex] (1\x) at (2*\x,3*2) {};
        }
\foreach \x in {1,2,...,4}
        {
        \node[vertex] (2\x) at (2*\x,2*2) {};
        }
\foreach \x in {1,2,...,4}
        {
        \node[vertex] (3\x) at (2*\x,1*2) {};
        }
\foreach \x in {1,2,...,4}
        {
        \node[vertex] (4\x) at (2*\x,0*2) {};
        }
        
\foreach \y [evaluate={\y as \x using int({\y+1})}] in {1,2,...,3} 
        {
        \draw[edge] (1\y) -- (1\x);
        \draw[edge] (2\y) -- (2\x);
        \draw[edge] (3\y) -- (3\x);
        \draw[edge] (4\y) -- (4\x);
        }
        
\foreach \z in {1,2,3,4}
        {\draw[edge] (1\z) -- (2\z);
        \draw[edge] (2\z) -- (3\z);
        \draw[edge] (3\z) -- (4\z);
        }
        
\foreach \z [evaluate={\znext=int({1+\z})}] in {2}
        {
        \draw[edge] (1\z) -- (2\znext);
        \draw[edge] (3\z) -- (4\znext);
        }
        \foreach \z [evaluate={\znext=int({\z-1})}] in {3}
        {
        \draw[edge] (1\z) -- (2\znext);
        \draw[edge] (3\z) -- (4\znext);
        }

\foreach \z [evaluate={\znext=int({1+\z})}] in {1,3}
        {
        \draw[edge] (2\z) -- (3\znext);
        }
        \foreach \z [evaluate={\znext=int({\z-1})}] in {2,4}
        {
        \draw[edge] (2\z) -- (3\znext);
        }
        \coordinate (c15) at (15);
        \coordinate (c16) at (16);

\node[label={[label distance=-3pt]90:\scriptsize{$g_{11}$}}] at (11) {};
        \foreach \x in {2,...,4}
        {
        \node[label={[label distance=-3pt]90:\scriptsize{$g_{1\x}$}}] at (1\x) {};
        }

        \node[label={[label distance=-3pt]180:\scriptsize{$g_{21}$}}] at (21) {};
        \node[label={[label distance=-3pt]180:\scriptsize{$g_{31}$}}] at (31) {};
        \node[label={[label distance=-3pt]180:\scriptsize{$g_{41}$}}] at (41) {};

        \node[label={[label distance=-3pt]0:\scriptsize{$g_{24}$}}] at (24) {};
        \node[label={[label distance=-3pt]0:\scriptsize{$g_{34}$}}] at (34) {};
        \node[label={[label distance=-3pt]0:\scriptsize{$g_{44}$}}] at (44) {};
    \end{scope}
\end{tikzpicture}
\caption{$\widetilde{\Gamma}_{1,5}$} 
     \end{subfigure}
    \begin{subfigure}[b]{0.23\textwidth}
        \begin{tikzpicture}[scale=0.28]
    \begin{scope}
\foreach \x in {1,2,...,4}
        {
        \node[vertex] (1\x) at (2*\x,3*2) {};
        }
\foreach \x in {1,2,...,4}
        {
        \node[vertex] (2\x) at (2*\x,2*2) {};
        }
\foreach \x in {1,2,...,4}
        {
        \node[vertex] (3\x) at (2*\x,1*2) {};
        }
\foreach \x in {1,2,...,4}
        {
        \node[vertex] (4\x) at (2*\x,0*2) {};
        }
        
\foreach \y [evaluate={\y as \x using int({\y+1})}] in {1,2,...,3} 
        {
        \draw[edge] (1\y) -- (1\x);
        \draw[edge] (2\y) -- (2\x);
        \draw[edge] (3\y) -- (3\x);
        \draw[edge] (4\y) -- (4\x);
        }
        
\foreach \z in {1,2,3,4}
        {\draw[edge] (1\z) -- (2\z);
        \draw[edge] (2\z) -- (3\z);
        \draw[edge] (3\z) -- (4\z);
        }
        
\foreach \z [evaluate={\znext=int({1+\z})}] in {2}
        {
        \draw[edge] (1\z) -- (2\znext);
        \draw[edge] (3\z) -- (4\znext);
        }
        \foreach \z [evaluate={\znext=int({\z-1})}] in {3}
        {
        \draw[edge] (1\z) -- (2\znext);
        \draw[edge] (3\z) -- (4\znext);
        }

\foreach \z [evaluate={\znext=int({1+\z})}] in {1,3}
        {
        \draw[edge] (2\z) -- (3\znext);
        }
        \foreach \z [evaluate={\znext=int({\z-1})}] in {2,4}
        {
        \draw[edge] (2\z) -- (3\znext);
        }
        \coordinate (c15) at (15);
        \coordinate (c16) at (16);

\node[label={[label distance=-3pt]90:\scriptsize{$g_{11}$}}] at (11) {};
        \foreach \x in {2,...,4}
        {
        \node[label={[label distance=-3pt]90:\scriptsize{$g_{1\x}$}}] at (1\x) {};
        }

        \node[label={[label distance=-3pt]0:\scriptsize{$g_{24}$}}] at (24) {};
        \node[label={[label distance=-3pt]0:\scriptsize{$g_{34}$}}] at (34) {};
        \node[label={[label distance=-3pt]0:\scriptsize{$g_{44}$}}] at (44) {};
    \end{scope}

     \begin{scope}[shift={(2,0)}]
            \node[vertex] (l1) at (-2,6) {} ;
            \node[vertex] (l2) at (-2,4) {} ;
            \node[vertex] (l3) at (-2,2) {} ;
            \node[vertex] (l4) at (-2,0) {} ;

            \draw[edge] (l1) -- (l2) ;
            \draw[edge] (l2) -- (l3) ;
            \draw[edge] (l3) -- (l4) ;
            \draw[edge] (l1) -- (11) ;
            \draw[edge] (l2) -- (21) ;
            \draw[edge] (l3) -- (31) ;
            \draw[edge] (l4) -- (41) ;
            \draw[edge] (l1) -- (21) ;
            \draw[edge] (l2) -- (11) ;
            \draw[edge] (l4) -- (31) ;
            \draw[edge] (l3) -- (41) ;

            \node[label={[label distance=-3pt]180:\scriptsize{$l_{1}$}}] at (l1) {};
            \node[label={[label distance=-3pt]180:\scriptsize{$l_{2}$}}] at (l2) {};
            \node[label={[label distance=-3pt]180:\scriptsize{$l_{3}$}}] at (l3) {};
            \node[label={[label distance=-3pt]180:\scriptsize{$l_{4}$}}] at (l4) {};

        \end{scope}

\end{tikzpicture}
\caption{$\widetilde{\Lambda}_{1,5}$} 
     \end{subfigure}
    \begin{subfigure}[b]{0.23\textwidth}
        \begin{tikzpicture}[scale=0.28]
    \begin{scope}
\foreach \x in {1,2,...,4}
        {
        \node[vertex] (1\x) at (2*\x,3*2) {};
        }
\foreach \x in {1,2,...,4}
        {
        \node[vertex] (2\x) at (2*\x,2*2) {};
        }
\foreach \x in {1,2,...,4}
        {
        \node[vertex] (3\x) at (2*\x,1*2) {};
        }
\foreach \x in {1,2,...,4}
        {
        \node[vertex] (4\x) at (2*\x,0*2) {};
        }
        
\foreach \y [evaluate={\y as \x using int({\y+1})}] in {1,2,...,3} 
        {
        \draw[edge] (1\y) -- (1\x);
        \draw[edge] (2\y) -- (2\x);
        \draw[edge] (3\y) -- (3\x);
        \draw[edge] (4\y) -- (4\x);
        }
        
\foreach \z in {1,2,3,4}
        {\draw[edge] (1\z) -- (2\z);
        \draw[edge] (2\z) -- (3\z);
        \draw[edge] (3\z) -- (4\z);
        }
        
\foreach \z [evaluate={\znext=int({1+\z})}] in {2}
        {
        \draw[edge] (1\z) -- (2\znext);
        \draw[edge] (3\z) -- (4\znext);
        }
        \foreach \z [evaluate={\znext=int({\z-1})}] in {3}
        {
        \draw[edge] (1\z) -- (2\znext);
        \draw[edge] (3\z) -- (4\znext);
        }

\foreach \z [evaluate={\znext=int({1+\z})}] in {1,3}
        {
        \draw[edge] (2\z) -- (3\znext);
        }
        \foreach \z [evaluate={\znext=int({\z-1})}] in {2,4}
        {
        \draw[edge] (2\z) -- (3\znext);
        }
        \coordinate (c15) at (15);
        \coordinate (c16) at (16);

\node[label={[label distance=-3pt]90:\scriptsize{$g_{11}$}}] at (11) {};
        \foreach \x in {2,...,4}
        {
        \node[label={[label distance=-3pt]90:\scriptsize{$g_{1\x}$}}] at (1\x) {};
        }

        \node[label={[label distance=-3pt]0:\scriptsize{$g_{24}$}}] at (24) {};
        \node[label={[label distance=-3pt]0:\scriptsize{$g_{34}$}}] at (34) {};
        \node[label={[label distance=-3pt]0:\scriptsize{$g_{44}$}}] at (44) {};
    \end{scope}

\begin{scope}[shift={(2,0)}]
            \node[vertex] (a1) at (-2,6) {} ;
            \node[vertex] (a2) at (-2,0) {} ;

            \draw[edge] (a1) -- (11) ;
            \draw[edge] (a1) -- (21) ;
            \draw[edge] (a2) -- (31) ;
            \draw[edge] (a2) -- (41) ;

            \node[label={[label distance=-3pt]180:\scriptsize{$a_{1}$}}] at (a1) {};   
            \node[label={[label distance=-3pt]180:\scriptsize{$a_{2}$}}] at (a2) {};
        \end{scope}

\end{tikzpicture}
\caption{$\widetilde{A}_{1,5}$} 
     \end{subfigure}
    \begin{subfigure}[b]{0.23\textwidth}
        \begin{tikzpicture}[scale=0.28]
    \begin{scope}
\foreach \x in {1,2,...,4}
        {
        \node[vertex] (1\x) at (2*\x,3*2) {};
        }
\foreach \x in {1,2,...,4}
        {
        \node[vertex] (2\x) at (2*\x,2*2) {};
        }
\foreach \x in {1,2,...,4}
        {
        \node[vertex] (3\x) at (2*\x,1*2) {};
        }
\foreach \x in {1,2,...,4}
        {
        \node[vertex] (4\x) at (2*\x,0*2) {};
        }
        
\foreach \y [evaluate={\y as \x using int({\y+1})}] in {1,2,...,3} 
        {
        \draw[edge] (1\y) -- (1\x);
        \draw[edge] (2\y) -- (2\x);
        \draw[edge] (3\y) -- (3\x);
        \draw[edge] (4\y) -- (4\x);
        }
        
\foreach \z in {1,2,3,4}
        {\draw[edge] (1\z) -- (2\z);
        \draw[edge] (2\z) -- (3\z);
        \draw[edge] (3\z) -- (4\z);
        }
        
\foreach \z [evaluate={\znext=int({1+\z})}] in {2}
        {
        \draw[edge] (1\z) -- (2\znext);
        \draw[edge] (3\z) -- (4\znext);
        }
        \foreach \z [evaluate={\znext=int({\z-1})}] in {3}
        {
        \draw[edge] (1\z) -- (2\znext);
        \draw[edge] (3\z) -- (4\znext);
        }

\foreach \z [evaluate={\znext=int({1+\z})}] in {1,3}
        {
        \draw[edge] (2\z) -- (3\znext);
        }
        \foreach \z [evaluate={\znext=int({\z-1})}] in {2,4}
        {
        \draw[edge] (2\z) -- (3\znext);
        }
        \coordinate (c15) at (15);
        \coordinate (c16) at (16);

\node[label={[label distance=-3pt]90:\scriptsize{$g_{11}$}}] at (11) {};
        \foreach \x in {2,...,4}
        {
        \node[label={[label distance=-3pt]90:\scriptsize{$g_{1\x}$}}] at (1\x) {};
        }

        \node[label={[label distance=-3pt]0:\scriptsize{$g_{24}$}}] at (24) {};
        \node[label={[label distance=-3pt]0:\scriptsize{$g_{34}$}}] at (34) {};
        \node[label={[label distance=-3pt]0:\scriptsize{$g_{44}$}}] at (44) {};
    \end{scope}

\begin{scope}[shift={(2,0)}]
            
            \node[vertex] (b1) at (-2,6) {} ;
            \node[vertex] (b2) at (-2,4) {} ;
            \node[vertex] (b3) at (-2,2) {} ;
            \node[vertex] (b4) at (-2,0) {} ;
            \node[vertex] (b31) at (-4,2) {} ;
            \node[vertex] (b41) at (-4,0) {} ;

            \draw[edge] (b1) -- (11) ;
            \draw[edge] (b1) -- (21) ;
            \draw[edge] (b1) -- (b2) ;
            \draw[edge] (b2) -- (b3) ;
            \draw[edge] (b2) -- (21) ;
            \draw[edge] (b2) -- (11) ;
            \draw[edge] (b3) -- (31) ;
            \draw[edge] (b3) -- (41) ;
            \draw[edge] (b3) -- (b4) ;
            \draw[edge] (b4) -- (41) ;
            \draw[edge] (b4) -- (31) ;

            \draw[edge] (b31) -- (b3) ;
            \draw[edge] (b31) -- (b2) ;
            \draw[edge] (b31) -- (b41) ;
            \draw[edge] (b41) -- (b4) ;

            \node[label={[label distance=-3pt]180:\scriptsize{$b_{3}$}}] at (b1) {};
            \node[label={[label distance=-3pt]180:\scriptsize{$b_{4}$}}] at (b2) {};
            \node[label={[label distance=-9pt]45:\scriptsize{$b_{5}$}}] at (b3) {};
            \node[label={[label distance=-9pt]135:\scriptsize{$b_{6}$}}] at (b4) {};
            \node[label={[label distance=-3pt]180:\scriptsize{$b_{1}$}}] at (b31) {};
            \node[label={[label distance=-3pt]180:\scriptsize{$b_{2}$}}] at (b41) {};
        \end{scope}

\end{tikzpicture}
\caption{$\widetilde{B}_{1,5}$} 
     \end{subfigure}
     
    \begin{subfigure}[b]{0.23\textwidth}
        \begin{tikzpicture}[scale=0.28]
    \begin{scope}
\foreach \x in {1,2,...,4}
        {
        \node[vertex] (1\x) at (2*\x,3*2) {};
        }
\foreach \x in {1,2,...,4}
        {
        \node[vertex] (2\x) at (2*\x,2*2) {};
        }
\foreach \x in {1,2,...,4}
        {
        \node[vertex] (3\x) at (2*\x,1*2) {};
        }
\foreach \x in {1,2,...,4}
        {
        \node[vertex] (4\x) at (2*\x,0*2) {};
        }
        
\foreach \y [evaluate={\y as \x using int({\y+1})}] in {1,2,...,3} 
        {
        \draw[edge] (1\y) -- (1\x);
        \draw[edge] (2\y) -- (2\x);
        \draw[edge] (3\y) -- (3\x);
        \draw[edge] (4\y) -- (4\x);
        }
        
\foreach \z in {1,2,3,4}
        {\draw[edge] (1\z) -- (2\z);
        \draw[edge] (2\z) -- (3\z);
        \draw[edge] (3\z) -- (4\z);
        }
        
\foreach \z [evaluate={\znext=int({1+\z})}] in {2}
        {
        \draw[edge] (1\z) -- (2\znext);
        \draw[edge] (3\z) -- (4\znext);
        }
        \foreach \z [evaluate={\znext=int({\z-1})}] in {3}
        {
        \draw[edge] (1\z) -- (2\znext);
        \draw[edge] (3\z) -- (4\znext);
        }

\foreach \z [evaluate={\znext=int({1+\z})}] in {1,3}
        {
        \draw[edge] (2\z) -- (3\znext);
        }
        \foreach \z [evaluate={\znext=int({\z-1})}] in {2,4}
        {
        \draw[edge] (2\z) -- (3\znext);
        }
        \coordinate (c15) at (15);
        \coordinate (c16) at (16);

\node[label={[label distance=-3pt]90:\scriptsize{$g_{11}$}}] at (11) {};
        \foreach \x in {2,...,4}
        {
        \node[label={[label distance=-3pt]90:\scriptsize{$g_{1\x}$}}] at (1\x) {};
        }

        \node[label={[label distance=-3pt]0:\scriptsize{$g_{24}$}}] at (24) {};
        \node[label={[label distance=-3pt]0:\scriptsize{$g_{34}$}}] at (34) {};
        \node[label={[label distance=-3pt]0:\scriptsize{$g_{44}$}}] at (44) {};
    \end{scope}

    \begin{scope}[shift={(2,0)}]
            \node[vertex] (c2) at (-2,0) {} ;

            \draw[edge] (c2) -- (31) ;
            \draw[edge] (c2) -- (41) ;

            \node[label={[label distance=-3pt]180:\scriptsize{$c_{1}$}}] at (c2) {};
    \end{scope}

\end{tikzpicture}
\caption{$\widetilde{C}_{1,5}$} 
\label{tildeC15}     \end{subfigure}
    \begin{subfigure}[b]{0.23\textwidth}
        \begin{tikzpicture}[scale=0.28]
    \begin{scope}
\foreach \x in {1,2,...,4}
        {
        \node[vertex] (1\x) at (2*\x,3*2) {};
        }
\foreach \x in {1,2,...,4}
        {
        \node[vertex] (2\x) at (2*\x,2*2) {};
        }
\foreach \x in {1,2,...,4}
        {
        \node[vertex] (3\x) at (2*\x,1*2) {};
        }
\foreach \x in {1,2,...,4}
        {
        \node[vertex] (4\x) at (2*\x,0*2) {};
        }
        
\foreach \y [evaluate={\y as \x using int({\y+1})}] in {1,2,...,3} 
        {
        \draw[edge] (1\y) -- (1\x);
        \draw[edge] (2\y) -- (2\x);
        \draw[edge] (3\y) -- (3\x);
        \draw[edge] (4\y) -- (4\x);
        }
        
\foreach \z in {1,2,3,4}
        {\draw[edge] (1\z) -- (2\z);
        \draw[edge] (2\z) -- (3\z);
        \draw[edge] (3\z) -- (4\z);
        }
        
\foreach \z [evaluate={\znext=int({1+\z})}] in {2}
        {
        \draw[edge] (1\z) -- (2\znext);
        \draw[edge] (3\z) -- (4\znext);
        }
        \foreach \z [evaluate={\znext=int({\z-1})}] in {3}
        {
        \draw[edge] (1\z) -- (2\znext);
        \draw[edge] (3\z) -- (4\znext);
        }

\foreach \z [evaluate={\znext=int({1+\z})}] in {1,3}
        {
        \draw[edge] (2\z) -- (3\znext);
        }
        \foreach \z [evaluate={\znext=int({\z-1})}] in {2,4}
        {
        \draw[edge] (2\z) -- (3\znext);
        }
        \coordinate (c15) at (15);
        \coordinate (c16) at (16);

\node[label={[label distance=-3pt]90:\scriptsize{$g_{11}$}}] at (11) {};
        \foreach \x in {2,...,4}
        {
        \node[label={[label distance=-3pt]90:\scriptsize{$g_{1\x}$}}] at (1\x) {};
        }

        \node[label={[label distance=-3pt]0:\scriptsize{$g_{24}$}}] at (24) {};
        \node[label={[label distance=-3pt]0:\scriptsize{$g_{34}$}}] at (34) {};
        \node[label={[label distance=-3pt]0:\scriptsize{$g_{44}$}}] at (44) {};
    \end{scope}

    \begin{scope}[shift={(2,0)}]
            \node[vertex] (d1) at (-2,4) {} ;
            \node[vertex] (d2) at (-2,2) {} ;

            \draw[edge] (d1) -- (11) ;
            \draw[edge] (d1) -- (21) ;
            \draw[edge] (d1) -- (d2) ;
            \draw[edge] (d2) -- (31) ;
            \draw[edge] (d2) -- (41) ;

            \node[label={[label distance=-3pt]180:\scriptsize{$d_{1}$}}] at (d1) {};
            \node[label={[label distance=-3pt]180:\scriptsize{$d_{2}$}}] at (d2) {}; 

        \end{scope}

\end{tikzpicture}
\caption{$\widetilde{D}_{1,5}$} 
     \end{subfigure}
    \begin{subfigure}[b]{0.23\textwidth}
        \begin{tikzpicture}[scale=0.28]
    \begin{scope}
\foreach \x in {1,2,...,4}
        {
        \node[vertex] (1\x) at (2*\x,3*2) {};
        }
\foreach \x in {1,2,...,4}
        {
        \node[vertex] (2\x) at (2*\x,2*2) {};
        }
\foreach \x in {1,2,...,4}
        {
        \node[vertex] (3\x) at (2*\x,1*2) {};
        }
\foreach \x in {1,2,...,4}
        {
        \node[vertex] (4\x) at (2*\x,0*2) {};
        }
        
\foreach \y [evaluate={\y as \x using int({\y+1})}] in {1,2,...,3} 
        {
        \draw[edge] (1\y) -- (1\x);
        \draw[edge] (2\y) -- (2\x);
        \draw[edge] (3\y) -- (3\x);
        \draw[edge] (4\y) -- (4\x);
        }
        
\foreach \z in {1,2,3,4}
        {\draw[edge] (1\z) -- (2\z);
        \draw[edge] (2\z) -- (3\z);
        \draw[edge] (3\z) -- (4\z);
        }
        
\foreach \z [evaluate={\znext=int({1+\z})}] in {2}
        {
        \draw[edge] (1\z) -- (2\znext);
        \draw[edge] (3\z) -- (4\znext);
        }
        \foreach \z [evaluate={\znext=int({\z-1})}] in {3}
        {
        \draw[edge] (1\z) -- (2\znext);
        \draw[edge] (3\z) -- (4\znext);
        }

\foreach \z [evaluate={\znext=int({1+\z})}] in {1,3}
        {
        \draw[edge] (2\z) -- (3\znext);
        }
        \foreach \z [evaluate={\znext=int({\z-1})}] in {2,4}
        {
        \draw[edge] (2\z) -- (3\znext);
        }
        \coordinate (c15) at (15);
        \coordinate (c16) at (16);

\node[label={[label distance=-3pt]90:\scriptsize{$g_{11}$}}] at (11) {};
        \foreach \x in {2,...,4}
        {
        \node[label={[label distance=-3pt]90:\scriptsize{$g_{1\x}$}}] at (1\x) {};
        }

        \node[label={[label distance=-3pt]0:\scriptsize{$g_{24}$}}] at (24) {};
        \node[label={[label distance=-3pt]0:\scriptsize{$g_{34}$}}] at (34) {};
        \node[label={[label distance=-3pt]0:\scriptsize{$g_{44}$}}] at (44) {};
    \end{scope}

\begin{scope}[shift={(2,0)}]
            \node[vertex] (e1) at (-2,3) {} ;
            
            \draw[edge] (e1) -- (11) ;
            \draw[edge] (e1) -- (21) ;
            \draw[edge] (e1) -- (31) ;
            \draw[edge] (e1) -- (41) ;

            \node[label={[label distance=-3pt]180:\scriptsize{$e_{1}$}}] at (e1) {};
        \end{scope}

\end{tikzpicture}
\caption{$\widetilde{E}_{1,5}$} 
     \end{subfigure}
    \begin{subfigure}[b]{0.23\textwidth}
        \begin{tikzpicture}[scale=0.28]
    \begin{scope}
\foreach \x in {1,2,...,4}
        {
        \node[vertex] (1\x) at (2*\x,3*2) {};
        }
\foreach \x in {1,2,...,4}
        {
        \node[vertex] (2\x) at (2*\x,2*2) {};
        }
\foreach \x in {1,2,...,4}
        {
        \node[vertex] (3\x) at (2*\x,1*2) {};
        }
\foreach \x in {1,2,...,4}
        {
        \node[vertex] (4\x) at (2*\x,0*2) {};
        }
        
\foreach \y [evaluate={\y as \x using int({\y+1})}] in {1,2,...,3} 
        {
        \draw[edge] (1\y) -- (1\x);
        \draw[edge] (2\y) -- (2\x);
        \draw[edge] (3\y) -- (3\x);
        \draw[edge] (4\y) -- (4\x);
        }
        
\foreach \z in {1,2,3,4}
        {\draw[edge] (1\z) -- (2\z);
        \draw[edge] (2\z) -- (3\z);
        \draw[edge] (3\z) -- (4\z);
        }
        
\foreach \z [evaluate={\znext=int({1+\z})}] in {2}
        {
        \draw[edge] (1\z) -- (2\znext);
        \draw[edge] (3\z) -- (4\znext);
        }
        \foreach \z [evaluate={\znext=int({\z-1})}] in {3}
        {
        \draw[edge] (1\z) -- (2\znext);
        \draw[edge] (3\z) -- (4\znext);
        }

\foreach \z [evaluate={\znext=int({1+\z})}] in {1,3}
        {
        \draw[edge] (2\z) -- (3\znext);
        }
        \foreach \z [evaluate={\znext=int({\z-1})}] in {2,4}
        {
        \draw[edge] (2\z) -- (3\znext);
        }
        \coordinate (c15) at (15);
        \coordinate (c16) at (16);

\node[label={[label distance=-3pt]90:\scriptsize{$g_{11}$}}] at (11) {};
        \foreach \x in {2,...,4}
        {
        \node[label={[label distance=-3pt]90:\scriptsize{$g_{1\x}$}}] at (1\x) {};
        }

        \node[label={[label distance=-3pt]0:\scriptsize{$g_{24}$}}] at (24) {};
        \node[label={[label distance=-3pt]0:\scriptsize{$g_{34}$}}] at (34) {};
        \node[label={[label distance=-3pt]0:\scriptsize{$g_{44}$}}] at (44) {};
    \end{scope}

\begin{scope}[shift={(2,0)}]
            \node[vertex] (f2) at (-2,4) {} ;
            \node[vertex] (f3) at (-2,2) {} ;
            \node[vertex] (f4) at (-2,0) {} ;

            \draw[edge] (f2) -- (f3) ;
            \draw[edge] (f3) -- (f4) ;
            \draw[edge] (f2) -- (21) ;
            \draw[edge] (f3) -- (31) ;
            \draw[edge] (f4) -- (41) ;
            \draw[edge] (f2) -- (11) ;
            \draw[edge] (f4) -- (31) ;
            \draw[edge] (f3) -- (41) ;

            \node[label={[label distance=-3pt]180:\scriptsize{$f_{1}$}}] at (f2) {};
            \node[label={[label distance=-3pt]180:\scriptsize{$f_{2}$}}] at (f3) {};
            \node[label={[label distance=-3pt]180:\scriptsize{$f_{3}$}}] at (f4) {};
            
        \end{scope}

\end{tikzpicture}
\caption{$\widetilde{F}_{1,5}$} 
     \end{subfigure}
    \caption{}
    \label{fig:tilda-base-graphs-Pn5}
\end{figure}

\begin{itemize}[leftmargin=*]

\item $\ind(\widetilde{\Gamma}_{n, 5})$:  

Since $N(g_{11})\subseteq N(g_{22})$ and $N(g_{41})\subseteq N(g_{32})$ in the graphs $\Tilde{\Gamma}_{0,5}$, and $\Tilde{\Gamma}_{0,5}\setminus g_{22}$, using \Cref{Folding Lemma}, we get $\ind(\Tilde{\Gamma}_{0,5})\simeq \ind(\Tilde{\Gamma}_{0,5}\setminus\{g_{22},g_{32}\})$. The graph $\Tilde{\Gamma}_{0,5}\setminus\{g_{22},g_{32}\}$ is a path with 6 vertices, and hence, using \Cref{Ind(path)}, we get $\ind(\Tilde{\Gamma}_{0,5})\simeq \mathbb{S}^1$. 
    
Now, consider $n\geq 1$. Application of the same procedure as that used in \Cref{homG_{n,5}} provides us with a graph $\Tilde{\Gamma}^{5}_{n,5}$ such that $\ind(\Tilde{\Gamma}_{n,5}) \simeq \Sigma^{2}\ind(\Tilde{\Gamma}^{5}_{n,5})$, where the graph $\Tilde{\Gamma}^{5}_{n,5}$ (see \Cref{subfig: gammatilde-n-5}) is obtained from $\Gamma_{n,5}^5$ (see \Cref{subfig: gamma-n-55}, in the proof of \Cref{homG_{n,5}}). 

\begin{figure}[h!]
\centering
\begin{subfigure}[b]{0.25\textwidth}
    \begin{tikzpicture}[scale=0.3]
			\begin{scope}
            \foreach \x in {3,...,7}
            {
            \node[vertex] (1\x) at (2*\x,3*2) {};
            }
            \foreach \x in {3,...,7}
            {
            \node[vertex] (2\x) at (2*\x,2*2) {};
            }
            \foreach \x in {3,...,7}
            {
            \node[vertex] (3\x) at (2*\x,1*2) {};
            }
            \foreach \x in {3,...,7}
            {
            \node[vertex] (4\x) at (2*\x,0*2) {};
            }
            
            \foreach \y [evaluate={\y as \x using int({\y+1})}] in {3,...,4,6} 
            {
            \draw[edge] (1\y) -- (1\x);
            \draw[edge] (2\y) -- (2\x);
            \draw[edge] (3\y) -- (3\x);
            \draw[edge] (4\y) -- (4\x);
            }
            
            \foreach \z in {3,4,5,...,7}
            {\draw[edge] (1\z) -- (2\z);
            \draw[edge] (2\z) -- (3\z);
            \draw[edge] (3\z) -- (4\z);
            }
            
            \foreach \z [evaluate={\znext=int({1+\z})}] in {4}
            {
            \draw[edge] (1\z) -- (2\znext);
            \draw[edge] (3\z) -- (4\znext);
            }
            \foreach \z [evaluate={\znext=int({\z-1})}] in {5}
            {
            \draw[edge] (1\z) -- (2\znext);
            \draw[edge] (3\z) -- (4\znext);
            }

            \foreach \z [evaluate={\znext=int({1+\z})}] in {3,6}
            {
            \draw[edge] (2\z) -- (3\znext);
            }
            \foreach \z [evaluate={\znext=int({\z-1})}] in {4,7}
            {
            \draw[edge] (2\z) -- (3\znext);
            }
            \coordinate (c15) at (15);
            \coordinate (c16) at (16);

            \draw[dotted, thick,shorten >=4pt, shorten <=4pt] (10,5) -- (12,5);
            \draw[dotted, thick,shorten >=4pt, shorten <=4pt] (10,3) -- (12,3);
            \draw[dotted, thick,shorten >=4pt, shorten <=4pt] (10,1) -- (12,1);
            
            \draw[edge] (13) to [bend right=45] (33);
            \draw[edge] (13) to [bend right=45] (43);
            \draw[edge] (23) to [bend right=45] (43);

            
            \foreach \x in {3,...,5}
            {
            \node[label={[label distance=-3pt]90:\footnotesize{$g_{1\x}$}}] at (1\x) {};
            \node[label={[label distance=-3pt]270:\footnotesize{$g_{4\x}$}}] at (4\x) {};
        }
        \node[label={[label distance=-8pt]45:\footnotesize{$g_{23}$}}] at (23) {};
        \node[label={[label distance=-8pt]-45:\footnotesize{$g_{33}$}}] at (33) {};


        \end{scope}
    \end{tikzpicture}
\caption{$\Tilde{\Gamma}^{5}_{n,5}$}
\label{subfig: gammatilde-n-5}
\end{subfigure}
 \begin{subfigure}[b]{0.25\linewidth}
    \begin{tikzpicture}[scale=0.3]
        \begin{scope}
        \foreach \x in {3,4,5,6,7}
        {
        \node[vertex] (1\x) at (2*\x,3*2) {};
        }
        \foreach \x in {4,5,6,7}
        {
        \node[vertex] (2\x) at (2*\x,2*2) {};
        }
        \foreach \x in {3,4,5,6,7}
        {
        \node[vertex] (3\x) at (2*\x,1*2) {};
        }
        \foreach \x in {3,4,5,6,7}
        {
        \node[vertex] (4\x) at (2*\x,0*2) {};
        }
        
        \foreach \y [evaluate={\y as \x using int({\y+1})}] in {3,4} 
        {
        \draw[edge] (1\y) -- (1\x);
        \draw[edge] (3\y) -- (3\x);
        \draw[edge] (4\y) -- (4\x);
        }
        \foreach \y [evaluate={\y as \x using int({\y+1})}] in {4} 
        {
        \draw[edge] (2\y) -- (2\x);
        }
        \draw[edge] (16) -- (17);
        \draw[edge] (26) -- (27);
        \draw[edge] (36) -- (37);
        \draw[edge] (46) -- (47);
        
        \foreach \z in {3,4,5,6,7}
        {
        \draw[edge] (3\z) -- (4\z);
        }
        \foreach \z in {4,5,6,7}
        {\draw[edge] (2\z) -- (3\z);
        \draw[edge] (1\z) -- (2\z);
        }
        
        \draw[edge] (13) to [bend right=45] (33);
        \draw[edge] (13) to [bend right=45] (43);

        \foreach \z [evaluate={\znext=int({1+\z})}] in {4}
        {
        \draw[edge] (1\z) -- (2\znext);
        \draw[edge] (3\z) -- (4\znext);
        }
        \foreach \z [evaluate={\znext=int({\z-1})}] in {5}
        {
        \draw[edge] (1\z) -- (2\znext);
        \draw[edge] (3\z) -- (4\znext);
        }

        \foreach \z [evaluate={\znext=int({1+\z})}] in {6}
        {
        \draw[edge] (2\z) -- (3\znext);
        }
       \foreach \z [evaluate={\znext=int({\z-1})}] in {7}
        {
        \draw[edge] (2\z) -- (3\znext);
        }
\draw[edge] (33) -- (24);

        \draw[dotted, thick,shorten >=4pt, shorten <=4pt] (10,5) -- (12,5);
        \draw[dotted, thick,shorten >=4pt, shorten <=4pt] (10,3) -- (12,3);
        \draw[dotted, thick,shorten >=4pt, shorten <=4pt] (10,1) -- (12,1);

        \foreach \x in {3,...,5}
        {
        \node[label={[label distance=-3pt]90:\footnotesize{$g_{1\x}$}}] at (1\x) {};
        \node[label={[label distance=-3pt]270:\footnotesize{$g_{4\x}$}}] at (4\x) {};
        }
        \node[label={[label distance=0pt]90:\footnotesize{$g_{33}$}}] at (33) {};
        \node[label={[label distance=-8pt]-30:\footnotesize{$g_{24}$}}] at (24) {};
          \node[label={[label distance=-8pt]30:\footnotesize{$g_{34}$}}] at (34) {};

    \end{scope}
    \end{tikzpicture}
    \caption{$\Tilde{\Gamma}_{n,5}^6$}
    \label{subfig: gamma-n-6'5}
    \end{subfigure}
      \begin{subfigure}[b]{0.25\linewidth}
    \begin{tikzpicture}[scale=0.3]
        \begin{scope}
        \foreach \x in {3,4,5,6,7}
        {
        \node[vertex] (1\x) at (2*\x,3*2) {};
        }
        \foreach \x in {5,6,7}
        {
        \node[vertex] (2\x) at (2*\x,2*2) {};
        }
        \foreach \x in {3,4,5,6,7}
        {
        \node[vertex] (3\x) at (2*\x,1*2) {};
        }
        \foreach \x in {3,4,5,6,7}
        {
        \node[vertex] (4\x) at (2*\x,0*2) {};
        }
        
        \foreach \y [evaluate={\y as \x using int({\y+1})}] in {3,4} 
        {
        \draw[edge] (3\y) -- (3\x);
        \draw[edge] (4\y) -- (4\x);
        }
        \draw[edge] (14) -- (15);
       
        \draw[edge] (16) -- (17);
        \draw[edge] (26) -- (27);
        \draw[edge] (36) -- (37);
        \draw[edge] (46) -- (47);
        
        \foreach \z in {3,4,5,6,7}
        {
        \draw[edge] (3\z) -- (4\z);
        }
        \foreach \z in {5,6,7}
        {\draw[edge] (2\z) -- (3\z);
        \draw[edge] (1\z) -- (2\z);
        }
        
        \draw[edge] (13) to [bend right=45] (33);
        \draw[edge] (13) to [bend right=45] (43);
         \draw[edge] (14) to [bend right=45] (44);
          \draw[edge] (14) to [bend right=45] (34);

        \foreach \z [evaluate={\znext=int({1+\z})}] in {4}
        {
        \draw[edge] (1\z) -- (2\znext);
        \draw[edge] (3\z) -- (4\znext);
        }
        \foreach \z [evaluate={\znext=int({\z-1})}] in {5}
        {
        \draw[edge] (3\z) -- (4\znext);
        }

        \foreach \z [evaluate={\znext=int({1+\z})}] in {6}
        {
        \draw[edge] (2\z) -- (3\znext);
        }
       \foreach \z [evaluate={\znext=int({\z-1})}] in {7}
        {
        \draw[edge] (2\z) -- (3\znext);
        }

        \draw[dotted, thick,shorten >=4pt, shorten <=4pt] (10,5) -- (12,5);
        \draw[dotted, thick,shorten >=4pt, shorten <=4pt] (10,3) -- (12,3);
        \draw[dotted, thick,shorten >=4pt, shorten <=4pt] (10,1) -- (12,1);

        \foreach \x in {3,...,5}
        {
        \node[label={[label distance=-3pt]90:\footnotesize{$g_{1\x}$}}] at (1\x) {};
        \node[label={[label distance=-3pt]270:\footnotesize{$g_{4\x}$}}] at (4\x) {};
        }
        \node[label={[label distance=0pt]90:\footnotesize{$g_{33}$}}] at (33) {};
        \node[label={[label distance=-8pt]-30:\footnotesize{$g_{24}$}}] at (24) {};
          \node[label={[label distance=-8pt]30:\footnotesize{$g_{34}$}}] at (34) {};

        
    \end{scope}
    \end{tikzpicture}
    \caption{$\Tilde{\Gamma}_{n,5}^8$}
    \label{subfig: gamma-n-8'5}
    \end{subfigure}
\caption{}
\end{figure} 
If $n =1 $, then $\n(g_{14})\subseteq \n(g_{23})$ and $\n(g_{44})\subseteq \n(g_{33})$ in the graphs $\Tilde{\Gamma}^{5}_{1,5}$ and $\Tilde{\Gamma}^{5}_{1,5}\setminus g_{23}$, respectively. Thus, from \Cref{Folding Lemma}, $\ind(\Tilde{\Gamma}^{5}_{1,5})\simeq \ind(\Tilde{\Gamma}^{5}_{1,5}\setminus \{g_{23},g_{33}\})$. We can observe that the resultant graph is isomorphic to a cycle on $6$ vertices. Now  \Cref{Ind(Cycle)} implies that $\ind(\mathcal{C}_6) \simeq \Sp^1 \vee \Sp^1$ and therefore $\ind(\Tilde{\Gamma}_{1, 5}) \simeq \Sp^3 \vee \Sp^3$. 
    
Now, let  $n\geq 2$ and proceed with the similar methods as in \Cref{homG_{n,5}}. We obtain the following relations:
\begin{itemize}

    \item $\ind(\Tilde{\Gamma}_{n,5}) \simeq \Sigma^2\left(\ind(\Tilde{\Gamma}_{n,5}^5\setminus g_{23}) \vee \Sigma\ind(\Tilde{A}_{n-2,5}) \right),$
    
    \item $\ind(\Tilde{\Gamma}_{n,5}^6) \simeq\ind(\Tilde{\Gamma}_{n,5}^8 )$, where $\Tilde{\Gamma}_{n,5}^6 \coloneq \Tilde{\Gamma}_{n,5}^5\setminus g_{23}$, and $ \Tilde{\Gamma}_{n,5}^8$ is as in \Cref{subfig: gamma-n-8'5}
    
    \item $\ind(\Tilde{\Gamma}_{n,5}^8) \simeq \Sigma \ind(\Tilde{\Gamma}_{n,5}^8 \setminus \n[g_{33}])\vee \Sigma \ind(\Tilde{\Gamma}_{n,5}^8\setminus \n[g_{43}]).$
\end{itemize}
 By combining all these relations, we find that 
 $$\ind(\Tilde{\Gamma}_{n,5})\simeq \Sigma^3(\ind(\Tilde{A}_{n-2,5}))\bigvee \Sigma^3(\ind(\Tilde{\Gamma}_{n,5}^8 \setminus \n[g_{33}])) \bigvee \Sigma^3(\ind(\Tilde{\Gamma}_{n,5}^8\setminus \n[g_{43}])).$$
However, both the graphs $\Tilde{\Gamma}_{n,5}^8 \setminus \n[g_{33}]$ and $\Tilde{\Gamma}_{n,5}^8 \setminus \n[g_{43}]$ are isomorphic to $\Tilde{D}_{n-2,5}$. 
 Therefore, we have the following:

    \begin{equation}\label{Gamman5tilde}
    \small
        \ind(\Tilde{\Gamma}_{n,5}) \simeq \begin{cases}
         \mathbb{S}^1 & \text{for $n = 0$,} \\
             \mathbb{S}^3\vee \mathbb{S}^3 & \text{for $n = 1$,} \\
            \Sigma^{3}\ind(\Tilde{A}_{n-2,5}) \vee \Sigma^{3} \ind(\Tilde{D}_{n-2,5}) \vee \Sigma^{3} \ind(\Tilde{D}_{n-2,5}) & \text{for $n \geq 2$.}
        \end{cases}
    \end{equation}

\item   $\ind(\widetilde{\Lambda}_{n, 5})$:  
Observe that the graphs $\widetilde{\Lambda}_{n,5}$ and $\Gamma_{n+1,5}$ are isomorphic. Thus, $\ind(\widetilde{\Lambda}_{n,5})\simeq \ind(\Gamma_{n+1,5})$. Hence, from \Cref{homG_{n,5}}, we get the following:

{\small
    \begin{equation}
        \ind(\widetilde{\Lambda}_{n,5}) \simeq \begin{cases}
              \bigvee^3\mathbb{S}^2 & \text{if $n = 0$,} \\
           \Sigma^3\ind(D_{n-1,5}) \vee \Sigma^3\ind(D_{n-1,5}) \vee \Sigma^3\ind(A_{n-1,5}) & \text{if $n \geq 1$.}
        \end{cases}
    \end{equation}
    }

\item $\ind(\widetilde{A}_{n, 5})$: 

Since $N(g_{12})\subseteq N(g_{21})$ and $N(g_{42})\subseteq N(g_{31})$ in the graphs $\widetilde{A}_{0,5}$ and $\widetilde{A}_{0,5}\setminus g_{21}$, respectively, using \Cref{Folding Lemma}, we get $\ind(\widetilde{A}_{0,5})\simeq \ind(\widetilde{A}_{0,5}\setminus\{g_{21},g_{31}\})$. The graph $\widetilde{A}_{0,5}\setminus\{g_{21},g_{31}\}$ is a path on 8 vertices, and hence $\ind(\widetilde{A}_{0,5})\simeq \mathbb{S}^2$ by \Cref{Ind(path)}. For $n\geq 1$, we use the same procedures as in \Cref{homA_{n,5}} to obtain the independence complex of $\widetilde{A}_{n,5}$. Therefore, we have the following:

 \begin{equation}\label{An'}
        \ind(\widetilde{A}_{n,5}) \simeq \begin{cases}
            \mathbb{S}^2 & \text{for $n = 0$,} \\
            \bigvee^2 \Sigma^2\ind(\widetilde{C}_{n-1,5}) \bigvee \Sigma^2\ind(\widetilde{D}_{n-1,5})  & \text{for $n \geq 1$.}
        \end{cases}
    \end{equation}

\item $\ind(\widetilde{B}_{n, 5})$: 

 Since $N(g_{12})\subseteq N(g_{21})$, $N(g_{42})\subseteq N(g_{31})$ and $N(b_2)\subseteq N(b_5)$ in $\widetilde{B}_{0,5}$, $\widetilde{B}_{0,5}\setminus g_{21}$, and $\widetilde{B}_{0,5}\setminus\{g_{21},g_{31}\}$, respectively, using \Cref{Folding Lemma}, we get $\ind(\widetilde{B}_{0,5})\simeq \ind(\widetilde{B}_{0,5}\setminus\{g_{21},g_{31},b_5\})$. Note that $b_3$ is a simplicial vertex and $N(b_3)=\{g_{11},b_4\}$ in the graph $\widetilde{B}_{0,5}\setminus\{g_{21},g_{31},b_5\}$. Therefore, $\ind(\widetilde{B}_{0,5}\setminus\{g_{21},g_{31},b_5\})\simeq \Sigma \ind((\widetilde{B}_{0,5}\setminus\{g_{21},g_{31},b_5\})\setminus \n[g_{11}])\vee \Sigma \ind((\widetilde{B}_{0,5}\setminus\{g_{21},g_{31},b_5\})\setminus \n[{b_4}])$. However, both the graphs  $(\widetilde{B}_{0,5}\setminus\{g_{21},g_{31},b_5\})\setminus \n[g_{11}]$ and $(\widetilde{B}_{0,5}\setminus\{g_{21},g_{31},b_5\})\setminus \n[b_4]$ are isomorphic to $P_7$. Therefore, \Cref{Ind(path)}, implies that $\ind(\widetilde{B}_{0,5})$ is contractible. For $n\geq 1$, $\ind(\widetilde{B}_{n,5})$, as mentioned below, can be computed using the same argument as in \Cref{homBn}. 

\begin{equation}
        \ind(\widetilde{B}_{n,5}) \simeq \begin{cases}
              \ast & \text{for $n = 0$,} \\
            \Sigma\ind(\widetilde{C}_{n,5}) \bigvee\Sigma^2\ind(\widetilde{\Lambda}_{n-1, 5}) & \text{for $n \geq 1$.}
        \end{cases}
    \end{equation}

\begin{figure}
    \centering
    \begin{subfigure}[b]{0.22\linewidth}
        \begin{tikzpicture}[scale=0.3]
        \begin{scope}
\foreach \x in {4,5,6,7}
            {
            \node[vertex] (1\x) at (2*\x,3*2) {};
            }
\foreach \x in {4,5,6,7}
            {
            \node[vertex] (2\x) at (2*\x,2*2) {};
            }
\foreach \x in {3,...,5,6,7}
            {
            \node[vertex] (3\x) at (2*\x,1*2) {};
            }
\foreach \x in {2,3,...,5,6,7}
            {
            \node[vertex] (4\x) at (2*\x,0*2) {};
            }

            \foreach \y [evaluate={\y as \x using {\y+1}}] in {4,6} 
            {
            \draw[edge] (1\y) -- (1\x);
            }
            \foreach \y [evaluate={\y as \x using {\y+1}}] in {4,6} 
            {
            \draw[edge] (2\y) -- (2\x);
            }
            \foreach \y [evaluate={\y as \x using {\y+1}}] in {3,...,4,6} 
            {
            \draw[edge] (3\y) -- (3\x);
            }
            \foreach \y [evaluate={\y as \x using {\y+1}}] in {2,3,...,4,6} 
            {
            \draw[edge] (4\y) -- (4\x);
            }
            
\foreach \z in {4,5,6,7}
            {\draw[edge] (1\z) -- (2\z);}
            \foreach \z in {4,5,6,7}
            {
            \draw[edge] (2\z) -- (3\z);
            }
            \foreach \z in {3,4,5,6,7}
            {
            \draw[edge] (3\z) -- (4\z);
            }
            
\foreach \z [evaluate={\znext=int({1+\z})}] in {4}
            {
            \draw[edge] (1\z) -- (2\znext);
            }
            \foreach \z [evaluate={\znext=int({1+\z})}] in {4}
            {
            \draw[edge] (3\z) -- (4\znext);
            }
            \foreach \z [evaluate={\znext=int({\z-1})}] in {5}
            {
            \draw[edge] (1\z) -- (2\znext);
            }
            \foreach \z [evaluate={\znext=int({\z-1})}] in {3,5}
            {
            \draw[edge] (3\z) -- (4\znext);
            }

\foreach \z [evaluate={\znext=int({1+\z})}] in {6}
            {
            \draw[edge] (2\z) -- (3\znext);
            }
            \foreach \z [evaluate={\znext=int({\z-1})}] in {4,7}
            {
            \draw[edge] (2\z) -- (3\znext);
            }
            \coordinate (c15) at (15);
            \coordinate (c16) at (16);

\draw[dotted, thick,shorten >=4pt, shorten <=4pt] (10,5) -- (12,5);
            \draw[dotted, thick,shorten >=4pt, shorten <=4pt] (10,3) -- (12,3);
            \draw[dotted, thick,shorten >=4pt, shorten <=4pt] (10,1) -- (12,1);

            \foreach \x in {4,5}
            {
            \node[label={[label distance=-3pt]90:\scriptsize{$g_{1\x}$}}] at (1\x) {};
            } 
            \foreach \x in {2,3,4,5}
            {
            \node[label={[label distance=-3pt]-90:\scriptsize{$g_{4\x}$}}] at (4\x) {};
            } 
            \node[label={[label distance=-3pt]180:\scriptsize{$g_{33}$}}] at (33) {};
            
        \end{scope}		
    \end{tikzpicture}
    \caption{$\widetilde{C}_{n,5}^5$}
    \label{subfig: C-n-5-tilde}
    \end{subfigure}
    \hspace{0.2\textwidth}
    \begin{subfigure}[b]{0.22\textwidth}
    \begin{tikzpicture}[scale=0.3]
        \begin{scope}
\foreach \x in {2,3,4,5}
            {
            \node[vertex] (1\x) at (2*\x,3*2) {};
            }
\foreach \x in {2,3,4,5}
            {
            \node[vertex] (2\x) at (2*\x,2*2) {};
            }
\foreach \x in {1,3,4,5}
            {
            \node[vertex] (3\x) at (2*\x,1*2) {};
            }
\foreach \x in {1,3,4,5}
            {
            \node[vertex] (4\x) at (2*\x,0*2) {};
            }
            
\foreach \y [evaluate={\y as \x using {\y+1}}] in {2,4} 
            {
            \draw[edge] (2\y) -- (2\x);
            }
            \foreach \y [evaluate={\y as \x using {\y+1}}] in {4} 
            {
            \draw[edge] (3\y) -- (3\x);
            }
            \draw[edge] (12) -- (13) ;
            \draw[edge] (14) -- (15) ;
            
\foreach \z in {2,3,4,5}
            {
            \draw[edge] (1\z) -- (2\z);
            }
            \foreach \z in {3,4,5}
            {
            \draw[edge] (2\z) -- (3\z);
            }
            \foreach \z in {3,4,5}
            {
            \draw[edge] (3\z) -- (4\z);
            }
      
            \draw[edge] (31) -- (41);
            
\foreach \z [evaluate={\znext=int({1+\z})}] in {2}
            {
            \draw[edge] (1\z) -- (2\znext);
            }
            \foreach \z [evaluate={\znext=int({\z-1})}] in {3}
            {
            \draw[edge] (1\z) -- (2\znext);
            }

\foreach \z [evaluate={\znext=int({1+\z})}] in {4}
            {
            \draw[edge] (2\z) -- (3\znext);
            }
            \foreach \z [evaluate={\znext=int({\z-1})}] in {5}
            {
            \draw[edge] (2\z) -- (3\znext);
            }

\draw[edge] (44) -- (45);

\draw[dotted, thick,shorten >=4pt, shorten <=4pt] (6,5) -- (8,5);
            \draw[dotted, thick,shorten >=4pt, shorten <=4pt] (6,3) -- (8,3);
            \draw[dotted, thick,shorten >=4pt, shorten <=4pt] (6,1) -- (8,1);
            
\end{scope}		

\begin{scope}[shift={(2,0)}]
            \node[vertex] (d1) at (-2,4) {} ;
            \node[vertex] (d2) at (-2,2) {} ;

\draw[edge] (d1) -- (d2) ;

            \draw[edge] (12) -- (33) ;
            \draw[edge] (12) -- (43) ;
            \draw[edge] (22) -- (33) ;
            \draw[edge] (22) -- (43) ;

        \end{scope}
\foreach \x in {2,3}
            {
            \node[label={[label distance=-3pt]90:\scriptsize{$g_{1\x}$}}] at (1\x) {};
            } 
            \foreach \x in {1,3}
            {
            \node[label={[label distance=-3pt]-90:\scriptsize{$g_{4\x}$}}] at (4\x) {};
            }
            \node[label={[label distance=-4pt]-180:\scriptsize{$g_{22}$}}] at (22) {};
            \node[label={[label distance=-4pt]90:\scriptsize{$g_{31}$}}] at (31) {};
            \node[label={[label distance=-3pt]180:\scriptsize{$d_{1}$}}] at (d1) {};
            \node[label={[label distance=-3pt]180:\scriptsize{$d_{2}$}}] at (d2) {};

\end{tikzpicture}
    \caption{$\widetilde{D}_{n,5}^6$}
    \label{subfig: D-n-5-5-tilde}
    \end{subfigure}
    \caption{}
    \label{fig:Cn-5-5tilde-Dn-5-6-tilde}
\end{figure}

\item $\ind(\widetilde{C}_{n, 5})$:  

Observe that $N(g_{12})\subseteq N(g_{21})$, and  $N(g_{42})\subseteq N(g_{31})$ in $\widetilde{C}_{0,5}$, $\widetilde{C}_{0,5}\setminus g_{21}$, respectively. From \Cref{Folding Lemma}, we get $\ind(\widetilde{C}_{0,5})\simeq \ind(\widetilde{C}_{0,5}\setminus\{g_{21},g_{31}\})$. The graph $\widetilde{C}_{0,5}\setminus\{g_{21},g_{31}\}$ is a path on 7 vertices and  hence  $\ind(\widetilde{C}_{0,5})$ is contractible.

Since $\n(g_{11})\subseteq \n(g_{22})$, $\n(g_{14})\subseteq \n(g_{23})$ and $\n(g_{44})\subseteq \n(g_{33})$ in the graphs $\widetilde{C}_{1,5}$ (see \Cref{tildeC15}), $\widetilde{C}_{1, 5}\setminus g_{22}$, and $\widetilde{C}_{1, 5}\setminus \{g_{22},g_{23}\}$, respectively, we get $\ind(\widetilde{C}_{1,5})\simeq \ind(\widetilde{C}_{1, 5}\setminus\{g_{22},g_{23},g_{33}\}).$ Let $\widetilde{C}_{1, 5}^1 = \widetilde{C}_{1, 5}\setminus\{g_{22},g_{23},g_{33}\} $.  Observe that $c_1$ is a simplicial vertex in $\widetilde{C}_{1, 5}^1$ with $\n(c_1)=\{g_{31},g_{41}\}$. Thus, using \Cref{Simplicial Vertex Lemma}, 
we get $\ind(\widetilde{C}_{1, 5})\simeq \Sigma\ind(\widetilde{C}_{1, 5}^1 \setminus\n[g_{31}]) \vee \Sigma\ind(\widetilde{C}_{1, 5}^1 \setminus\n[g_{41}])$. Since the graph $\widetilde{C}_{1, 5}^1\setminus\n[g_{31}]$ is isomorphic to $P_9$, and $\widetilde{C}_{1, 5}^1\setminus\n[g_{41}]$ is isomorphic to the cycle graph $\mathcal{C}_{10}$, using \Cref{Ind(Cycle),,Ind(path)}, we get
$\ind(\widetilde{C}_{1, 5}) \simeq \mathbb{S}^3\vee\mathbb{S}^3$.
 For $n\geq 2$, we use the same argument as in \Cref{homCn} to get recurrence relations for $\ind(\widetilde{C}_{n,5})$. The homotopy type of $\ind(\widetilde{C}_{n,5})$ is given by the following:

\begin{equation}
        \ind(\widetilde{C}_{n,5}) \simeq \begin{cases} 
        * & \text{for $n = 0$,} \\
             \mathbb{S}^3\vee\mathbb{S}^3& \text{for $n = 1$,} \\          
             \Sigma^{3}\ind(\widetilde{A}_{n-2,5}) \bigvee \Sigma^2\ind(\widetilde{E}_{n-1,5}) \bigvee \Sigma^{3}\ind(\widetilde{F}_{n-2,5}) & \text{for $n \geq 2$.}
        \end{cases}
    \end{equation}

\item $\ind(\widetilde{D}_{n, 5})$: 

Consider the graph $\widetilde{D}_{0,5}$. Since $N(g_{12})\subseteq N(g_{21})$, and  $N(g_{42})\subseteq N(g_{31})$ in graphs $\widetilde{D}_{0,5}$, and $\widetilde{D}_{0,5}\setminus g_{21}$, respectively, using \Cref{Folding Lemma}, we obtain $\ind(\widetilde{D}_{0,5})\simeq\ind(\widetilde{D}_{0,5}\setminus\{g_{21},g_{31}\})$. The graph $\widetilde{D}_{0,5}\setminus\{g_{21},g_{31}\}$ is a cycle on 8 vertices. Therefore, using \Cref{Ind(Cycle)}, we get that $\ind(\widetilde{D}_{0,5})\simeq \mathbb{S}^2$. For $n\geq 1$, we proceed in the same way as in \Cref{homDn}. We obtain the following relations:
\begin{itemize}
    \item $\ind(\widetilde{D}_{n,5})\simeq \ind(\widetilde{D}_{n,5}^4)\simeq \ind(\widetilde{D}_{n,5}^4 \setminus g_{21}) \vee \Sigma^2 \ind(\widetilde{E}_{n-1,5})$,
        \item $\ind(\widetilde{D}_{n,5}^4\setminus g_{21})\simeq \ind(\widetilde{D}_{n,5}^6),$
        \item $\ind(\widetilde{D}_{n,5}^6) \simeq  \ind(\widetilde{D}_{n,5}^6 \setminus g_{12}) \vee \Sigma\ind(\widetilde{D}_{n,5}^6 \setminus \n[g_{12}])$.
\end{itemize}
By combining all the above relations, we get
$\ind(\widetilde{D}_{n,5})\simeq \ind(\widetilde{D}_{n,5}^6 \setminus g_{12}) \vee \Sigma\ind(\widetilde{D}_{n,5}^6 \setminus \n[g_{12}])\vee  \Sigma^2 \ind(\widetilde{E}_{n-1,5})$.
The graph $\widetilde{D}_{n,5}^6$ (see \Cref{subfig: D-n-5-5-tilde}) is obtained from the graph $D_{n,5}^6$ (see \Cref{subfig: D-n-6}). In the case of $n=1$, we find that the graph $\widetilde{D}_{1,5}^6 \setminus \n[g_{12}]$ consists of three components: two edges and one with a path on $4$ vertices. Thus, using \Cref{Ind(path)}, we get $\ind(\widetilde{D}_{1,5}^6 \setminus \n[g_{12}])$ is contractible. On the other hand, the graph $\widetilde{D}_{1,5}^6 \setminus g_{12}$ is isomorphic to a three-component graph, where two are edges and the remaining one is a graph isomorphic to $\widetilde{E}_{0,5}$. Therefore, using \eqref{En5tilde}, we get $\ind(\widetilde{D}_{1,5}^6 \setminus g_{12})\simeq \mathbb{S}^3$. Hence, we get  $\ind(\widetilde{D}_{1,5})\simeq \mathbb{S}^3\vee\mathbb{S}^3$. For $n\geq 2$, the independence complexes of $\widetilde{D}_{n,5}$ follow the same pattern as those in \Cref{homDn}. Thus, we have the following:

   \begin{equation}
        \ind(\widetilde{D}_{n,5}) \simeq \begin{cases}
         \mathbb{S}^2 & \text{for $n =0$,} \\
              \mathbb{S}^3\vee \mathbb{S}^3 & \text{for $n =1$,} \\
            \Sigma^3\ind(\widetilde{\Lambda}_{n-2,5}) \bigvee \Sigma^2\ind(\widetilde{E}_{n-1,5}) \bigvee \Sigma^2 \ind(\widetilde{E}_{n-1,5}) & \text{for $n \geq 2$.}
        \end{cases}
    \end{equation}

\item  $\ind(\widetilde{E}_{n, 5})$: 

We find that $N(g_{12})\subseteq N(g_{21})$, and  $N(g_{42})\subseteq N(g_{31})$ in graphs $\widetilde{E}_{0,5}$, and $\widetilde{E}_{0,5}\setminus g_{21}$, respectively. Using \Cref{Folding Lemma}, we get $\ind(\widetilde{E}_{0,5})\simeq \ind(\widetilde{E}_{0,5}\setminus\{g_{21},g_{31}\})$. Observe that the graph $\widetilde{E}_{0,5}\setminus\{g_{21},g_{31}\}$ is a cycle on $7$ vertices. Thus, 
$\ind(\widetilde{E}_{0,5})\simeq \mathbb{S}^1$.

For $n\geq 1$, we proceed in the same way as in \Cref{homEn}. Using \eqref{En pre}, we get $\ind(\widetilde{E}_{n,5}) \simeq \Sigma^2 \left(\vee^2 \ind(\widetilde{C}_{n,5}^5)\right) \vee \Sigma\ind(\widetilde{A}_{n-1,5}) \vee \ind(\widetilde{\Gamma}_{n,5}),$ where the graph $\widetilde{C}_{n,5}^5$ (see \Cref{subfig: C-n-5-tilde}) is obtained from the graph $C_{n,5}^5$ (see \Cref{subfig: C-n-5}).
In case of $n=1$,  $\n(g_{44})\subseteq \n(g_{33})$ in $\widetilde{C}_{1,5}^5$ and therefore  $\ind(\widetilde{C}_{1,5}^5)\simeq \ind(\widetilde{C}_{1,5}^5\setminus g_{33})$. Since the graph $\widetilde{C}_{1,5}^5\setminus g_{33}$  is a path on 
$6$ vertices, $\ind(\widetilde{C}_{1,5}^5)\simeq \mathbb{S}^1$.
Now $\ind(\widetilde{A}_{0,5})\simeq \mathbb{S}^2$ and $\ind(\widetilde{\Gamma}_{1,5})\simeq \mathbb{S}^3\vee \mathbb{S}^3$ imply that $\ind(\widetilde{E}_{1,5}) \simeq \bigvee^5{\mathbb{S}^3}$. For $n\geq 2$, we use the same arguments as those in \Cref{homCn} to get  $\ind(\widetilde{C}_{n,5}^5) \simeq \Sigma\ind(\widetilde{A}_{n-2,5}) \vee \Sigma\ind(\widetilde{F}_{n-2,5})$. Thus, combining all of these, we have the following:

{\small
\begin{equation}\label{En5tilde}
        \ind(\widetilde{E}_{n,5}) \simeq \begin{cases}
        \mathbb{S}^1 & \text{for $n = 0$,} \\
              \bigvee^5\mathbb{S}^3 & \text{for $n = 1$,} \\
            \left({\bigvee}^2\Sigma^3\ind(\widetilde{A}_{n-2,5})\right) \vee \left({\bigvee}^2\Sigma^3 \ind(\widetilde{F}_{n-2,5})\right) \vee \Sigma\ind(\widetilde{A}_{n-1,5}) \vee \ind(\widetilde{\Gamma}_{n,5})
            & \text{for $n \geq 2$.}
        \end{cases}
    \end{equation}}
\item $\ind(\widetilde{F}_{n, 5})$:

Since $N(g_{12})\subseteq N(g_{21})$, and  $N(g_{42})\subseteq N(g_{31})$ in graphs $\widetilde{F}_{0,5}$, and $\widetilde{F}_{0,5}\setminus g_{21}$, respectively, from \Cref{Folding Lemma},  $\ind(\widetilde{F}_{0,5})\simeq \ind(\widetilde{F}_{0,5}\setminus\{g_{21},g_{31}\})$. Since $f_3$ is a simplicial vertex in $\widetilde{F}_{0,5}\setminus\{g_{21},g_{31}\}$ with $N(f_3)=\{g_{41},f_2\}$, $\ind(\widetilde{F}_{0,5})\simeq \Sigma \ind((\widetilde{F}_{0,5}\setminus\{g_{21},g_{31}\})\setminus\cn{}{g_{41}})\vee \Sigma \ind((\widetilde{F}_{0,5}\setminus\{g_{21},g_{31}\})\setminus\cn{}{f_2}) $. Both the graphs $(\widetilde{F}_{0,5}\setminus\{g_{21},g_{31}\})\setminus\cn{}{g_{41}}$ and $(\widetilde{F}_{0,5}\setminus\{g_{21},g_{31}\})\setminus\cn{}{f_2}$ are isomorphic to $P_5$. Hence,  $\ind(\widetilde{F}_{0,5})\simeq \mathbb{S}^2\vee\mathbb{S}^2$. For $n\geq 1$, the relation for $\ind(\widetilde{F}_{n,5})$ follows from a similar argument as that of \Cref{homFn}. Thus, 
    
{\small
\begin{equation}\label{lambda_{n,5}'}
        \ind(\widetilde{F}_{n,5}) \simeq \begin{cases}
              \mathbb{S}^2\vee\mathbb{S}^2 & \text{for $n =0$,} \\
            \Sigma\ind(\widetilde{B}_{n-1,5}) \bigvee 
            \Sigma^2\ind(\widetilde{C}_{n-1,5}) \bigvee 
            \Sigma^2\ind(\widetilde{C}_{n-1,5}) \bigvee 
            \Sigma^2\ind(\widetilde{D}_{n-1,5})
            & \text{for $n \geq 1$.}
        \end{cases}
    \end{equation}}
    \end{itemize}
  
 \begin{theorem}\label{PnxP5}
     For any graph $\widetilde{G}_{n,5}\in \set{\widetilde{\Gamma}_{n,5}, \widetilde{\Lambda}_{n,5}, \widetilde{A}_{n,5}, \widetilde{B}_{n,5}, \widetilde{C}_{n,5}, \widetilde{D}_{n,5}, \widetilde{E}_{n,5}, \widetilde{F}_{n,5} : n\geq 0 }$, $\ind(\widetilde{G}_{n,5})$ is homotopy equivalent to a wedge of spheres.
 \end{theorem}
 \begin{proof}
     The result follows from \eqref{Gamman5tilde} to \eqref{lambda_{n,5}'}, \Cref{Theorem:PnP5firstcase} and induction on $n$.
 \end{proof}

\subsection{\texorpdfstring{Homotopy type of $\M(P_n \times P_5)$}{Homotopy type of\M(Pn × P5)}}\label{subsection:PnP5dimensionbound}
In this section, we conclude the homotopy type of $\M(P_n \times P_5)$. We also determine the maximum and minimum dimensions of the spheres occurring in the homotopy type of the independence complexes of the graphs in 
$\{\Gamma_{n,5},\Lambda_{n,5},A_{n,5},B_{n,5}$, $C_{n,5},D_{n,5},E_{n,5}, F_{n,5} \}$.

\begin{theorem}\label{pnxp5}
    For $n\geq 2$,  $\M(P_{n}\times P_5)$ is homotopy equivalent to a wedge of spheres.
\end{theorem}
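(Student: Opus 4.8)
The plan is to deduce \Cref{pnxp5} directly from the structural decomposition of $L(P_n\times P_5)$ recorded at the beginning of \Cref{section:PnP5}, together with \Cref{Theorem:PnP5firstcase} and \Cref{PnxP5}. Recall that $\M(P_n\times P_5)\simeq\ind(L(P_n\times P_5))$ and that, since $P_n$ is bipartite, $L(P_n\times P_5)$ has exactly two connected components. First I would split into the two parity cases. If $n=2k+2$ with $k\geq 0$, then $L(P_n\times P_5)=\Gamma_{k,5}\sqcup\Gamma_{k,5}$, so by \Cref{prop: join}
\[
\M(P_n\times P_5)\simeq\ind(\Gamma_{k,5})\ast\ind(\Gamma_{k,5}).
\]
If $n=2k+3$ with $k\geq 0$, then $L(P_n\times P_5)=\Lambda_{k,5}\sqcup\Tilde{\Gamma}_{k,5}$, whence
\[
\M(P_n\times P_5)\simeq\ind(\Lambda_{k,5})\ast\ind(\Tilde{\Gamma}_{k,5}).
\]
Since $n\geq 2$, these two cases exhaust all $n$ with $k\geq 0$.

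Next I would invoke \Cref{Theorem:PnP5firstcase} and \Cref{PnxP5}: each of $\ind(\Gamma_{k,5})$, $\ind(\Lambda_{k,5})$ and $\ind(\Tilde{\Gamma}_{k,5})$ is homotopy equivalent to a wedge of spheres (allowing, as usual, the empty wedge, i.e.\ a contractible space). It then remains to use the elementary fact that the join of two wedges of spheres is again a wedge of spheres: for $p,q\geq 0$ one has $\mathbb{S}^p\ast\mathbb{S}^q\simeq\mathbb{S}^{p+q+1}$, the join operation commutes with wedges up to homotopy equivalence, and the join of a contractible space with any space is contractible. Concretely, if $X\simeq\bigvee_i\mathbb{S}^{p_i}$ and $Y\simeq\bigvee_j\mathbb{S}^{q_j}$, then $X\ast Y\simeq\bigvee_{i,j}\mathbb{S}^{p_i+q_j+1}$. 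Applying this to the two displays above yields that $\M(P_n\times P_5)$ is homotopy equivalent to a wedge of spheres.

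There is essentially no remaining obstacle: the substantive work — that the independence complexes of the intermediary graphs and of $\Gamma_{n,5}$, $\Lambda_{n,5}$, $\Tilde{\Gamma}_{n,5}$ are wedges of spheres — has already been carried out via the recursions of \Cref{subsection:GeneralCasePnP5,subsection:GammaTilde} and the base cases \Crefrange{homA base}{homL base}. The only minor points to handle carefully are the parity bookkeeping (checking that $k\geq 0$ really covers every $n\geq 2$, and noting that for $n=2$ the factor $\ind(\Gamma_{0,5})$ is contractible, so $\M(P_2\times P_5)$ is contractible, which we regard as the empty wedge of spheres) and a clean statement of the ``join of wedges of spheres'' lemma. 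Alternatively, one can bypass even that lemma by feeding the explicit sphere counts and dimensions from \Cref{homG_{n,5},homLambdan} and \Cref{subsection:GammaTilde} into the join, which is exactly the route taken in \Cref{subsection:PnP5dimensionbound}.
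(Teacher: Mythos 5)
Your proposal is correct and follows essentially the same route as the paper: split on the parity of $n$, write $\M(P_n\times P_5)$ as the join of the independence complexes of the two line-graph components ($\Gamma$, $\Lambda$, $\Tilde{\Gamma}$), and invoke \Cref{Theorem:PnP5firstcase} and \Cref{PnxP5} together with the standard fact that a join of wedges of spheres is a wedge of spheres. Your explicit remarks about the join lemma and the contractible case $n=2$ (where $\ind(\Gamma_{0,5})$ is contractible) are merely small elaborations of details the paper leaves implicit.
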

\begin{proof}
  \textbf{Case 1:} If $n$ is even, i.e., $n=2k$ for some positive integer $k$, then  $P_n\times P_5$ comprises of two isomorphic connected components whose line graphs are isomorphic to $\Gamma_{k-1,5}$. Thus, $$\M(P_n\times P_5)\simeq \ind(\Gamma_{k-1,5})\ast \ind(\Gamma_{k-1,5}).$$ Now, the result follows from \Cref{Theorem:PnP5firstcase}.

  \noindent\textbf{Case 2:} If $n$ is odd, i.e., $n=2k+1$ for some positive integer $k$, then  $P_n\times P_5$ comprises of two non-isomorphic connected components with corresponding line graphs isomorphic to $\Lambda_{k-1,5}$ and $\widetilde{\Gamma}_{k-1,5}$. Thus, $$\M(P_n\times P_5)\simeq \ind(\Lambda_{k-1,5})\ast \ind(\widetilde{\Gamma}_{k-1,5}).$$
 The result follows from \Cref{Theorem:PnP5firstcase,,PnxP5}.
  \end{proof}

\begin{proposition}{\label{propn: dim-min-max-Pn5}}
    For $n\geq 0$, the maximum and minimum dimension of the spheres in the homotopy type of the independence complexes of graphs in $\{\Gamma_{n, 5}, \Lambda_{n, 5}, A_{n, 5}, B_{n, 5}, C_{n, 5}, $  $D_{n, 5}, E_{n, 5}, F_{n, 5}\}$  are given in $\Cref{tab: max Pn5}$ and $\Cref{tab: min Pn5}$, respectively.

  \begin{table}[h!]
  	{\tiny
  		\setlength{\tabcolsep}{5pt} 
  		\renewcommand{\arraystretch}{1.6} 
  		\centering
  		\begin{tabular}{|c|c|c|c|c|c|c|}
  			\hline
  			\multirow{2}{*}{$n \equiv$} & \multicolumn{6}{c|}{$d_{\mathit{max}}$ of independence complexes of graphs} \\
  			\cline{2-7}
  			{} & $\Gamma_{n,5}$ & $\Lambda_{n,5}$ & $A_{n,5}, F_{n,5}$ & $B_{n,5}$ & $C_{n,5}, D_{n,5}$ & $E_{n,5}$ \\
  			\hline
  			$0 \mod{4}$ & $7\big(\frac{n}{4}\big) \ (n \neq 0)$ & $7\big(\frac{n}{4}\big)+1$ & $7\big(\frac{n}{4}\big)+1$ & $7\big(\frac{n}{4}\big)+2$ & $7\big(\frac{n}{4}\big)+1$ & $7\big(\frac{n}{4}\big)$ \\
  			\hline
  			$1 \mod{4}$ & $7\big(\frac{n-1}{4}\big)+2$ & $7\big(\frac{n-1}{4}\big)+3$ & $7\big(\frac{n-1}{4}\big)+3$ & $7\big(\frac{n-1}{4}\big)+3$ & $7\big(\frac{n-1}{4}\big)+2$ & $7\big(\frac{n-1}{4}\big)+2$ \\
  			\hline
  			$2 \mod{4}$ & $7\big(\frac{n-2}{4}\big)+4$ & $7\big(\frac{n-2}{4}\big)+5$ & $7\big(\frac{n-2}{4}\big)+4$ & $7\big(\frac{n-2}{4}\big)+5$ & $7\big(\frac{n-2}{4}\big)+4$ & $7\big(\frac{n-2}{4}\big)+4$ \\
  			\hline
  			$3 \mod{4}$ & $7\big(\frac{n-3}{4}\big)+6$ & $7\big(\frac{n-3}{4}\big)+6$ & $7\big(\frac{n-3}{4}\big)+6$ & $7\big(\frac{n+1}{4}\big)$ & $7\big(\frac{n-3}{4}\big)+6$ & $7\big(\frac{n-3}{4}\big)+6$ \\
  			\hline
  	\end{tabular}}
  	\caption{}
  	\label{tab: max Pn5}
  \end{table}

  
  \begin{table}[h!]
  	{\tiny
  		\setlength{\tabcolsep}{5pt} 
  		\renewcommand{\arraystretch}{1.6} 
  		\centering
  		\begin{tabular}{|c|c|c|c|c|c|c|}
  			\hline
  			\multirow{2}{*}{\centering $n \equiv$} & \multicolumn{6}{c|}{$d_{\mathit{min}}$ of independence complexes of graphs} \\
  			\cline{2-7}
  			& $\Gamma_{n,5}$ & $\Lambda_{n,5}, A_{n,5}$ &  $B_{n,5}$ & $C_{n,5}$ & $D_{n,5}, F_{n,5}$ & $E_{n,5}$ \\
  			\hline
  			$0 \mod{5}$ & $8\big(\frac{n}{5}\big)+1 \ (n \neq 0)$ & $8\big(\frac{n}{5}\big)+1$ &  $8\big(\frac{n}{5}\big)+1$ & $8\big(\frac{n}{5}\big)+1$ & $8\big(\frac{n}{5}\big)+1$ & $8\big(\frac{n}{5}\big)+1$ \\
  			\hline
  			$1 \mod{5}$ & $8\big(\frac{n-1}{5}\big)+2$ & $8\big(\frac{n-1}{5}\big)+3$ &  $8\big(\frac{n-1}{5}\big)+3$ & $8\big(\frac{n-1}{5}\big)+2$ & $8\big(\frac{n-1}{5}\big)+2$ & $8\big(\frac{n-1}{5}\big)+2$ \\
  			\hline
  			$2 \mod{5}$ & $8\big(\frac{n-2}{5}\big)+4$ & $8\big(\frac{n-2}{5}\big)+4$ &  $8\big(\frac{n-2}{5}\big)+5$ & $8\big(\frac{n-2}{5}\big)+4$ & $8\big(\frac{n-2}{5}\big)+4$ & $8\big(\frac{n-2}{5}\big)+4$ \\
  			\hline
  			$3 \mod{5}$ & $8\big(\frac{n-3}{5}\big)+5$ & $8\big(\frac{n-3}{5}\big)+6$ &  $8\big(\frac{n-3}{5}\big)+6$ & $8\big(\frac{n-3}{5}\big)+5$ & $8\big(\frac{n-3}{5}\big)+6$ & $8\big(\frac{n-3}{5}\big)+5$ \\
  			\hline
  			$4 \mod{5}$ & $8\big(\frac{n-4}{5}\big)+7$ & $8\big(\frac{n-4}{5}\big)+7$ &  $8\big(\frac{n-4}{5}\big)+8$ & $8\big(\frac{n-4}{5}\big)+7$ & $8\big(\frac{n-4}{5}\big)+7$ & $8\big(\frac{n-4}{5}\big)+7$ \\
  			\hline
  	\end{tabular}}
  	\caption{}
  	\label{tab: min Pn5}
  \end{table}

\end{proposition}
\begin{proof}
    We prove the statement by using induction on $n$ for the graphs in $\{\Gamma_{n,5},\Lambda_{n,5}, A_{n,5},B_{n,5}$, $C_{n,5},D_{n,5},E_{n,5},F_{n,5}\}$ for $n\geq 0$. 
    The statement holds for $n=0$ from \Crefrange{homG_{n,5}}{homFn}.  Assume that the statement holds for all non-negative integers less than $n$. For $\Gamma_{n,5}$, \Cref{homG_{n,5}} implies that the statement holds for $n=1$ as well. Further, for $n\geq 2$, we have 
    \begin{equation}{\label{eq: gamma-n-5 rec}}
        \ind(\Gamma_{n,5}) \simeq {\vee}^2\Sigma^{3}\ind(D_{n-2,5}) \vee \Sigma^3\ind(A_{n-2,5}).     \end{equation}
    Therefore, $\dimax{\Gamma_{n,5}}$ is given by
    \begin{equation*}
    \begin{split}
        \max\{ & 3+\dimax{A_{n-2,5}}, 3+\dimax{D_{n-2,5}} \}.
    \end{split}
    \end{equation*}
    Let $n \equiv 0 \mod{4}$. Then $n-2$ is congruent to $2$  modulo $4$. By the induction hypothesis, we have
    \begin{equation*}
        \begin{split}
            \dimax{\Gamma_{n,5}} = &
            \max\{ 3+(\sfrac{7(n-4)}{4}+4), 3+(\sfrac{7(n-4)}{4}+4) \} \\
            = & 7\left(\frac{n}{4}\right).
        \end{split}
    \end{equation*}
    The proof or $n\equiv 1,2,3 \mod{4}$ follows similarly.

On the other hand, $\dimin{\Gamma_{n,5}}$ is given by 
    \begin{equation*}
    \begin{split}
        \min\{ & 3+\dimin{A_{n-2,5}}, 3+\dimin{D_{n-2,5}} \}.
    \end{split}
    \end{equation*}
For $n\equiv 0 \mod{5}$, $n-2$ is congruent to $3$  modulo $5$. Therefore, using the induction hypothesis, we have
\begin{equation*}
        \begin{split}
            \dimin{\Gamma_{n,5}} = &
            \min\{ 3+(\sfrac{8(n-5)}{5}+6), 3+(\sfrac{8(n-5)}{5}+6) \} \\
            = & 8\left(\frac{n}{5}\right)+1.
        \end{split}
    \end{equation*}
    Similarly, we prove the required equality for $n\equiv 1,2,3,4 \mod{5}$.

The proof follows analogously for the maximum and minimum dimension of the wedge in independence complexes of $\Lambda_{n,5},A_{n,5},B_{n,5},C_{n,5},D_{n,5},E_{n,5}$ and $F_{n,5}$.
\end{proof}

\begin{proposition}
   For $n\geq 0$, the maximum and minimum dimension of the spheres in the homotopy type of the independence complexes of graphs in $\{\widetilde{\Gamma}_{n, 5}, \widetilde{\Lambda}_{n, 5}, \widetilde{A}_{n, 5}, \widetilde{B}_{n, 5}, \widetilde{C}_{n, 5}, $  $\widetilde{D}_{n, 5}, \widetilde{E}_{n, 5}, \widetilde{F}_{n, 5}\}$  are given in $\Cref{tab: max 2nd Pn5}$ and $\Cref{tab: min 2nd Pn5}$, respectively.
     \begin{table}[h!]
 	{\tiny
 		\setlength{\tabcolsep}{5pt} 
 		\renewcommand{\arraystretch}{1.7} 
 		\centering
 		\begin{tabular}{|c|c|c|c|c|c|}
 			\hline
 			\multirow{2}{*}{\centering $n \equiv$} & \multicolumn{5}{c|}{$d_{\mathit{max}}$ of independence complexes of graphs} \\
 			\cline{2-6}
 			& $\widetilde{\Gamma}_{n,5}, \widetilde{E}_{n,5}$ & $\widetilde{\Lambda}_{n,5}$ & $\widetilde{A}_{n,5}, \widetilde{F}_{n,5}$ & $\widetilde{B}_{n,5}$ & $\widetilde{C}_{n,5}, \widetilde{D}_{n,5}$ \\
 			\hline
 			$0 \mod{4}$ & $7\big(\frac{n}{4}\big)+1$ & $7\big(\frac{n}{4}\big)+2$ & $7\big(\frac{n}{4}\big)+2$ & $7\big(\frac{n}{4}\big)+3$ & $7\big(\frac{n}{4}\big)+2$ \\
 			\hline
 			$1 \mod{4}$ & $7\big(\frac{n-1}{4}\big)+3$ & $7\big(\frac{n-1}{4}\big)+4$ & $7\big(\frac{n-1}{4}\big)+4$ & $7\big(\frac{n-1}{4}\big)+4$ & $7\big(\frac{n-1}{4}\big)+3$ \\
 			\hline
 			$2 \mod{4}$ & $7\big(\frac{n-2}{4}\big)+5$ & $7\big(\frac{n-2}{4}\big)+6$ & $7\big(\frac{n-2}{4}\big)+5$ & $7\big(\frac{n-2}{4}\big)+6$ & $7\big(\frac{n-2}{4}\big)+5$ \\
 			\hline
 			$3 \mod{4}$ & $7\big(\frac{n+1}{4}\big)$ & $7\big(\frac{n+1}{4}\big)$ & $7\big(\frac{n+1}{4}\big)$ & $7\big(\frac{n+1}{4}\big)+1$ & $7\big(\frac{n+1}{4}\big)$ \\
 			\hline
 	\end{tabular}}
 	\vspace{-0.2 cm}
 	\caption{}
 	\label{tab: max 2nd Pn5}
 \end{table}
 \begin{table}[h!]
 	{\tiny
 		\setlength{\tabcolsep}{5pt} 
 		\renewcommand{\arraystretch}{1.6} 
 		\centering
 		\begin{tabular}{|c|c|c|c|c|c|}
 			\hline
 			\multirow{2}{*}{\centering $n \equiv$} & \multicolumn{5}{c|}{$d_{\mathit{min}}$ of independence complexes of graphs} \\
 			\cline{2-6}
 			& $\widetilde{\Gamma}_{n,5}, \widetilde{E}_{n,5}$ & $\widetilde{\Lambda}_{n,5}, \widetilde{A}_{n,5}$  & $\widetilde{B}_{n,5}$ & $\widetilde{C}_{n,5}, \widetilde{D}_{n,5}$ & $\widetilde{F}_{n,5}$ \\
 			\hline
 			$0 \mod{5}$ & $8\big(\frac{n}{5}\big)+2 \ (n \neq 0)$ & $8\big(\frac{n}{5}\big)+2$  & $8\big(\frac{n}{5}\big)+3$ & $8\big(\frac{n}{5}\big)+2$ & $8\big(\frac{n}{5}\big)+2$\\
 			\hline
 			$1 \mod{5}$ & $8\big(\frac{n-1}{5}\big)+3$ & $8\big(\frac{n-1}{5}\big)+4$  & $8\big(\frac{n-1}{5}\big)+4$ & $8\big(\frac{n-1}{5}\big)+3$ & $8\big(\frac{n-1}{5}\big)+4$\\
 			\hline
 			$2 \mod{5}$ & $8\big(\frac{n-2}{5}\big)+5$ & $8\big(\frac{n-2}{5}\big)+5$  & $8\big(\frac{n-2}{5}\big)+6$ & $8\big(\frac{n-2}{5}\big)+5$ & $8\big(\frac{n-2}{5}\big)+5$\\
 			\hline
 			$3 \mod{5}$ & $8\big(\frac{n-3}{5}\big)+6$ & $8\big(\frac{n-3}{5}\big)+7$  & $8\big(\frac{n-3}{5}\big)+7$ & $8\big(\frac{n-3}{5}\big)+7$ & $8\big(\frac{n-3}{5}\big)+7$\\
 			\hline
 			$4 \mod{5}$ & $8\big(\frac{n+1}{5}\big)$ & $8\big(\frac{n+1}{5}\big)+1$  & $8\big(\frac{n+1}{5}\big)+1$ & $8\big(\frac{n+1}{5}\big)$ & $8\big(\frac{n+1}{5}\big)$\\
 			\hline
 	\end{tabular}}
 	\vspace{-0.2cm}
 	\caption{}
 	\label{tab: min 2nd Pn5}
 \end{table}

\end{proposition}
\begin{proof}
    The proof is essentially verbatim to the proof of \Cref{propn: dim-min-max-Pn5}.
\end{proof}

\begin{rem}

The maximum and minimum dimension of spheres in the homotopy type of $\M(P_n \times P_5)$ as given in \Cref{proposition:dim_PnP5} can be concluded from \Crefrange{tab: max Pn5}{tab: min 2nd Pn5}. 
\end{rem}

\section{Future Directions}\label{section:futureplan}
We proved that for $n \geq 3$ and $3 \leq m \leq 5$, the matching complex of $P_n \times P_m$ is homotopy equivalent to a wedge of spheres. For $m =3$, we explicitly compute the number and dimension of spheres appearing in the wedge. Further, for $m \in \{4, 5\}$,  we have given the minimum and maximum dimension of spheres appearing in the wedge in $\mathsf{M}(P_n \times P_m)$. This exploration naturally leads to the following question:
\begin{ques}
    Can we determine the dimensions of the spheres appearing in the wedge representing the homotopy types of the matching complexes of categorical products $P_n\times P_4$ and $P_n\times P_5$?
\end{ques}
Based on the main result of this article and our computer-based computations, we propose the following:

\begin{conj}
    For $n\geq 3$ and  $m\geq 6$, the matching complex of categorical product $P_n\times P_m$ is homotopy equivalent to a wedge of spheres.
\end{conj}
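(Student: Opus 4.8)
The plan is to push the column-by-column reduction strategy that proves \Cref{theorem:combine} for $3\le m\le 5$ up to arbitrary $m$, the genuinely new ingredient being a proof that the combinatorial bookkeeping stays finite. The first move is purely structural and works verbatim for every $m$: since $P_n$ is bipartite, $L(P_n\times P_m)$ is disconnected with two components (isomorphic when $n$ is even, a pair $\Lambda\sqcup\widetilde\Gamma$ when $n$ is odd), so by \Cref{prop: join} $\M(P_n\times P_m)$ is the join of the independence complexes of these components. A join of wedges of spheres is again a wedge of spheres (because $\Sp^a\ast\Sp^b\simeq\Sp^{a+b+1}$ and $\ast$ distributes over $\vee$ up to homotopy), so it suffices to show that each component's independence complex is a wedge of spheres. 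Write $\Gamma_{k,m}$ (and its siblings $\Lambda_{k,m},\widetilde\Gamma_{k,m}$) for a generic component, thought of as $k$ "double columns" of the grid line graph, so that these are the $m$-analogues of $\Gamma_{n,5},\Lambda_{n,5},\widetilde\Gamma_{n,5}$.

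Next I would isolate a finite family $\mathcal G_m$ of graphs — the $m$-analogue of $\{\Gamma,\Lambda,A,B,C,D,E,F\}_{n,5}$ — each consisting of a truncated line-graph grid together with a bounded "gadget" attached at its left end, arranged so that $\Gamma_{k,m},\Lambda_{k,m},\widetilde\Gamma_{k,m}\in\mathcal G_m$. The core of the argument is a closure lemma: for every $G\in\mathcal G_m$ with $k$ above some threshold $k_0(m)$, a fixed finite sequence of the operations available in the paper — folding dominated vertices (\Cref{Folding Lemma}), rewriting via edge-invariant triplets and edge deletions (\Cref{Edge deletion 1}), splitting at pendant/simplicial vertices (\Cref{Simplicial Vertex Lemma}), and splitting via \Cref{Link and Deletion} whenever a cone vertex $x\in N(v)$ with $N(x)\subseteq N[v]$ makes the relevant inclusion null-homotopic — reduces $\ind(G)$ to
\[
\ind(G)\ \simeq\ \bigvee_{i}\Sigma^{c_i}\ind(H_i),\qquad H_i\in\mathcal G_m,\ \text{each }H_i\text{ having strictly smaller }k.
\]
Concretely one would peel off the leftmost one or two columns exactly as in the proof of \Cref{homG_{n,5}}: fold the vertices in column $2$ that are dominated by their diagonal neighbours, break the remaining left fragment into a disjoint union by a chain of edge-invariant triplets, and then apply \Cref{Link and Deletion} at the surviving simplicial vertices, checking at each step that the pieces $G\setminus v$ and $G\setminus N[v]$ are again disjoint unions of small explicit graphs together with members of $\mathcal G_m$ whose left gadget has changed but remains bounded.

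Granting the closure lemma, the theorem follows by induction on $k$. The base cases $k\le k_0(m)$ are finitely many explicit graphs, whose independence complexes can be checked to be wedges of spheres by hand or by the same computer-assisted verification the authors already use; the inductive step is immediate because suspension, \Cref{Simplicial Vertex Lemma} and \Cref{Link and Deletion} all send wedges of spheres to wedges of spheres. Hence each component of $L(P_n\times P_m)$ has independence complex a wedge of spheres, and the join reduction of the first paragraph completes the proof.

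The hard part — and the reason the statement is only conjectural — is the closure lemma, and inside it the finiteness of $\mathcal G_m$. The number of gadget types generated by the reduction jumped from $4$ at $m=4$ to $13$ at $m=5$, and there is no evident reason this growth is not super-exponential in $m$; taming it requires either a genuinely uniform reduction scheme that does not spawn new gadget shapes as $m$ grows, or a change of method. A promising alternative is to construct an explicit acyclic matching (discrete Morse function) on $\ind(L(\Gamma_{k,m}))$ that is periodic in the $k$-direction, with critical cells confined to a controlled band of dimensions and with trivial incidence numbers — this would bypass the intermediary-graph bookkeeping entirely, at the cost of proving acyclicity and identifying the critical cells. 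In either approach, the recurring technical subtlety is verifying uniformly in $m$ that the inclusions $\ind(G\setminus N[v])\hookrightarrow\ind(G\setminus v)$ are null-homotopic, i.e. always locating a suitable cone vertex; this is exactly where one expects the difficulty to concentrate.
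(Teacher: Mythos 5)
The statement you are addressing is posed in the paper only as a conjecture (\Cref{section:futureplan}); the paper contains no proof of it, and your proposal does not supply one either. The outer layer of your argument is sound and matches the paper's general framework: since $P_n$ is bipartite, $L(P_n\times P_m)$ splits into two components, \Cref{prop: join} turns $\M(P_n\times P_m)$ into a join, and a join of wedges of spheres is again a wedge of spheres, so the conjecture reduces to showing each component's independence complex is a wedge of spheres. But everything after that is exactly the open content of the conjecture, and you leave it unproved. The ``closure lemma'' — that there is a \emph{finite} family $\mathcal G_m$ of gadget-decorated grid line graphs, closed under the reduction operations (\Cref{Folding Lemma}, \Cref{Edge deletion 1}, \Cref{Simplicial Vertex Lemma}, \Cref{Link and Deletion}), with each reduction strictly decreasing the column parameter $k$ — is asserted as the plan, not established. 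Without it the induction on $k$ has no engine: you have neither a definition of $\mathcal G_m$ for general $m$, nor a proof that the peeling of the leftmost columns terminates in members of $\mathcal G_m$, nor a uniform argument that the required cone vertices exist so that the inclusions $\ind(G\setminus N[v])\hookrightarrow\ind(G\setminus v)$ are null-homotopic. You acknowledge this yourself, which is honest, but it means the proposal is a research programme rather than a proof.

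Two further points worth flagging if you pursue this. First, the ``base cases'' step is not as benign as stated: for arbitrary $m$ the threshold family $\{G\in\mathcal G_m : k\le k_0(m)\}$ is finite only after $\mathcal G_m$ has been shown finite, and its members grow with $m$, so ``check by hand or by computer'' is not a uniform argument and cannot appear in a proof valid for all $m\ge 6$. Second, the experience of the paper itself cuts against the optimistic reading of the closure lemma: the number of intermediary classes grows from $1$ at $m=3$, to $4$ at $m=4$, to $13$ (in two flavours, $H_{n,5}$ and $\tilde H_{n,5}$) at $m=5$, and the specific folds and edge-invariant triplets used in \Cref{homG_{n,5}}--\Cref{homFn} depend delicately on the local shape of the left end, so there is no reason the same recipe stabilises for larger $m$. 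Your suggested alternative — a $k$-periodic acyclic matching with critical cells in a controlled band and trivial incidences — is a reasonable direction, but it too is only named, not constructed; note also that controlling dimensions of critical cells alone does not give a wedge of spheres without the additional argument that the resulting Morse complex has free homology concentrated appropriately and that the space is simply connected (or that incidence numbers vanish), which is precisely the hard part. As it stands, the proposal correctly identifies where the difficulty lives but does not close the gap, so the conjecture remains open.
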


For odd $m$ and $n$, one of the components of $P_n \times P_m$ is an induced subgraph of $P_n \Box P_m$  (see \Cref{fig:cat-cart-paths}). Since the homotopy type of the matching complexes of $P_n \Box P_m$ is not known for $m \geq 3$, it will be interesting to see if there is any relation between the topology of $\M(P_n \times P_n)$ and $\M(P_n \Box P_m)$. 

\begin{ques}
Is there any correlation between the matching complexes of the cartesian product of path graphs and those of the categorical product of path graphs?
\end{ques}

\section{Acknowledgment}
The authors would like to thank the anonymous referees for many useful comments and suggestions. We also would like to thank an anonymous referee for pointing out a gap in the proof of Claim 5.4 in the earlier version.  The authors express gratitude to NCM and CMI for organizing the workshop titled ``Cohen Macaulay simplicial complexes in graph theory", which provided an opportunity for the authors to convene and initiate the current work. The first author is financially supported by CSIR(India) at IIT Delhi and Inspire Faculty Fellow research grant of Subhajit Ghosh (IFA 23 MA 198). The second author is supported by the Prime Minister's Research Fellowship (PMRF/1401215), India, and also thanks SERB (Now ANRF) India for the CRG Grant (CRG/2023/000239). The third author is supported by HTRA-fellowship by IIT Madras, India. The fourth author is supported by the seed grant project IITM/SG/SMS/95 by IIT Mandi, India.

\section{Declarations}
 \subsection{Ethical Approval} Not applicable
 \subsection{Funding} Not applicable
  \subsection{Availability of data and materials }  Not applicable

\end{document}